%% LyX 2.3.0 created this file.  For more info, see http://www.lyx.org/.
%% Do not edit unless you really know what you are doing.
\documentclass[11pt,english,bibliography=totoc]{extarticle}
\usepackage[T1]{fontenc}
\usepackage[latin9]{inputenc}
\usepackage{geometry}
\geometry{verbose,tmargin=2.5cm,bmargin=3.5cm,lmargin=2.5cm,rmargin=2.5cm}
\setcounter{secnumdepth}{4}
\setcounter{tocdepth}{4}
\usepackage{color}
\usepackage{babel}
\usepackage{float}
\usepackage{mathrsfs}
\usepackage{mathtools}
\usepackage{enumitem}
\usepackage{amsmath}
\usepackage{amsthm}
\usepackage{amssymb}
\usepackage{graphicx}
\usepackage{wasysym}
\usepackage[authoryear]{natbib}
\PassOptionsToPackage{normalem}{ulem}
\usepackage{ulem}
%\usepackage[unicode=true,pdfusetitle,
% bookmarks=true,bookmarksnumbered=false,bookmarksopen=false,
% breaklinks=false,pdfborder={0 0 0},pdfborderstyle={},backref=false,colorlinks=true]
% {hyperref}

\usepackage{hyperref}

\hypersetup{
	unicode=true,
	pdfusetitle,
	bookmarks=true,
	bookmarksnumbered=false,
	bookmarksopen=false,
	breaklinks=false,
	pdfborder={0 0 0},
	pdfborderstyle={},
	backref=false,
	colorlinks=true
}
\usepackage{epstopdf}

\makeatletter

%%%%%%%%%%%%%%%%%%%%%%%%%%%%%% LyX specific LaTeX commands.
\floatstyle{ruled}
\newfloat{algorithm}{tbp}{loa}
\providecommand{\algorithmname}{Algorithm}
\floatname{algorithm}{\protect\algorithmname}

%%%%%%%%%%%%%%%%%%%%%%%%%%%%%% Textclass specific LaTeX commands.
\numberwithin{figure}{section}
      % auxiliary length 
\theoremstyle{definition}
\newtheorem{defn}{\protect\definitionname}
\theoremstyle{plain}
\newtheorem{prop}{\protect\propositionname}
\theoremstyle{plain}
\newtheorem{assumption}{\protect\assumptionname}
\theoremstyle{remark}
\newtheorem{rem}{\protect\remarkname}
\theoremstyle{plain}
\newtheorem{thm}{\protect\theoremname}
\theoremstyle{plain}
\newtheorem{cor}{\protect\corollaryname}
\theoremstyle{plain}
\newtheorem{lem}{\protect\lemmaname}

%%%%%%%%%%%%%%%%%%%%%%%%%%%%%% User specified LaTeX commands.

\interdisplaylinepenalty=2500
\usepackage{dsfont}
\usepackage{relsize}
\usepackage{subdepth}
\usepackage{xcolor}
\allowdisplaybreaks

\usepackage{url}

\DeclareFontFamily{OT1}{pzc}{}
\DeclareFontShape{OT1}{pzc}{m}{it}{<-> s * [1.200] pzcmi7t}{}
\DeclareMathAlphabet{\mathpzc}{OT1}{pzc}{m}{it}

\usepackage[noadjust]{cite}
\usepackage{filecontents}

\usepackage{stfloats}

\usepackage{babel}
%\usepackage[en-US]{datetime2}
%\DTMlangsetup{showdayofmonth=false}

\renewcommand\footnotemark{}

\usepackage{bigints}

\usepackage{scalerel}

%%%%%%%%%%%%%%%%%%%%%%%%%%%%%%%%%%%%%%%%%%%%%%%%%%%%%%%%%%%%%%%%%%%%%%%
\usepackage[framemethod=tikz]{mdframed}

\newmdenv[
  hidealllines=true,
  backgroundcolor=blue!10,
  innerleftmargin=8pt,
  innerrightmargin=8pt,
  innertopmargin=0pt,
  innerbottommargin=6pt,
  leftmargin=-0pt,
  rightmargin=-0pt
]{shadedbox}

%%%%%%%%%%%%%%%%%%%%%%%%%%%%%%%%%%%%%%%%%%%%%%%%%%%%%%%%%%%%%%%%%%%%%%%

%\usepackage{commath}
%\usepackage[sc,osf]{mathpazo}

%%%%%%%%%%%%%%%%%%%%%%%%%%%%%%%%%%%%%%%%%%%%%%%%%%%%%%%%%%%%%%%%%%%%%%%

\usepackage[titles]{tocloft}
\setlength{\cftbeforesecskip}{4pt}
%\setlength{\cftbeforesecskip}{12pt}
%\setlength{\cftbeforesubsecskip}{4pt}
%\setlength{\cftbeforesubsubsecskip}{2pt}

%%%%%%%%%%%%%%%%%%%%%%%%%%%%%%%%%%%%%%%%%%%%%%%%%%%%%%%%%%%%%%%%%%%%%%%

%\let\oldnorm\norm   % <-- Store original \norm as \oldnorm
%\let\norm\undefined % <-- "Undefine" \norm
%\DeclarePairedDelimiter\norm{\lVert}{\rVert}

\definecolor{airforceblue}{rgb}{0.36, 0.54, 0.66}
\definecolor{ballblue}{rgb}{0.13, 0.67, 0.8}
\hypersetup{citecolor=ballblue}

\definecolor{alizarin}{rgb}{0.82, 0.1, 0.26}
\definecolor{asparagus}{rgb}{0.53, 0.66, 0.42}
\definecolor{applegreen}{rgb}{0.55, 0.71, 0.0}
\definecolor{armygreen}{rgb}{0.29, 0.33, 0.13}
\definecolor{amber(sae/ece)}{rgb}{1.0, 0.49, 0.0}
\definecolor{coquelicot}{rgb}{1.0, 0.22, 0.0}
\definecolor{ao(english)}{rgb}{0.0, 0.5, 0.0}
%\hypersetup{linkcolor=ao(english)}

%%%%%%%%%%%%%%%%%%%%%%%%%%%%%%%%%%%%%%%%%%%%%%%%%%%%%%%%%%%%%%%%%%%%%%%

\newcommand{\CaseStretch}{1.2}

\renewcommand*\env@cases[1][\CaseStretch]{%
  \let\@ifnextchar\new@ifnextchar
  \left\lbrace
  \def\arraystretch{#1}%
  \array{@{}l@{\quad}l@{}}%
}

\usepackage{anyfontsize}
\usepackage{cleveref}

\makeatother

\providecommand{\assumptionname}{Assumption}
\providecommand{\corollaryname}{Corollary}
\providecommand{\definitionname}{Definition}
\providecommand{\lemmaname}{Lemma}
\providecommand{\propositionname}{Proposition}
\providecommand{\remarkname}{Remark}
\providecommand{\theoremname}{Theorem}

\begin{document}

\title{\textbf{\vspace{-25pt}
}\\
\textbf{Recursive Optimization of Convex Risk Measures:}\\
\textbf{Mean-Semideviation Models}}

\author{Dionysios S. Kalogerias and Warren B. Powell\thanks{The Authors are with the Department of Operations Research \& Financial
Engineering (ORFE), Princeton University, Sherrerd Hall, Charlton
Street, Princeton, NJ 08544, USA. e-mail: \{dkalogerias, powell\}@princeton.edu.}}

\maketitle
\textbf{\vspace{-30pt}
}
\begin{abstract}
We develop recursive, data-driven, stochastic subgradient methods
for optimizing a new, versatile, and application-driven class of convex
risk measures, termed here as \textit{mean-semidevi- ations}, strictly
generalizing the well-known and popular mean-upper-semideviation.
We introduce the $\textit{MESSAGE}^{p}$ \textit{algorithm}, which
is an efficient compositional subgradient procedure for iteratively
solving \textit{convex} mean-semideviation risk-averse problems \textit{to
optimality}, and constitutes a \textit{parallel} variation of the
recently developed, general purpose \textit{$T$-SCGD} \textit{algorithm}
of Yang, Wang \& Fang \citep{Wang2018}. We analyze the asymptotic
behavior of the $\textit{MESSAGE}^{p}$ algorithm under a flexible
and structure-exploiting set of problem assumptions, which reveal
a well-defined trade-off between the expansiveness of the random cost
and the smoothness of the mean-semideviation risk measure under consideration.
In particular:
\begin{itemize}
\item Under appropriate stepsize rules, we establish pathwise convergence
of the $\textit{MESSAGE}^{p}$ algorithm in a strong technical sense,
confirming its asymptotic consistency. 
\item Assuming a \textit{strongly convex} \textit{cost}, we show that, for
fixed semideviation order $p>1$ and for $\epsilon\in\left[0,1\right)$,
the $\textit{MESSAGE}^{p}$ algorithm achieves a squared-${\cal L}_{2}$
solution suboptimality rate of the order of ${\cal O}(n^{-\left(1-\epsilon\right)/2})$
iterations, where, for $\epsilon>0$, pathwise convergence is \textit{simultaneously}
guaranteed. This result establishes a rate of order arbitrarily close
to ${\cal O}(n^{-1/2})$, while ensuring strongly stable pathwise
operation. For $p\equiv1$, the rate order improves to ${\cal O}(n^{-2/3})$,
which also suffices for pathwise convergence, and matches previous
results. 
\item Likewise, in the general case of a \textit{convex cost}, we show that,
for any $\epsilon\in\left[0,1\right)$, the $\textit{MESSAGE}^{p}$
algorithm \textit{with iterate smoothing} achieves an ${\cal L}_{1}$
objective suboptimality rate of the order of ${\cal O}(n^{-\left(1-\epsilon\right)/\left(4\mathds{1}_{\left\{ p>1\right\} }+4\right)})$.
This result provides maximal rates of ${\cal O}(n^{-1/4})$, if $p\equiv1$,
and ${\cal O}(n^{-1/8})$, if $p>1$, matching the state of the art,
as well.
\end{itemize}
Finally, we discuss the superiority of the proposed framework for
convergence, as compared to that employed earlier in \citep{Wang2018},
within the risk-averse context under consideration. By performing
careful analysis and by constructing non-trivial counterexamples,
we explicitly demonstrate that the class of mean-semideviation problems
supported herein is \textit{strictly larger} than the respective class
of problems supported in \citep{Wang2018}. As a result, this work
establishes the applicability of compositional stochastic optimization
for a significantly and strictly wider spectrum of convex mean-semideviation
risk-averse problems, as compared to the state of the art. This fact
justifies the purpose of our work from this perspective, as well.
\end{abstract}
\textbf{\textit{$\quad$}}\textbf{Keywords.} Risk-Averse Optimization,
Risk-Aware Learning, Risk Measures, Mean-Upper-Semi- deviation, Stochastic
Optimization, Stochastic Gradient Descent, Compositional Optimization.

\newpage{}

\hypersetup{linkcolor=ao(english)}\tableofcontents{}

\hypersetup{linkcolor=red}

\newpage{}

\section{\label{sec:Introduction}Introduction}

During the last almost twenty years, many significant advances have
been made in the now relatively mature area of risk-averse modeling
and optimization. These primarily include the fundamental axiomatization
and theoretical characterization of risk functionals, also commonly
known as risk measures \citep{Kijima1993,Rockafellar1997,Artzner1999,Ogryczak1999,Ogryczak2002,Rockafellar2002,Rockafellar2003,Rockafellar2006,Ruszczynski2006a,ShapiroLectures_2ND},
as well as extensive analysis in the context of risk-averse stochastic
programs in both static and sequential decision making problem settings
\citep{Rockafellar1997,Follmer2002,Rockafellar2003,Rockafellar2006,Ruszczynski2006,Collado2012,Cavus2014,Asamov2015,Dentcheva2017,Grechuk2017,Shapiro2017,Fan2018}.
The importance of building a well structured theory of risk is motivated
by its natural and intuitive relevance to problems from a large variety
of applied domains. Arguably the oldest, archetypical application
of risk is in Finance \citep{Kijima1993,Rockafellar1997,Andersson2001,Krokhmal2001,Chen2008,Shang2018},
which has decisively driven pioneering research in risk-averse modeling
and optimization, from its very birth, probably dating back to the
work of Markowitz \citep{Markowitz1952}, to present. Other applications
of risk may be found in both classical and contemporary domains such
as Energy \citep{Moazeni2015,Bruno2016,Jiang2016a}, Wireless Networks
\citep{Ma2018}, Inventory Optimization \citep{Ahmed2007,Chen2007,Xinsheng2015}
and Supply Chain Management \citep{Gan2004,Sawik2016}, to name a
few.

Most recently, the development of effective \textit{computational
methods} for applying risk-averse optimization to actual problems
has also been attracting considerable attention; see, e.g., \citep{Ruszczynski2010,Cavus2014a,Moazeni2017,Tamar2017,Dentcheva2017a,W.Huang2017,Jiang2017,Yu2018}.
This line of work can be divided between sequential settings \citep{Cavus2014a,Moazeni2017,Tamar2017,W.Huang2017,Jiang2017,Yu2018},
and static settings \citep{Tamar2017,Dentcheva2017a}, for a variety
of different problem characteristics. Computational recipes also vary.
\textit{For instance}, \citep{Ruszczynski2010} and \citep{Cavus2014a}
develop and analyze variations of the well known value and policy
iteration algorithms of risk-neutral dynamic programming; \citep{Moazeni2017}
proposes a method for risk-averse nonstationary direct parametric
policy search for finite horizon problems; \citep{Tamar2017}, \citep{Dentcheva2017a}
and \citep{Yu2018} rely on the so-called \textit{Sample Average Approximation
(SAA)} approach \citep{ShapiroLectures_2ND}, where an appropriately
constructed empirical estimate of the original objective is used as
a \textit{surrogate} to that of the original stochastic program, assuming
existence of a sufficiently large sample of the processes introducing
uncertainty into the corresponding risk-averse objective; \citep{W.Huang2017}
and \citep{Jiang2017} consider an \textit{Approximate Dynamic Programming
(ADP)} \citep{Powell2004} approach, where sequential finite state/action
risk-averse stochastic programs are tackled via stochastic approximation
\citep{Kushner2003}.

Following this recent trend, this paper proposes and rigorously analyzes\textit{
recursive stochastic subgradient methods} for an important class of
\textit{static, convex} risk-averse stochastic programs. In a nutshell,
we make the following contributions:
\begin{enumerate}
\item Following the Mean-Risk Model paradigm \citep{ShapiroLectures_2ND},
we introduce a new class of \textit{convex} risk measures, called
\textit{mean-semideviations}. These \textit{strictly generalize} the
well known mean-upper-semideviation risk measure, and are constructed
by replacing the positive part \textit{weighting} function of the
latter by another nonlinear map, termed here as a \textit{risk regularizer},
obeying certain properties. Mean-semideviations share the same core
analytical structure with the mean-upper-semideviation risk measure;
however, they are much more versatile in applications. We study mean-semideviations
in terms of their basic properties, and we present a fundamental \textit{constructive}
characterization result, demonstrating their generality. Specifically,
we show that the class of all mean-semideviation risk measures is
\textit{almost} \textit{in one-to-one correspondence} with the class
of cumulative distribution functions (cdfs) of all integrable random
variables. This result provides an \textit{analytical device} for
constructing mean-semideviations with desirable characteristics, starting
from any cdf of the aforementioned type. The flexibility and effectiveness
of mean-semideviations are explicitly demonstrated on a classical,
chance-constrained newsvendor model, as well.
\item We introduce the $\textit{MESSAGE}^{p}$ \textit{(MEan-Semideviation
Stochastic compositionAl subGradient dEscent of order $p$) algorithm},
an efficient, \textit{data-driven} Stochastic Subgradient Descent
(SSD) -type procedure for iteratively solving \textit{convex} mean-semideviation
risk-averse problems \textit{to optimality}. The $\textit{MESSAGE}^{p}$
algorithm constitutes a \textit{parallel} variation of general purpose\textit{
T-level Stochastic Compositional Gradient Descent} \textit{($T$-SCGD)}
algorithm, recently developed in \citep{Wang2018}, under a generic
theoretical framework. Although risk-averse optimization is listed
in \citep{Wang2018} as a potential application of stochastic compositional
optimization for the mere case of mean-upper-semideviations, this
work is the first to propose a general algorithm, applicable to any
mean-semideviation model of choice.
\item We analyze the asymptotic behavior of the $\textit{MESSAGE}^{p}$
algorithm under a new, flexible and \textit{structure-exploiting}
set of problem assumptions, which reveal a well-defined trade-off
between the expansiveness of the random cost and the smoothness of
the mean-semideviation risk measure under consideration. In particular,
under our proposed structural framework:
\begin{itemize}
\item Under appropriate stepsize rules, we establish pathwise convergence
of the $\textit{MESSAGE}^{p}$ algorithm in a strong technical sense,
confirming its asymptotic consistency. 
\item Assuming a \textit{strongly convex} cost function, the convergence
rate of the $\textit{MESSAGE}^{p}$ algorithm is studied in detail.
More specifically, we show that, for fixed semideviation order $p>1$
and for $\epsilon\in\left[0,1\right)$, the $\textit{MESSAGE}^{p}$
algorithm achieves a squared-${\cal L}_{2}$ solution suboptimality
rate of the order of ${\cal O}(n^{-\left(1-\epsilon\right)/2})$ iterations,
where, for $\epsilon>0$, pathwise convergence is \textit{simultaneously}
guaranteed. Thus, this new result establishes a rate of order arbitrarily
close to ${\cal O}(n^{-1/2})$, also ensuring strongly stable pathwise
operation of the $\textit{MESSAGE}^{p}$ algorithm. In the simpler
case where the semideviation order is chosen as $p\equiv1$, the rate
order of the proposed algorithm improves to ${\cal O}(n^{-2/3})$,
which is sufficient for pathwise convergence as well, and matches
previous results in the related literature \citep{Wang2017}. 
\item For the general case of a \textit{convex cost}, we show that, for
any $\epsilon\in\left[0,1\right)$, the $\textit{MESSAGE}^{p}$ algorithm
\textit{with iterate smoothing} achieves an ${\cal L}_{1}$ objective
suboptimality rate of the order of ${\cal O}(n^{-\left(1-\epsilon\right)/\left(4\mathds{1}_{\left\{ p>1\right\} }+4\right)})$.
As in the strongly convex case, for $\epsilon>0$, pathwise convergence
is also \textit{simultaneously} guaranteed. For $\epsilon\equiv0$,
this result provides maximal rates of ${\cal O}(n^{-1/4})$, if $p\equiv1$,
and ${\cal O}(n^{-1/8})$, if $p>1$, matching the state of the art,
as well.
\end{itemize}
\item We discuss the superiority of the proposed framework for convergence,
as compared to that employed earlier in \citep{Wang2018}, within
the risk-averse context under consideration. By performing careful
analysis and by constructing non-trivial counterexamples, we explicitly
demonstrate that the class of mean-semideviation problems supported
herein is \textit{strictly larger} than the respective class of problems
supported in \citep{Wang2018}. As a result, this paper establishes
the applicability of compositional stochastic optimization for a significantly
and strictly wider spectrum of convex mean-semideviation risk-averse
problems, as compared to the state of the art. This fact justifies
the purpose of our work from this perspective, as well.
\end{enumerate}
Our contributions, briefly outlined above, are now discussed in greater
detail. We also briefly explain how our work relates to and is placed
within the existing literature.

\subsection{Mean-Semideviation Risk Measures}

\textit{Mean-semideviation risk measures}, as proposed and developed
in this work, constitute a new class of risk measures where, given
a random cost, the corresponding dispersion measure (the term penalizing
the ``mean'' part of a mean-risk functional) is defined as the ${\cal L}_{p}$-norm
of a nonlinear, one-dimensional map of the \textit{centered cost},
or, in other words, \textit{its central deviation}. This map is called
a \textit{risk regularizer}, and possesses certain analytical properties:
convexity, nonnegativity, monotonicity and nonexpansiveness. Dispersion
measures with this structure are suggestively called \textit{generalized
semideviations}. 

This terminology originates from the presence of the positive part
function $\left(\cdot\right)_{+}\triangleq\max\left\{ \cdot,0\right\} $,
which is the simplest, prototypical example of a risk regularizer,
in the corresponding dispersion measure of the well known mean-upper-semideviation
risk measure \citep{ShapiroLectures_2ND}, i.e., the upper-(central)-semideviation.
Mean-semideviations are much more versatile, however, since different
choices for the involved risk regularizer correspond to different
rules for ranking the relative effect of both riskier (higher than
the mean) and less risky (lower than the mean) events, corresponding
to specific regions in the range of the (centered) cost. As a result,
the choice of the risk regularizer affects the general quality and
the roughness/stability of an optimal random cost, in a decision making
setting. Consequently, owing to their versatility, mean-semideviations
are practically appealing as well, because they are parametrizable
and they may incorporate domain specific knowledge more easily than
the rigid mean-upper-semideviation.

In this work, after we formulate simple conditions for the existence
of mean-semideviation risk measures, we study their basic geometric
properties, such as convexity and monotonicity. Contrary to the mean-upper-semideviation
alone, mean-semideviations are \textit{not} coherent risk measures,
in general (as a class), because they do not satisfy positive homogeneity
\citep{ShapiroLectures_2ND}. This is due to the potential nonhomogeneity
of the risk regularizer involved. They do satisfy convexity, monotonicity
and translation equivariance, though and, therefore, they belong to
the class of \textit{convex risk measures}, \citep{Follmer2002,ShapiroLectures_2ND},
and that of \textit{convex-monotone risk measures}, as well. 

Further, we present a fundamental \textit{constructive} characterization
result, demonstrating the generality of mean-semideviations. Specifically,
on the one hand, this result shows that the class of all mean-semideviation
risk measures is \textit{almost} \textit{in one-to-one correspondence}
with the class of cdfs of all integrable random variables (on the
line). On the other, it provides an \textit{analytical device} for
constructing such risk measures from any cdf of the aforementioned
type. Although not studied in this paper, this correspondence between
mean-semideviations and cdfs might be of interest in other areas related
to stochastically robust optimization such as stochastic dominance;
see, for instance, the seminal articles \citep{Ogryczak1999,Ogryczak2002}
for some interesting connections.

Our discussion on mean-semideviation risk measures is concluded by
a demonstration of their practical usefulness and flexibility on a
classical, chance-constrained newsvendor model. After we briefly analyze
the structure of the problem under consideration, we put risk regularizers
-each inducing a mean-semideviation risk measure- \textit{in context},
and we explicitly discuss their construction, so that the resulting
mean-semideviation risk measure best reflects problem characteristics,
and the objectives of the decision maker. Additionally, we present
numerical simulations, experimentally confirming the effectiveness
of the proposed risk-averse approach. Our simulations also reveal
some interesting features of the resulting risk-averse solutions,
which we further discuss.

\textbf{\textit{Relation to the Literature:}} We are not the first
to propose convex risk measures featuring nonlinear weighting functions;
see, for instance, \citep{Kijima1993,Chen2011,Fu2017}. In particular,
the recent article \citep{Fu2017} considers risk measures defined
as a nonlinearly weighted, order-$1$ (lower) semideviation \textit{from
a fixed target} (see, for instance, Example 6.25 in \citep{ShapiroLectures_2ND}),
focusing mainly on their applications on a portfolio selection model.
In \citep{Fu2017}, the corresponding weighting function shares the
same properties as a risk regularizer (see above), except for nonexpansiveness.
However, our proposed mean-semideviation risk measures are substantially
different and structurally more complex compared to the risk measures
proposed in \citep{Fu2017}. The main reason is the presence of the
\textit{expected cost}, rather than a fixed target, in the definition
of mean-semideviations; for more details, compare (\citep{Fu2017},
Definition 1) with Section \ref{sec:Mean-Semideviation-Models} herein.

\subsection{Recursive Optimization of Mean-Semideviations}

The main contribution of this work concerns efficient optimization
of mean-semideviations, measuring convexly parameterized random cost
functions, over a closed and convex set. We introduce and rigorously
analyze the $\textit{MESSAGE}^{p}$ \textit{(MEan-Semideviation Stochastic
compositionAl subGradient dEscent of order $p$) algorithm} (Algorithm
\ref{alg:SCGD-3} in Section \ref{subsec:MESSAGE}), which constitutes
an efficient Stochastic Subgradient Descent (SSD) -type procedure
for iteratively solving our base problem \textit{to optimality}. The
$\textit{MESSAGE}^{p}$ algorithm may be seen as a parameterized (relative
to the choice of the risk regularizer), \textit{parallel} variation\textit{
}of the general purpose \textit{T-Level Stochastic Compositional Gradient
Descent} \textit{($T$-SCGD)} \textit{algorithm}, presented and analyzed
very recently in \citep{Wang2018} under generic assumptions. In turn,
the \textit{$T$-SCGD} algorithm is a natural generalization of the
\textit{Basic 2-Level SCGD }algorithm, presented and analyzed earlier
in \citep{Wang2017}. A key feature of the aforementioned \textit{compositional}
stochastic subgradient schemes is the existence of more than one ($T$,
in general), \textit{pairwise coupled} stochastic approximation updates,
or \textit{levels}, \textit{each with a dedicated stepsize}, which
are executed \textit{concurrently }through the operation of the algorithm.
In the case of the $\textit{MESSAGE}^{p}$ algorithm, there exist
three such levels (that is, $T\equiv3$), and this results naturally,
due our specific problem structure. However, contrary to the \textit{$T$-SCGD}
algorithm, all three stochastic approximation levels of the $\textit{MESSAGE}^{p}$
algorithm \textit{are executed completely in parallel within every
iteration}, presenting additional operational efficiency, potentially
important in various applications.

Pathwise convergence and convergence rate analyses of the \textit{$T$-SCGD
}algorithm are presented in \citep{Wang2018}, and \citep{Wang2017}
(where, in the latter, $T\equiv2$). However, the respective structural
framework considered in both \citep{Wang2018} and \citep{Wang2017},
when applied to the problem class considered in this work, imposes
\textit{significant restrictions} in regard to the possible choice
of the risk regularizer, partially related to the expansiveness and
smoothness (or roughness) of the involved random cost function. This
fact significantly limits the type of problems the \textit{$T$-SCGD
}algorithm is provably applicable to, \textit{at least} within the
class of risk-averse problems introduced and studied herein. For example,
when $p\equiv1$, arguably the most popular regularizer $\left(\cdot\right)_{+}$,
leading to the mean-upper-semideviation risk measure, is \textit{not}
supported within the framework of \citep{Wang2017,Wang2018}. This
is because nonsmooth risk regularizers \textit{exhibiting corner points},
such as $\left(\cdot\right)_{+}$, apparently have \textit{discontinuous
subderivatives}, whereas the respective assumptions made in \citep{Wang2017,Wang2018}
essentially require the respective risk regularizer to be \textit{not
only everywhere differentiable}, but to have\textit{ Lipschitz derivatives},
as well. This shortcoming of the theoretical framework of \citep{Wang2017,Wang2018}
naturally carries over to higher values of the semideviation order,
$p$. Naturally, the theoretical narrowness of \citep{Wang2017,Wang2018}
motivates closer study of any compositional subgradient algorithm
whatsoever, one that would exploit the special characteristics of
a mean-semideviation risk measure. The ultimate goal is the development
of a sufficiently general theoretical framework, which will justify
the compositional optimization approach for the whole class of mean-semideviation
risk measures, under as weak structural assumptions as possible.

Following this direction, and focusing on optimizing mean-semideviation
models, we present a new and flexible set of problem assumptions,
\textit{substantially weaker than those employed in} \citep{Wang2017,Wang2018},
under which we analyze the asymptotic behavior of the $\textit{MESSAGE}^{p}$
algorithm, proposed in our work. Our framework carefully exploits
the structure of mean-semideviations, and presents a probably fundamental,
though practically useful, trade-off between the expansiveness of
the random cost function and the smoothness of the chosen risk regularizer,
in a very well-defined sense. As previously outlined, our results
are restated, as follows.

\textit{First}, under appropriate stepsize rules, we establish pathwise
convergence of the $\textit{MESSAGE}^{p}$ algorithm in the same strong
sense as in \citep{Wang2017,Wang2018}, thus confirming its asymptotic
consistency. 

\textit{Second}, assuming a \textit{strongly convex} cost function,
we study the convergence rate of the $\textit{MESSAGE}^{p}$ algorithm,
in detail. More specifically, we show that, for fixed semideviation
order $p>1$ and for any choice of $\epsilon\in\left[0,1\right)$,
the $\textit{MESSAGE}^{p}$ algorithm achieves a squared-${\cal L}_{2}$
solution suboptimality rate of the order of ${\cal O}(n^{-\left(1-\epsilon\right)/2})$
iterations. Here, $\epsilon$ is a user-specified parameter, which
directly affects stepsize selection. If, additionally, $\epsilon$
is chosen to be strictly positive, that is, for $\epsilon>0$, pathwise
convergence is \textit{simultaneously} guaranteed. This completely
novel result establishes a convergence rate of order arbitrarily close
to ${\cal O}(n^{-1/2})$ as $\epsilon\rightarrow0$, while ensuring
strongly stable pathwise operation of the algorithm. In the structurally
simpler case where $p\equiv1$, the rate order improves to ${\cal O}(n^{-2/3})$,
which is sufficient for pathwise convergence as well, and matches
existing results in compositional stochastic optimization, developed
earlier along the lines of \citep{Wang2017}.

\textit{Third}, for the general case of a convex cost function, we
show that, for any $\epsilon\in\left[0,1\right)$, the $\textit{MESSAGE}^{p}$
algorithm \textit{with iterate smoothing} achieves an ${\cal L}_{1}$
objective suboptimality rate of the order of ${\cal O}(n^{-\left(1-\epsilon\right)/\left(4\mathds{1}_{\left\{ p>1\right\} }+4\right)})$.
As in the strongly convex case, for $\epsilon>0$, pathwise convergence
is also \textit{simultaneously} guaranteed. For $\epsilon\equiv0$,
this result provides maximal rates of ${\cal O}(n^{-1/4})$, if $p\equiv1$,
and ${\cal O}(n^{-1/8})$, if $p>1$, matching the state of the art,
as well \citep{Wang2017,Wang2018}. Although those rates may not be
particularly satisfying, they quantitatively demonstrate the remarkable
speedup achieved by assuming and leveraging strong convexity for the
analysis and operation of the $\textit{MESSAGE}^{p}$ algorithm. 

The proposed structural framework adequately mitigates the aforementioned
technical issues of that considered in \citep{Wang2017,Wang2018}.
For example, we show that, when the random cost function has \textit{bounded}
(random) subgradients and its distribution is generally well-behaved,
the choice of the risk regularizer can be\textit{ completely unconstrained},
\textit{regardless of the value of} $p\in\left[1,\infty\right)$.
As a result, under the new framework, the most popular candidate $\left(\cdot\right)_{+}$,
but also every risk regularizer exhibiting corner points, are now
valid choices (under appropriate conditions) for any $p$, contrary
to \citep{Wang2017,Wang2018}.

Finally, in order to show the superiority of our proposed framework
compared to that of \citep{Wang2017,Wang2018}, we present a detailed
analytical comparison, which rigorously demonstrates that the class
of mean-semideviation programs supported within this work \textit{contains}
the respective class of problems supported within \citep{Wang2017,Wang2018};
further,\textit{ the inclusion is strict}. Such comparison is made
possible by performing careful analysis and by constructing non-trivial,
\textit{non-cornercase} counterexamples. As a result, the applicability
of compositional stochastic optimization is established herein for
a significantly and strictly wider spectrum of convex mean-semideviation
risk-averse problems, as compared to the state of the art. This fact
justifies the purpose of our work from this perspective, in addition
to our algorithmic contribution, as well.

\textbf{\textit{Relation to the Literature: }}Apparently, the results
presented in this work are related to those developed in \citep{Wang2017,Wang2018},
for a generic problem setting. Indeed, as already stated, optimization
of mean-upper-semideviation risk measures has been briefly identified
in \citep{Wang2017,Wang2018} as a potential application of the compositional
algorithms proposed therein. However, as mentioned above, the assumptions
on problem structure employed in \citep{Wang2017,Wang2018} are too
restrictive to adequately study the class of mean-semideviation risk
measures introduced herein, which include the mean-upper-semideviation
as a single member of this class. Except for the aforementioned works,
and as also discussed above, there is a significant line of research
considering the SAA approach to risk-averse stochastic optimization,
both from a fundamental, theoretical perspective \citep{Shapiro2013,Guigues2016,Dentcheva2017a}
and from the computational one \citep{Dentcheva2017a,Tamar2017}.
As noted in \citep{Wang2017,Wang2018}, the compositional, SSD-type
optimization algorithms analyzed in this paper present some major
natural advantages over the SAA approach. First, the $\textit{MESSAGE}^{p}$
algorithm solves the \textit{original} risk-averse stochastic program
\textit{asymptotically to optimality}, whereas, in the SAA approach,
the corresponding SAA surrogate to the original program is solved,
producing only an approximate solution; as the number of the sample
increases the solution to the SAA surrogate approaches that of the
original stochastic program, in some well defined sense \citep{Shapiro2013,Dentcheva2017a}.
Second, because of its nature, the SAAs cannot exploit new information
available to the decision maker, so that they can improve their decisions,
based on those made so far; in fact, the SAA surrogate needs to be
redefined using new available information, and then solved afresh.
Of course, the $\textit{MESSAGE}^{p}$ algorithm efficiently exploits
new information, due to its \textit{recursive}, sequential nature.
Third, as a result of the above, SAAs are not suitable for settings
where \textit{information is available sequentially}, and decisions
have to be made adaptively over time. Fourth, SAAs might often require
a very large number of samples for producing accurate approximations
to the optimal decisions corresponding to the original problem, and
this might result in optimization problems whose objective is computationally
difficult to evaluate. For more details on this, see \citep{Wang2017}.
On the contrary, the $\textit{MESSAGE}^{p}$ algorithm is iterative
in nature, and presents \textit{minimal} and \textit{fixed} time and
space complexity per iteration.

\subsection*{\textit{Organization of the Paper}}

The rest of the paper is organized as follows. Section \ref{sec:Problem-Setting}
establishes the stochastic risk-averse convex programming setting
under study, and provides some elementary, albeit necessary preliminaries
on the theory of risk measures. In Section \ref{sec:Mean-Semideviation-Models},
we constructively introduce the class of mean-semideviation risk measures,
we study their existence and their structural properties, we discuss
specific examples, and we develop our above mentioned fundamental
characterization result. Section \ref{sec:Message} is devoted to
the development and analysis of the $\textit{MESSAGE}^{p}$ algorithm,
under our proposed theoretical framework for convergence, and includes
the rigorous comparison of our results with those presented in \citep{Wang2018}.
Finally, Section \ref{sec:Conclusion} concludes the paper.

\textbf{\textit{Note:}} Some longer proofs of the theoretical results
presented in the paper in the form of Theorems, Lemmata and Propositions
are excluded from the main body of the paper for clarity in the exposition,
and are presented in Section \ref{sec:Appendix:-Proofs} (Appendix).

\subsection*{\textit{Notation \& Definitions}}

Matrices and vectors will be denoted by boldface uppercase and boldface
lowercase letters, respectively. Calligraphic letters and formal script
letters will generally denote sets and $\sigma$-algebras, respectively,
except for clearly specified exceptions. The operator $\left(\cdot\right)^{\boldsymbol{T}}$
will denote vector transposition. The $\ell_{p}$-norm of $\boldsymbol{x}\in\mathbb{R}^{n}$
is $\left\Vert \boldsymbol{x}\right\Vert _{p}\triangleq\left(\sum_{i=1}^{n}\left|x\left(i\right)\right|^{p}\right)^{1/p}$,
for all $\mathbb{N}\ni p\ge1$. Similarly, the ${\cal L}_{p}$ norm
of an appropriately measurable function $f\left(\cdot\right)$ will
be $\left\Vert f\right\Vert _{{\cal L}_{p}}\triangleq\left(\int\left|f\left(x\right)\right|^{p}\mathrm{d}\mu\left(x\right)\right)^{1/p}$
for $p\in\left[1,\infty\right)$, and $\left\Vert f\right\Vert _{{\cal L}_{\infty}}\triangleq\mathrm{ess\hspace{1bp}sup}_{x}\left|f\left(x\right)\right|$,
where the reference measure $\mu$ will be clearly specified by the
context. The finite $N$-dimensional identity operator will be denoted
as ${\bf I}_{N}$. Additionally, we define $\mathbb{N}^{+}\triangleq\left\{ 1,2,\ldots\right\} $,
$\mathbb{N}_{n}^{+}\triangleq\left\{ 1,2,\ldots,n\right\} $ and $\mathbb{N}_{n}\triangleq\left\{ 0\right\} \cup\mathbb{N}_{n}^{+}$,
for $n\in\mathbb{N}^{+}$.

If $\Omega$ denotes a \textit{base} sample space and $F:\mathbb{R}^{N}\times\Omega\rightarrow\mathbb{R}$
(referring directly to $\Omega$), then, for the sake of clarity,
we sometimes drop dependence on $\omega\in\Omega$, and write simply
$F\left(\boldsymbol{x},\omega\right)\equiv F\left(\boldsymbol{x}\right)$
(clear by the context).

For every set ${\cal X}\subseteq\mathbb{R}^{N}$, which is nonempty,
closed and convex, the \textit{Euclidean projection onto} ${\cal X}$,
$\Pi_{{\cal X}}:\mathbb{R}^{N}\rightarrow{\cal X}$ is defined, as
usual, as $\Pi_{{\cal X}}\left(\boldsymbol{x}\right)\triangleq\mathrm{arg\hspace{1bp}min}_{\widetilde{\boldsymbol{x}}\in{\cal X}}\left\Vert \widetilde{\boldsymbol{x}}-\boldsymbol{x}\right\Vert _{2}$,
for all $\boldsymbol{x}\in\mathbb{R}^{N}$. Euclidean projections,
as defined above, always exist and are nonexpansive operators.

For every real-valued function $f:\mathbb{R}^{N}\rightarrow\mathbb{R}$,
which is differentiable at a point $\boldsymbol{x}\in\mathbb{R}^{N}$,
the vector $\nabla f\left(\boldsymbol{x}\right)\in\mathbb{R}^{N}$
denotes its \textit{gradient} \textit{at} $\boldsymbol{x}$. If, additionally,
$f$ is differentiable on ${\cal X}\subseteq\mathbb{R}^{N}$, the
function $\nabla f:{\cal X}\rightarrow\mathbb{R}^{N}$ denotes its
\textit{gradient function}, mapping each $\boldsymbol{x}\in{\cal X}$
to $\nabla f\left(\boldsymbol{x}\right)$.

If $f$ is nonsmooth and convex, its \textit{subdifferential} is the
closed-valued multifunction $\partial f:\mathbb{R}^{N}\rightrightarrows\mathbb{R}^{N}$,
defined, for every $\boldsymbol{x}\in\mathbb{R}^{N}$, as the set
of all gradients each corresponding to a linear underestimator of
$f$, or, in other words,
\begin{equation}
\partial f\left(\boldsymbol{x}\right)\triangleq\left\{ \left.\boldsymbol{y}_{\boldsymbol{x}}\in\mathbb{R}^{N}\right|f\left(\boldsymbol{z}\right)\ge f\left(\boldsymbol{x}\right)+\boldsymbol{y}_{\boldsymbol{x}}^{\boldsymbol{T}}\left(\boldsymbol{z}-\boldsymbol{x}\right),\quad\forall\boldsymbol{z}\in\mathbb{R}^{N}\right\} ,\quad\forall\boldsymbol{x}\in\mathbb{R}^{N}.
\end{equation}
\textit{A} \textit{subgradient (function) of $f$}, suggestively denoted
as $\underline{\nabla}f:\mathbb{R}^{N}\rightarrow\mathbb{R}^{N}$,
is defined as \textit{any} \textit{selection} of the subdifferential
multifunction $\partial f$, that is, for every $\boldsymbol{x}\in\mathbb{R}^{N}$,
it is true that $\underline{\nabla}f\left(\boldsymbol{x}\right)\in\partial f\left(\boldsymbol{x}\right)$;
for brevity, we write $\underline{\nabla}f\in\partial f$. For fixed
$\boldsymbol{x}\in\mathbb{R}^{N}$, $\underline{\nabla}f\left(\boldsymbol{x}\right)$
will be called a \textit{subgradient of $f$ at} $\boldsymbol{x}$.

\section{\label{sec:Problem-Setting}Problem Setting \& Preliminaries}

We now formally introduce the problem of interest in this work. Henceforth,
all subsequent probabilistic statements will presume the existence
of a \textit{common} probability space $\left(\Omega,\mathscr{F},{\cal P}\right)$.
We refer to $\left(\Omega,\mathscr{F},{\cal P}\right)$ as the \textit{base
space}. We place no topological restrictions on the sample space $\Omega$.
However, in order for some mild technicalities to be easily resolved,
we conveniently assume that $\left(\Omega,\mathscr{F},{\cal P}\right)$
constitutes a complete measure space.

Let $F:\mathbb{R}^{N}\times\mathbb{R}^{M}\rightarrow\mathbb{R}$ be
a bivariate real-valued mapping, such that, for every $\boldsymbol{x}\in\mathbb{R}^{N}$,
the function $F\left(\boldsymbol{x},\cdot\right)$ is $\mathscr{\mathscr{B}}\left(\mathbb{R}^{M}\right)$-measurable
and, for every $\boldsymbol{w}\in\mathbb{R}^{M}$, the path $F\left(\cdot,\boldsymbol{w}\right)$
is \textit{(real-valued)} \textit{convex (and subdifferentiable)}.
Also, for a given $\mathscr{F}$-measurable (in general) random element
$\boldsymbol{W}:\Omega\rightarrow\mathbb{R}^{M}$, consider the composite
function $\widetilde{F}:\mathbb{R}^{N}\times\Omega\rightarrow\mathbb{R}$,
defined as
\begin{equation}
\widetilde{F}\left(\cdot,\omega\right)\triangleq F\left(\cdot,\boldsymbol{W}\left(\omega\right)\right),\quad\forall\omega\in\Omega.
\end{equation}
It easily follows that, for every $\boldsymbol{x}\in\mathbb{R}^{N}$,
the function $\widetilde{F}\left(\boldsymbol{x},\cdot\right)\equiv F\left(\boldsymbol{x},\boldsymbol{W}\left(\cdot\right)\right)$
is an $\mathscr{F}$-measurable (in general), real-valued random variable.
We additionally assume that, for every $\boldsymbol{x}\in\mathbb{R}^{N}$,
$\widetilde{F}\left(\boldsymbol{x},\cdot\right)$ belongs to the Lebesgue
space ${\cal L}_{q}$ for some fixed choice of $q\in\left[1,\infty\right]$,
relative to the base measure ${\cal P}$, that is, $\widetilde{F}\left(\boldsymbol{x},\cdot\right)\in{\cal L}_{q}\left(\Omega,\mathscr{F},{\cal P};\mathbb{R}\right)\triangleq{\cal Z}_{q}$.
Of course, if, for every $\boldsymbol{x}\in\mathbb{R}^{N}$, $F\left(\boldsymbol{x},\cdot\right)\in{\cal L}_{q}\left(\mathbb{R}^{M},\mathscr{\mathscr{B}}\left(\mathbb{R}^{M}\right),{\cal P}_{\boldsymbol{W}};\mathbb{R}\right)$,
where ${\cal P}_{\boldsymbol{W}}$ is the Borel pushforward of $\boldsymbol{W}$,
then $\widetilde{F}\left(\boldsymbol{x},\cdot\right)\in{\cal Z}_{q}$,
as well. Hereafter, $F\left(\cdot,\boldsymbol{W}\right)$ will be
referred to as a \textit{random cost function}. 

With the term \textit{risk measure}, we refer to some fixed and known
\textit{real-valued} \textit{functional} on the Banach space ${\cal Z}_{q}$
\citep{ShapiroLectures_2ND}. Among all risk measures on ${\cal Z}_{q}$,
we pay special attention to those exhibiting the following basic structural
characteristics.
\begin{defn}
\textbf{(Convex-Monotone Risk Measures)}\label{def:1} A real valued
functional on ${\cal Z}_{q}$, $\rho:{\cal Z}_{q}\rightarrow\mathbb{R}$,
is called a \textit{convex-monotone risk measure}, if and only if
it satisfies the following conditions:
\begin{description}
\item [{$\mathbf{R1}$}] $\,\,\:$\textit{(Convexity)}: For every $Z_{1}\in{\cal Z}_{q}$
and $Z_{2}\in{\cal Z}_{q}$, it is true that
\begin{equation}
\rho\left(\alpha Z_{1}+\left(1-\alpha\right)Z_{2}\right)\le\alpha\rho\left(Z_{1}\right)+\left(1-\alpha\right)\rho\left(Z_{2}\right),
\end{equation}
for all $\alpha\in\left[0,1\right]$.
\item [{$\mathbf{R2}$}] $\,\,\:$\textit{(Monotonicity)}: For every $Z_{1}\in{\cal Z}_{q}$
and $Z_{2}\in{\cal Z}_{q}$, such that $Z_{1}\left(\omega\right)\ge Z_{2}\left(\omega\right)$,
for ${\cal P}$-almost all $\omega\in\Omega$, it is true that $\rho\left(Z_{1}\right)\ge\rho\left(Z_{2}\right)$.
\end{description}
\end{defn}
For a possibly convex-monotone risk measure $\rho:{\cal Z}_{q}\rightarrow\mathbb{R}$
(following Assumption \ref{def:1}), we will be interested in the
``static'' stochastic program
\begin{equation}
\boxed{\begin{array}{rl}
\underset{\boldsymbol{x}}{\mathrm{minimize}} & \rho\left(\widetilde{F}\left(\boldsymbol{x},\cdot\right)\right)\equiv\rho\left(F\left(\boldsymbol{x},\boldsymbol{W}\right)\right)\triangleq\phi^{\widetilde{F}}\left(\boldsymbol{x}\right)\\
\mathrm{subject\,to} & \boldsymbol{x}\in{\cal X}
\end{array},}\label{eq:Prog_1}
\end{equation}
where the set of feasible decisions ${\cal X}\subseteq\mathbb{R}^{N}$
is assumed to be closed and convex.

Under the standard problem setting outlined above, it is straightforward
to formulate the following elementary result, provided here without
proof, and for completeness.
\begin{prop}
\textbf{\textup{(Convexity of Risk-Function Compositions \citep{ShapiroLectures_2ND})\label{prop:Convexity-of-Compositions}}}
Consider a real-valued random function $f:\mathbb{R}^{N}\times\Omega\rightarrow\mathbb{R}$,
as well as a real-valued risk measure $\rho:{\cal Z}_{q}\rightarrow\mathbb{R}$.
Suppose that, for every $\omega\in\Omega$, $f\left(\cdot,\omega\right)$
is convex and that $\rho$ is convex-monotone. Then, the real-valued
composite function $\phi^{f}\left(\cdot\right)\equiv\text{\ensuremath{\rho}}\left(f\left(\cdot,\bullet\right)\right):\mathbb{R}^{N}\rightarrow\mathbb{R}$
is convex.
\end{prop}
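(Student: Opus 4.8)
The plan is to exploit the two defining properties of a convex-monotone risk measure in sequence, chaining the $\omega$-wise convexity of the random cost with the monotonicity and convexity of $\rho$. Fix $\boldsymbol{x}_{1},\boldsymbol{x}_{2}\in\mathbb{R}^{N}$ and $\alpha\in\left[0,1\right]$, and write $\boldsymbol{x}_{\alpha}\triangleq\alpha\boldsymbol{x}_{1}+\left(1-\alpha\right)\boldsymbol{x}_{2}$. Since $\rho$ maps into $\mathbb{R}$, each of $f\left(\boldsymbol{x}_{1},\bullet\right)$, $f\left(\boldsymbol{x}_{2},\bullet\right)$ and $f\left(\boldsymbol{x}_{\alpha},\bullet\right)$ is tacitly a member of ${\cal Z}_{q}$; because ${\cal Z}_{q}$ is a vector space, the convex combination $\alpha f\left(\boldsymbol{x}_{1},\bullet\right)+\left(1-\alpha\right)f\left(\boldsymbol{x}_{2},\bullet\right)$ also lies in ${\cal Z}_{q}$, so every functional evaluation below is well-defined.

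First I would invoke the pointwise convexity of $f\left(\cdot,\omega\right)$: for each, hence ${\cal P}$-almost every, $\omega\in\Omega$,
\begin{equation}
f\left(\boldsymbol{x}_{\alpha},\omega\right)\le\alpha f\left(\boldsymbol{x}_{1},\omega\right)+\left(1-\alpha\right)f\left(\boldsymbol{x}_{2},\omega\right).
\end{equation}
This is precisely an almost-sure inequality between the two members $Z_{1}\triangleq f\left(\boldsymbol{x}_{\alpha},\bullet\right)$ and $Z_{2}\triangleq\alpha f\left(\boldsymbol{x}_{1},\bullet\right)+\left(1-\alpha\right)f\left(\boldsymbol{x}_{2},\bullet\right)$ of ${\cal Z}_{q}$, with $Z_{2}\ge Z_{1}$ holding ${\cal P}$-almost surely. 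Applying monotonicity $\mathbf{R2}$ of $\rho$ then yields $\rho\left(Z_{1}\right)\le\rho\left(Z_{2}\right)$, that is, $\phi^{f}\left(\boldsymbol{x}_{\alpha}\right)\le\rho\left(\alpha f\left(\boldsymbol{x}_{1},\bullet\right)+\left(1-\alpha\right)f\left(\boldsymbol{x}_{2},\bullet\right)\right)$.

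Next I would apply convexity $\mathbf{R1}$ of $\rho$ to the right-hand side, obtaining
\begin{equation}
\rho\left(\alpha f\left(\boldsymbol{x}_{1},\bullet\right)+\left(1-\alpha\right)f\left(\boldsymbol{x}_{2},\bullet\right)\right)\le\alpha\rho\left(f\left(\boldsymbol{x}_{1},\bullet\right)\right)+\left(1-\alpha\right)\rho\left(f\left(\boldsymbol{x}_{2},\bullet\right)\right)=\alpha\phi^{f}\left(\boldsymbol{x}_{1}\right)+\left(1-\alpha\right)\phi^{f}\left(\boldsymbol{x}_{2}\right).
\end{equation}
Chaining the last two displays gives $\phi^{f}\left(\boldsymbol{x}_{\alpha}\right)\le\alpha\phi^{f}\left(\boldsymbol{x}_{1}\right)+\left(1-\alpha\right)\phi^{f}\left(\boldsymbol{x}_{2}\right)$, which is exactly the asserted convexity of $\phi^{f}$.

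There is no genuine obstacle here; the argument is a textbook two-stage composition. The only point demanding a moment's care is the \emph{order} in which the two properties are used: monotonicity must be applied before convexity, since it is what permits the passage from the nonlinear quantity $f\left(\boldsymbol{x}_{\alpha},\bullet\right)$ to the linear combination $\alpha f\left(\boldsymbol{x}_{1},\bullet\right)+\left(1-\alpha\right)f\left(\boldsymbol{x}_{2},\bullet\right)$ sitting inside the argument of $\rho$; reversing the order would not produce the desired bound. A secondary, purely technical point is the implicit check that all intermediate random variables remain in ${\cal Z}_{q}$, which is immediate from its vector-space structure.
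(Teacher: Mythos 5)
Your proof is correct. The paper states this proposition without proof (it is cited from the risk-measure literature as an elementary, standard result), and your argument is exactly the canonical one that citation refers to: pointwise convexity of $f\left(\cdot,\omega\right)$ gives an almost-sure ordering of the two random variables, monotonicity $\mathbf{R2}$ transfers that ordering through $\rho$, and convexity $\mathbf{R1}$ then splits the linear combination — including your correct remark that monotonicity must precede convexity and that ${\cal Z}_{q}$ being a vector space keeps all intermediate quantities in the domain of $\rho$.
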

Proposition \ref{prop:Convexity-of-Compositions} shows that, under
the respective assumptions, (\ref{eq:Prog_1}) constitutes a convex
mathematical program in standard form. Thus, application of a subgradient
method would require that some selection of the subdifferential multifunction
$\partial\phi^{\widetilde{F}}$ can be evaluated at will, at any $\boldsymbol{x}\in{\cal X}$.
However, for most choices of the random cost function $F\left(\cdot,\boldsymbol{W}\right)$
and of the risk measure $\rho$, even the composition $\phi^{\widetilde{F}}\left(\cdot\right)\equiv\rho\left(F\left(\cdot,\boldsymbol{W}\right)\right)$
is impossible to be evaluated exactly, let alone (a selection of)
$\partial\phi^{\widetilde{F}}$. Instead, we may be given either realizations
of the random exogenous information $\boldsymbol{W}$, or direct evaluations
of $F\left(\cdot,\boldsymbol{W}\right)$ and a subgradient, $\underline{\nabla}F\left(\cdot,\boldsymbol{W}\right)$,
at some \textit{test decision candidate} $\boldsymbol{x}$. It might
also be desirable that decision making is performed \textit{sequentially
over time}, where decisions are updated adaptively as new information
arrives. Such settings motivate the consideration of SSD-type algorithms
for solving (\ref{eq:Prog_1}), which are of main interest in this
paper.

Some basic assumptions follow, fairly standard in the literature of
stochastic approximation \citep{ShapiroLectures_2ND,Wang2017,Wang2018,Kushner2003}.
To this end, let us formally introduce the elementary concept of a
\textit{IID process}. Then, our assumptions follow.
\begin{defn}
\textbf{(IID Process)}\label{def:2} A stochastic sequence $\left\{ \boldsymbol{W}^{n}\right\} _{n\in\mathbb{N}^{+}}$
is called \textit{IID} if and only if it consists of statistically
independent, $\mathbb{R}^{M}$-valued random elements, identically
distributed according to a fixed Borel measure ${\cal P}_{\boldsymbol{W}}$.
\end{defn}
\begin{assumption}
\textbf{\textup{(Availability of Information)}}\label{assu:1} \textbf{Either
one, or more, }mutually independent, IID sequences are available \textbf{sequentially},
all distributed according to ${\cal P}_{\boldsymbol{W}}$.
\end{assumption}
\begin{rem}
Note that in Assumption \ref{assu:1} we\textit{ do not }require that
the process $\boldsymbol{W}^{n}$ is actually \textit{observable}
to the user, but only \textit{available}, either in the form of a
data stream, or by simulation.\hfill{}\ensuremath{\blacksquare}
\end{rem}
\begin{assumption}
\textbf{\textup{(Existence of an ${\cal SO}$)}}\label{assu:2} There
exists a mechanism, called a \textbf{Sampling Oracle} (${\cal SO}$),
which, given $\boldsymbol{x}\in{\cal X}$ and $\boldsymbol{w}\in\mathbb{R}^{M}$,
returns either $F\left(\boldsymbol{x},\boldsymbol{w}\right)$, or
$\underline{\nabla}F\left(\boldsymbol{x},\boldsymbol{w}\right)$,
a subgradient of $F$ relative to $\boldsymbol{x}$, or both. It is
further assumed that the ${\cal SO}$ has \textbf{direct access} to
all available information streams, according to Assumption \ref{assu:1}.
\end{assumption}
In this work, we propose and analyze efficient algorithms for solving
(\ref{eq:Prog_1}) under Assumptions \ref{assu:1} and \ref{assu:2},
and \textit{explicitly assuming no prior knowledge} of either the
random cost function $F\left(\cdot,\boldsymbol{W}\right)$, or its
respective subgradients. We will be restricting our attention to a
new class of convex-monotone risk measures with, however, wide applicability,
and whose general structure follows the so-called \textit{Mean-Risk
Model} (\citep{ShapiroLectures_2ND}, Section 6.2). This special class
of risk measures is introduced and analyzed, in detail, in Section
\ref{sec:Mean-Semideviation-Models}.

\section{\label{sec:Mean-Semideviation-Models}Mean-Semideviation Models}

Under the Mean-Risk Model paradigm \citep{ShapiroLectures_2ND}, a
risk measure $\rho:{\cal Z}_{q}\rightarrow\mathbb{R}$ is defined,
for each random cost $Z\in{\cal Z}_{q}$, as
\begin{equation}
\rho\left(Z\right)\triangleq\mathbb{E}\left\{ Z\right\} +c\mathbb{D}\left\{ Z\right\} ,\label{eq:Mean_Risk}
\end{equation}
where the functional $\mathbb{D}:{\cal Z}_{q}\rightarrow\mathbb{R}$
constitutes a \textit{dispersion measure}, and provided that the respective
quantities are well defined, for the particular choice of $q\in\left[1,\infty\right]$.
The dispersion measure $\mathbb{D}$ may be conveniently thought as
a \textit{penalty}, weighted by the penalty multiplier $c\ge0$, \textit{effectively
quantifying the uncertainty} of the particular cost $Z$. 

In this section, we introduce a special class of dispersion measures,
which constitute natural generalizations of the well-known upper semideviation
of order $p$ \citep{ShapiroLectures_2ND}. This new class of dispersion
measures is termed here as \textit{generalized semideviations}. Reasonably
enough, risk measures of the form of (\ref{eq:Mean_Risk}), where
the respective dispersion measure constitutes a generalized semideviation
will be called either \textit{mean-semideviation risk measures}, or,
interchangeably, \textit{mean-semideviation models}, or, simply, \textit{mean-semideviations.} 

This section is structured as follows. First, the simple notion of
a \textit{risk regularizer} is introduced; risk regularizers constitute
the basic building block of generalized semideviations. The basic
properties of risk regularizers are concisely presented, and a formal
definition of generalized semideviations is also formulated, along
with a brief discussion related to their practical relevance. Mean-semideviation
risk measures are then formally introduced, along with their basic
properties, and specific examples are discussed, highlighting their
versatility. Next, we develop a constructive characterization result,
essentially showing that the class of all mean-semideviation risk
measures is \textit{almost} \textit{in one-to-one correspondence}
with the class of cumulative distribution functions of all integrable
random variables (on the line). This result readily demonstrates an
apparent generality of mean-semideviations, as well. Lastly, the usefulness,
flexibility and effectiveness of mean-semideviation risk measures
are demonstrated on a classical, chance-constrained newsvendor model.
In particular, risk regularizers (each inducing a mean-semideviation
risk measure) are put \textit{in context}, and their construction
is explicitly discussed, reflecting the special characteristics of
the specific newsvendor problem under consideration, and the objectives
of the decision maker.

\subsection{Basic Concepts}

We start by introducing the concept of a \textit{risk regularizer}.
Risk regularizers are simple, real-valued functions of one variable,
which are reasonably structured, so that they, on the one hand, can
be used to quantify risk (see below) and, on the other, can result
in problems which can be solved efficiently and exactly via convex
stochastic optimization.
\begin{defn}
\textbf{(Risk Regularizers)}\label{def:3} A real-valued function
${\cal R}:\mathbb{R}\rightarrow\mathbb{R}$ is called a \textit{risk
regularizer}, if it satisfies the following conditions: 
\begin{description}
\item [{$\mathbf{S1}$}] $\,\,\:$${\cal R}$ is \textit{convex}.
\item [{$\mathbf{S2}$}] $\,\,\:$${\cal R}$ is \textit{nonnegative}.
\item [{$\mathbf{S3}$}] $\,\,\:$${\cal R}$ is \textit{nondecreasing.}
\item [{$\mathbf{S4}$}] $\,\,\:$For every $\alpha\ge0$, it is true that
${\cal R}\left(x+\alpha\right)\le{\cal R}\left(x\right)+\alpha$,
for all $x\in\mathbb{R}$.
\end{description}
\end{defn}
Fig. \ref{fig:Examples-of-RRs} illustrates the shapes of various
risk regularizers, other than the arguably most obvious example of
the positive part function $\left(\cdot\right)_{+}$. Note that a
risk regularizer need not be smooth (a trivial example is $\left(\cdot\right)_{+}$);
several of the examples of Fig. \ref{fig:Examples-of-RRs} are indeed
nonsmooth, with the respective corner points highlighted by black
dots.

Risk regularizers of Definition \ref{def:3} may be further structurally
characterized via the following simple result.
\begin{prop}
\textbf{\textup{(Characterization of ${\cal R}$)\label{prop:CharacterizationR}}}
Consider a real-valued function ${\cal R}:\mathbb{R}\rightarrow\mathbb{R}$,
satisfying condition $\mathbf{S3}$ of Definition \ref{def:4}. Then,
condition $\mathbf{S4}$ holds if and only if ${\cal R}$ is nonexpansive.
\end{prop}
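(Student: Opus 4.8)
The plan is to prove the equivalence by treating the two implications separately, noting that only one of them genuinely requires the monotonicity hypothesis $\mathbf{S3}$. Recall that nonexpansiveness of ${\cal R}$ means $\left|{\cal R}\left(x\right)-{\cal R}\left(y\right)\right|\le\left|x-y\right|$ for all $x,y\in\mathbb{R}$, i.e. that ${\cal R}$ is $1$-Lipschitz with respect to the absolute value.

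For the implication (nonexpansive $\Rightarrow\mathbf{S4}$), I would fix $x\in\mathbb{R}$ and $\alpha\ge0$ and apply the nonexpansive bound to the pair $\left(x+\alpha,x\right)$, giving ${\cal R}\left(x+\alpha\right)-{\cal R}\left(x\right)\le\left|{\cal R}\left(x+\alpha\right)-{\cal R}\left(x\right)\right|\le\left|\left(x+\alpha\right)-x\right|=\alpha$, which is exactly condition $\mathbf{S4}$. This direction uses nothing about monotonicity and holds for any real-valued ${\cal R}$.

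For the converse ($\mathbf{S3}$ and $\mathbf{S4}$ $\Rightarrow$ nonexpansive), I would take arbitrary $x,y\in\mathbb{R}$ and, without loss of generality by the symmetry of both $\left|{\cal R}\left(x\right)-{\cal R}\left(y\right)\right|$ and $\left|x-y\right|$, assume $x\le y$, writing $y=x+\alpha$ with $\alpha=y-x\ge0$. Here is where $\mathbf{S3}$ enters: since ${\cal R}$ is nondecreasing and $y\ge x$, we have ${\cal R}\left(y\right)\ge{\cal R}\left(x\right)$, so the increment ${\cal R}\left(y\right)-{\cal R}\left(x\right)$ is nonnegative and therefore equals $\left|{\cal R}\left(y\right)-{\cal R}\left(x\right)\right|$. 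Applying $\mathbf{S4}$ to this same $x$ and $\alpha$ gives ${\cal R}\left(y\right)={\cal R}\left(x+\alpha\right)\le{\cal R}\left(x\right)+\alpha$, that is, ${\cal R}\left(y\right)-{\cal R}\left(x\right)\le\alpha=\left|y-x\right|$. Combining the two observations yields $\left|{\cal R}\left(y\right)-{\cal R}\left(x\right)\right|\le\left|y-x\right|$, which is the claimed nonexpansiveness.

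There is no substantive obstacle; the single point worth care is the precise role of $\mathbf{S3}$ in the converse. Condition $\mathbf{S4}$ controls the one-sided increment ${\cal R}\left(x+\alpha\right)-{\cal R}\left(x\right)$ only from above, so without monotonicity one could not exclude a large negative drop that would violate the Lipschitz bound. Monotonicity fixes the sign of the increment inside the absolute value, making the single-sided $\mathbf{S4}$ estimate sufficient. I would close by remarking that this asymmetry explains why $\mathbf{S3}$ is stated as a standing hypothesis of the proposition, whereas the forward implication requires no monotonicity at all.
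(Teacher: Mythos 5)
Your proof is correct and follows essentially the same route as the paper's: in both arguments, monotonicity ($\mathbf{S3}$) is used to identify $\left|{\cal R}\left(y\right)-{\cal R}\left(x\right)\right|$ with the signed increment so that the one-sided bound $\mathbf{S4}$ yields the Lipschitz estimate, and the reverse direction bounds ${\cal R}\left(x+\alpha\right)-{\cal R}\left(x\right)$ by $\left|{\cal R}\left(x+\alpha\right)-{\cal R}\left(x\right)\right|\le\alpha$. Your closing remark that the nonexpansive $\Rightarrow\mathbf{S4}$ direction needs no monotonicity is a valid (slight) sharpening, since the paper's version of that step invokes $\mathbf{S3}$ to write the increment as an absolute value when the inequality ${\cal R}\left(x+\alpha\right)-{\cal R}\left(x\right)\le\left|{\cal R}\left(x+\alpha\right)-{\cal R}\left(x\right)\right|$ already suffices.
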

\begin{proof}[Proof of Proposition \ref{prop:CharacterizationR}]
First, assume that condition $\mathbf{S4}$ holds. Then, by the fact
that ${\cal R}$ is nondecreasing ($\mathbf{S3}$), it is true that
\begin{flalign}
\left|{\cal R}\left(x\right)-{\cal R}\left(y\right)\right| & \equiv\left({\cal R}\left(x\right)-{\cal R}\left(y\right)\right)\mathds{1}_{\left\{ x\ge y\right\} }+\left({\cal R}\left(y\right)-{\cal R}\left(x\right)\right)\mathds{1}_{\left\{ x<y\right\} }\nonumber \\
 & \equiv\left({\cal R}\left(y+\left(x-y\right)\right)-{\cal R}\left(y\right)\right)\mathds{1}_{\left\{ x\ge y\right\} }+\left({\cal R}\left(x+\left(y-x\right)\right)-{\cal R}\left(x\right)\right)\mathds{1}_{\left\{ x<y\right\} }\nonumber \\
 & \le\left(x-y\right)\mathds{1}_{\left\{ x\ge y\right\} }+\left(y-x\right)\mathds{1}_{\left\{ x<y\right\} }\equiv\left|x-y\right|,
\end{flalign}
for all $\left(x,y\right)\in\mathbb{R}^{2}$, showing that ${\cal R}$
is a nonexpansive map. \textit{Conversely}, assume that ${\cal R}$
is nonexpansive. Then, for any $\alpha\ge0$, it is true that
\begin{flalign}
0\le{\cal R}\left(x+\alpha\right)-{\cal R}\left(x\right) & \equiv\left|{\cal R}\left(x+\alpha\right)-{\cal R}\left(x\right)\right|\nonumber \\
 & \le\left|x+\alpha-x\right|\equiv\alpha,
\end{flalign}
for all $x\in\mathbb{R}$, verifying condition $\mathbf{S4}$.
\end{proof}
At this point, let us emphasize the elementary fact that, because
of convexity, every (real-valued) risk-regularizer must also be \textit{differentiable
almost everywhere}, relative to the Lebesgue measure on the Borel
space $\left(\mathbb{R},\mathscr{B}\left(\mathbb{R}\right)\right)$.
This also follows either by monotonicity, or due to the fact that
a risk regularizer is nonexpansive and, therefore, Lipschitz continuous
on $\mathbb{R}$. Further, because of convexity, the set of Lebesgue
measure zero of points in $\mathbb{R}$, where a risk regularizer
is \textit{nondifferentiable}, is \textit{at most countable}.

The class all possible risk regularizers\textit{ induces} that of
\textit{generalized semideviations}, which constitute the class of
dispersion measures considered in this paper. The definition of a
generalized semideviation is presented below.
\begin{defn}
\textbf{(Generalized Semideviations)}\label{def:4} Fix $p\in\left[1,\infty\right)$
and choose a risk regularizer ${\cal R}:\mathbb{R}\rightarrow\mathbb{R}$.
A dispersion measure $\mathbb{D}_{p}^{{\cal R}}:{\cal Z}_{q}\rightarrow\mathbb{R}$
is called a \textit{generalized semideviation of order $p$,} if and
only if, for $Z\in{\cal Z}_{q}$,
\begin{equation}
\mathbb{D}_{p}^{{\cal R}}\left\{ Z\right\} \triangleq\left(\mathbb{E}\left\{ \left({\cal R}\left(Z-\mathbb{E}\left\{ Z\right\} \right)\right)^{p}\right\} \right)^{1/p}\equiv\left\Vert {\cal R}\left(Z-\mathbb{E}\left\{ Z\right\} \right)\right\Vert _{{\cal L}_{p}},
\end{equation}
where it is assumed that all involved quantities are well defined
and finite.
\end{defn}
\begin{figure}
\centering\includegraphics[bb=78bp 14bp 812bp 400bp,clip,scale=0.625]{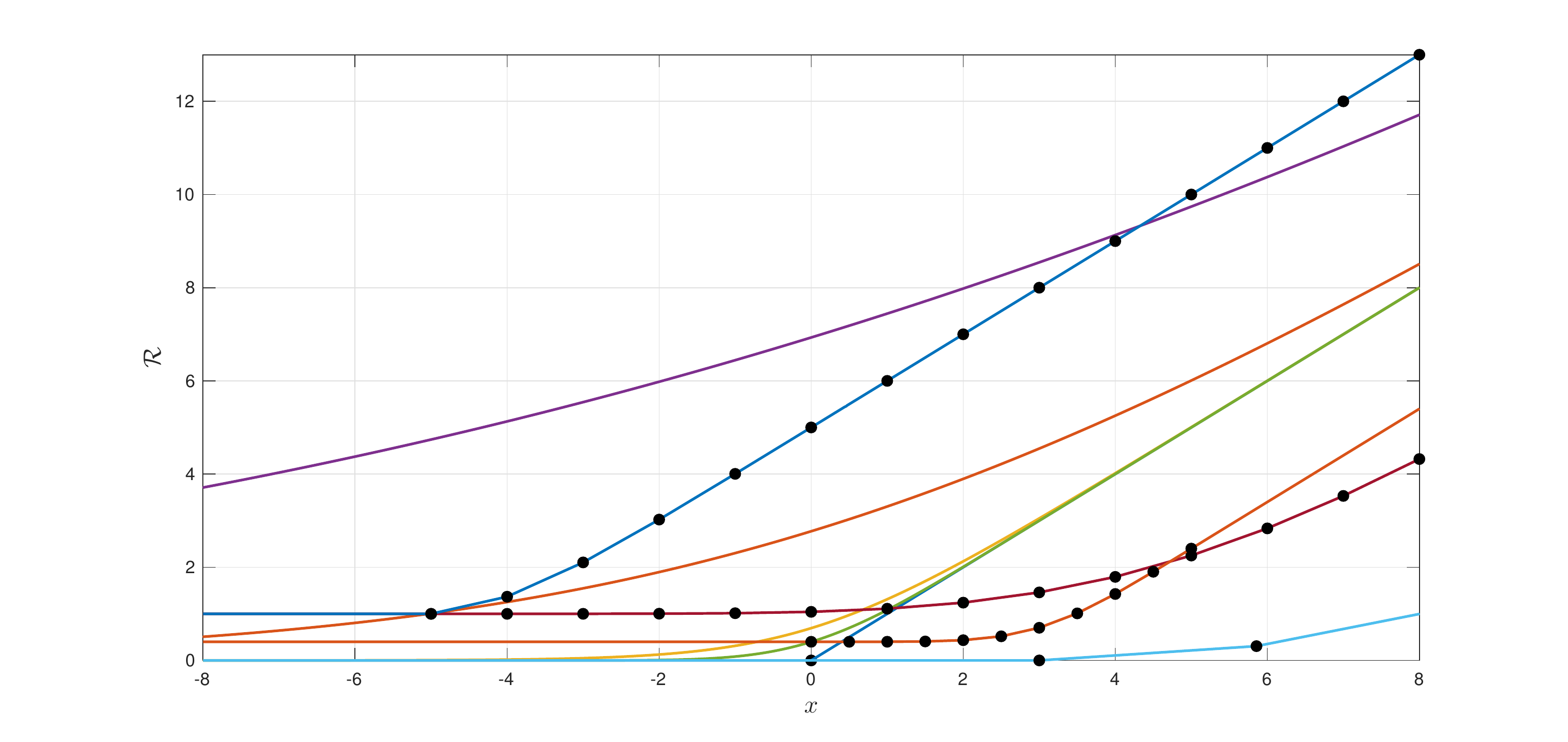}\caption{\label{fig:Examples-of-RRs}Some examples of both smooth and nonsmooth
risk regularizers. Black dots highlight the respective corner points
of nondifferentiability (some imperceptible).}

\vspace{-5bp}
\end{figure}
The power of generalized semideviations is in the fact that they form
a parametric family relative to the choice of the risk regularizer
${\cal R}$; different risk regularizers correspond to different rules
for ranking the relative effect of both riskier (higher than the mean)
and less risky (lower than the mean) events, corresponding to specific
regions in the range of the cost. For more details, see Section \ref{sec:Mean-Semideviation-Models},
where we illustrate the versatility of generalized semideviations
via additional examples, considering various specific choices for
${\cal R}$, with the well known upper-semideviation dispersion measure
\citep{ShapiroLectures_2ND} being the prototypical representative
of this class.

\subsection{Mean-Semideviations: Definition, Existence \& Structure}

Utilizing the concept of generalized semideviations, we may now introduce
the class of risk measures of central interest in this work, as follows.
\begin{defn}
\textbf{(Mean-Semideviation Risk Measures)}\label{def:5} Fix $p\in\left[1,\infty\right)$
and choose a risk regularizer ${\cal R}:\mathbb{R}\rightarrow\mathbb{R}$.
The \textit{mean-semideviation of order $p$, induced by ${\cal R}$,
}or\textit{ $MS_{p}^{{\cal R}}$}, for short, is the real-valued risk
measure defined, for $Z\in{\cal Z}_{q}$, as\footnote{A mean-semideviation risk measure will be denoted either as $\rho_{p}^{{\cal R}}\left(Z;c\right)$,
which is proper, or $\rho\left(Z\right)$, which is simpler, as long
as the choices of $p,{\cal R}$ and $c$ are clearly specified. }
\begin{flalign}
\rho\left(Z\right)\equiv\rho_{p}^{{\cal R}}\left(Z;c\right) & \triangleq\mathbb{E}\left\{ Z\right\} +c\mathbb{D}_{p}^{{\cal R}}\left\{ Z\right\} ,
\end{flalign}
where $c\ge0$ constitutes a fixed penalty multiplier, and provided
that all involved quantities are well defined and finite.
\end{defn}
Next, we state and prove a small number of relatively simple results,
related to the existence of mean-semideviation risk measures, introduced
in Definition \ref{def:5}, as well as their functional structure.
First, as it might be expected, we show that mean-semideviation risk
measures of order $p$ may be naturally associated with costs which
are also in ${\cal L}_{p}$ (i.e., choosing $p\equiv q$). Recall
that, throughout the paper, $p$ is reserved for specifying the order
of the mean-semideviation risk measure under consideration, whereas
$q$ is related to the integrability of the respective cost.
\begin{prop}
\textbf{\textup{(Compatibility of $p$'s and $q$'s)\label{prop:P=000026Q}}}
Fix $p\in\left[1,\infty\right)$, $c\ge0$, and choose any risk regularizer
${\cal R}:\mathbb{R}\rightarrow\mathbb{R}$. Then, as long as $q\ge p$,
the $MS_{p}^{{\cal R}}$ risk measure $\rho_{p}^{{\cal R}}\left(\cdot;c\right)$
is well-defined and finite, for every $Z\in{\cal Z}_{q}$. 
\end{prop}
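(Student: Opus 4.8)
The plan is to check that the two summands of $\rho_{p}^{{\cal R}}\left(Z;c\right)=\mathbb{E}\left\{ Z\right\} +c\,\mathbb{D}_{p}^{{\cal R}}\left\{ Z\right\}$ are each well-defined and finite, leveraging that the base space is a probability space and that ${\cal R}$ is nonexpansive. The first ingredient I would record is the standard nesting of Lebesgue spaces: since ${\cal P}$ has unit total mass and $q\ge p\ge1$, Jensen's (equivalently H\"older's) inequality yields the inclusions ${\cal Z}_{q}\subseteq{\cal Z}_{p}\subseteq{\cal Z}_{1}$, with monotone norms. In particular, any $Z\in{\cal Z}_{q}$ lies in ${\cal Z}_{1}$, so $\mathbb{E}\left\{ Z\right\} $ is a finite real number; this disposes of the mean term and also shows that the centered cost $Z-\mathbb{E}\left\{ Z\right\} $ is a bona fide element of ${\cal Z}_{p}$, constants belonging to every ${\cal Z}_{r}$ on a probability space.

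For the dispersion term $\mathbb{D}_{p}^{{\cal R}}\left\{ Z\right\} =\left\Vert {\cal R}\left(Z-\mathbb{E}\left\{ Z\right\} \right)\right\Vert _{{\cal L}_{p}}$ I would argue in two stages. Well-definedness: because ${\cal R}$ satisfies $\mathbf{S3}$ and $\mathbf{S4}$, Proposition \ref{prop:CharacterizationR} gives that ${\cal R}$ is nonexpansive, hence continuous, so ${\cal R}\left(Z-\mathbb{E}\left\{ Z\right\} \right)$ is measurable and nonnegative (by $\mathbf{S2}$), whence the defining integral is unambiguous in $\left[0,\infty\right]$. Finiteness: nonexpansiveness furnishes the pointwise bound
\[
\left|{\cal R}\left(Z-\mathbb{E}\left\{ Z\right\} \right)\right|\le\left|{\cal R}\left(0\right)\right|+\left|Z-\mathbb{E}\left\{ Z\right\} \right|,\quad{\cal P}\text{-a.e.},
\]
since $\left|{\cal R}\left(t\right)-{\cal R}\left(0\right)\right|\le\left|t\right|$ for all $t\in\mathbb{R}$. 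Taking ${\cal L}_{p}$ norms and applying Minkowski's inequality then gives $\mathbb{D}_{p}^{{\cal R}}\left\{ Z\right\} \le\left|{\cal R}\left(0\right)\right|+\left\Vert Z-\mathbb{E}\left\{ Z\right\} \right\Vert _{{\cal L}_{p}}<\infty$, finite because $Z-\mathbb{E}\left\{ Z\right\} \in{\cal Z}_{p}$ and ${\cal R}\left(0\right)$ is a finite constant. Combining the two parts yields $\left|\rho_{p}^{{\cal R}}\left(Z;c\right)\right|<\infty$.

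I do not anticipate a genuine obstacle: the statement is essentially a bookkeeping exercise in Lebesgue-space inclusions. The only points demanding a little care are, first, not to conflate the roles of $p$ (the semideviation order, controlling the integrability needed for the dispersion) and $q$ (the integrability of the cost), and second, to recognize that it is precisely the nonexpansiveness of ${\cal R}$ --- rather than any differentiability or smoothness --- that controls the nonlinear composition inside the ${\cal L}_{p}$ norm. The displayed pointwise bound is the crux of the argument, and it rests solely on $\mathbf{S3}$--$\mathbf{S4}$ via Proposition \ref{prop:CharacterizationR}.
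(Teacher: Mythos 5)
Your proposal is correct and follows essentially the same route as the paper's proof: reduce to the case $Z\in{\cal Z}_{p}$ via the nesting ${\cal Z}_{q}\subseteq{\cal Z}_{p}\subseteq{\cal Z}_{1}$, observe that $\mathbb{E}\left\{ Z\right\} $ is finite so the centered cost lies in ${\cal Z}_{p}$, and control the dispersion term through the pointwise bound $0\le{\cal R}\left(X\right)\le{\cal R}\left(0\right)+\left|X\right|$, which the paper derives directly from $\mathbf{S3}$--$\mathbf{S4}$ and you derive equivalently from nonexpansiveness via Proposition \ref{prop:CharacterizationR}. Your use of Minkowski to split the resulting norm is an immaterial variation of the paper's appeal to monotonicity of the ${\cal L}_{p}$-norm.
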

\begin{proof}[Proof of Proposition \ref{prop:P=000026Q}]
Since $q\ge p\ge1$, it is trivial that $Z\in{\cal Z}_{1}$, simply
due to the inclusion ${\cal Z}_{1}\supset{\cal Z}_{2}\supset\ldots$,
for any choice of $q$. Thus, the expectation of every $Z\in{\cal Z}_{q}$
exists and is finite, and what remains is to prove the result for
the dispersion measure $\mathbb{D}_{p}^{{\cal R}}$.

For simplicity, let $q\equiv p$. Using the fact that $\mathbb{E}\left\{ Z\right\} $
is finite, it is true that, for every $Z\in{\cal Z}_{p}$, the shifted
cost $Z-\mathbb{E}\left\{ Z\right\} $ is in ${\cal Z}_{p}$. It thus
suffices to show that, for every $Z-\mathbb{E}\left\{ Z\right\} \triangleq X\in{\cal Z}_{p}$,
${\cal R}\left(X\right)$ is in ${\cal Z}_{p}$, as well. Because
the risk regularizer ${\cal R}$ is nonnegative (condition $\mathbf{S2}$),
the integral $\mathbb{E}\left\{ \left({\cal R}\left(X\right)\right)^{p}\right\} $
exists. Also, due to condition $\mathbf{S4}$ of Definition \ref{def:3},
it follows that, for every $x\ge0$, ${\cal R}\left(x\right)\le{\cal R}\left(0\right)+x,$
and since ${\cal R}$ is nondecreasing ($\mathbf{S3}$), it is true
that ${\cal R}\left(x\right)\le{\cal R}\left(0\right)+\left|x\right|$,
for all $x\in\mathbb{R}$. Setting $x\equiv X$, this yields
\begin{equation}
0\le{\cal R}\left(X\right)\le{\cal R}\left(0\right)+\left|X\right|,
\end{equation}
and since $X\in{\cal Z}_{p}$, ${\cal R}\left(0\right)+\left|X\right|\in{\cal Z}_{p}$,
as well. Consequently, it is true that
\begin{equation}
\left(\mathbb{E}\left\{ \left({\cal R}\left(X\right)\right)^{p}\right\} \right)^{1/p}\le\left(\mathbb{E}\left\{ \left({\cal R}\left(0\right)+\left|X\right|\right)^{p}\right\} \right)^{1/p}<+\infty,
\end{equation}
showing that $\mathbb{D}_{p}^{{\cal R}}$ and, therefore, $\rho_{p}^{{\cal R}}\left(\cdot;c\right)$,
are both well defined and finite, for every $Z\in{\cal Z}_{p}$.

Now, due to the inclusion ${\cal Z}_{1}\supset{\cal Z}_{2}\supset\ldots$,
we know that, if $Z\in{\cal Z}_{q}$, for some $q\ge p$, then $Z\in{\cal Z}_{p}$,
as well. Enough said.
\end{proof}
\textit{Hereafter}, for the sake of generality, \textit{we will implicitly
assume that $p$ and $q$ are compatible}, so that existence and finiteness
of the resulting risk measures considered is ensured. Of course, in
actual applications, Proposition \ref{prop:P=000026Q} may be directly
invoked on a case-by-case basis, in order to select the order of the
particular dispersion measure of choice, depending on the nature of
the random cost, or a family of those, under study.

After characterizing existence and finiteness of mean-semideviation
risk measures, as introduced in Definition \ref{def:5}, we focus
on their structural properties, from a functional point of view. As
the following result suggests, mean-semideviation risk measures are
indeed convex-monotone under a standardized assumption on the penalty
multiplier $c$. 
\begin{thm}
\textbf{\textup{(When are Mean-Semideviations Convex-Monotone?)\label{thm:ConvexMONO}}}
Fix $p\in\left[1,\infty\right)$ and choose any risk regularizer ${\cal R}:\mathbb{R}\rightarrow\mathbb{R}$.
Then, as long as $c\in\left[0,1\right]$, the $MS_{p}^{{\cal R}}$
risk measure $\rho_{p}^{{\cal R}}\left(\cdot;c\right)$ is convex-monotone;
that is, it satisfies both conditions $\mathbf{R1}$ and $\mathbf{R2}$. 
\end{thm}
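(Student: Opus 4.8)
The plan is to prove the two defining conditions separately, establishing $\mathbf{R1}$ (convexity) for every $c\ge 0$ and then $\mathbf{R2}$ (monotonicity), where the restriction $c\in\left[0,1\right]$ will enter in an essential way. Throughout, finiteness of all quantities is guaranteed by Proposition \ref{prop:P=000026Q} under $p$--$q$ compatibility.

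For $\mathbf{R1}$, I would first observe that $Z\mapsto\mathbb{E}\left\{ Z\right\} $ is affine, so it suffices to show that the generalized semideviation $\mathbb{D}_{p}^{{\cal R}}$ is convex on ${\cal Z}_{q}$. Writing $X_{Z}\triangleq Z-\mathbb{E}\left\{ Z\right\} $, the centering map $Z\mapsto X_{Z}$ is linear, so for $Z=\alpha Z_{1}+\left(1-\alpha\right)Z_{2}$ one has $X_{Z}=\alpha X_{Z_{1}}+\left(1-\alpha\right)X_{Z_{2}}$ pointwise. Applying convexity of ${\cal R}$ ($\mathbf{S1}$) pointwise in $\omega$, then using nonnegativity ($\mathbf{S2}$) together with the monotonicity of the ${\cal L}_{p}$-norm on nonnegative functions, and finally the triangle inequality and positive homogeneity of the norm, yields $\mathbb{D}_{p}^{{\cal R}}\left\{ Z\right\} \le\alpha\mathbb{D}_{p}^{{\cal R}}\left\{ Z_{1}\right\} +\left(1-\alpha\right)\mathbb{D}_{p}^{{\cal R}}\left\{ Z_{2}\right\} $. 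Adding back the affine expectation term gives $\mathbf{R1}$ for \emph{any} $c\ge0$, so the bound $c\le1$ plays no role here.

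For $\mathbf{R2}$, fix $Z_{1},Z_{2}\in{\cal Z}_{q}$ with $Z_{1}\ge Z_{2}$ ${\cal P}$-almost surely, set $\Delta\triangleq Z_{1}-Z_{2}\ge0$ and $m\triangleq\mathbb{E}\left\{ \Delta\right\} \ge0$, and note that the centered costs satisfy $X_{Z_{2}}=X_{Z_{1}}+\left(m-\Delta\right)$ pointwise. The heart of the argument --- and the step I expect to be the main obstacle --- is a sharp pointwise bound on ${\cal R}\left(X_{Z_{2}}\right)$. The naive route via the reverse triangle inequality and nonexpansiveness (Proposition \ref{prop:CharacterizationR}) produces $\left\Vert \Delta-m\right\Vert _{{\cal L}_{p}}$, which is not controlled by $m$ and is therefore too lossy. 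Instead I would split on the sign of $m-\Delta$: where $m-\Delta\ge0$, condition $\mathbf{S4}$ gives ${\cal R}\left(X_{Z_{2}}\right)\le{\cal R}\left(X_{Z_{1}}\right)+\left(m-\Delta\right)$; where $m-\Delta<0$, monotonicity $\mathbf{S3}$ gives ${\cal R}\left(X_{Z_{2}}\right)\le{\cal R}\left(X_{Z_{1}}\right)$. Both cases combine into the single pointwise inequality ${\cal R}\left(X_{Z_{2}}\right)\le{\cal R}\left(X_{Z_{1}}\right)+\left(m-\Delta\right)_{+}$.

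With this bound in hand the rest is routine. Both sides are nonnegative, so monotonicity of the ${\cal L}_{p}$-norm and the triangle inequality give $\mathbb{D}_{p}^{{\cal R}}\left\{ Z_{2}\right\} \le\mathbb{D}_{p}^{{\cal R}}\left\{ Z_{1}\right\} +\left\Vert \left(m-\Delta\right)_{+}\right\Vert _{{\cal L}_{p}}$. Since $\Delta\ge0$ and $m\ge0$ force $\left(m-\Delta\right)_{+}\le m$ pointwise, and $\left\Vert 1\right\Vert _{{\cal L}_{p}}=1$ on the probability space, I obtain $\left\Vert \left(m-\Delta\right)_{+}\right\Vert _{{\cal L}_{p}}\le m$, hence $\mathbb{D}_{p}^{{\cal R}}\left\{ Z_{2}\right\} -\mathbb{D}_{p}^{{\cal R}}\left\{ Z_{1}\right\} \le\mathbb{E}\left\{ \Delta\right\} $. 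Finally, invoking $c\in\left[0,1\right]$,
\[
\rho_{p}^{{\cal R}}\left(Z_{1};c\right)-\rho_{p}^{{\cal R}}\left(Z_{2};c\right)=\mathbb{E}\left\{ \Delta\right\} +c\left(\mathbb{D}_{p}^{{\cal R}}\left\{ Z_{1}\right\} -\mathbb{D}_{p}^{{\cal R}}\left\{ Z_{2}\right\} \right)\ge\left(1-c\right)\mathbb{E}\left\{ \Delta\right\} \ge0,
\]
which is exactly $\mathbf{R2}$ and completes the proof. The conceptual point worth emphasizing is that $c\le1$ is precisely the threshold at which the increase of the mean always dominates any decrease of the dispersion, so that the risk measure remains monotone.
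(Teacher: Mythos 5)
Your proof is correct and follows essentially the same route as the paper: convexity is established identically (linearity of the centering map, pointwise convexity and nonnegativity of ${\cal R}$, monotonicity of the ${\cal L}_{p}$-norm on nonnegative functions, then Minkowski), and monotonicity rests on the same combination of $\mathbf{S3}$ and $\mathbf{S4}$ to show that the dispersion term can increase by at most $\mathbb{E}\{Z_{1}\}-\mathbb{E}\{Z_{2}\}$, after which $c\le1$ closes the argument exactly as in the paper. The only cosmetic difference is in the packaging of the pointwise bound: the paper applies $\mathbf{S3}$ and $\mathbf{S4}$ sequentially to obtain ${\cal R}(Z_{2}-\mathbb{E}\{Z_{2}\})\le{\cal R}(Z_{1}-\mathbb{E}\{Z_{1}\})+\mathbb{E}\{Z_{1}\}-\mathbb{E}\{Z_{2}\}$ directly, whereas you perform a pointwise case split yielding the intermediate bound with $(m-\Delta)_{+}$, which you then immediately relax to the same quantity $m$.
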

\begin{proof}[Proof of Theorem \ref{thm:ConvexMONO}]
Let us start with verifying convexity ($\mathbf{R1}$). Since the
expectation term of $\rho_{p}^{{\cal R}}\left(\cdot;c\right)$ is
a linear functional on ${\cal Z}_{q}$, it will suffice to show that
the generalized semideviation term $\mathbb{D}_{p}^{{\cal R}}$ is
convex. Indeed, for every $Z_{1}\in{\cal Z}_{q}$, $Z_{2}\in{\cal Z}_{q}$
and every $\alpha\in\left[0,1\right]$, we may write
\begin{flalign}
\mathbb{D}_{p}^{{\cal R}}\left\{ \alpha Z_{1}+\left(1-\alpha\right)Z_{2}\right\}  & \equiv\left\Vert {\cal R}\left(\alpha Z_{1}+\left(1-\alpha\right)Z_{2}-\mathbb{E}\left\{ \alpha Z_{1}+\left(1-\alpha\right)Z_{2}\right\} \right)\right\Vert _{{\cal L}_{p}}\nonumber \\
 & \equiv\left\Vert {\cal R}\left(\alpha\left(Z_{1}-\mathbb{E}\left\{ Z_{1}\right\} \right)+\left(1-\alpha\right)\left(Z_{2}-\mathbb{E}\left\{ Z_{2}\right\} \right)\right)\right\Vert _{{\cal L}_{p}}\nonumber \\
 & \le\left\Vert \alpha{\cal R}\left(\left(Z_{1}-\mathbb{E}\left\{ Z_{1}\right\} \right)\right)+\left(1-\alpha\right){\cal R}\left(Z_{2}-\mathbb{E}\left\{ Z_{2}\right\} \right)\right\Vert _{{\cal L}_{p}}\nonumber \\
 & \le\alpha\left\Vert {\cal R}\left(\left(Z_{1}-\mathbb{E}\left\{ Z_{1}\right\} \right)\right)\right\Vert _{{\cal L}_{p}}+\left(1-\alpha\right)\left\Vert {\cal R}\left(Z_{2}-\mathbb{E}\left\{ Z_{2}\right\} \right)\right\Vert _{{\cal L}_{p}}\nonumber \\
 & \equiv\alpha\mathbb{D}_{p}^{{\cal R}}\left\{ Z_{1}\right\} +\left(1-\alpha\right)\mathbb{D}_{p}^{{\cal R}}\left\{ Z_{2}\right\} ,
\end{flalign}
where the first inequality is true due to conditions $\mathbf{S1}$
(convexity) and $\mathbf{S2}$ (nonnegativity), and the second is
due to the triangle (Minkowski) inequality. Thus, $\mathbb{D}_{p}^{{\cal R}}$
is a convex functional, which means that $\rho_{p}^{{\cal R}}\left(\cdot;c\right)$
is also convex on ${\cal Z}_{q}$. Note that the value of $c\ge0$
is not crucial in order to show convexity of $\rho_{p}^{{\cal R}}\left(\cdot;c\right)$.

Let us now study monotonicity ($\mathbf{R2}$) of the risk measure
$\rho_{p}^{{\cal R}}\left(\cdot;c\right)$. For every $Z_{1}\in{\cal Z}_{q}$
and $Z_{2}\in{\cal Z}_{q}$, such that $Z_{1}\left(\omega\right)\ge Z_{2}\left(\omega\right)$,
for ${\cal P}$-almost all $\omega\in\Omega$, we have
\begin{flalign}
\rho_{p}^{{\cal R}}\left(Z_{2};c\right) & \equiv\mathbb{E}\left\{ Z_{2}\right\} +c\left\Vert {\cal R}\left(Z_{2}-\mathbb{E}\left\{ Z_{2}\right\} \right)\right\Vert _{{\cal L}_{p}}\nonumber \\
 & \le\mathbb{E}\left\{ Z_{2}\right\} +c\left\Vert {\cal R}\left(Z_{1}-\mathbb{E}\left\{ Z_{2}\right\} \right)\right\Vert _{{\cal L}_{p}}\nonumber \\
 & \equiv\mathbb{E}\left\{ Z_{2}\right\} +c\left\Vert {\cal R}\left(Z_{1}-\mathbb{E}\left\{ Z_{1}\right\} +\mathbb{E}\left\{ Z_{1}\right\} -\mathbb{E}\left\{ Z_{2}\right\} \right)\right\Vert _{{\cal L}_{p}}\nonumber \\
 & \le\mathbb{E}\left\{ Z_{2}\right\} +c\left\Vert {\cal R}\left(Z_{1}-\mathbb{E}\left\{ Z_{1}\right\} \right)+\mathbb{E}\left\{ Z_{1}\right\} -\mathbb{E}\left\{ Z_{2}\right\} \right\Vert _{{\cal L}_{p}}\nonumber \\
 & \le\mathbb{E}\left\{ Z_{2}\right\} +c\left(\mathbb{E}\left\{ Z_{1}\right\} -\mathbb{E}\left\{ Z_{2}\right\} \right)+c\left\Vert {\cal R}\left(Z_{1}-\mathbb{E}\left\{ Z_{1}\right\} \right)\right\Vert _{{\cal L}_{p}},\label{eq:MONO_1}
\end{flalign}
where the first inequality is due to conditions $\mathbf{S2}$ (nonnegativity)
and $\mathbf{S3}$ (monotonicity), the second is due to conditions
$\mathbf{S2}$ (nonnegativity), $\mathbf{S4}$ (nonexpansiveness),
as well as the fact that $\mathbb{E}\left\{ Z_{1}\right\} \ge\mathbb{E}\left\{ Z_{2}\right\} $,
and the third is again due to the triangle inequality. From (\ref{eq:MONO_1}),
we readily see that, as long as $c\in\left[0,1\right]$, we may further
write
\begin{equation}
\rho_{p}^{{\cal R}}\left(Z_{2};c\right)\le\mathbb{E}\left\{ Z_{1}\right\} +c\left\Vert {\cal R}\left(Z_{1}-\mathbb{E}\left\{ Z_{1}\right\} \right)\right\Vert _{{\cal L}_{p}}\equiv\rho_{p}^{{\cal R}}\left(Z_{1};c\right),
\end{equation}
completing the proof of the theorem.
\end{proof}
We may now invoke Proposition \ref{prop:Convexity-of-Compositions},
presented earlier, to immediately obtain the following key corollary.
The proof is trivial and, thus, omitted.
\begin{cor}
\textbf{\textup{(When is (\ref{eq:Prog_1}) Convex?)\label{cor:Convex_Prog1}}}
Fix $p\in\left[1,\infty\right)$ and choose any risk regularizer ${\cal R}:\mathbb{R}\rightarrow\mathbb{R}$.
Then, as long as $c\in\left[0,1\right]$, the composite function $\phi^{\widetilde{F}}\left(\cdot\right)\equiv\rho_{p}^{{\cal R}}\left(F\left(\cdot,\boldsymbol{W}\right);c\right)\equiv\rho\left(F\left(\cdot,\boldsymbol{W}\right)\right)$
is convex on $\mathbb{R}^{N}$, and (\ref{eq:Prog_1}) constitutes
a convex stochastic program.
\end{cor}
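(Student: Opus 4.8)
The plan is to assemble this statement directly from the two preceding results, since no new analytic work is required. By construction, the objective $\phi^{\widetilde{F}}$ is the composition of the risk measure $\rho_{p}^{{\cal R}}\left(\cdot;c\right)$ with the random cost $\widetilde{F}\left(\cdot,\omega\right)\equiv F\left(\cdot,\boldsymbol{W}\left(\omega\right)\right)$, so convexity of the objective will follow from Proposition \ref{prop:Convexity-of-Compositions} as soon as its two hypotheses are verified. Accordingly, I would organize the argument around checking those two hypotheses and then reading off the conclusion.

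First I would confirm the pathwise convexity hypothesis. Proposition \ref{prop:Convexity-of-Compositions} requires that $\widetilde{F}\left(\cdot,\omega\right)$ be convex for every $\omega\in\Omega$, and this is immediate from the standing assumption of Section \ref{sec:Problem-Setting}, namely that $F\left(\cdot,\boldsymbol{w}\right)$ is real-valued and convex for every $\boldsymbol{w}\in\mathbb{R}^{M}$; fixing the realization $\boldsymbol{w}=\boldsymbol{W}\left(\omega\right)$ preserves convexity for each $\omega$. Second I would invoke Theorem \ref{thm:ConvexMONO}: under the stated restriction $c\in\left[0,1\right]$, and for any order $p\in\left[1,\infty\right)$ and any risk regularizer ${\cal R}$, the mean-semideviation $\rho_{p}^{{\cal R}}\left(\cdot;c\right)$ is convex-monotone, i.e.\ it satisfies both $\mathbf{R1}$ and $\mathbf{R2}$. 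This is precisely the second hypothesis demanded by Proposition \ref{prop:Convexity-of-Compositions}.

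With both hypotheses in hand, Proposition \ref{prop:Convexity-of-Compositions}, applied with $f\equiv\widetilde{F}$ and $\rho\equiv\rho_{p}^{{\cal R}}\left(\cdot;c\right)$, yields convexity of $\phi^{\widetilde{F}}$ on $\mathbb{R}^{N}$. Since the feasible set ${\cal X}$ is assumed closed and convex, minimizing a convex objective over ${\cal X}$ makes (\ref{eq:Prog_1}) a convex program in standard form, completing the argument. There is no genuine obstacle here, which is why the result is stated as an immediate corollary; the only point worth keeping tacitly in mind is $p$-$q$ compatibility (Proposition \ref{prop:P=000026Q}), ensuring that $\rho_{p}^{{\cal R}}\left(\cdot;c\right)$ is finite-valued on ${\cal Z}_{q}$ and hence a legitimate real-valued risk measure to which Proposition \ref{prop:Convexity-of-Compositions} applies.
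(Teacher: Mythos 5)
Your proposal is correct and is exactly the argument the paper intends (and omits as trivial): combine Theorem \ref{thm:ConvexMONO} (convex-monotonicity of $\rho_{p}^{{\cal R}}\left(\cdot;c\right)$ for $c\in\left[0,1\right]$) with Proposition \ref{prop:Convexity-of-Compositions} applied to the pathwise-convex cost $\widetilde{F}$, then note ${\cal X}$ is closed and convex. Your additional remark on $p$-$q$ compatibility via Proposition \ref{prop:P=000026Q} is a sensible, harmless bit of bookkeeping that matches the paper's standing conventions.
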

Corollary \ref{cor:Convex_Prog1} is an important result, because
it shows that, for every mean-semideviation risk measure, or equivalently,
for every risk regularizer of choice, problem (\ref{eq:Prog_1}) would
be exactly solvable via, for instance, subgradient methods, if the
function $\phi^{\widetilde{F}}$ was known in advance. This fact reinforces
our hope that it might indeed be possible to solve (\ref{eq:Prog_1})
to optimality, utilizing some carefully designed \textit{stochastic
search}, or, more specifically, and based on the assumed subdifferentiability
of $F\left(\cdot,\boldsymbol{W}\right)$, \textit{stochastic subgradient}
algorithm. Of course, such an algorithm should be designed to work
under Assumptions \ref{assu:1} and \ref{assu:2}, \textit{without}
the need for explicit knowledge of $F\left(\cdot,\boldsymbol{W}\right)$,
or $\underline{\nabla}F\left(\cdot,\boldsymbol{W}\right)$.
\begin{rem}
\textbf{\textit{(Coherence?)}} We should mention that mean-semideviations
\textit{are not coherent} \textit{risk measures} (\citep{ShapiroLectures_2ND},
Section 6.3), since they do not satisfy the axiomatic property of
positive homogeneity. This is simply due to the fact that, in general,
one may find choices for ${\cal R}$ such that
\begin{equation}
\left\Vert {\cal R}\left(tZ-\mathbb{E}\left\{ tZ\right\} \right)\right\Vert _{{\cal L}_{p}}\neq t\left\Vert {\cal R}\left(Z-\mathbb{E}\left\{ Z\right\} \right)\right\Vert _{{\cal L}_{p}},
\end{equation}
for some $t>0$ and $Z\in{\cal Z}_{q}$. Nevertheless, mean-semideviations
may be readily shown to satisfy translation equivariance, although
such property is not explicitly required in this work. As a result,
except for being convex-monotone, mean-semideviations also belong
to the class of \textit{convex risk measures} \citep{Follmer2002,ShapiroLectures_2ND}.\hfill{}\ensuremath{\blacksquare}
\end{rem}

\subsection{\label{subsec:Examples_of_MS}Examples of Mean-Semideviation Models}

Before moving on, it would be instructive to discuss some examples
of mean-semideviations, highlighting the versatility of this particular
class of risk measures. We start from simple, illustrative choices
as far as the involved risk regularizer is concerned, and then we
generalize.

\subsubsection{Mean\textit{-Upper}-Semideviations}

The simplest, \textit{prototypical} example of a mean-semideviation
risk measure is the \textit{mean-upper-semide- viation} of order $p$
(\citep{ShapiroLectures_2ND}, Sections 6.2.2 \& 6.3.2), which is
constructed by choosing as risk regularizer the function
\begin{equation}
{\cal R}\left(x\right)\triangleq\left(x\right)_{+}\triangleq\max\left\{ x,0\right\} ,\quad x\in\mathbb{R},
\end{equation}
yielding the risk measure
\begin{align}
\rho\left(Z\right) & \equiv\mathbb{E}\left\{ Z\right\} +c\left(\mathbb{E}\left\{ \left(\left(Z-\mathbb{E}\left\{ Z\right\} \right)_{+}\right)^{p}\right\} \right)^{1/p}\nonumber \\
 & \equiv\mathbb{E}\left\{ Z\right\} +c\left\Vert \left(Z-\mathbb{E}\left\{ Z\right\} \right)_{+}\right\Vert _{{\cal L}_{p}},
\end{align}
for $Z\in{\cal Z}_{q}$. Of course, in this case, it is trivial to
show that ${\cal R}$ satisfies conditions $\mathbf{S1}$-$\mathbf{S4}$
of Definition \ref{def:3}. Recall that we have assumed that $q$
is appropriately chosen, such that $\rho$ is a well defined, real-valued
functional on ${\cal Z}_{q}$.

\subsubsection{\textit{Entropic} Mean-Semideviations}

Our second example is a generalization of the mean-upper-semideviation
risk measure discussed in the previous example. Here, the risk regularizer
${\cal R}$ is chosen itself from a parametric family, as
\begin{equation}
{\cal R}\left(x;t\right)\triangleq\dfrac{1}{t}\log\left(1+\exp\left(tx\right)\right),\quad\left(x,t\right)\in\mathbb{R}\times\mathbb{R}_{++},
\end{equation}
where $t$ is a parameter, regulating the sharpness of the function
at zero. It is trivial to verify conditions $\mathbf{S2}$ (nonnegativity)
and $\mathbf{S3}$ (monotonicity). Also, for fixed $t$, the first
derivative of ${\cal R}$ relative to $x$ is the logistic function
\begin{equation}
\dfrac{\partial{\cal R}}{\partial x}\left(x;t\right)\equiv\dfrac{\exp\left(tx\right)}{1+\exp\left(tx\right)}\in\left(0,1\right),\quad\forall x\in\mathbb{R},\label{eq:LOGISTIC}
\end{equation}
showing that ${\cal R}$ is a contraction mapping, immediately verifying
condition $\mathbf{S4}$ (nonexpansiveness), via Proposition \ref{prop:CharacterizationR}.
Likewise, the second derivative of ${\cal R}$ is given by
\begin{equation}
\dfrac{\partial^{2}{\cal R}}{\partial x^{2}}\left(x;t\right)\equiv\dfrac{t\exp\left(tx\right)}{\left(1+\exp\left(tx\right)\right)^{2}}>0,\quad\forall x\in\mathbb{R},
\end{equation}
and, thus, $\mathbf{S1}$ (convexity) is readily verified, as well.
Hence, ${\cal R}$ is a valid risk regularizer. Alternatively and
to illustrate the procedure, we may verify condition $\mathbf{S4}$
directly; for fixed $t>0$, for every $\alpha\ge0$ and for every
$x\in\mathbb{R}$, we may write
\begin{flalign}
{\cal R}\left(x+a;t\right) & \equiv\dfrac{1}{t}\log\left(1+\exp\left(t\left(x+\alpha\right)\right)\right)\nonumber \\
 & \equiv\dfrac{1}{t}\log\left(\dfrac{1}{\exp\left(t\alpha\right)}+\exp\left(tx\right)\right)+\alpha\nonumber \\
 & \le\dfrac{1}{t}\log\left(1+\exp\left(tx\right)\right)+\alpha\nonumber \\
 & \equiv{\cal R}\left(x;t\right)+\alpha,
\end{flalign}
where the inequality is due to the fact that $t\alpha\ge0$. It is
also easy to see that, for every $x\in\mathbb{R}$, ${\cal R}\left(x;t\right)\underset{t\rightarrow\infty}{\longrightarrow}\left(x\right)_{+}$,
showing that ${\cal R}\left(\cdot;t\right)$ constitutes a smooth
approximation to the risk regularizer of the mean-upper-semideviation
risk measure discussed previously.

The resulting risk measure is called an \textit{entropic mean-semideviation}
of order $p$, and may be expressed as
\begin{equation}
\rho\left(Z\right)\equiv\mathbb{E}\left\{ Z\right\} +\dfrac{c}{t}\left\Vert \log\left(1+\exp\left(t\left(Z-\mathbb{E}\left\{ Z\right\} \right)\right)\right)\right\Vert _{{\cal L}_{p}},
\end{equation}
for $Z\in{\cal Z}_{q}$. For obvious reasons, this risk measure may
be considered a \textit{soft} version of the mean-upper-semideviation
risk measure.

\subsubsection{\textit{\label{subsec:CDFA_MS}CDF-Antiderivative (CDFA)} Mean-Semideviations}

We now show that, in fact, both previously presented examples are
special cases of a much more general approach, which may be utilized
for the \textit{construction} of risk regularizers. To this end, let
$Y:\Omega\rightarrow\mathbb{R}$ be a random variable in ${\cal Z}_{1}$,
with cumulative distribution function (cdf) $F_{Y}$. Consider the
choice
\begin{equation}
{\cal R}\left(x\right)\triangleq\int_{-\infty}^{x}F_{Y}\left(y\right)\textrm{d}y,\quad x\in\mathbb{R},\label{eq:F_2}
\end{equation}
where, because $F_{Y}$ is a nonnegative Borel measurable function,
the involved integration is \textit{always} well-defined (might be
$+\infty$, though), in the sense of Lebesgue. The particular \textit{antiderivative
of the cdf} $F_{Y}$, as defined in (\ref{eq:F_2}), constitutes a
very important quantity in the theory of stochastic dominance; see,
for instance, related articles \citep{Ogryczak1999} and \citep{Ogryczak2002}
for definition and insights. In particular, via Fubini's Theorem (Theorem
2.6.6 in \citep{Ash2000Probability}), ${\cal R}$ may be easily shown
to admit the alternative integral representation
\begin{equation}
{\cal R}\left(x\right)\equiv\mathbb{E}\left\{ \left(x-Y\right)_{+}\right\} ,\quad\forall x\in\mathbb{R}.\label{eq:F_2_2}
\end{equation}
Exploiting the assumption that $Y\in{\cal Z}_{1}$, it follows that
${\cal R}\left(x\right)<+\infty$, for every $x\in\mathbb{R}$. Also,
from (\ref{eq:F_2_2}), it is trivial to see that, because of the
structure of the function $\left(\cdot\right)_{+}$, ${\cal R}$ is
convex ($\mathbf{S1}$), nonnegative ($\mathbf{S2}$) and nondecreasing
($\mathbf{S3}$) on $\mathbb{R}$. Nonexpansiveness ($\mathbf{S4}$)
may also be readily verified.

Consequently, ${\cal R}$ is a valid risk regularizer, and the resulting
risk measure, called a \textit{CDF-Antiderivative (CDFA) mean-semideviation},
may be expressed in various forms as
\begin{flalign}
\rho\left(Z\right) & \equiv\mathbb{E}\left\{ Z\right\} +c\left\Vert \int_{-\infty}^{Z-\mathbb{E}\left\{ Z\right\} }F_{Y}\left(y\right)\textrm{d}y\right\Vert _{{\cal L}_{p}}\nonumber \\
 & \equiv\mathbb{E}\left\{ Z\right\} +c\left\Vert \left.\mathbb{E}\left\{ \left(x-Y\right)_{+}\right\} \right|_{x\equiv Z-\mathbb{E}\left\{ Z\right\} }\right\Vert _{{\cal L}_{p}}\nonumber \\
 & \equiv\mathbb{E}\left\{ Z\right\} +c\left\Vert \int_{\mathbb{R}}\left(\left[Z-\mathbb{E}\left\{ Z\right\} \right]-y\right)_{+}\textrm{d}{\cal P}_{Y}\left(y\right)\right\Vert _{{\cal L}_{p}}\nonumber \\
 & \equiv\mathbb{E}\left\{ Z\right\} +c\left\Vert \mathbb{E}\left\{ \left.\left(\left[Z-\mathbb{E}\left\{ Z\right\} \right]-Y\right)_{+}\right|Z\right\} \right\Vert _{{\cal L}_{p}},
\end{flalign}
for $Z\in{\cal Z}_{q}$, where $Y$ can be arbitrarily taken to be
\textit{independent of} $Z$ and ${\cal P}_{Y}$ denotes the Borel
pushforward of $Y$. 

We may now verify that both mean-upper-semideviation and entropic
mean-semideviation risk measures discussed above are special cases
of CDFA mean-semideviations. In mean-upper-semidevi- ations, the respective
risk regularizer is an antiderivative (taken piecewise) of the cdf
corresponding to the Dirac measure at zero. In entropic mean-semideviations,
the respective risk regularizer is an antiderivative of (\ref{eq:LOGISTIC})
(by monotone convergence and via a sequential argument), which is
the cdf of a zero-mean element in ${\cal Z}_{1}$. In both cases,
the antiderivatives involved are of the form of (\ref{eq:F_2}).

\paragraph{Special Case:\textit{ Gaussian Antiderivative (GA)} Mean-Semideviations}

An interesting subclass of CDFA mean-semideviations is the one resulting
from taking antiderivatives of the cdf of a standard Gaussian random
variable $Y\sim{\cal N}\left(0,1\right)$. In this case, the simplest
possible risk regularizer may be constructed as
\begin{equation}
{\cal R}\left(x\right)\triangleq\int_{-\infty}^{x}\varPhi\left(y\right)\textrm{d}y\equiv x\varPhi\left(x\right)+\varphi\left(x\right),\quad x\in\mathbb{R},
\end{equation}
where $\varPhi:\mathbb{R}\rightarrow\left[0,1\right]$ and $\varphi:\mathbb{R}\rightarrow\mathbb{R}$
denote the standard Gaussian cdf and density, respectively. This particular
antiderivative of $\varPhi$ appears naturally in standard treatments
of the so-called ranking-\&-selection, or best arm identification
problem and, more specifically, in lookahead selection policies, such
as the Knowledge Gradient and the Expected Improvement \citep{Frazier2008,Ryzhov2016}.

The resulting mean-semideviation risk measure is called a \textit{Gaussian
Antiderivative (GA) mean-semideviation} of order $p$, and may be
expressed as
\begin{equation}
\rho\left(Z\right)\equiv\mathbb{E}\left\{ Z\right\} +c\left\Vert \left(Z-\mathbb{E}\left\{ Z\right\} \right)\varPhi\left(Z-\mathbb{E}\left\{ Z\right\} \right)+\varphi\left(Z-\mathbb{E}\left\{ Z\right\} \right)\right\Vert _{{\cal L}_{p}},\label{eq:GA_MS}
\end{equation}
for $Z\in{\cal Z}_{q}$. Of course, as it happens for all mean semideviations,
the functional $\rho$, as defined in (\ref{eq:GA_MS}), is a convex
risk measure for every $c\ge0$, and a convex-monotone risk measure,
if $c\in\left[0,1\right]$.

\subsection{A Complete Characterization of Mean-Semideviations}

As a result of the discussion in Section \ref{subsec:CDFA_MS} above,
it follows that risk regularizers may be formed by taking antiderivatives
of the cdf of \textit{any integrable} random variable of choice, resulting
in a vast variety of mean-semideviation risk measures, all sharing
a common favorable structure.

Here, we show that if we start from a \textit{given} risk regularizer
${\cal R}$, \textit{the converse statement is also true}. In this
respect, we state and prove the following important result.
\begin{thm}
\textbf{\textup{(CDF-Based Representation of Risk Regularizers)\label{thm:IFF_RR}}}
Let $Y:\Omega\rightarrow\mathbb{R}$ be a random variable, such that,
for every $x\in\mathbb{R}$, $\mathbb{E}\left\{ \left(x-Y\right)_{+}\right\} <+\infty$,
and let $F_{Y}:\mathbb{R}\rightarrow\left[0,1\right]$ denote its
cdf. Then, for any fixed $0\le C_{S}\le1$ and $C_{I}\ge0$, the function
${\cal R}:\mathbb{R}\rightarrow\mathbb{R}$ defined as
\begin{equation}
{\cal R}\left(x\right)\triangleq C_{S}\int_{-\infty}^{x}F_{Y}\left(y\right)\textrm{d}y+C_{I},\quad\forall x\in\mathbb{R},\label{eq:CDF_Rep}
\end{equation}
is a valid risk regularizer, where integration may be interpreted
either in the improper Riemann sense (for computation), or in the
standard sense of Lebesgue (for derivation). 

Conversely, let ${\cal R}:\mathbb{R}\rightarrow\mathbb{R}$ be any
risk regularizer. Then, there exist some random variable $Y:\Omega\rightarrow\mathbb{R}$,
satisfying $\mathbb{E}\left\{ \left(x-Y\right)_{+}\right\} <+\infty$,
for all $x\in\mathbb{R}$, with cdf $F_{Y}:\mathbb{R}\rightarrow\left[0,1\right]$,
and constants $0\le C_{S}\le1$ and $C_{I}\ge0$, such that, for every
$x\in\mathbb{R}$, the representation (\ref{eq:CDF_Rep}) is valid.
In particular, if ${\cal R}'_{+}:\mathbb{R}\rightarrow\mathbb{R}$
denotes the right derivative of ${\cal R}$, it is always true that
$C_{S}\equiv\sup_{x\in\mathbb{R}}{\cal R}'_{+}\left(x\right)$, $C_{I}\equiv\inf_{x\in\mathbb{R}}{\cal R}\left(x\right)$,
and, as long as ${\cal R}$ is nonconstant, it holds that $C_{S}\neq0$,
and $F_{Y}$ is given by $F_{Y}\left(x\right)\equiv C_{S}^{-1}{\cal R}'_{+}\left(x\right),$
for all $x\in\mathbb{R}$.
\end{thm}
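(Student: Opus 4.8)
The plan is to treat the two implications separately, leaning on the CDF-antiderivative construction already developed in Section \ref{subsec:CDFA_MS} for the forward direction, and on standard convex-analytic facts about right derivatives for the converse.

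\textbf{Forward direction.} Writing $\widetilde{{\cal R}}\left(x\right)\triangleq\int_{-\infty}^{x}F_{Y}\left(y\right)\mathrm{d}y\equiv\mathbb{E}\left\{ \left(x-Y\right)_{+}\right\} $, the integrability hypothesis $\mathbb{E}\left\{ \left(x-Y\right)_{+}\right\} <+\infty$ guarantees that $\widetilde{{\cal R}}$ is real-valued, and the discussion around (\ref{eq:F_2})--(\ref{eq:F_2_2}) already establishes that $\widetilde{{\cal R}}$ satisfies conditions $\mathbf{S1}$--$\mathbf{S4}$. It then suffices to check that the affine reparametrization ${\cal R}=C_{S}\widetilde{{\cal R}}+C_{I}$ preserves each condition. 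Convexity ($\mathbf{S1}$) and monotonicity ($\mathbf{S3}$) are preserved since $C_{S}\ge0$; nonnegativity ($\mathbf{S2}$) holds because $C_{S},C_{I}\ge0$ and $\widetilde{{\cal R}}\ge0$; and nonexpansiveness ($\mathbf{S4}$) follows from $0\le{\cal R}\left(x+\alpha\right)-{\cal R}\left(x\right)=C_{S}\int_{x}^{x+\alpha}F_{Y}\left(y\right)\mathrm{d}y\le C_{S}\alpha\le\alpha$ for $\alpha\ge0$, using $F_{Y}\le1$ and $C_{S}\le1$.

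\textbf{Converse direction.} Here I would exploit that any risk regularizer is convex ($\mathbf{S1}$), so its right derivative ${\cal R}'_{+}$ exists at every point and is nondecreasing and right-continuous. Monotonicity ($\mathbf{S3}$) forces ${\cal R}'_{+}\ge0$, while nonexpansiveness ($\mathbf{S4}$, via Proposition \ref{prop:CharacterizationR}) forces ${\cal R}'_{+}\le1$; hence ${\cal R}'_{+}$ takes values in $\left[0,1\right]$. Define $C_{S}\triangleq\sup_{x}{\cal R}'_{+}\left(x\right)=\lim_{x\to\infty}{\cal R}'_{+}\left(x\right)\in\left[0,1\right]$ and $C_{I}\triangleq\inf_{x}{\cal R}\left(x\right)=\lim_{x\to-\infty}{\cal R}\left(x\right)\ge0$, both finite. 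If ${\cal R}$ is constant, then ${\cal R}'_{+}\equiv0$, so $C_{S}=0$ and the representation reduces to ${\cal R}\equiv C_{I}$, valid for any admissible $Y$. Otherwise ${\cal R}$ is nonconstant, which forces $C_{S}>0$, since ${\cal R}'_{+}\equiv0$ would render ${\cal R}$ constant.

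The crux is to verify that $F_{Y}\triangleq C_{S}^{-1}{\cal R}'_{+}$ is a genuine cdf. It is nondecreasing and right-continuous (inherited from ${\cal R}'_{+}$), with $\lim_{x\to\infty}F_{Y}\left(x\right)=1$ by construction. The only nontrivial point is $\lim_{x\to-\infty}F_{Y}\left(x\right)=0$, equivalently $\lim_{x\to-\infty}{\cal R}'_{+}\left(x\right)=0$: were this limit some $m>0$, then ${\cal R}\left(x\right)\le{\cal R}\left(0\right)+mx\to-\infty$ as $x\to-\infty$, contradicting nonnegativity ($\mathbf{S2}$). With $F_{Y}$ a valid cdf, take $Y$ to be any random variable realizing it (e.g. via the quantile transform). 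The fundamental theorem of calculus for convex, hence absolutely continuous, functions gives ${\cal R}\left(x\right)-{\cal R}\left(a\right)=\int_{a}^{x}{\cal R}'_{+}\left(y\right)\mathrm{d}y$; letting $a\to-\infty$ and using monotone convergence together with ${\cal R}'_{+}\ge0$ yields $\int_{-\infty}^{x}F_{Y}\left(y\right)\mathrm{d}y=C_{S}^{-1}\left({\cal R}\left(x\right)-C_{I}\right)<+\infty$, which both confirms the integrability requirement $\mathbb{E}\left\{ \left(x-Y\right)_{+}\right\} <\infty$ and gives $C_{S}\int_{-\infty}^{x}F_{Y}\,\mathrm{d}y+C_{I}={\cal R}\left(x\right)$, i.e. (\ref{eq:CDF_Rep}).

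\textbf{Main obstacle.} The delicate steps lie entirely in the converse: deducing $\lim_{x\to-\infty}{\cal R}'_{+}\left(x\right)=0$ from nonnegativity alone (so that $F_{Y}$ is a proper rather than a defective cdf), and justifying the integral reconstruction $\int_{a}^{x}{\cal R}'_{+}={\cal R}\left(x\right)-{\cal R}\left(a\right)$ together with the passage $a\to-\infty$, which rests on the absolute continuity of convex functions and on monotone convergence. A secondary technical point is realizing $Y$ on the prescribed space $\left(\Omega,\mathscr{F},{\cal P}\right)$; since only the law of $Y$ enters (\ref{eq:CDF_Rep}), this is immaterial as long as the base space is rich enough to carry a random variable with cdf $F_{Y}$.
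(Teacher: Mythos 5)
Your proposal is correct and takes essentially the same route as the paper's proof: the converse rests on the identical chain of arguments (right derivative ${\cal R}'_{+}$ pinned to $\left[0,1\right]$ via $\mathbf{S3}$--$\mathbf{S4}$, its vanishing at $-\infty$ forced by nonnegativity $\mathbf{S2}$, normalization by $C_{S}\equiv\sup_{x}{\cal R}'_{+}\left(x\right)$ to produce a proper cdf, and Lebesgue's fundamental theorem of calculus for the convex, hence locally absolutely continuous, ${\cal R}$ combined with monotone convergence as $\alpha\rightarrow-\infty$ to recover (\ref{eq:CDF_Rep})), while the forward direction is likewise delegated to the construction of Section \ref{subsec:CDFA_MS} plus the easy affine-scaling check. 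The only item you leave unaddressed is the statement's claim that the integral may equivalently be read in the improper Riemann sense, a routine detail (a bounded monotone integrand is Riemann integrable on compacts, and the improper limit matches the Lebesgue integral by monotone convergence) which the paper does spell out explicitly.
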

\begin{proof}[Proof of Theorem \ref{thm:IFF_RR}]
See Section \ref{subsec:Proof-of-CHAR} (Appendix).
\end{proof}
Theorem \ref{thm:IFF_RR} is important for two main reasons, the first
being related to the forward statement, and the second to the converse.
On the one hand, Theorem \ref{thm:IFF_RR} provides us with the clean,
very versatile and analytically friendly integral formula (\ref{eq:CDF_Rep})
for constructing risk regularizers of various shapes and types. On
the other hand, it informs us that, \textit{necessarily}, any risk
regularizer can be expressed in the form of (\ref{eq:CDF_Rep}) and,
as a result, \textit{all possible} risk regularizers may be constructed
utilizing (\ref{eq:CDF_Rep}), each time for some suitably chosen
cdf. Therefore, risk regularizers are completely characterized by
the cdf-based representation of Theorem \ref{thm:IFF_RR}.

Of course, every risk regularizer induces a unique mean-semideviation
risk measure. But also notice that, trivially, every mean-semideviation
risk measure corresponds to a uniquely specified risk regularizer
(as a functional, or when \textit{all} costs in the corresponding
${\cal L}_{p}$-space ${\cal Z}_{p}$ -the \textit{largest} such space,
for the \textit{smallest possible} $q$- are considered). Therefore,
Theorem \ref{thm:IFF_RR} provides a complete characterization of
the whole class of mean-semideviation risk measures. In particular,
Theorem \ref{thm:IFF_RR} implies that the class of all mean-semideviation
risk measures is \textbf{\textit{almost}} \textit{in one-to-one correspondence}
with the class of cdfs of all integrable $\mathbb{R}$-valued random
elements. The ``almost'' in the preceding statement is due to the
presence of constants $C_{S}$ and $C_{I}$ in Theorem \ref{thm:IFF_RR},
and that actually slightly less is required than (absolute) integrability
of the involved random variable $Y$.

\subsection{Practical Illustration of Mean-Semideviation Models}

We conclude this section by briefly outlining the relevance of mean-semideviation
models in applications, also putting our proposed risk regularizers
in context. More specifically, we consider a chance-constrained version
of the prototypical, single-product newsvendor problem (see, for instance,
Chapter 1 in \citep{ShapiroLectures_2ND}), upon which we are based
in order to formulate a \textit{doubly risk-averse }newsvendor problem,
which jointly controls \textit{both} unmet demand \textit{and} holding
costs. We also explicitly demonstrate how the respective risk regularizer
may be potentially designed, based on the characteristics of the particular
problem under consideration.

Although the single-product newsvendor problem (and its variations)
indeed constitutes a one-dimensional, toy example, it provides insights
and highlights some important features of the mean-semideviation risk
measures advocated herein. Additionally, the simplicity of such a
problem facilitates numerical solution, and enables us to present
some numerical results, verifying the effectiveness of the proposed
mean-semideviation risk measures experimentally, as well.

\subsubsection{A Chance-Constrained Single-Product NewsVendor}

Suppose that a newsvendor is interested in optimally producing newspapers
for an uncertain market, so that they minimize the cost incurred by
actual production \textit{and} by \textit{not meeting} market demand,
while respecting their holding capacity, or a predefined holding cost
target. Let $K^{P}>0$, $K^{U}>0$ and $K^{H}>0$ be known constants,
standing for the production, unmet demand and holding costs \textit{per
production unit}. Also let $W:\Omega\rightarrow\mathbb{R}_{+}$ be
the random market demand, a random variable with cdf $F_{W}:\mathbb{R}\rightarrow\left[0,1\right]$,
for simplicity assumed to be absolutely continuous relative to the
Lebesgue measure on $\left(\mathbb{R},\mathscr{B}\left(\mathbb{R}\right)\right)$.
Since the market is uncertain, the newsvendor resorts to stochastically
deciding their production plan by solving the \textit{chance-constrained
program }\renewcommand{\arraystretch}{1.5}
\begin{equation}
\begin{array}{rl}
\underset{x}{\mathrm{minimize}} & K^{P}x+\mathbb{E}\left\{ K^{U}\left(W-x\right)_{+}\right\} \\
\mathrm{subject\,to} & {\cal P}\left(K^{H}\left(x-W\right)_{+}>h\right)\le\alpha\\
 & x\ge0
\end{array},\label{eq:Joes_Prog}
\end{equation}
\renewcommand{\arraystretch}{1}where, also for simplicity, we assume
that the decision variable is real-valued, and where $0\le\alpha\le1$
constitutes the newsvendor's tolerance in the event that their holding
cost $K^{H}\left(x-W\right)_{+}$ will exceed a prescribed threshold
$h\ge0$. Both $\alpha$ and $h$ are fixed design parameters decided
by the newsvendor beforehand. Chance-constrained newsvendor problems
similar to (\ref{eq:Joes_Prog}) have been previously considered in
the literature; see, for instance, the related article \citep{Zhang2009}.
Here, an important detail is that, despite the probabilistic constraint,
problem (\ref{eq:Joes_Prog}) is risk-neutral as far as treatment
of unmet demand is concerned. This is because only the expectation
of the cost of not meeting the demand, corresponding to $K^{U}\left(W-x\right)_{+}$,
is considered in the objective. 

Problem (\ref{eq:Joes_Prog}) exhibits some interesting features and
may be significantly simplified, as follows. First, we may observe
that, for every fixed choice of $h\ge0$,
\begin{flalign}
{\cal P}\left(K^{H}\left(x-W\right)_{+}>h\right) & ={\cal P}\left(K^{H}\left(x-W\right)>h\right)\nonumber \\
 & \equiv{\cal P}\left(W<x-\dfrac{h}{K^{H}}\right)\nonumber \\
 & \equiv F_{W}\left(x-\dfrac{h}{K^{H}}\right),\quad\forall x\in\mathbb{R}.
\end{flalign}
Consequently, it is true that
\begin{flalign}
{\cal P}\left(K^{H}\left(x-W\right)_{+}>h\right)\le\alpha & \iff F_{W}\left(x-\dfrac{h}{K^{H}}\right)\le\alpha\nonumber \\
 & \iff x\le F_{W}^{-1}\left(\text{\ensuremath{\alpha}}\right)+\dfrac{h}{K^{H}}\label{eq:Joes_Constraint_1}\\
 & \iff{\cal P}\left(W<x\right)\le F_{W}\left(F_{W}^{-1}\left(\text{\ensuremath{\alpha}}\right)+\dfrac{h}{K^{H}}\right),\label{eq:Joes_Constraint_2}
\end{flalign}
where, due to $F_{W}$ being continuous, the pseudo-inverse or quantile
function $F_{W}^{-1}:\left[0,1\right]\rightarrow\mathbb{R}_{+}$ is
defined as
\begin{equation}
F_{W}^{-1}\left(\alpha\right)\triangleq\inf\left\{ x\in\mathbb{R}\left|F_{W}\left(x\right)\ge\alpha\right.\right\} \equiv\sup\left\{ x\in\mathbb{R}\left|F_{W}\left(x\right)\le\alpha\right.\right\} .
\end{equation}
Thus, problem (\ref{eq:Joes_Prog}) is convex and may be reformulated
as\renewcommand{\arraystretch}{1.3}
\begin{equation}
\begin{array}{rl}
\underset{x}{\mathrm{minimize}} & K^{P}x+\mathbb{E}\left\{ K^{U}\left(W-x\right)_{+}\right\} \\
\mathrm{subject\,to} & x\in\left[0,F_{W}^{-1}\left(\text{\ensuremath{\alpha}}\right)+\dfrac{h}{K^{H}}\right]
\end{array}.\label{eq:Joes_Prog_REF}
\end{equation}
\renewcommand{\arraystretch}{1}Hereafter, without loss of generality,
we may assume that $\alpha$ and $h$ are chosen such that $F_{W}^{-1}\left(\text{\ensuremath{\alpha}}\right)+h/K^{H}>0$.
Otherwise, the problem is trivially solved at $x^{*}\equiv0$. To
be fully compatible with the generic notation utilized in this paper,
we may also define $F\left(\cdot,\bullet\right)\triangleq K^{P}\left(\cdot\right)+K^{U}\left(\left(\bullet\right)-\left(\cdot\right)\right)_{+},$
and ${\cal X}\triangleq\left[0,F_{W}^{-1}\left(\text{\ensuremath{\alpha}}\right)+h/K^{H}\right]$.

Next, let us consider the derivative of the objective of (\ref{eq:Joes_Prog_REF}),
relative to $x$. We have
\begin{equation}
\nabla\mathbb{E}\left\{ F\left(x,W\right)\right\} =K^{P}-K^{U}{\cal P}\left(W\ge x\right),\quad\forall x\in{\cal X}.
\end{equation}
Hence, unless $K^{U}>K^{P}$, it readily follows that, for every $x\in{\cal X}$,
$\nabla\mathbb{E}\left\{ F\left(x,W\right)\right\} \ge0$, again implying
that the choice $x^{*}\equiv0$ constitutes a solution of (\ref{eq:Joes_Prog_REF});
in other words, producing nothing is always optimal whenever $K^{U}\le K^{P}$.
On the other hand, it is apparently true that $\nabla\mathbb{E}\left\{ F\left(x,W\right)\right\} <0$,
for all $x\in{\cal X}$, if and only if
\begin{equation}
K^{P}-K^{U}\left(1-{\cal P}\left(W<x\right)\right)<0,\quad\forall x\in{\cal X},
\end{equation}
implying that the condition
\begin{equation}
K^{P}-K^{U}\left(1-F_{W}\left(F_{W}^{-1}\left(\text{\ensuremath{\alpha}}\right)+\dfrac{h}{K^{H}}\right)\hspace{-2pt}\right)<0
\end{equation}
is sufficient to ensure negativity of $\nabla\mathbb{E}\left\{ F\left(x,W\right)\right\} $
everywhere within the feasible set ${\cal X}$ (where (\ref{eq:Joes_Constraint_2})
is always satisfied), in which case the choice $x^{*}\equiv F_{W}^{-1}\left(\text{\ensuremath{\alpha}}\right)+h/K^{H}$
constitutes the optimal production level. Putting it altogether, whenever
\begin{equation}
F_{W}\left(F_{W}^{-1}\left(\text{\ensuremath{\alpha}}\right)+\dfrac{h}{K^{H}}\right)>0,
\end{equation}
the condition
\begin{equation}
K^{U}\left(1-F_{W}\left(F_{W}^{-1}\left(\text{\ensuremath{\alpha}}\right)+\dfrac{h}{K^{H}}\right)\hspace{-2pt}\right)\le K^{P}<K^{U}
\end{equation}
ensures that problem (\ref{eq:Joes_Prog}) admits nontrivial solutions,
thus being of technical interest.

Problem (\ref{eq:Joes_Prog}) may be solved in closed form. Indeed,
either by considering the Karush-Kuhn-Tucker (KKT) conditions for
problem (\ref{eq:Joes_Prog}) (for a constraint qualification, we
may observe that Slater's condition is satisfied trivially whenever
$F_{W}^{-1}\left(\text{\ensuremath{\alpha}}\right)+h/K^{H}>0$), or
by looking at its geometric structure directly, it may be easily shown
that its optimal solution may be expressed analytically as
\begin{equation}
x^{*}=\begin{cases}
0, & \text{if }K^{U}\le K^{P}\\
\min\left\{ F_{W}^{-1}\left(\dfrac{K^{U}-K^{P}}{K^{U}}\right)\hspace{-1pt},F_{W}^{-1}\left(\text{\ensuremath{\alpha}}\right)+\dfrac{h}{K^{H}}\right\} \hspace{-1pt}, & \text{if }K^{U}>K^{P}
\end{cases},
\end{equation}
representing the newsvendor's optimal decision in regard to the quantity
of newspapers they would have to plan for, before the random market
demand $W$ is revealed.
\begin{rem}
Problems of the type of (\ref{eq:Joes_Prog}) are meaningful in various
settings; specifically, they are most suitable when holding is operationally
more important than unmet demand. For instance, it might be the case
that the event where holding exceeds some threshold might have severe
economic consequences, while not meeting the demand might be tolerable,
although undesirable.

Additionally and perhaps more importantly, we should mention that
a chance-constrained approach such as that adopted in (\ref{eq:Joes_Prog})
allows to efficiently \textit{blend economic with physical quantities}
in a single stochastic program. This is simply due to the fact that
by defining a quantity $\widetilde{h}\triangleq h/K^{H}\ge0$, the
probabilistic constraint of (\ref{eq:Joes_Prog}) may be written as
\begin{equation}
{\cal P}\left(\left(x-W\right)_{+}>\widetilde{h}\right)\le\alpha,
\end{equation}
implying that, if we want to, we may directly choose $\widetilde{h}$
as a probabilistic upper bound directly on the excess production $\left(x-W\right)_{+}$.

The modification above can be very useful if we are willing to consider
the problem\renewcommand{\arraystretch}{1.7}
\begin{equation}
\begin{array}{rl}
\underset{x}{\mathrm{minimize}} & K^{P}x+\mathbb{E}\left\{ K^{H}\left(x-W\right)_{+}\right\} \\
\mathrm{subject\,to} & {\cal P}\left(K^{U}\left(W-x\right)_{+}>u\right)\le\alpha\\
 & x\ge0
\end{array},\label{eq:Joes_Prog_D}
\end{equation}
\renewcommand{\arraystretch}{1}which constitutes a dual version of
the initial newsvendor problem (\ref{eq:Joes_Prog}) resulting by
interchanging the two respective stochastic costs and where, similarly
to (\ref{eq:Joes_Prog}), $u\ge0$ is a prescribed threshold. In this
case, unmet demand is operationally more important than holding, by
choice. Of course, problem (\ref{eq:Joes_Prog_D}) is structurally
very similar to (\ref{eq:Joes_Prog}), and can be analyzed via almost
the same procedure as above. By defining $\widetilde{u}\triangleq u/K^{U}\ge0$,
problem (\ref{eq:Joes_Prog_D}) may be reformulated as\renewcommand{\arraystretch}{1.7}
\begin{equation}
\begin{array}{rl}
\underset{x}{\mathrm{minimize}} & K^{P}x+\mathbb{E}\left\{ K^{H}\left(x-W\right)_{+}\right\} \\
\mathrm{subject\,to} & {\cal P}\left(\left(W-x\right)_{+}>\widetilde{u}\right)\le\alpha\\
 & x\ge0
\end{array},\label{eq:Joes_Prog_D-1}
\end{equation}
\renewcommand{\arraystretch}{1}where $\widetilde{u}$ can now be
preselected directly. We may readily observe that the objective of
(\ref{eq:Joes_Prog_D-1}) constitutes an economic quantity (a cost),
whereas the probabilistic constraint is placed on the unmet demand
itself, which, of course, is a physical quantity. This modification
can be extremely useful in a more realistic scenario, since in many
practical cases the unit cost of unmet demand, $K^{U}$, is either
completely unknown, or extremely difficult to estimate based on experience.\hfill{}\ensuremath{\blacksquare}
\end{rem}

\subsubsection{A Doubly Risk-Averse Single-Product NewsVendor}

Suppose now that, due to high variability of the market demand, the
newsvendor realizes that minimizing their unmet demand cost in expectation
does not constitute a very meaningful objective. Thus, the newsvendor
would like to decide on their newspaper production size by explicitly
accounting for market variability in their model and, because they
are reasonable, they are willing to settle with a potentially slightly
higher \textit{expected} monetary penalty for not meeting market demand.
In effect, the newsvendor is interested in making their decision by
additionally considering the risk incurred due to stochastic variability
in the resulting unmet demand, the latter realized when market demand
is revealed. In other words, the newsvendor would like to come up
with a meaningful \textit{doubly risk-averse }version\textit{ }of
the original, chance-constrained problem (\ref{eq:Joes_Prog}).

The newsvendor may think as follows. For every \textit{fixed and feasible}
production decision $x\in{\cal X}$, if the \textit{noisy} unmet demand
$\left(W-x\right)_{+}$ is \textit{smaller than} $\mathbb{E}\left\{ \left(W-x\right)_{+}\right\} $,
which is the newsvendor's expectation, then there is no risk incurred,
since the newsvendor has been prepared for and has agreed to settle
with a cost of unmet demand equal to $\mathbb{E}\left\{ K^{U}\left(W-x\right)_{+}\right\} $.
In an actual production scenario, $\mathbb{E}\left\{ \left(W-x\right)_{+}\right\} $
might correspond to a small quantity of newspapers which are \textit{not
actually produced}, but for which resources have been allocated beforehand,
to compensate for the case $W$ is greater than $x$, but not too
much. In other words, we might think about the quantity $\mathbb{E}\left\{ \left(W-x\right)_{+}\right\} $
as a \textit{risk-free, first-level safety stock}.

Positive risk is incurred whenever $\left(W-x\right)_{+}>\mathbb{E}\left\{ \left(W-x\right)_{+}\right\} $.
However, the newsvendor realizes that \textit{not} all values of the
\textit{central deviation}
\begin{equation}
CD\left(x,W\right)\triangleq\left(W-x\right)_{+}-\mathbb{E}\left\{ \left(W-x\right)_{+}\right\} 
\end{equation}
are of equal importance, or equal severity. In other words, the newsvendor's
risk is variable relative to the value of $CD\left(x,W\right)$. Under
the reasonable assumption that positive risk should be increasing
as a function of the deviation $CD\left(x,W\right)$, the newsvendor's
realization translates naturally into a variable and increasing rate
of change of the risk, relative to the values of the deviation. In
particular, whenever $CD\left(x,W\right)>0$, the newsvendor identifies
the following risk-incurring regions of increasing severity:
\begin{enumerate}
\item $CD\left(x,W\right)\in\left(0,t_{1}>0\right]$. In this case, unmet
demand is higher than what the newsvendor expects, but its deviation
from their expectation is no higher than a fixed threshold $t_{1}$.
The value $\mathbb{E}\left\{ \left(W-x\right)_{+}\right\} +t_{1}$
corresponds to the maximum \textit{partially unplanned or unexpected}
production quantity that the newsvendor may be able to produce \textit{today},
potentially using \textit{presently unallocated} resources. We might
think about the threshold $t_{1}$ as a \textit{risk-incurring,} \textit{second-level
safety stock}.
\item $CD\left(x,W\right)\in\left(t_{1},t_{2}>t_{1}\right]$. Here, the
deviation of the unmet demand from the newsvendor's expectation is
exceeds $t_{1}$, but is no higher than another fixed threshold $t_{2}$.
The value $t_{2}-t_{1}$ corresponds to the maximum quantity of newspapers
that the newsvendor cannot produce in-house today, but may ask a nearby
vendor to produce for them. Of course, such events should incur higher
and more severely increasing risk, since the newsvendor essentially
\textit{borrows resources} from the nearby vendor. We might call $t_{2}$
as the \textit{borrowing threshold}.
\item $CD\left(x,W\right)\in\left(t_{2},\infty\right)$. This constitutes
an event of ``total disaster,'' in which it is impossible for the
newsvendor to compensate for unmet market demand. When $CD\left(x,W\right)>t_{2}$,
unmet demand is so high that it cannot be met even if the newsvendor
borrows the maximum amount of resources from some nearby newsvendor.
This might have severe consequences for the newsvendor, since they
either might be in debt, or even lose their professional credibility,
or both.
\end{enumerate}
Although potentially simplified, a narrative such as the above is
reasonable and quite realistic. Of course, what is important for us
in the context of this paper, is the fact that the characteristics
of the relatively complex risk dynamics discussed above can be succinctly
captured by an appropriately shaped risk regularizer, as proposed
and analyzed herein. As a simplest example, we may define a \textit{piecewise
linear} risk regularizer ${\cal R}^{nv}:\mathbb{R}\rightarrow\mathbb{R}$
as
\begin{equation}
{\cal R}^{nv}\left(x\right)\triangleq\begin{cases}
0, & \text{if }x\le0\\
\psi_{1}x, & \text{if }0<x\le K^{U}t_{1}\\
\psi_{2}x+\left(\psi_{1}-\psi_{2}\right)K^{U}t_{1}, & \text{if }K^{U}t_{1}<x\le K^{U}t_{2}\\
x+\left(\psi_{2}-1\right)K^{U}t_{2}+\left(\psi_{1}-\psi_{2}\right)K^{U}t_{1}, & \text{if }x>K^{U}t_{2}
\end{cases},\label{eq:NV_Regularizer}
\end{equation}
where the \textit{risk slopes} $\psi_{1}\ge0$ and $\psi_{2}\ge0$
are chosen such that $\psi_{1}\le\psi_{2}\le1$. Of course, ${\cal R}$
may be rewritten as
\begin{flalign}
{\cal R}^{nv}\left(x\right) & \equiv\psi_{1}x\mathds{1}_{\left[0,K^{U}t_{1}\right)}\left(x\right)+\left(\psi_{2}x+\left(\psi_{1}-\psi_{2}\right)K^{U}t_{1}\right)\mathds{1}_{\left[K^{U}t_{1},K^{U}t_{2}\right)}\left(x\right)\nonumber \\
 & \quad\quad+\left(x+\left(\psi_{2}-1\right)K^{U}t_{2}+\left(\psi_{1}-\psi_{2}\right)K^{U}t_{1}\right)\mathds{1}_{\left[K^{U}t_{2},\infty\right)}\left(x\right),
\end{flalign}
for all $x\in\mathbb{R}$, and may be conveniently thought as a generalization
of the positive part function of the upper-semideviation dispersion
measure. \textit{Equivalently}, the risk regularizer ${\cal R}^{nv}$
may be defined as an antiderivative of the cdf $F_{Y}^{nv}:\mathbb{R}\rightarrow\left[0,1\right]$
corresponding to some random variable $Y:\Omega\rightarrow\mathbb{R}$
in ${\cal Z}_{\infty}$, and defined as
\begin{equation}
F_{Y}^{nv}\left(x\right)\triangleq\psi_{1}\mathds{1}_{\left[0,K^{U}t_{1}\right)}\left(x\right)+\psi_{2}\mathds{1}_{\left[K^{U}t_{1},K^{U}t_{2}\right)}\left(x\right)+\mathds{1}_{\left[K^{U}t_{2},\infty\right)}\left(x\right),\quad x\in\mathbb{R},
\end{equation}
as suggested by Theorem \ref{thm:IFF_RR}. The cdf $F_{Y}^{nv}$ expresses
precisely the \textit{rate of increase of the risk} incurred at each
$x\in\mathbb{R}$, where $x$ may be thought of as the central deviation
of the quantity whose risk is assessed by the risk regularizer ${\cal R}^{nv}$;
in the newsvendor's case, this quantity should be the noisy economic
consequence due to unmet demand, i.e., $K^{U}\left(W-x\right)_{+}$,
also justifying the multiplication of thresholds $t_{1}$ and $t_{2}$
with the unit cost $K^{U}$ in (\ref{eq:NV_Regularizer}). Essentially,
$F_{Y}^{nv}$ admits an intuitive interpretation, and can be utilized
in order to \textit{actually design} ${\cal R}^{nv}$, as well; also
see Theorem \ref{thm:IFF_RR}. 

If the newsvendor chooses the risk slopes $\psi_{1}$ and $\psi_{2}$
such that $\psi_{1}<\psi_{2}<1$ (only \textit{they} know how set
specific appropriate values), the quantity ${\cal R}^{nv}\left(K^{U}CD\left(x,W\right)\right)$
(for a fixed and feasible $x\in{\cal X}$) captures the dynamic behavior
of the risk incurred by the central deviation $CD\left(x,W\right)$
when taking different values in $\mathbb{R}_{+}$, as described above.
Essentially, ${\cal R}^{nv}$ may be regarded as a nonlinear weighting
function acting on $K^{U}CD\left(x,W\right)$, whose shape has been
carefully designed in order to reflect the newsvendor's particular
context.

Now, the newsvendor is interested in utilizing ${\cal R}^{nv}\left(K^{U}CD\left(\cdot,W\right)\right)$
for decision making purposes. Since, for each $x\in{\cal X}$, ${\cal R}^{nv}\left(K^{U}CD\left(\cdot,W\right)\right)$
depends pointwise on the random market demand $W$, which is unobservable
when the newsvendor decides their production plan, it is most reasonable
to consider the ${\cal L}_{p}$-norm of ${\cal R}^{nv}\left(K^{U}CD\left(\cdot,W\right)\right)$,
for some prespecified $p\in\left[1,\infty\right)$, \textit{as a}
\textit{measure of magnitude}. Of course, lower values for such deterministic
term are preferred. In order to effectively manage their risk during
decision making, the newsvendor proceeds by adding the ${\cal L}_{p}$-norm
of ${\cal R}^{nv}\left(K^{U}CD\left(\cdot,W\right)\right)$ as a penalty
term to their original, risk-neutral objective, leading to the risk-averse
stochastic program
\begin{figure}
\centering\includegraphics[bb=80bp 145bp 1000bp 1114bp,clip,scale=0.56]{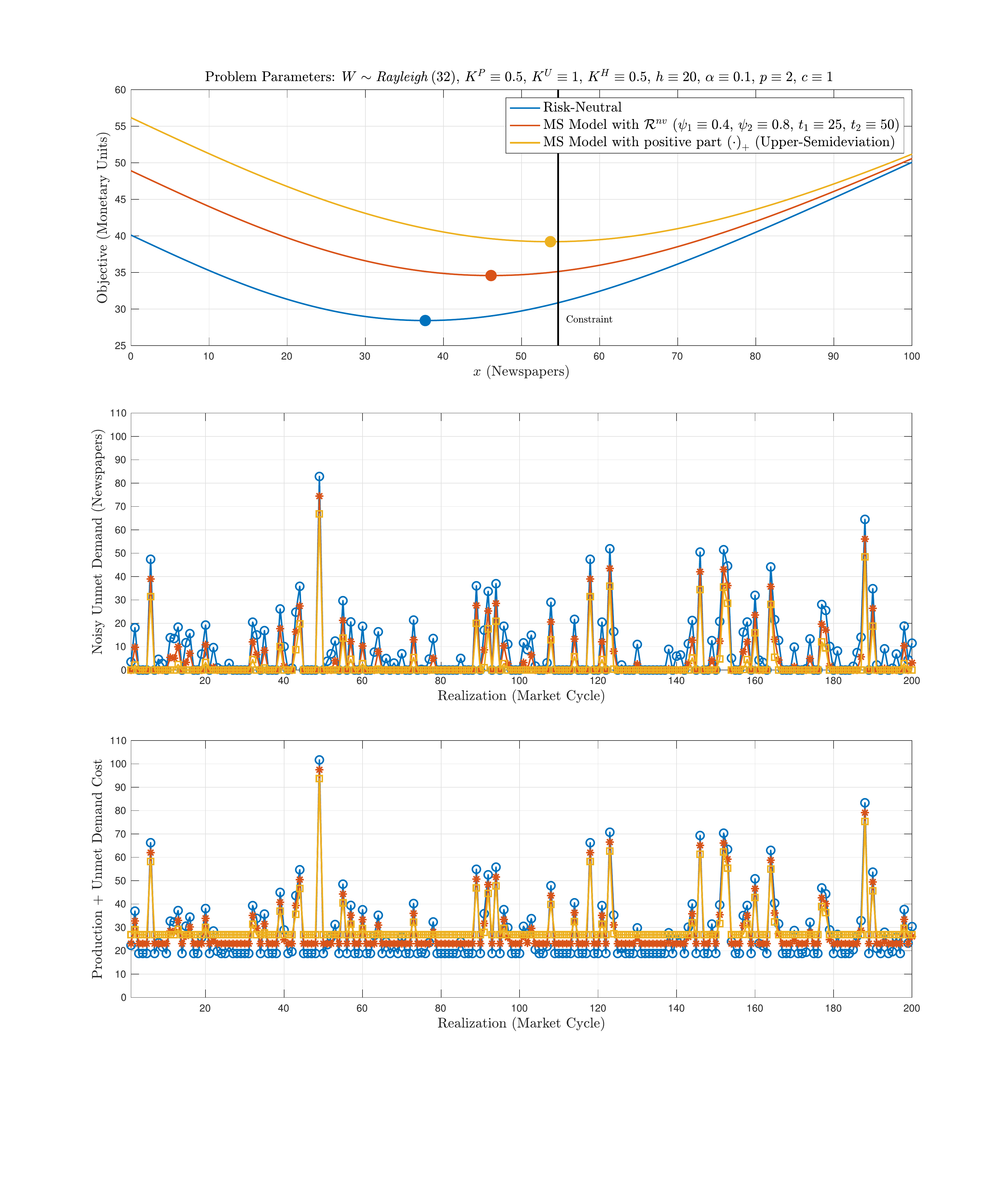}\caption{\label{fig:RA_NewsVendor}Top: Objective as a function of the production
decision, for each of the following three cases: Risk neutral (problem
(\ref{eq:Joes_Prog})), risk-averse of the form (\ref{eq:Joes_Prog_RA}),
and risk-averse of the form (\ref{eq:Joes_Prog_RA}), but with ${\cal R}^{nv}$
being replaced by $\left(\cdot\right)_{+}$. Middle: Unmet demand
realizations as a function of the market cycle, for each of the three
cases. Bottom: Combined monetary cost of production plus unmet demand
realizations as a function of the market cycle, for each of the three
cases.}

\vspace{-5bp}
\end{figure}
\begin{equation}
\begin{array}{rl}
\underset{x}{\mathrm{minimize}} & K^{P}x+\mathbb{E}\left\{ K^{U}\left(W-x\right)_{+}\right\} +c\left\Vert {\cal R}^{nv}\left(K^{U}\left(W-x\right)_{+}\hspace{-2pt}-\mathbb{E}\left\{ K^{U}\left(W-x\right)_{+}\right\} \right)\right\Vert _{{\cal L}_{p}}\\
\mathrm{subject\,to} & {\cal P}\left(K^{H}\left(x-W\right)_{+}>h\right)\le\alpha\\
 & x\ge0
\end{array}\hspace{-2pt},\label{eq:Joes_Prog_RA_PRE}
\end{equation}
where, in general, $c\ge0$ denotes the corresponding penalty tradeoff
multiplier. It is then easy to see that the objective of problem (\ref{eq:Joes_Prog_RA_PRE})
constitutes a mean-semideviation model. Indeed, by equivalently rewriting
${\cal R}^{nv}\left(K^{U}CD\left(\cdot,W\right)\right)$ as
\begin{flalign}
{\cal R}^{nv}\left(K^{U}CD\left(x,W\right)\right) & \equiv{\cal R}^{nv}\left(K^{P}x+K^{U}\left(W-x\right)_{+}-\mathbb{E}\left\{ K^{P}x+K^{U}\left(W-x\right)_{+}\right\} \right)\nonumber \\
 & \equiv{\cal R}^{nv}\left(F\left(x,W\right)-\mathbb{E}\left\{ F\left(x,W\right)\right\} \right),\quad\forall x\in{\cal X},
\end{flalign}
problem (\ref{eq:Joes_Prog_RA_PRE}) may be equivalently restated
as
\begin{equation}
\begin{array}{rl}
\underset{x}{\mathrm{minimize}} & \mathbb{E}\left\{ F\left(x,W\right)\right\} +c\left\Vert {\cal R}^{nv}\left(F\left(x,W\right)-\mathbb{E}\left\{ F\left(x,W\right)\right\} \right)\right\Vert _{{\cal L}_{p}}\\
\mathrm{subject\,to} & x\in{\cal X}
\end{array}.\label{eq:Joes_Prog_RA}
\end{equation}
Apparently, the objective of problem (\ref{eq:Joes_Prog_RA}) is a
mean-semideviation risk measure. Of course, whenever $c\in\left[0,1\right]$,
(\ref{eq:Joes_Prog_RA}) (and thus, (\ref{eq:Joes_Prog_RA_PRE}),
as well) constitutes a convex, risk-averse stochastic program, precisely
of the form considered in this paper.

To empirically demonstrate the effectiveness of the risk-averse newsvendor
problem (\ref{eq:Joes_Prog_RA}), we have conducted numerical simulations
concerning the following three cases: Initial problem (\ref{eq:Joes_Prog})
(risk neutral), risk-averse problem (\ref{eq:Joes_Prog_RA}), and
risk-averse of the form (\ref{eq:Joes_Prog_RA}), but with ${\cal R}^{nv}$
being replaced by $\left(\cdot\right)_{+}$, resulting to the mean-upper-semideviation
risk measure. In all simulations, the random market demand $W$ follows
a Rayleigh distribution, and all necessary expectations present in
each objective have been approximated utilizing $5\cdot10^{6}$ demand
realizations, sampled independently. The precise values for the scale
of the aforementioned distribution and for all the rest of parameters
involved in problem (\ref{eq:Joes_Prog_RA}) are shown in the title
of Fig. \ref{fig:RA_NewsVendor} (top), respectively.

From Fig. \ref{fig:RA_NewsVendor} (top), we observe that the optimal
production decision obtained by solving (\ref{eq:Joes_Prog_RA}) is
distinctly different from the respective solutions obtained by solving
both the risk neutral problem (\ref{eq:Joes_Prog}), and the risk-averse
problem employing the mean-upper-semideviation risk measure (all optimal
solutions are represented by appropriately colored dots in Fig. \ref{fig:RA_NewsVendor}
(top)). In particular, the solution of (\ref{eq:Joes_Prog_RA}) lies
somewhere near the midpoint of the respective solutions of the remaining
two aforementioned problems. Therefore, we may conclude that the solution
of (\ref{eq:Joes_Prog_RA}) constitutes a \textit{less conservative}
risk-averse production decision, compared to the case of the mean-upper-semideviation
risk measure, which essentially presumes that all risk-incurring events
are of equal operational severity for the newsvendor. Equivalently,
the mean-semideviation model utilized in (\ref{eq:Joes_Prog_RA})
constitutes a less conservative risk-averse objective compared to
that involving the mean-upper-semideviation risk measure (which, of
course, is itself a mean-semideviation model induced by the trivial
risk regularizer $\left(\cdot\right)_{+}$). As it can be readily
observed in Fig. \ref{fig:RA_NewsVendor} (middle \& bottom), the
less conservative character of problem (\ref{eq:Joes_Prog_RA}) translates
directly to the statistical behavior of the realized unmet demand,
and that of the combined cost due to production and unmet demand.
This is obviously expected in this example, and is due to the simple
structure to the original newsvendor problem we started with.

\section{\label{sec:Message}The \textit{$\textit{MESSAGE}^{p}$} Algorithm}

This section is devoted to the introduction and detailed analysis
of the $\textit{MESSAGE}^{p}$ algorithm. As also stated in Section
\ref{sec:Introduction}, the $\textit{MESSAGE}^{p}$ algorithm is
a parameterized (relative to the choice of ${\cal R}$) parallel version\textit{
}of the general purpose \textit{$T$-SCGD} algorithm \citep{Wang2018}.
Both the algorithm and analysis presented in this work are new; as
compared to \citep{Wang2018}, we propose a significantly milder set
of problem assumptions, which, nonetheless, result in asymptotic guarantees
of \textit{at least} the same quality, \textit{and more}.

Before proceeding, let us restate the stochastic program under study.
Formally, for fixed $p\in\left[1,\infty\right)$, we are interested
in the convex optimization problem
\begin{equation}
\begin{array}{rl}
\underset{\boldsymbol{x}}{\mathrm{minimize}} & \mathbb{E}\left\{ F\left(\boldsymbol{x},\boldsymbol{W}\right)\right\} +c\left\Vert {\cal R}\left(F\left(\boldsymbol{x},\boldsymbol{W}\right)-\mathbb{E}\left\{ F\left(\boldsymbol{x},\boldsymbol{W}\right)\right\} \right)\right\Vert _{{\cal L}_{p}}\\
\mathrm{subject\,to} & \boldsymbol{x}\in{\cal X}
\end{array},\label{eq:MAIN_PROB}
\end{equation}
where, for every $\boldsymbol{x}\in{\cal X}$, the convex real-valued
random cost $F\left(\boldsymbol{x},\boldsymbol{W}\left(\cdot\right)\right)\equiv\widetilde{F}\left(\boldsymbol{x},\cdot\right)$
is in ${\cal Z}_{q}$, the set of feasible decisions ${\cal X}\subseteq\mathbb{R}^{N}$
is closed and convex, the risk related penalty multiplier is denoted
by $c\ge0$, and where ${\cal R}:\mathbb{R}\rightarrow\mathbb{R}$
constitutes any risk regularizer of choice. Recall that we implicitly
assume that $q$ and $p$ are compatible according to Proposition
\ref{prop:P=000026Q}. Also, based on our definitions, the objective
is identified as either of the functions $\rho\left(F\left(\cdot,\boldsymbol{W}\right)\right)\equiv\rho_{p}^{{\cal R}}\left(F\left(\cdot,\boldsymbol{W}\right);c\right)$
and $\phi^{\widetilde{F}}$, where the choices of $\rho$-related
quantities $p$, $c$, ${\cal R}$ are assumed to be \textit{fixed}
and made in advance; as such, they will not be explicitly referred
to in our notation. Additionally, in the following, we assume that
$c\in\left[0,1\right]$, so that, by Corollary \ref{cor:Convex_Prog1},
(\ref{eq:MAIN_PROB}) constitutes a \textit{convex} problem.

In the following, we first discuss the reformulation of the objective
of (\ref{eq:MAIN_PROB}) in a convenient compositional form, key to
the development of any compositional algorithm whatsoever. Second,
we discuss the technical reasons that motivate the consideration of
a compositional SSD-type algorithm for solving (\ref{eq:MAIN_PROB}),
as well as differentiability of its objective. Then, we present the
$\textit{MESSAGE}^{p}$ algorithm, along with some of its key characteristics.

The section proceeds with the asymptotic analysis of the $\textit{MESSAGE}^{p}$
algorithm. First, our structural assumptions are presented and their
main implications are discussed. Second, pathwise convergence of the
$\textit{MESSAGE}^{p}$ algorithm is established, in a strong technical
sense. Our proof follows the somewhat standard\textit{ ``almost-supermartingale
approach''}, also adopted in \citep{Wang2017,Wang2018}. Third, we
present a detailed convergence rate analysis of the $\textit{MESSAGE}^{p}$
algorithm, where we systematically develop all the results advertised
in Section \ref{sec:Introduction}, along with relevant discussions. 

Finally, the generality of our structural framework against that utilized
in \citep{Wang2018} is also clearly demonstrated, rigorously showing
that the class of mean-semideviation programs supported herein is
\textit{strictly larger} than the respective class of problems supported
within \citep{Wang2017,Wang2018}. This result concludes our discussion
related to the consistency of the $\textit{MESSAGE}^{p}$ algorithm,
and justifies our effort.

\subsection{Mean-Semideviations in Compositional Form}

Because we will be interested in determining the structure of the
subdifferential of the objective of (\ref{eq:MAIN_PROB}), $\phi^{\widetilde{F}}$,
it is convenient to express $\phi^{\widetilde{F}}$ in \textit{compositional
form}, similar to the general approach adopted in \citep{Wang2017,Wang2018}.
To do this, let us define the \textit{expectation functions} $\varrho:\mathbb{R}_{+}\rightarrow\mathbb{R}$,
$g^{\widetilde{F}}:\mathbb{R}^{N}\times\mathbb{R}\rightarrow\mathbb{R}_{+}$
and $\boldsymbol{h}^{\widetilde{F}}:\mathbb{R}^{N}\rightarrow\mathbb{R}^{N}\times\mathbb{R}$
as
\begin{flalign}
\varrho\left(x\right) & \triangleq x^{1/p},\quad x>0,\\
g^{\widetilde{F}}\left(\boldsymbol{x},y\right) & \triangleq\mathbb{E}\left\{ \left({\cal R}\left(F\left(\boldsymbol{x},\boldsymbol{W}\right)-y\right)\right)^{p}\right\} \quad\text{and}\\
\boldsymbol{h}^{\widetilde{F}}\left(\boldsymbol{x}\right) & \triangleq\left[\boldsymbol{x}\:\mathbb{E}\left\{ F\left(\boldsymbol{x},\boldsymbol{W}\right)\right\} \right],
\end{flalign}
for every admissible choice of $F$ and ${\cal P}_{\boldsymbol{W}}$.
Then, $\phi^{\widetilde{F}}$ may be alternatively expressed as
\begin{equation}
\phi^{\widetilde{F}}\left(\boldsymbol{x}\right)\equiv\mathbb{E}\left\{ F\left(\boldsymbol{x},\boldsymbol{W}\right)\right\} +c\varrho\left(g^{\widetilde{F}}\left(\boldsymbol{h}^{\widetilde{F}}\left(\boldsymbol{x}\right)\right)\right),\quad\boldsymbol{x}\in{\cal X}.\label{eq:MAIN_2}
\end{equation}
We observe that the functional composition term on the RHS of (\ref{eq:MAIN_2})
coincides with the dispersion measure in the objective of (\ref{eq:MAIN_PROB}),
simply rewritten as a composition of real-valued and, of course, deterministic,
functions. In the special case where $p\equiv1$, (\ref{eq:MAIN_2})
becomes
\begin{equation}
\phi^{\widetilde{F}}\left(\boldsymbol{x}\right)\equiv\mathbb{E}\left\{ F\left(\boldsymbol{x},\boldsymbol{W}\right)\right\} +cg^{\widetilde{F}}\left(\boldsymbol{h}^{\widetilde{F}}\left(\boldsymbol{x}\right)\right),\quad\boldsymbol{x}\in{\cal X}.\label{eq:MAIN_3}
\end{equation}
Also, it is trivial to see that, if $p\equiv1$, then, by defining
another function $\mathring{g}^{\widetilde{F}}:\mathbb{R}^{N}\times\mathbb{R}\rightarrow\mathbb{R}$
as
\begin{equation}
\mathring{g}^{\widetilde{F}}\left(\boldsymbol{x},y\right)\triangleq y+c\mathbb{E}\left\{ {\cal R}\left(F\left(\boldsymbol{x},\boldsymbol{W}\right)-y\right)\right\} ,
\end{equation}
we may as well write
\begin{equation}
\phi^{\widetilde{F}}\left(\boldsymbol{x}\right)\equiv\mathring{g}^{\widetilde{F}}\left(\boldsymbol{h}^{\widetilde{F}}\left(\boldsymbol{x}\right)\right),\quad\boldsymbol{x}\in{\cal X},\label{eq:MAIN_4}
\end{equation}
which is exactly the type of problem considered in \citep{Wang2017},
except for the fact that, here, it is formulated under weaker assumptions.
See Section \ref{subsec:MESSAGE} for details. 

The difference between (\ref{eq:MAIN_3}) and (\ref{eq:MAIN_2}) is
subtle. As we will see later on, based on our assumptions, the structure
of any SSD-type optimization algorithm suitable for handling objectives
of the form of (\ref{eq:MAIN_2}) (and, thus, (\ref{eq:MAIN_PROB}))
for $p\in\left(1,\infty\right)$ is inherently more complicated, compared
to the case of the slightly simpler objective resulting by setting
$p\equiv1$.
\begin{rem}
Note that, alternatively, we could reexpress $\phi^{\widetilde{F}}$
in the compositional form outlined in (\citep{Wang2017}, Supplemental
Materials, Section H.4, or \citep{Wang2018}, Section 4). In particular,
if we define the expectation functions $\widehat{\varrho}:\mathbb{R}\times\mathbb{R}_{+}\rightarrow\mathbb{R}$,
$\widehat{g}^{\widetilde{F}}:\mathbb{R}^{N}\times\mathbb{R}\rightarrow\mathbb{R}\times\mathbb{R}_{+}$
and $\widehat{\boldsymbol{h}}^{\widetilde{F}}:\mathbb{R}^{N}\rightarrow\mathbb{R}^{N}\times\mathbb{R}$
as
\begin{flalign}
\widehat{\varrho}\left(x,y\right) & \triangleq x+cy^{1/p},\quad y>0,\\
\widehat{\boldsymbol{g}}^{\widetilde{F}}\left(\boldsymbol{x},y\right) & \triangleq\left[y\:\mathbb{E}\left\{ \left({\cal R}\left(F\left(\boldsymbol{x},\boldsymbol{W}\right)-y\right)\right)^{p}\right\} \right]\quad\text{and}\\
\widehat{\boldsymbol{h}}^{\widetilde{F}}\left(\boldsymbol{x}\right) & \triangleq\left[\boldsymbol{x}\:\mathbb{E}\left\{ F\left(\boldsymbol{x},\boldsymbol{W}\right)\right\} \right],
\end{flalign}
then $\phi^{\widetilde{F}}$ may be written as
\begin{equation}
\phi^{\widetilde{F}}\left(\boldsymbol{x}\right)=\widehat{\varrho}\left(\widehat{\boldsymbol{g}}^{\widetilde{F}}\left(\widehat{\boldsymbol{h}}^{\widetilde{F}}\left(\boldsymbol{x}\right)\right)\right),\quad\forall\boldsymbol{x}\in{\cal X}.\label{eq:MAIN_Mengdi}
\end{equation}
Of course, the compositional representation (\ref{eq:MAIN_Mengdi})
is equivalent to (\ref{eq:MAIN_2}). For our purposes, though, (\ref{eq:MAIN_2})
is perfectly sufficient and, additionally, it is cleaner and somewhat
more compact.\hfill{}\ensuremath{\blacksquare}
\end{rem}

\subsection{Algorithm Motivation and Differentiability of $\phi^{\widetilde{F}}$}

As a result of the discussion above, the original problem (\ref{eq:Prog_1})
can be equivalently written as
\begin{equation}
\begin{array}{rl}
\underset{\boldsymbol{x}}{\mathrm{minimize}} & \mathbb{E}\left\{ F\left(\boldsymbol{x},\boldsymbol{W}\right)\right\} +c\varrho\left(g^{\widetilde{F}}\left(\boldsymbol{h}^{\widetilde{F}}\left(\boldsymbol{x}\right)\right)\right)\\
\mathrm{subject\,to} & \boldsymbol{x}\in{\cal X}
\end{array}.\label{eq:MAIN_PROB_2}
\end{equation}
Exploiting the assumed convexity of $\phi^{\widetilde{F}}$ on ${\cal X}$,
and given that we are interested in solving (\ref{eq:MAIN_PROB_2}),
one would most reasonably hope for a SSD-type algorithm, whose gradient
evaluation policy follows a path of the stochastic differential equation
\begin{equation}
\boldsymbol{x}^{n+1}=\Pi_{{\cal X}}\left\{ \boldsymbol{x}^{n}-\gamma_{n}\left[\underline{\widetilde{\nabla}}^{n+1}\phi^{\widetilde{F}}\left(\boldsymbol{x}^{n}\right)\right]\right\} ,\quad n\in\mathbb{N},\label{eq:SGD_MUS1_1}
\end{equation}
where $\boldsymbol{x}^{0}\in{\cal X}$ is arbitrarily chosen, $\left\{ \gamma_{n}>0\right\} _{n\in\mathbb{N}}$
is an appropriately chosen stepsize sequence, and $\left\{ \underline{\widetilde{\nabla}}^{n}\phi^{\widetilde{F}}\right\} _{n\in\mathbb{N}^{+}}$
denotes a sequence of \textit{stochastic subgradients}, that is, a
sequence of $\mathbb{R}^{N}$-valued, \textit{appropriately measurable}
\textit{random functions on} $\mathbb{R}^{N}\times\Omega$, such that
\begin{equation}
\mathbb{E}\left\{ \underline{\widetilde{\nabla}}^{n}\phi^{\widetilde{F}}\left(\boldsymbol{x}\right)\right\} \equiv\mathbb{E}\left\{ \underline{\widetilde{\nabla}}^{n}\phi^{\widetilde{F}}\left(\boldsymbol{x},\cdot\right)\right\} \in\partial\phi^{\widetilde{F}}\left(\boldsymbol{x}\right),\quad\forall\left(n,\boldsymbol{x}\right)\in\mathbb{N}^{+}\times{\cal X},
\end{equation}
where we recall that the compact-valued multifunction $\partial\phi^{\widetilde{F}}:\mathbb{R}^{N}\rightrightarrows\mathbb{R}^{N}$
constitutes the subdifferential associated with the convex function
$\phi^{\widetilde{F}}$.

We are now interested in the structural characterization of $\partial\phi^{\widetilde{F}}$.
However, even though $\phi^{\widetilde{F}}$ is convex, a \textit{computationally
useful} characterization of its subdifferential is highly nontrivial;
this is mainly due to the fact that, although any mean-semideviation
risk measure is convex-monotone (as long as $c\in\left[0,1\right]$),
the corresponding dispersion measure can be only guaranteed to be
convex. This implies that composition of the latter with a convex
random function on $\mathbb{R}^{N}$ (such as $F$) does \textit{not}
yield a convex function on $\mathbb{R}^{N}$. Unfortunately, common
rules from subdifferential calculus, such as addition and composition,
which are essential in order to tractably determine the structure
of the multifunction $\partial\phi^{\widetilde{F}}$, are rather complicated
for nonconvex functions (see, e.g., Chapter 10 in \citep{Rockafellar2009VarAn}),
not only conceptually, but most importantly, from a computational
point of view, as well. Fortunately, the problem simplifies considerably
if we impose some mild regularity requirements on the structure of
the random cost function $F$, thus avoiding unnecessary technical
complications.
\begin{assumption}
\label{assu:F_AS_1}The random function $F$ possesses the following
properties:
\begin{description}
\item [{$\mathbf{P1}$}] $\,\,\:$For every $\boldsymbol{x}\in{\cal X}$,
there exists a measurable set $\mathsf{D}_{\boldsymbol{x}}\subseteq\Omega$,
with ${\cal P}\left(\mathsf{D}_{\boldsymbol{x}}\right)\equiv1$, such
that, for all $\omega\in\mathsf{D}_{\boldsymbol{x}}$, the random
function $F\left(\cdot,\boldsymbol{W}\left(\omega\right)\right)$
is differentiable at $\boldsymbol{x}$. In other words, $F$ is differentiable
at each $\boldsymbol{x}\in{\cal X}$, almost everywhere relative to
the base measure ${\cal P}$.
\item [{$\mathbf{P2}$}] $\,\,\:$Let ${\cal A}$ be the countable Borel
nullset of points where the risk regularizer ${\cal R}$ is nondifferentiable.
For every $\boldsymbol{x}\in{\cal X}$, there exists an event $\mathsf{N}_{\boldsymbol{x}}\subseteq\Omega$,
with ${\cal P}\left(\mathsf{N}_{\boldsymbol{x}}\right)\equiv1$, such
that, for all $\omega\in\mathsf{N}_{\boldsymbol{x}}$, $F\left(\boldsymbol{x},\boldsymbol{W}\left(\omega\right)\right)-\mathbb{E}\left\{ F\left(\boldsymbol{x},\boldsymbol{W}\right)\right\} \notin{\cal A}$.
In other words, for every $\boldsymbol{x}\in{\cal X}$, it is true
that
\begin{equation}
{\cal P}\left(F\left(\boldsymbol{x},\boldsymbol{W}\right)-\mathbb{E}\left\{ F\left(\boldsymbol{x},\boldsymbol{W}\right)\right\} \notin{\cal A}\right)\equiv1.
\end{equation}
\end{description}
\end{assumption}
In addition to Properties $\mathbf{P1}$ and $\mathbf{P2}$, it is
technically necessary to make the following basic assumption, concerning
the random subdifferential multifunction of $F$, relative to $\boldsymbol{x}$.
\begin{assumption}
\label{assu:F_AS_1_SUB}There exists a jointly $\mathscr{B}\left(\mathbb{R}^{N}\right)\otimes\mathscr{B}\left(\mathbb{R}^{M}\right)$-measurable
selection of the closed-valued multifunction $\partial F\left(\cdot,\bullet\right):\mathbb{\mathbb{R}}^{N}\times\mathbb{R}^{M}\rightrightarrows\mathbb{R}^{N}$,
say $\underline{\nabla}F\left(\cdot,\bullet\right):\mathbb{\mathbb{R}}^{N}\times\mathbb{R}^{M}\rightarrow\mathbb{R}^{N}$;
this is provided by the ${\cal SO}$, at each $n\in\mathbb{N}$, given
current iterate $\boldsymbol{x}^{n}\in\mathbb{R}^{N}$ and IID process
realization $\boldsymbol{w}\in\mathbb{R}^{M}$.
\end{assumption}
Assumption \ref{assu:F_AS_1_SUB} is important, because it allows
us to integrate $\underline{\nabla}F$ on $\mathbb{R}^{N}\times\mathbb{R}^{M}$,
relative to any qualifying Borel measure, provided such an integral
is well defined. This is extremely useful, in case \textit{both arguments}
$\boldsymbol{x}$ and $\boldsymbol{W}$ are random elements. Hereafter,
Assumption \ref{assu:F_AS_1_SUB} will be considered \textit{implicitly
true}; although it has to be verified case-by-case, it is almost always
true in practice. Utilizing both properties $\mathbf{P1}$ and $\mathbf{P2}$,
the following result may be formulated; it will then be utilized in
the design of SSD-type algorithms, specialized for the convex problem
(\ref{eq:MAIN_PROB}).
\begin{lem}
\textbf{\textup{(Differentiability of $\phi^{\widetilde{F}}$)}}\label{lem:Sub_Grad}
Consider the convex function $\phi^{\widetilde{F}}$. Let Assumption
\ref{assu:F_AS_1} be in effect, and suppose that ${\cal R}$ is not
identically zero everywhere on $\mathbb{R}$. Also, \uline{if \mbox{$p\in\left(1,\infty\right)$}},
and with
\begin{equation}
\kappa_{{\cal R}}\triangleq\sup\left\{ x\in\mathbb{R}\left|{\cal R}\left(x\right)\equiv0\right.\right\} \in\left[-\infty,\infty\right),
\end{equation}
suppose that
\begin{equation}
{\cal P}\left(F\left(\boldsymbol{x},\boldsymbol{W}\right)-\mathbb{E}\left\{ F\left(\boldsymbol{x},\boldsymbol{W}\right)\right\} \le\kappa_{{\cal R}}\right)<1,\quad\forall\boldsymbol{x}\in{\cal X}.\label{eq:Condition_Diff}
\end{equation}
Then $\phi^{\widetilde{F}}$ is differentiable everywhere on ${\cal X}$,
and its gradient $\nabla\phi^{\widetilde{F}}:\mathbb{R}^{N}\rightarrow\mathbb{R}^{N}$
may be expressed as
\begin{flalign}
\nabla\phi^{\widetilde{F}}\left(\boldsymbol{x}\right) & \equiv\mathbb{E}\left\{ \underline{\nabla}F\left(\boldsymbol{x},\boldsymbol{W}\right)\right\} +c\nabla\boldsymbol{h}^{\widetilde{F}}\left(\boldsymbol{x}\right)\nabla g^{\widetilde{F}}\left(\boldsymbol{h}^{\widetilde{F}}\left(\boldsymbol{x}\right)\right)\nabla\varrho\left(g^{\widetilde{F}}\left(\boldsymbol{h}^{\widetilde{F}}\left(\boldsymbol{x}\right)\right)\right),\quad\forall\boldsymbol{x}\in{\cal X},\label{eq:MUS1_Rep1}
\end{flalign}
where the derivative $\nabla\varrho:\mathbb{R}_{+}\rightarrow\mathbb{R}$,
Jacobian $\nabla\boldsymbol{h}^{\widetilde{F}}:\mathbb{R}^{N}\rightarrow\mathbb{R}^{N\times\left(N+1\right)}$
and gradient $\nabla g^{\widetilde{F}}:\mathbb{R}^{N+1}\rightarrow\mathbb{R}^{N+1}$
are given by the expectation functions
\begin{align}
\nabla\varrho\left(x\right) & \equiv\begin{cases}
1,\quad x\ge0, & \text{if }p\equiv1\\
\dfrac{1}{p}x^{\left(1-p\right)/p},\quad x>0, & \text{if }p\in\left(1,\infty\right)
\end{cases},\label{eq:rho_MUS1}\\
\nabla\boldsymbol{h}^{\widetilde{F}}\left(\boldsymbol{x}\right) & \equiv\mathbb{E}\left\{ \hspace{-2pt}\left[\hspace{-2pt}\hspace{-2pt}\begin{array}{c|c}
\\
\boldsymbol{I}_{N}\hspace{-2pt} & \underline{\nabla}F\left(\boldsymbol{x},\boldsymbol{W}\right)\\
\\
\end{array}\hspace{-2pt}\hspace{-2pt}\right]\hspace{-2pt}\right\} \quad\text{and}\label{eq:JAC_MUS1}\\
\nabla g^{\widetilde{F}}\left(\boldsymbol{x},y\right) & \equiv\mathbb{E}\left\{ p\left({\cal R}\left(F\left(\boldsymbol{x},\boldsymbol{W}\right)-y\right)\right)^{p-1}\underline{\nabla}{\cal R}\left(F\left(\boldsymbol{x},\boldsymbol{W}\right)-y\right)\left[\begin{array}{c}
\vphantom{{\displaystyle \int}}\underline{\nabla}F\left(\boldsymbol{x},\boldsymbol{W}\right)\\
\hline \vphantom{{\displaystyle \int}}-1
\end{array}\right]\hspace{-2pt}\right\} ,\label{eq:GRAD_MUS1}
\end{align}
respectively, for every $\left(\boldsymbol{x},y\right)\in\left\{ \left.\left(\boldsymbol{x},y\right)\in{\cal X}\times\mathbb{R}\right|y\equiv\mathbb{E}\left\{ F\left(\boldsymbol{x},\boldsymbol{W}\right)\right\} \right\} \triangleq\mathrm{Graph}_{{\cal X}}\left(\mathbb{E}\left\{ F\left(\cdot,\boldsymbol{W}\right)\right\} \right)$.
\end{lem}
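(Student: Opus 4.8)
The plan is to read $\phi^{\widetilde{F}}$ in the compositional form $\phi^{\widetilde{F}}(\boldsymbol{x})=\mathbb{E}\{F(\boldsymbol{x},\boldsymbol{W})\}+c\,\varrho(g^{\widetilde{F}}(\boldsymbol{h}^{\widetilde{F}}(\boldsymbol{x})))$ of (\ref{eq:MAIN_2}), and to establish differentiability of each of the three factors $\boldsymbol{h}^{\widetilde{F}}$, $g^{\widetilde{F}}$ and $\varrho$ at the relevant base point, so that the total-derivative chain rule (in the gradient convention of the paper) assembles (\ref{eq:MUS1_Rep1}). Since everything lives in finite dimensions, G\^ateaux and Fr\'echet differentiability coincide for the convex pieces, so the chain rule applies once pointwise differentiability at the image points is secured; the whole argument thus reduces to two interchanges of gradient and expectation plus one positivity check for the argument of $\varrho$.

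First I would treat $\boldsymbol{h}^{\widetilde{F}}(\boldsymbol{x})=[\boldsymbol{x}\;\mathbb{E}\{F(\boldsymbol{x},\boldsymbol{W})\}]$. Its first $N$ coordinates are the identity, contributing $\boldsymbol{I}_N$; the last coordinate is the expectation of the convex integrand $F(\cdot,\boldsymbol{W})$, finite on all of $\mathbb{R}^N$ because $F(\boldsymbol{x},\boldsymbol{W})\in{\cal Z}_q\subseteq{\cal Z}_1$. Invoking the standard differentiation-under-expectation result for convex random functions (e.g.\ \citep{ShapiroLectures_2ND}), whose hypotheses are exactly finiteness near $\boldsymbol{x}$ together with the almost-sure differentiability granted by Property $\mathbf{P1}$, gives differentiability of $\mathbb{E}\{F(\cdot,\boldsymbol{W})\}$ with gradient $\mathbb{E}\{\underline{\nabla}F(\boldsymbol{x},\boldsymbol{W})\}$ (well defined and integrable by the subgradient bounds for convex functions and the measurability of Assumption \ref{assu:F_AS_1_SUB}). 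This simultaneously yields the first summand of (\ref{eq:MUS1_Rep1}) and the Jacobian (\ref{eq:JAC_MUS1}).

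The crux is $g^{\widetilde{F}}(\boldsymbol{x},y)=\mathbb{E}\{({\cal R}(F(\boldsymbol{x},\boldsymbol{W})-y))^p\}$. The structural observation I would exploit is that its integrand is jointly convex in $(\boldsymbol{x},y)$: indeed $F(\boldsymbol{x},\boldsymbol{W})-y$ is convex in $(\boldsymbol{x},y)$, ${\cal R}$ is convex and nondecreasing ($\mathbf{S1}$, $\mathbf{S3}$), so ${\cal R}(F-y)$ is convex, and $t\mapsto t^p$ is convex nondecreasing on $[0,\infty)$ with ${\cal R}\ge 0$ ($\mathbf{S2}$), so the composition is convex. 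Finiteness of $g^{\widetilde{F}}$ everywhere follows exactly as in the proof of Proposition \ref{prop:P=000026Q}, using ${\cal R}(z)\le{\cal R}(0)+|z|$ and $F(\boldsymbol{x},\boldsymbol{W})\in{\cal Z}_p$. For almost-sure differentiability of the integrand at a graph point $y=\mathbb{E}\{F(\boldsymbol{x},\boldsymbol{W})\}$, I would combine Property $\mathbf{P1}$ (differentiability of $F(\cdot,\boldsymbol{W})$) with Property $\mathbf{P2}$, which forces the centered argument $F(\boldsymbol{x},\boldsymbol{W})-\mathbb{E}\{F(\boldsymbol{x},\boldsymbol{W})\}$ to avoid the countable nondifferentiability set ${\cal A}$ of ${\cal R}$ almost surely, and then differentiate the integrand by the ordinary chain rule to get $p({\cal R}(\cdot))^{p-1}\underline{\nabla}{\cal R}(\cdot)[\underline{\nabla}F(\boldsymbol{x},\boldsymbol{W})^{\boldsymbol{T}},-1]^{\boldsymbol{T}}$. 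A second application of the convex differentiation-under-expectation result then yields differentiability of $g^{\widetilde{F}}$ along the graph, with the gradient (\ref{eq:GRAD_MUS1}).

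Finally, for $\varrho(x)=x^{1/p}$: when $p\equiv1$ it is the identity, differentiable everywhere with $\nabla\varrho\equiv1$, so no extra condition is required; when $p\in(1,\infty)$ it is differentiable only on $(0,\infty)$, so I must verify that its argument $g^{\widetilde{F}}(\boldsymbol{h}^{\widetilde{F}}(\boldsymbol{x}))=\mathbb{E}\{({\cal R}(F(\boldsymbol{x},\boldsymbol{W})-\mathbb{E}\{F(\boldsymbol{x},\boldsymbol{W})\}))^p\}$ is strictly positive. Since ${\cal R}$ is nonnegative and nondecreasing, ${\cal R}(z)>0$ if and only if $z>\kappa_{\cal R}$, so positivity of the integral is equivalent to ${\cal P}(F(\boldsymbol{x},\boldsymbol{W})-\mathbb{E}\{F(\boldsymbol{x},\boldsymbol{W})\}>\kappa_{\cal R})>0$, which is precisely the hypothesis (\ref{eq:Condition_Diff}); the assumption ${\cal R}\not\equiv0$ guarantees $\kappa_{\cal R}<+\infty$ so that this is not vacuous. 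Assembling the three derivatives through the chain rule then produces (\ref{eq:MUS1_Rep1}) with the component expressions (\ref{eq:rho_MUS1})--(\ref{eq:GRAD_MUS1}). I expect the main obstacle to be the rigorous justification of the two interchanges of gradient and expectation --- in particular establishing the joint convexity of the integrand of $g^{\widetilde{F}}$, which is what licenses the clean convex-function theorem and lets Properties $\mathbf{P1}$--$\mathbf{P2}$ supply the pointwise differentiability, instead of forcing a bespoke domination argument.
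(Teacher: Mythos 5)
Your proposal is correct and follows essentially the same route as the paper's own proof: the compositional reading of $\phi^{\widetilde{F}}$, two applications of the convex differentiation-under-expectation theorem of \citep{ShapiroLectures_2ND} (once for $\mathbb{E}\left\{ F\left(\cdot,\boldsymbol{W}\right)\right\}$ via $\mathbf{P1}$, once for $g^{\widetilde{F}}$ via $\mathbf{P1}$ and $\mathbf{P2}$ on the graph points), and the positivity check $g^{\widetilde{F}}\left(\boldsymbol{h}^{\widetilde{F}}\left(\boldsymbol{x}\right)\right)>0$ reduced to condition (\ref{eq:Condition_Diff}) through $\kappa_{{\cal R}}$, exactly as the paper does. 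The only cosmetic difference is that you justify the interchange via joint convexity of the integrand of $g^{\widetilde{F}}$ in $\left(\boldsymbol{x},y\right)$, which is a slightly cleaner statement of what the paper records as convexity ``in both variables,'' and you phrase the integrand's derivative via the ordinary chain rule on the full-measure event rather than via subgradient composition rules; neither changes the substance.
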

\begin{proof}[Proof of Lemma \ref{lem:Sub_Grad}]
See Section \ref{subsec:MUS-1-Direct-Derivation} (Appendix).
\end{proof}
\begin{rem}
Note that, in Lemma \ref{lem:Sub_Grad}, we have explicitly assumed
that the risk regularizer ${\cal R}$ is \textit{not} identically
zero everywhere on $\mathbb{R}$. If it is, (\ref{eq:Prog_1}) reduces
to the standard, risk neutral stochastic program of minimizing the
expectation of a convex random function over a closed convex set,
well-studied in the literature of stochastic approximation.\hfill{}\ensuremath{\blacksquare}
\end{rem}
\begin{rem}
We would also like to comment on the potential restrictiveness of
condition (\ref{eq:Condition_Diff}). Suppose that $\kappa_{{\cal R}}\le0$.
This essentially means that the risk regularizer ${\cal R}$ always
\textit{positively} penalizes events for which $F\left(\boldsymbol{x},\boldsymbol{W}\right)>\mathbb{E}\left\{ F\left(\boldsymbol{x},\boldsymbol{W}\right)\right\} $.
In such a case, (\ref{eq:Condition_Diff}) will be true if, for every
$\boldsymbol{x}\in{\cal X}$,
\begin{equation}
{\cal P}\left(F\left(\boldsymbol{x},\boldsymbol{W}\right)-\mathbb{E}\left\{ F\left(\boldsymbol{x},\boldsymbol{W}\right)\right\} \le0\right)<1.\label{eq:Condition_0}
\end{equation}
It is then a standard exercise to show that, for every $\boldsymbol{x}\in{\cal X}$,
(\ref{eq:Condition_0}) is true if and only if
\begin{equation}
{\cal P}\left(F\left(\boldsymbol{x},\boldsymbol{W}\right)\equiv\mathbb{E}\left\{ F\left(\boldsymbol{x},\boldsymbol{W}\right)\right\} \right)<1.
\end{equation}
This means that, if $\kappa_{{\cal R}}\le0$, (\ref{eq:Condition_Diff})
translates to a truly mild condition on the structure of $F\left(\cdot,\boldsymbol{W}\right)$,
namely, that, for every feasible decision $\boldsymbol{x}\in{\cal X}$,
the cost $F\left(\boldsymbol{x},\boldsymbol{W}\right)$ cannot be
equal to a constant, almost everywhere relative to ${\cal P}$. In
other words, for every $\boldsymbol{x}\in{\cal X}$, $F\left(\boldsymbol{x},\boldsymbol{W}\right)$
has to be a nontrivial random variable. Of course, it is not hard
to satisfy such a condition in practice.\hfill{}\ensuremath{\blacksquare}
\end{rem}
Lemma \ref{lem:Sub_Grad} establishes that, under Assumption \ref{assu:F_AS_1}
(properties $\mathbf{P1}$ and $\mathbf{P2}$) and, \textit{if} $p\in\left(1,\infty\right)$,
under condition (\ref{eq:Condition_Diff}), the subdifferential of
$\phi^{\widetilde{F}}$ is a singleton, and provides an explicit representation
of the gradient vector $\nabla\phi^{\widetilde{F}}$.

Of course, in the original version of the SSD algorithm, one would
require the availability of a stochastic subgradient sequence in order
to perform the usual SSD update step. However, Lemma \ref{lem:Sub_Grad}
reveals that, under our base assumptions, such a stochastic subgradient
process is far from obvious to obtain, mainly due to the functional
form of $\nabla\phi^{\widetilde{F}}$. In particular, by inspection
of (\ref{eq:MUS1_Rep1}) in Lemma \ref{lem:Sub_Grad}, it is easy
to see that $\nabla\phi^{\widetilde{F}}$ exhibits itself \textit{compositional
structure}, consisting of products of nested expectation functions.
This fact implies that it is generally \textit{not} possible to generate
a stochastic gradient in a \textit{single} sampling step, thus leading
naturally to the idea of developing a \textit{compositional stochastic
subgradient algorithm} (see, for instance, \citep{Wang2017}). In
such an algorithm, the respective stochastic gradient step would be
implemented in a hierarchical fashion \textit{at every iteration},
starting from the ``deepest'' \textit{Stochastic Approximation (SA)
level}, to the ``shallowest'' (in most cases, a \textit{biased}
procedure); see Section \ref{subsec:MESSAGE} for details.

Exploiting Assumptions \ref{assu:1}, \ref{assu:2} \textit{and} \ref{assu:F_AS_1},
and to efficiently exploit the special compositional structure of
$\nabla\phi^{\widetilde{F}}$, we will be particularly interested
in sampled \textit{approximations} of $\nabla\phi^{\widetilde{F}}$,
which are constructed using \textit{sampled realizations} of the random
cost $F\left(\cdot,\boldsymbol{W}\right)$, as well as some corresponding
subgradient, $\underline{\nabla}F\left(\cdot,\boldsymbol{W}\right)$.
From now on, we will implicitly assume that the risk regularizer ${\cal R}$
\textit{is not identically zero everywhere on} $\mathbb{R}$. Otherwise,
the problem reduces to its risk-neutral counterpart.

\subsection{\label{subsec:MESSAGE}The \textit{$\textit{MESSAGE}^{p}$} Algorithm}

For any value of $p\in\left[1,\infty\right)$, the proposed \textit{$\textit{MESSAGE}^{p}$}
algorithm consists of \textit{three} SA levels, as naturally suggested
by the structure of $\nabla\phi^{\widetilde{F}}$, and assumes the
existence of \textit{two} mutually independent, IID information streams,
$\boldsymbol{W}_{1}^{n}$, $\boldsymbol{W}_{2}^{n}$, accessible by
the ${\cal SO}$ (see\textit{ }Assumption \ref{assu:1}), as follows.
In the first (shallowest) SA level, at iteration $n\in\mathbb{N}$,
and given current, random iterates $\boldsymbol{x}^{n}\equiv\boldsymbol{x}^{n}\left(\omega\right)\in{\cal X}$
and $y^{n}\equiv y^{n}\left(\omega\right)\in\mathbb{R}$, the ${\cal SO}$
provides the samples $F\hspace{-2pt}\left(\boldsymbol{x}^{n},\boldsymbol{W}_{1}^{n+1}\right)$
and $\underline{\nabla}F\hspace{-2pt}\left(\boldsymbol{x}^{n},\boldsymbol{W}_{1}^{n+1}\right)$
and the smoothing update
\begin{algorithm}[t]
\begin{shadedbox}
\vspace{3bp}
\textbf{Input}: Initial points $\boldsymbol{x}^{0}\in{\cal X}$, $y^{0}\in\mathbb{R}$,
$z^{0}\in\mathbb{R}$, stepsize sequences $\left\{ \alpha_{n}\right\} _{n\in\mathbb{N}}$,
$\left\{ \beta_{n}\right\} _{n\in\mathbb{N}}$, $\left\{ \gamma_{n}\right\} _{n\in\mathbb{N}}$,
IID sequences $\left\{ \boldsymbol{W}_{1}^{n}\right\} _{n\in\mathbb{N}}$,
$\left\{ \boldsymbol{W}_{2}^{n}\right\} _{n\in\mathbb{N}}$ and penalty
coefficient $c\in\left[0,1\right]$.

\textbf{Output}: Sequence $\left\{ \boldsymbol{x}^{n}\right\} _{n\in\mathbb{N}}$.

1:$\;\;$\textbf{for} $n=0,1,2,\ldots$ \textbf{do}

2:$\;\;$$\quad$Obtain $F\hspace{-2pt}\left(\boldsymbol{x}^{n},\boldsymbol{W}_{1}^{n+1}\right)$
and $\underline{\nabla}F\hspace{-2pt}\left(\boldsymbol{x}^{n},\boldsymbol{W}_{1}^{n+1}\right)$
from the ${\cal SO}$.

3:$\;\;$$\quad$Update (First SA Level):
\begin{flalign*}
y^{n+1} & =\left(1-\beta_{n}\right)y^{n}+\beta_{n}F\hspace{-2pt}\left(\boldsymbol{x}^{n},\boldsymbol{W}_{1}^{n+1}\right)
\end{flalign*}

4:$\;\;$$\quad$Obtain $F\hspace{-2pt}\left(\boldsymbol{x}^{n},\boldsymbol{W}_{2}^{n+1}\right)$
and $\underline{\nabla}F\hspace{-2pt}\left(\boldsymbol{x}^{n},\boldsymbol{W}_{2}^{n+1}\right)$
from the ${\cal SO}$.

5:$\;\;$$\quad$Update (Second SA Level):
\[
z^{n+1}=\begin{cases}
1, & \text{if }p=1\\
\left(1-\gamma_{n}\right)z^{n}+\gamma_{n}\left({\cal R}\left(F\hspace{-2pt}\left(\boldsymbol{x}^{n},\boldsymbol{W}_{2}^{n+1}\right)\hspace{-2pt}-\hspace{-2pt}y^{n}\right)\right)^{p}, & \text{if }p>1
\end{cases}
\]

6:$\;\;$$\quad$Define auxiliary variables:
\begin{flalign*}
\delta & =F\left(\boldsymbol{x}^{n},\boldsymbol{W}_{2}^{n+1}\right)-y^{n}\\
\boldsymbol{\delta}^{\underline{\nabla}} & =\underline{\nabla}F\hspace{-2pt}\left(\boldsymbol{x}^{n},\boldsymbol{W}_{2}^{n+1}\right)-\underline{\nabla}F\hspace{-2pt}\left(\boldsymbol{x}^{n},\boldsymbol{W}_{1}^{n+1}\right)\\
\Delta & =\boldsymbol{\delta}^{\underline{\nabla}}\underline{\nabla}{\cal R}\left(\delta\right)\left({\cal R}\left(\delta\right)\right)^{p-1}\left(z^{n}\right)^{\left(1-p\right)/p}
\end{flalign*}
7:$\;\;$$\quad$Update (Third SA Level):
\[
\boldsymbol{x}^{n+1}=\Pi_{{\cal X}}\left\{ \boldsymbol{x}^{n}-\alpha_{n}\left(\underline{\nabla}F\hspace{-2pt}\left(\boldsymbol{x}^{n},\boldsymbol{W}_{2}^{n+1}\right)\hspace{-2pt}+c\Delta\right)\hspace{-2pt}\right\} 
\]

8:$\;\;$\textbf{end for}
\end{shadedbox}
\vspace{-9bp}

\caption{\label{alg:SCGD-3}\textit{$\;\textit{MESSAGE}^{p}$}}
\end{algorithm}
\begin{equation}
y^{n+1}=\left(1-\beta_{n}\right)y^{n}+\beta_{n}F\hspace{-2pt}\left(\boldsymbol{x}^{n},\boldsymbol{W}_{1}^{n+1}\right)\label{eq:HIER_1-1}
\end{equation}
is performed, where $\left\{ \beta_{n}>0\right\} _{n\in\mathbb{N}}$
is an appropriately chosen stepsize sequence. In the second SA level,
the ${\cal SO}$ provides the samples $F\hspace{-2pt}\left(\boldsymbol{x}^{n},\boldsymbol{W}_{2}^{n+1}\right)$
and $\underline{\nabla}F\hspace{-2pt}\left(\boldsymbol{x}^{n},\boldsymbol{W}_{2}^{n+1}\right)$,
and another smoothing update
\begin{equation}
z^{n+1}=\begin{cases}
1, & \text{if }p=1\\
\left(1-\gamma_{n}\right)z^{n}+\gamma_{n}\left({\cal R}\left(F\hspace{-2pt}\left(\boldsymbol{x}^{n},\boldsymbol{W}_{2}^{n+1}\right)\hspace{-2pt}-\hspace{-2pt}y^{n}\right)\right)^{p}, & \text{if }p>1
\end{cases}
\end{equation}
is performed, with $\left\{ \gamma_{n}>0\right\} _{n\in\mathbb{N}}$
being another appropriately chosen stepsize sequence. Of course, in
the simpler case where $p\equiv1$, no actual update is performed.
In the third (deepest) SA level, \textit{with no additional information
by the ${\cal SO}$}, and by defining the stochastic gradient \textit{approximation}
$\widehat{\nabla}^{n+1}\phi^{\widetilde{F}}:\mathbb{R}^{N+2}\times\Omega\rightarrow\mathbb{R}$
as
\begin{flalign}
 & \hspace{-2pt}\hspace{-2pt}\hspace{-2pt}\hspace{-2pt}\widehat{\nabla}^{n+1}\phi^{\widetilde{F}}\left(\boldsymbol{x}^{n},y^{n},z^{n}\right)\equiv\widehat{\nabla}^{n+1}\phi^{\widetilde{F}}\left(\boldsymbol{x}^{n},y^{n},z^{n},\cdot\right)\nonumber \\
 & \triangleq\underline{\nabla}F\hspace{-2pt}\left(\boldsymbol{x}^{n},\hspace{-1pt}\boldsymbol{W}_{2}^{n+1}\right)\hspace{-2pt}+\hspace{-2pt}c\left(z^{n}\right)^{\left(1-p\right)/p}\left[\hspace{-2pt}\hspace{-2pt}\hspace{-2pt}\begin{array}{c|c}
\\
\boldsymbol{I}_{N}\hspace{-2pt}\hspace{-1pt} & \hspace{-2pt}\underline{\nabla}F\hspace{-2pt}\left(\boldsymbol{x}^{n},\hspace{-1pt}\boldsymbol{W}_{1}^{n+1}\right)\\
\\
\end{array}\hspace{-2pt}\hspace{-2pt}\hspace{-2pt}\right]\hspace{-2pt}\hspace{-2pt}\hspace{-2pt}\nonumber \\
 & \quad\quad\times\left[\begin{array}{c}
\vphantom{{\displaystyle \int}}\underline{\nabla}F\hspace{-2pt}\left(\boldsymbol{x}^{n},\hspace{-1pt}\boldsymbol{W}_{2}^{n+1}\right)\\
\hline \vphantom{{\displaystyle \int}}-1
\end{array}\right]\underline{\nabla}{\cal R}\hspace{-2pt}\left(F\hspace{-2pt}\left(\boldsymbol{x}^{n},\hspace{-1pt}\boldsymbol{W}_{2}^{n+1}\right)\hspace{-2pt}-y^{n}\right)\hspace{-2pt}\hspace{-2pt}\left({\cal R}\hspace{-2pt}\left(F\hspace{-2pt}\left(\boldsymbol{x}^{n},\hspace{-1pt}\boldsymbol{W}_{2}^{n+1}\right)\hspace{-2pt}-y^{n}\right)\right)^{p-1}\nonumber \\
 & \equiv\underline{\nabla}F\hspace{-2pt}\left(\boldsymbol{x}^{n},\hspace{-1pt}\boldsymbol{W}_{2}^{n+1}\right)\hspace{-2pt}+\hspace{-2pt}c\left(z^{n}\right)^{\left(1-p\right)/p}\left(\underline{\nabla}F\hspace{-2pt}\left(\boldsymbol{x}^{n},\hspace{-1pt}\boldsymbol{W}_{2}^{n+1}\right)-\underline{\nabla}F\hspace{-2pt}\left(\boldsymbol{x}^{n},\hspace{-1pt}\boldsymbol{W}_{1}^{n+1}\right)\right)\nonumber \\
 & \quad\quad\quad\quad\quad\quad\quad\quad\quad\quad\quad\times\underline{\nabla}{\cal R}\hspace{-2pt}\left(F\hspace{-2pt}\left(\boldsymbol{x}^{n},\hspace{-1pt}\boldsymbol{W}_{2}^{n+1}\right)\hspace{-2pt}-y^{n}\right)\hspace{-2pt}\hspace{-2pt}\left({\cal R}\hspace{-2pt}\left(F\hspace{-2pt}\left(\boldsymbol{x}^{n},\hspace{-1pt}\boldsymbol{W}_{2}^{n+1}\right)\hspace{-2pt}-y^{n}\right)\right)^{p-1}\nonumber \\
 & \triangleq\underline{\nabla}F\hspace{-2pt}\left(\boldsymbol{x}^{n},\hspace{-1pt}\boldsymbol{W}_{2}^{n+1}\right)\hspace{-2pt}+c\Delta^{n+1}\left(\boldsymbol{x}^{n},y^{n},z^{n}\right),
\end{flalign}
we update the current estimate $\boldsymbol{x}^{n}$ as
\begin{flalign}
\boldsymbol{x}^{n+1}\hspace{-2pt} & \equiv\hspace{-2pt}\Pi_{{\cal X}}\hspace{-2pt}\left\{ \boldsymbol{x}^{n}\hspace{-2pt}-\hspace{-2pt}\alpha_{n}\widehat{\nabla}^{n+1}\phi^{\widetilde{F}}\left(\boldsymbol{x}^{n},y^{n},z^{n}\right)\right\} \nonumber \\
\hspace{-2pt} & \equiv\hspace{-2pt}\Pi_{{\cal X}}\hspace{-2pt}\left\{ \boldsymbol{x}^{n}\hspace{-2pt}-\hspace{-2pt}\alpha_{n}\hspace{-2pt}\left(\underline{\nabla}F\hspace{-2pt}\left(\boldsymbol{x}^{n},\hspace{-1pt}\boldsymbol{W}_{2}^{n+1}\right)\hspace{-2pt}+\hspace{-2pt}c\Delta^{n+1}\left(\boldsymbol{x}^{n},y^{n},z^{n}\right)\right)\hspace{-2pt}\right\} ,\label{eq:HIER_2-1}
\end{flalign}
where $\left\{ \alpha_{n}\ge0\right\} _{n\in\mathbb{N}}$ is another
appropriately chosen stepsize sequence. In the above, $\Delta^{n}:\mathbb{R}^{N+2}\times\Omega\rightarrow\mathbb{R}$,
$n\in\mathbb{N}^{+}$ (a random function of the involved quantities)
may be viewed as a \textit{risk-averse correction sequence}, weighted
by the penalty multiplier $c\in\left[0,1\right]$. Again, if $p\equiv1$,
the correction $\Delta^{n}$ is simplified accordingly as
\begin{flalign}
\Delta^{n+1}\left(\boldsymbol{x}^{n},y^{n},z^{n}\right) & \equiv\Delta^{n+1}\left(\boldsymbol{x}^{n},y^{n},1\right)\nonumber \\
 & \equiv\underline{\nabla}{\cal R}\hspace{-2pt}\left(F\hspace{-2pt}\left(\boldsymbol{x}^{n},\hspace{-1pt}\boldsymbol{W}_{2}^{n+1}\right)\hspace{-2pt}-y^{n}\right)\left(\underline{\nabla}F\hspace{-2pt}\left(\boldsymbol{x}^{n},\hspace{-1pt}\boldsymbol{W}_{2}^{n+1}\right)-\underline{\nabla}F\hspace{-2pt}\left(\boldsymbol{x}^{n},\hspace{-1pt}\boldsymbol{W}_{1}^{n+1}\right)\right),
\end{flalign}
for all $n\in\mathbb{N}$. As we will shortly see, whether $p\equiv1$
or $p>1$ has nontrivial consequences in regard to the asymptotic
performance of the \textit{$\textit{MESSAGE}^{p}$ }algorithm. The
iterative optimization procedure outlined above (the \textit{$\textit{MESSAGE}^{p}$}
algorithm) is summarized in Algorithm \ref{alg:SCGD-3}.

Exploiting Assumption \ref{assu:F_AS_1_SUB}, it is then easy to verify
that, for every $\left(n,\boldsymbol{x}\right)\in\mathbb{N}^{+}\times{\cal X}$,
\begin{align}
\mathbb{E}\left\{ \widehat{\nabla}^{n}\phi^{\widetilde{F}}\left(\boldsymbol{x},\mathbb{E}\left\{ F\hspace{-2pt}\left(\boldsymbol{x},\boldsymbol{W}\right)\right\} ,\mathbb{E}\left\{ \left({\cal R}\left(F\hspace{-2pt}\left(\boldsymbol{x},\boldsymbol{W}\right)\hspace{-2pt}-\hspace{-2pt}\mathbb{E}\left\{ F\hspace{-2pt}\left(\boldsymbol{x},\boldsymbol{W}\right)\right\} \right)\right)^{p}\right\} \right)\right\}  & \equiv\nabla\phi^{\widetilde{F}}\left(\boldsymbol{x}\right).
\end{align}
This implies that while the random function $\widehat{\nabla}^{n}\phi^{\widetilde{F}}\left(\cdot,y^{n},z^{n}\right)$
\textit{does not} necessarily yield a stochastic gradient of $\phi^{\widetilde{F}}$,
for $n\in\mathbb{N}$, $\widehat{\nabla}^{n}\phi^{\widetilde{F}}\left(\cdot,\mathbb{E}\left\{ F\hspace{-2pt}\left(\cdot,\boldsymbol{W}\right)\right\} ,\mathbb{E}\left\{ \left({\cal R}\left(F\hspace{-2pt}\left(\cdot,\boldsymbol{W}\right)\hspace{-2pt}-\hspace{-2pt}\mathbb{E}\left\{ F\hspace{-2pt}\left(\cdot,\boldsymbol{W}\right)\right\} \right)\right)^{p}\right\} \right)$
\textit{does}; this is a key fact in the analysis of general purpose
compositional stochastic subgradient algorithms \citep{Wang2018}.
\begin{rem}
In relation to the brief discussion above, it might be helpful to
observe that, by the substitution rule for conditional expectations
(again due to Assumption \ref{assu:F_AS_1_SUB}), it is also true
that
\begin{equation}
\hspace{-2pt}\hspace{-2pt}\mathbb{E}\hspace{-2pt}\left\{ \hspace{-1pt}\left.\widehat{\nabla}^{n+1}\phi^{\widetilde{F}}\hspace{-2pt}\left(\boldsymbol{x}^{n},\left.\mathbb{E}\hspace{-2pt}\left\{ F\hspace{-2pt}\left(\boldsymbol{x},\hspace{-1pt}\boldsymbol{W}\right)\right\} \right|_{\boldsymbol{x}\equiv\boldsymbol{x}^{n}}\hspace{-1pt},\left.\mathbb{E}\hspace{-2pt}\left\{ \left({\cal R}\left(F\hspace{-2pt}\left(\boldsymbol{x},\hspace{-1pt}\boldsymbol{W}\right)\hspace{-2pt}-\hspace{-2pt}\mathbb{E}\hspace{-2pt}\left\{ F\hspace{-2pt}\left(\boldsymbol{x},\hspace{-1pt}\boldsymbol{W}\right)\right\} \right)\right)^{p}\right\} \right|_{\boldsymbol{x}\equiv\boldsymbol{x}^{n}}\right)\right|\boldsymbol{x}^{n}\right\} \hspace{-2pt}\equiv\hspace{-2pt}\nabla\phi^{\widetilde{F}}\hspace{-2pt}\left(\boldsymbol{x}^{n}\right)\hspace{-1pt},\hspace{-2pt}
\end{equation}
almost everywhere relative to ${\cal P}$ and, apparently, this is
\textit{not} the case for the conditional expectation of $\widehat{\nabla}^{n+1}\phi^{\widetilde{F}}\left(\boldsymbol{x}^{n},y^{n},z^{n}\right)$
relative to $\sigma\left\{ \boldsymbol{x}^{n}\right\} $, or even
$\sigma\left\{ \boldsymbol{x}^{0},\boldsymbol{x}^{1},\ldots,\boldsymbol{x}^{n}\right\} $.
In this sense, we might say that the hierarchical approximate gradient
sampling scheme described by (\ref{eq:HIER_1-1}) and (\ref{eq:HIER_2-1})
is \textit{conditionally biased}.\hfill{}\ensuremath{\blacksquare}
\end{rem}
\vspace{-5bp}

\subsection{\label{subsec:Convergence-Analysis}Convergence Analysis}

Next, we present and discuss the proposed structural framework, explicitly
demonstrating its generality and flexibility. Subsequently, we proceed
with a detailed presentation of our main technical results, concerning
the asymptotic behavior of the \textit{$\textit{MESSAGE}^{p}$} algorithm;
we study pathwise convergence first, and rate of convergence second.
\vspace{-5bp}

\subsubsection{\label{par:Structural-Assumptions}Structural Assumptions}

Hereafter, for some measurable set $\Omega_{E}\subseteq\Omega$, such
that ${\cal P}\left(\Omega_{E}\right)\equiv1$, let us define the
quantities
\begin{equation}
m_{l}\triangleq\inf_{\boldsymbol{x}\in{\cal X}}\inf_{\omega\in\Omega_{E}}F\left(\boldsymbol{x},\boldsymbol{W}\left(\omega\right)\right)\quad\text{and}\quad m_{h}\triangleq\sup_{\boldsymbol{x}\in{\cal X}}\sup_{\omega\in\Omega_{E}}F\left(\boldsymbol{x},\boldsymbol{W}\left(\omega\right)\right),
\end{equation}
and let $\mathfrak{R}^{\widetilde{F}}\triangleq\textrm{cl}\left\{ \left(m_{l}-m_{h},m_{h}-m_{l}\right)\right\} $,
where the closure is taken relative to the usual Euclidean topology
on $\mathbb{R}$. The structural problem assumptions considered in
this paper follow. 
\begin{shadedbox}
\begin{assumption}
\label{assu:F_AS_Main}For $P\in\left[2,\infty\right]$ and $Q\in\left[P/\hspace{-2pt}\left(P\hspace{-2pt}-\hspace{-2pt}1\right)\hspace{-2pt},\infty\right]$\footnote{As usual, the case $P\equiv\infty$ is understood as a limit.},
$F\left(\cdot,\hspace{-1pt}\boldsymbol{W}\right)$ and ${\cal R}$
satisfy the conditions:
\begin{description}
\item [{$\mathbf{C1}$}] $\,\,\:$For chosen random subgradient $\underline{\nabla}F\left(\cdot,\hspace{-1pt}\boldsymbol{W}\right)$,
there exists a number $G<\infty$, such that
\[
\sup_{\boldsymbol{x}\in{\cal X}}\left[\mathbb{E}\left\{ \left\Vert \underline{\nabla}F\left(\boldsymbol{x},\hspace{-1pt}\boldsymbol{W}\right)\right\Vert _{2}^{P}\right\} \right]^{1/P}\triangleq\sup_{\boldsymbol{x}\in{\cal X}}\left\Vert \vphantom{\varint}\hspace{-2pt}\left\Vert \underline{\nabla}F\left(\boldsymbol{x},\hspace{-1pt}\boldsymbol{W}\right)\right\Vert _{2}\right\Vert _{{\cal L}_{P}}\le G.
\]
In other words, the $\ell_{2}$-norm of $\underline{\nabla}F\left(\cdot,\hspace{-1pt}\boldsymbol{W}\right)$
has bounded ${\cal L}_{P}$-norm, uniformly over ${\cal X}$.
\item [{$\mathbf{C2}$}] $\,\,\:$There exists a number $V<\infty$, such
that
\[
\sup_{\boldsymbol{x}\in{\cal X}}\mathbb{V}\left\{ F\left(\boldsymbol{x},\hspace{-1pt}\boldsymbol{W}\right)\right\} \triangleq\sup_{\boldsymbol{x}\in{\cal X}}\left[\mathbb{E}\left\{ \left(F\left(\boldsymbol{x},\hspace{-1pt}\boldsymbol{W}\right)\right)^{2}\right\} -\left(\mathbb{E}\left\{ F\left(\boldsymbol{x},\hspace{-1pt}\boldsymbol{W}\right)\right\} \right)^{2}\right]\le V,
\]
that is, uniformly on ${\cal X}$, $F\left(\cdot,\hspace{-1pt}\boldsymbol{W}\right)$
is of bounded variance.
\item [{$\mathbf{C3}$}] $\,\,\:$For chosen subderivative $\underline{\nabla}{\cal R}$,
there exists another number $D<\infty$, such that\footnote{Note that, by convexity, it always is true that $\underline{\nabla}\left({\cal R}\left(z\right)\right)^{p}\equiv p\left({\cal R}\left(z\right)\right)^{p-1}\underline{\nabla}{\cal R}\left(z\right)$.}
\[
\sup_{\boldsymbol{x}\in{\cal X}}\left\Vert \vphantom{\varint}\hspace{-2pt}\left|\left.\underline{\nabla}\left({\cal R}\left(z\right)\right)^{p}\right|_{z\equiv F\left(\boldsymbol{x},\hspace{-1pt}\boldsymbol{W}\right)-y_{1}}-\left.\underline{\nabla}\left({\cal R}\left(z\right)\right)^{p}\right|_{z\equiv F\left(\boldsymbol{x},\hspace{-1pt}\boldsymbol{W}\right)-y_{2}}\right|\right\Vert _{{\cal L}_{Q}}\le D\left|y_{1}-y_{2}\right|,
\]
for all $\left(y_{1},y_{2}\right)\in\left[\mathrm{cl}\left\{ \left(m_{l},m_{h}\right)\right\} \right]^{2}$.
This is a Lipschitz-in-Expectation type of condition.
\item [{$\mathbf{C4}$}] $\,\,\:$\uline{Whenever \mbox{$p>1$}}, it
is true that $-\infty<m_{l}\le m_{h}<\infty$, and
\[
0<\varepsilon\triangleq{\cal R}\left(m_{l}-m_{h}\right)\le{\cal R}\left(m_{h}-m_{l}\right)\triangleq{\cal E}<\infty.
\]
In other words, the risk regularizer ${\cal R}$ is strictly positively
uniformly bounded within $\mathfrak{R}^{\widetilde{F}}$.
\end{description}
\end{assumption}
\end{shadedbox}

Let us briefly comment on the various conditions of Assumption \ref{assu:F_AS_Main}.
First, conditions $\mathbf{C1}$ and $\mathbf{C3}$ reveal a probably
fundamental trade-off between the size of the $\ell_{2}$-norm of
the random subgradient $\underline{\nabla}F\left(\boldsymbol{x},\hspace{-1pt}\boldsymbol{W}\right)$,
which may be thought of as a measure of the expansiveness of the random
cost function $F\left(\cdot,\boldsymbol{W}\right)$, and the size
of the \textit{slope} of the random subgradient $p\left({\cal R}\left(F\left(\cdot,\hspace{-1pt}\boldsymbol{W}\right)-\bullet\right)\right)^{p-1}$
$\times\underline{\nabla}{\cal R}\left(F\left(\cdot,\hspace{-1pt}\boldsymbol{W}\right)-\bullet\right)$,
which is directly related to the smoothness of the risk regularizer
${\cal R}$, as well as the smoothness of the distribution of $F\left(\cdot,\boldsymbol{W}\right)$.
This trade-off is explicitly demonstrated through the following simple
result, which presents at least three ways of increasing generality,
for ensuring validity of condition $\mathbf{C3}$, for certain values
of the exponent pair $\left(P,Q\right)$.
\begin{prop}
\textbf{\textup{(Ensuring Validity of }}$\mathbf{C3}$\textbf{\textup{)\label{prop:C3_Valid}}}
Assume that, whenever $p>1$, condition ${\bf C4}$ is satisfied.
Then, the following statements are true:
\begin{enumerate}
\item Suppose that the $p$-th power of ${\cal R}$ is differentiable on
$\mathfrak{R}^{\widetilde{F}}$, and that there is $D_{{\cal R},p}<\infty$,
such that
\begin{equation}
\left|\nabla\left({\cal R}\left(y_{1}\right)\right)^{p}-\nabla\left({\cal R}\left(y_{1}\right)\right)^{p}\right|\le D_{{\cal R},p}\left|y_{1}-y_{2}\right|,\quad\forall\left(y_{1},y_{2}\right)\in\left[\mathfrak{R}^{\widetilde{F}}\right]^{2}.\label{eq:Valid_1}
\end{equation}
Then, condition $\mathbf{C3}$ is satisfied for every choice of $Q\in\left[P/\hspace{-2pt}\left(P\hspace{-2pt}-\hspace{-2pt}1\right)\hspace{-2pt},\infty\right]$,
for every choice of $P\in\left[2,\infty\right]$.
\item Choose $\underline{\nabla}{\cal R}\equiv{\cal R}'_{+}$, and if $F_{\boldsymbol{W}}^{\left(\cdot\right)}:\mathbb{R}\rightarrow\left[0,1\right]$
denotes the cdf of $F\left(\cdot,\boldsymbol{W}\right)$, suppose
that there exists $D_{\widetilde{F}}<\infty$, such that
\begin{equation}
\sup_{\boldsymbol{x}\in{\cal X}}\left|F_{\boldsymbol{W}}^{\boldsymbol{x}}\left(y_{1}\right)-F_{\boldsymbol{W}}^{\boldsymbol{x}}\left(y_{2}\right)\right|\le D_{\widetilde{F}}\left|y_{1}-y_{2}\right|,\quad\forall\left(y_{1},y_{2}\right)\in\left[\mathrm{cl}\left\{ \left(m_{l},m_{h}\right)\right\} \right]^{2}.\label{eq:Valid_2}
\end{equation}
Then, condition $\mathbf{C3}$ is satisfied for $Q\equiv1$ (implying
that $P\equiv\infty$), for every value of $p\in\left[1,\infty\right)$.
\item More generally, whenever $p\equiv1$ and for any choice of ${\cal R}$,
take $\underline{\nabla}{\cal R}\equiv{\cal R}'_{+}$, let $Y:\Omega\rightarrow\mathbb{R}$
be the random variable associated with ${\cal R}$, as in Theorem
\ref{thm:IFF_RR}, and suppose that $F_{\boldsymbol{W}}^{\left(\cdot\right)}$
is continuous everywhere on $\mathbb{R}$ (not necessarily Lipschitz).
Then, condition $\mathbf{C3}$ is satisfied for $Q\equiv1$ (implying
that $P\equiv\infty$) if and only if there exists some $D_{{\cal R}}^{\widetilde{F}}<\infty$,
such that the Lipschitz-in-Expectation condition
\begin{equation}
\sup_{\boldsymbol{x}\in{\cal X}}\int\left|F_{\boldsymbol{W}}^{\boldsymbol{x}}\left(y+y_{1}\right)-F_{\boldsymbol{W}}^{\boldsymbol{x}}\left(y+y_{2}\right)\right|\mathrm{d}{\cal P}_{Y}\left(y\right)\le D_{{\cal R}}^{\widetilde{F}}\left|y_{1}-y_{2}\right|,\label{eq:Valid_3}
\end{equation}
is satisfied, for all $\left(y_{1},y_{2}\right)\in\left[\mathrm{cl}\left\{ \left(m_{l},m_{h}\right)\right\} \right]^{2}$.
If $p>1$, (\ref{eq:Valid_3}) is only sufficient for condition ${\bf C3}$.
\end{enumerate}
\end{prop}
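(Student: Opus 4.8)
The plan is to reduce all three statements to a single scalar object, namely the map $\psi\left(z\right)\triangleq\underline{\nabla}\left({\cal R}\left(z\right)\right)^{p}\equiv p\left({\cal R}\left(z\right)\right)^{p-1}\underline{\nabla}{\cal R}\left(z\right)$, which (being a (sub)derivative of the convex function $\left({\cal R}\right)^{p}$) is \emph{nondecreasing}. First I would record two reductions used throughout. Since $m_{l}\le F\left(\boldsymbol{x},\boldsymbol{W}\right)\le m_{h}$ on $\Omega_{E}$ and $\left(y_{1},y_{2}\right)\in\left[\mathrm{cl}\left\{ \left(m_{l},m_{h}\right)\right\} \right]^{2}$, both shifted arguments $F\left(\boldsymbol{x},\boldsymbol{W}\right)-y_{i}$ lie in $\mathfrak{R}^{\widetilde{F}}$ almost surely. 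Moreover, taking $y_{1}>y_{2}$ without loss of generality, monotonicity of $\psi$ removes the absolute value: $\left|\psi\left(F-y_{1}\right)-\psi\left(F-y_{2}\right)\right|\equiv\psi\left(F-y_{2}\right)-\psi\left(F-y_{1}\right)\ge0$. This lets me treat the inner quantity as a sign-definite increment of a monotone function.

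For statement (1), the argument is immediate: if $\left({\cal R}\right)^{p}$ has $D_{{\cal R},p}$-Lipschitz derivative on $\mathfrak{R}^{\widetilde{F}}$, then $\left|\psi\left(F-y_{1}\right)-\psi\left(F-y_{2}\right)\right|\le D_{{\cal R},p}\left|\left(F-y_{1}\right)-\left(F-y_{2}\right)\right|\equiv D_{{\cal R},p}\left|y_{1}-y_{2}\right|$ pointwise on $\Omega_{E}$, by the first reduction. Since this bound is deterministic, it survives both the ${\cal L}_{Q}$-norm (for every $Q$, hence every admissible $P$) and the supremum over ${\cal X}$, yielding $\mathbf{C3}$ with $D\equiv D_{{\cal R},p}$.

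For statements (2) and (3) I would pass to a Lebesgue--Stieltjes representation. Writing $\mu_{\psi}$ for the Lebesgue--Stieltjes measure of the nondecreasing $\psi$ and applying Tonelli's theorem, I would express $\mathbb{E}\left\{ \psi\left(F-y_{2}\right)-\psi\left(F-y_{1}\right)\right\} \equiv\int\left[F_{\boldsymbol{W}}^{\boldsymbol{x}}\left(t+y_{1}\right)-F_{\boldsymbol{W}}^{\boldsymbol{x}}\left(t+y_{2}\right)\right]\mathrm{d}\mu_{\psi}\left(t\right)$, the continuity of $F_{\boldsymbol{W}}^{\left(\cdot\right)}$ identifying left limits with values. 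For (2), bounding each cdf increment by $D_{\widetilde{F}}\left|y_{1}-y_{2}\right|$ via (\ref{eq:Valid_2}) and factoring it out leaves the total mass $\mu_{\psi}\left(\mathfrak{R}^{\widetilde{F}}\right)\equiv\psi\left(m_{h}-m_{l}\right)-\psi\left(m_{l}-m_{h}\right)$, which is finite by $\mathbf{C4}$ when $p>1$ and bounded by $C_{S}\le1$ when $p\equiv1$ (since then $\psi\equiv{\cal R}'_{+}\equiv C_{S}F_{Y}$); this gives $\mathbf{C3}$ at $Q\equiv1$ for every $p$. For (3) with $p\equiv1$, I would instead exploit the exact identity $\psi\equiv C_{S}F_{Y}$ from Theorem \ref{thm:IFF_RR} (the degenerate constant-${\cal R}$ case being trivial): writing $F_{Y}\left(F-y_{i}\right)\equiv\mathbb{E}\left\{ \mathds{1}_{\left\{ Y\le F-y_{i}\right\} }\left|F\right.\right\} $ with $Y$ taken independent of $F$ and swapping the order of integration produces the \emph{equality} $\mathbb{E}\left\{ \left|\psi\left(F-y_{1}\right)-\psi\left(F-y_{2}\right)\right|\right\} \equiv C_{S}\int\left|F_{\boldsymbol{W}}^{\boldsymbol{x}}\left(y+y_{1}\right)-F_{\boldsymbol{W}}^{\boldsymbol{x}}\left(y+y_{2}\right)\right|\mathrm{d}{\cal P}_{Y}\left(y\right)$. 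Taking the supremum over ${\cal X}$, this equality renders $\mathbf{C3}$ and (\ref{eq:Valid_3}) equivalent (with $D\equiv C_{S}D_{{\cal R}}^{\widetilde{F}}$), establishing the claimed ``if and only if''.

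The genuinely delicate case, which I expect to be the main obstacle, is the sufficiency claim for $p>1$ in statement (3). Here I would telescope $\psi\left(b\right)-\psi\left(a\right)\equiv p\left({\cal R}\left(b\right)\right)^{p-1}\left({\cal R}'_{+}\left(b\right)-{\cal R}'_{+}\left(a\right)\right)+p{\cal R}'_{+}\left(a\right)\left(\left({\cal R}\left(b\right)\right)^{p-1}-\left({\cal R}\left(a\right)\right)^{p-1}\right)$, with $a\equiv F-y_{1}\le b\equiv F-y_{2}$. After bounding $\left({\cal R}\left(b\right)\right)^{p-1}\le{\cal E}^{p-1}$, the first term reduces to the $p\equiv1$ identity above and is therefore controlled by (\ref{eq:Valid_3}). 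The obstruction is the second term: since $t\mapsto t^{p-1}$ fails to be Lipschitz near the origin when $1<p<2$, a naive estimate of $\left({\cal R}\left(b\right)\right)^{p-1}-\left({\cal R}\left(a\right)\right)^{p-1}$ blows up wherever ${\cal R}$ approaches its zero set. This is exactly where condition $\mathbf{C4}$ must be invoked: because ${\cal R}$ is nondecreasing with ${\cal R}\left(m_{l}-m_{h}\right)\equiv\varepsilon>0$, it is bounded strictly away from zero on $\mathfrak{R}^{\widetilde{F}}$, so that ${\cal R}\left(a\right),{\cal R}\left(b\right)\in\left[\varepsilon,{\cal E}\right]$ and $\left({\cal R}\right)^{p-2}$ is uniformly bounded there; combined with ${\cal R}'_{+}\le1$ and the nonexpansive estimate ${\cal R}\left(b\right)-{\cal R}\left(a\right)\le b-a\equiv y_{1}-y_{2}$, the second term admits a \emph{deterministic} bound of order $\left|y_{1}-y_{2}\right|$. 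Summing the two contributions yields $\mathbf{C3}$ at $Q\equiv1$, i.e. sufficiency; the converse is not asserted because this extra, always-present term breaks the exact correspondence available at $p\equiv1$.
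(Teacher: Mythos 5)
Your proposal is correct, and it departs from the paper's own proof in instructive ways. Case (1) is handled identically: the deterministic Lipschitz bound survives every ${\cal L}_{Q}$-norm and the supremum over ${\cal X}$. For case (2), the paper first telescopes $\underline{\nabla}\left({\cal R}\left(\cdot\right)\right)^{p}$ into a $\underline{\nabla}{\cal R}$-difference term plus a difference-of-powers term, treats the former through the representation ${\cal R}'_{+}\equiv C_{S}F_{Y}$ of Theorem \ref{thm:IFF_RR} followed by Fubini (arriving at the identity (\ref{eq:pre_result})), and the latter deterministically via Lemma \ref{lem:P_is_LIP}; you instead integrate against the Lebesgue--Stieltjes measure $\mu_{\psi}$ of the full monotone subgradient $\psi\equiv\underline{\nabla}\left({\cal R}\left(\cdot\right)\right)^{p}$, which yields the bound $D_{\widetilde{F}}\left|y_{1}-y_{2}\right|\mu_{\psi}\left(\mathfrak{R}^{\widetilde{F}}\right)$ in one stroke, uniformly in $p\in\left[1,\infty\right)$, with no case split on $p$. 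That is a genuinely different and arguably cleaner decomposition; what it costs is the (routine) check that the countably many discontinuities of $\psi$ are null for the law of $F\left(\boldsymbol{x},\boldsymbol{W}\right)-y_{i}$, which your continuity hypotheses supply, exactly as in the paper's own Fubini step. For case (3), your $p\equiv1$ argument reproduces the paper's identity (\ref{eq:pre_result}) and the same equivalence via $C_{S}\neq0$, so there you coincide. For $p>1$ you telescope just as the paper does, but your treatment is in fact \emph{more} complete on one point: the paper's proof only writes out the cases $p>2$, $p\equiv2$ and $p\equiv1$, and the estimate $\left|a^{p-1}-b^{p-1}\right|\le\left(p-1\right){\cal E}^{p-2}\left|a-b\right|$ it borrows from Lemma \ref{lem:P_is_LIP} fails on $\left[0,{\cal E}\right]$ when $1<p<2$, since $t\mapsto t^{p-1}$ has unbounded slope near the origin; your observation that condition ${\bf C4}$ forces ${\cal R}\ge\varepsilon>0$ on $\mathfrak{R}^{\widetilde{F}}$, so that $t\mapsto t^{p-1}$ is $\left(p-1\right)\varepsilon^{p-2}$-Lipschitz on the relevant range, is precisely what is needed to cover that regime, and is a gap the paper silently skips. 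In short: same skeleton for cases (1) and (3) at $p\equiv1$, a different and more unified argument for case (2), and a more careful constant for case (3) when $1<p<2$.
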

\begin{proof}[Proof of Proposition \ref{prop:C3_Valid}]
See Section \ref{subsec:Proof-of-Proposition_5} (Appendix). 
\end{proof}
Proposition \ref{prop:C3_Valid} demonstrates the versatility of condition
$\mathbf{C3}$, mainly relative to the choice of $Q$. First, observe
that if $P\equiv2$, then $Q$ can be \textit{anything} in $\left[2,\infty\right]$.
If, for instance, we choose $Q\equiv\infty$ (this is most easiest
to verify from a technical viewpoint; also see Remark \ref{rem:We-would-like}
below), the \textit{almost Lipschitz} assumption imposed by condition
$\mathbf{C3}$ on the $p$-th power of the chosen risk regularizer
${\cal R}$ might be \textit{severely restrictive}, depending on the
value $p$. More specifically, a model satisfying (or required to
satisfy) Assumption \ref{assu:F_AS_Main} for $Q\equiv\infty$ (in
which case $\mathbf{C3}$ is \textit{almost equivalent} to the respective
condition in the first part of Proposition \ref{prop:C3_Valid}) \textit{might}
\textit{not} allow for risk regularizers exhibiting \textit{corner
points}. Let us illustrate this by means of an example. Let $p\equiv1$,
choose ${\cal R}$ to be the upper-semideviation regularizer, that
is, ${\cal R}\equiv\left(\cdot\right)_{+}\equiv\max\left\{ \cdot,0\right\} $,
and consider the \textit{linear} objective
\begin{equation}
F\left(\boldsymbol{x},\hspace{-1pt}\boldsymbol{W}\right)\triangleq\boldsymbol{\boldsymbol{W}}_{1}^{\boldsymbol{T}}\boldsymbol{x}+W\in\mathbb{R},
\end{equation}
where $\boldsymbol{W}\triangleq\left[\boldsymbol{W}_{1}^{\boldsymbol{T}}\,W\right]^{\boldsymbol{T}}$,
where $\boldsymbol{\boldsymbol{W}}_{1}:\Omega\rightarrow\mathbb{R}^{N}$
constitutes an absolutely continuous random element almost everywhere
in $\left[0,1\right]^{N}$, $\mathbb{E}\left\{ \boldsymbol{\boldsymbol{W}}_{1}\right\} \equiv\boldsymbol{\mu}$,
and $W\sim{\cal N}\left(0,1\right)$. Then, we are interested in the
\textit{nonlinear} \textit{(convex)}, risk-averse stochastic program
\begin{equation}
\underset{\boldsymbol{x}\in{\cal X}}{\inf}\:\boldsymbol{\mu}^{\boldsymbol{T}}\boldsymbol{x}+c\mathbb{E}\left\{ \left(\boldsymbol{\boldsymbol{W}}_{1}^{\boldsymbol{T}}\boldsymbol{x}-\boldsymbol{\mu}^{\boldsymbol{T}}\boldsymbol{x}+W\right)_{+}\right\} ,
\end{equation}
for some closed, convex set ${\cal X}$. Note that, for every choice
of ${\cal X}$ (compact or not), it is true that $m_{l}\equiv-\infty$
and $m_{h}\equiv+\infty$, since the random element $W$ is unbounded.
Thus, $\mathrm{cl}\left\{ \left(m_{l},m_{h}\right)\right\} \equiv\mathbb{R}$.
Consequently, for this problem, condition ${\bf C3}$ (for $Q\equiv\infty$)
demands the existence of a number $D<\infty$, such that\footnote{In this case, we simply take $\underline{\nabla}{\cal R}\left(\cdot\right)\equiv\underline{\nabla}\left(\cdot\right)_{+}\equiv\mathds{1}_{\left\{ \left(\cdot\right)\ge0\right\} }$}
\begin{equation}
\sup_{\boldsymbol{x}\in{\cal X}}\underset{\omega\in\Omega}{\mathrm{ess\hspace{1bp}sup}}\left|\mathds{1}_{\left\{ F\left(\boldsymbol{x},\boldsymbol{W}\left(\omega\right)\right)\ge y_{1}\right\} }-\mathds{1}_{\left\{ F\left(\boldsymbol{x},\boldsymbol{W}\left(\omega\right)\right)\ge y_{2}\right\} }\right|\le D\left|y_{1}-y_{2}\right|,
\end{equation}
for all $\left(y_{1},y_{2}\right)\in\mathbb{R}^{2}$, or by definition
of the essential supremum (\citep{Bogachev2007}, p. 250), 
\begin{equation}
\sup_{\boldsymbol{x}\in{\cal X}}\hspace{1bp}\inf_{\mathscr{F}\ni\Omega'\subseteq\Omega:{\cal P}\left(\Omega'\right)\equiv1}\hspace{1bp}\sup_{\omega\in\Omega'}\left|\mathds{1}_{\left\{ F\left(\boldsymbol{x},\boldsymbol{W}\left(\omega\right)\right)\ge y_{1}\right\} }-\mathds{1}_{\left\{ F\left(\boldsymbol{x},\boldsymbol{W}\left(\omega\right)\right)\ge y_{2}\right\} }\right|\le D\left|y_{1}-y_{2}\right|,\label{eq:ESSSUP_OPEN}
\end{equation}
for all $\left(y_{1},y_{2}\right)\in\mathbb{R}^{2}$. It is relatively
easy to show that it is actually impossible for (\ref{eq:ESSSUP_OPEN})
to hold \textit{uniformly in} $\left(y_{1},y_{2}\right)\in\mathbb{R}^{2}$,
for any possible choice of $D>0$. Indeed, for simplicity, consider
the symmetric ``antidiagonal'' case where $y_{1}\equiv-y_{2}\triangleq z>0$.
Then, for each fixed $\boldsymbol{x}\in{\cal X}$, it is true that
\begin{flalign}
\left|\mathds{1}_{\left\{ F\left(\boldsymbol{x},\boldsymbol{W}\left(\omega\right)\right)\ge y_{1}\right\} }-\mathds{1}_{\left\{ F\left(\boldsymbol{x},\boldsymbol{W}\left(\omega\right)\right)\ge y_{2}\right\} }\right| & \equiv\left|\mathds{1}_{\left\{ F\left(\boldsymbol{x},\boldsymbol{W}\left(\omega\right)\right)\ge z\right\} }-\mathds{1}_{\left\{ F\left(\boldsymbol{x},\boldsymbol{W}\left(\omega\right)\right)\ge-z\right\} }\right|\nonumber \\
 & =\mathds{1}_{\left\{ F\left(\boldsymbol{x},\boldsymbol{W}\left(\omega\right)\right)\in\left[-z,z\right)\right\} }\nonumber \\
 & \equiv\mathds{1}_{\Omega_{\boldsymbol{x}}^{z}}\left(\omega\right),\quad\forall\omega\in\Omega,
\end{flalign}
where the event $\Omega_{\boldsymbol{x}}^{z}\in\mathscr{F}$ is defined
as
\begin{equation}
\Omega_{\boldsymbol{x}}^{z}\triangleq\left\{ \omega\in\Omega\left|F\left(\boldsymbol{x},\boldsymbol{W}\left(\omega\right)\right)\in\left[-z,z\right)\right.\right\} ,\quad\forall\left(\boldsymbol{x},z\right)\in{\cal X}\times\mathbb{R}_{++},
\end{equation}
and where we emphasize that, due to our assumptions, it holds that
${\cal P}\left(\Omega_{\boldsymbol{x}}^{z}\right)>0$, for every choice
of $\left(\boldsymbol{x},z\right)\in{\cal X}\times\mathbb{R}_{++}$.
Consequently, for an arbitrary event $\Omega'\subseteq\Omega$ such
that ${\cal P}\left(\Omega'\right)\equiv1$, we have 
\begin{flalign}
 & \hspace{-2pt}\hspace{-2pt}\hspace{-2pt}\hspace{-2pt}\hspace{-2pt}\hspace{-2pt}\hspace{-2pt}\hspace{-2pt}\hspace{-2pt}\sup_{\omega\in\Omega'}\left|\mathds{1}_{\left\{ F\left(\boldsymbol{x},\boldsymbol{W}\left(\omega\right)\right)\ge y_{1}\right\} }-\mathds{1}_{\left\{ F\left(\boldsymbol{x},\boldsymbol{W}\left(\omega\right)\right)\ge y_{2}\right\} }\right|\equiv\sup_{\omega\in\Omega'}\mathds{1}_{\Omega_{\boldsymbol{x}}^{z}}\left(\omega\right)\nonumber \\
 & \equiv\max\left\{ \sup_{\omega\in\Omega'\cap\Omega_{\boldsymbol{x}}^{z}}\mathds{1}_{\Omega_{\boldsymbol{x}}^{z}}\left(\omega\right),\sup_{\omega\in\Omega'\cap\left(\Omega_{\boldsymbol{x}}^{z}\right)^{c}}\mathds{1}_{\Omega_{\boldsymbol{x}}^{z}}\left(\omega\right)\right\} =\max\left\{ 1,0\right\} \equiv1,\quad\forall\left(\boldsymbol{x},z\right)\in{\cal X}\times\mathbb{R}_{++},\hspace{-2pt}\hspace{-2pt}\hspace{-2pt}\hspace{-2pt}\hspace{-2pt}
\end{flalign}
which implies in particular that, unless $z\ge\left(2D\right)^{-1}$,
\begin{equation}
1\equiv\sup_{\boldsymbol{x}\in{\cal X}}\underset{\omega\in\Omega}{\mathrm{ess\hspace{1bp}sup}}\left|\mathds{1}_{\left\{ F\left(\boldsymbol{x},\boldsymbol{W}\left(\omega\right)\right)\ge y_{1}\right\} }-\mathds{1}_{\left\{ F\left(\boldsymbol{x},\boldsymbol{W}\left(\omega\right)\right)\ge y_{2}\right\} }\right|>D\left|y_{1}-y_{2}\right|\equiv2Dz,
\end{equation}
for any fixed and finite choice of $D>0$, thus immediately disproving
uniform validity of (\ref{eq:ESSSUP_OPEN}). Of course, the apparent
impossibility of (\ref{eq:ESSSUP_OPEN}) may be seen as a consequence
of the fact that, for \textit{any} \textit{fixed} $z\in\mathbb{R}$,
the function $\mathds{1}_{\left\{ z\ge y\right\} }$ \textit{is} \textit{discontinuous}
\textit{when} $y\in\mathbb{R}$.

For this example, it is also possible to show that condition ${\bf C3}$
is impossible to hold for $Q\equiv2$, as well, which corresponds
to the smallest choice of $Q$, when $P\equiv2$. Indeed, in this
case, condition ${\bf C3}$ demands the existence of a number $D<\infty$,
such that
\begin{equation}
\sup_{\boldsymbol{x}\in{\cal X}}\left\Vert \vphantom{\varint}\hspace{-2pt}\left|\mathds{1}_{\left\{ F\left(\boldsymbol{x},\boldsymbol{W}\right)\ge y_{1}\right\} }-\mathds{1}_{\left\{ F\left(\boldsymbol{x},\boldsymbol{W}\right)\ge y_{2}\right\} }\right|\right\Vert _{{\cal L}_{2}}\le D\left|y_{1}-y_{2}\right|,\label{eq:No_L2}
\end{equation}
for all $\left(y_{1},y_{2}\right)\in\mathbb{R}^{2}$. For every $\boldsymbol{x}\in{\cal X}$
and for every pair $\left(y_{1},y_{2}\right)\in\mathbb{R}^{2}$, we
may write
\begin{flalign}
 & \hspace{-2pt}\hspace{-2pt}\hspace{-2pt}\hspace{-2pt}\hspace{-2pt}\hspace{-2pt}\mathbb{E}\left\{ \left(\mathds{1}_{\left\{ F\left(\boldsymbol{x},\boldsymbol{W}\right)\ge y_{1}\right\} }-\mathds{1}_{\left\{ F\left(\boldsymbol{x},\boldsymbol{W}\right)\ge y_{2}\right\} }\right)^{2}\right\} \nonumber \\
 & \equiv\mathbb{E}\left\{ \mathds{1}_{\left\{ F\left(\boldsymbol{x},\boldsymbol{W}\right)\ge y_{1}\right\} }+\mathds{1}_{\left\{ F\left(\boldsymbol{x},\boldsymbol{W}\right)\ge y_{2}\right\} }-2\mathds{1}_{\left\{ F\left(\boldsymbol{x},\boldsymbol{W}\right)\ge y_{1}\right\} }\mathds{1}_{\left\{ F\left(\boldsymbol{x},\boldsymbol{W}\right)\ge y_{2}\right\} }\right\} \nonumber \\
 & ={\cal P}\left(F\left(\boldsymbol{x},\hspace{-1pt}\boldsymbol{W}\right)\ge y_{1}\right)+{\cal P}\left(F\left(\boldsymbol{x},\hspace{-1pt}\boldsymbol{W}\right)\ge y_{2}\right)-2{\cal P}\left(F\left(\boldsymbol{x},\hspace{-1pt}\boldsymbol{W}\right)\ge\max\left\{ y_{1},y_{2}\right\} \right)\nonumber \\
 & ={\cal P}\left(F\left(\boldsymbol{x},\hspace{-1pt}\boldsymbol{W}\right)\ge\min\left\{ y_{1},y_{2}\right\} \right)-{\cal P}\left(F\left(\boldsymbol{x},\hspace{-1pt}\boldsymbol{W}\right)\ge\max\left\{ y_{1},y_{2}\right\} \right)\nonumber \\
 & \equiv\left|{\cal P}\left(F\left(\boldsymbol{x},\hspace{-1pt}\boldsymbol{W}\right)\ge y_{1}\right)-{\cal P}\left(F\left(\boldsymbol{x},\hspace{-1pt}\boldsymbol{W}\right)\ge y_{2}\right)\right|\nonumber \\
 & \equiv\left|F_{\boldsymbol{W}}^{\boldsymbol{x}}\left(y_{1}\right)-F_{\boldsymbol{W}}^{\boldsymbol{x}}\left(y_{2}\right)\right|.
\end{flalign}
Therefore, for (\ref{eq:No_L2}) to hold, it must be true that, for
every $\boldsymbol{x}\in{\cal X}$ and for every $\left(y_{1},y_{2}\right)\in\mathbb{R}^{2}$,
\begin{equation}
\sqrt{\left|F_{\boldsymbol{W}}^{\boldsymbol{x}}\left(y_{1}\right)-F_{\boldsymbol{W}}^{\boldsymbol{x}}\left(y_{2}\right)\right|}\le D\left|y_{1}-y_{2}\right|\iff\left|F_{\boldsymbol{W}}^{\boldsymbol{x}}\left(y_{1}\right)-F_{\boldsymbol{W}}^{\boldsymbol{x}}\left(y_{2}\right)\right|\le D^{2}\left|y_{1}-y_{2}\right|^{2},
\end{equation}
implying that $F_{\boldsymbol{W}}^{\left(\cdot\right)}$ \textit{must
be constant} \textit{on} $\mathbb{R}$. This is absurd, however, since
$F_{\boldsymbol{W}}^{\left(\cdot\right)}$ is a proper cdf.

On the other hand, the second and third parts of Proposition \ref{prop:C3_Valid}
show that, if the random variable $\left\Vert \underline{\nabla}F\left(\cdot,\hspace{-1pt}\boldsymbol{W}\right)\right\Vert _{2}$
can be afforded to be uniformly in ${\cal Z}_{\infty}$ (for $Q\equiv1$),
the choice of the risk regularizer ${\cal R}$ may be completely unconstrained,
as long as the Borel pushforward of $F\left(\cdot,\boldsymbol{W}\right)$
is uniformly well behaved, in the sense of either (\ref{eq:Valid_2}),
or, more generally, (\ref{eq:Valid_3}). Of course, this constitutes
a major improvement compared to the case where $Q\equiv\infty$, discussed
above, at least in regard to the shape of ${\cal R}$. For example,
in our previous example, it is also true that
\begin{equation}
\underline{\nabla}F\left(\boldsymbol{x},\hspace{-1pt}\boldsymbol{W}\right)=\nabla F\left(\boldsymbol{x},\hspace{-1pt}\boldsymbol{W}\right)=\boldsymbol{\boldsymbol{W}}_{1}\in\left[0,1\right]^{N},\quad{\cal P}-a.e.,
\end{equation}
and, hence,
\begin{equation}
\sup_{\boldsymbol{x}\in{\cal X}}\left\Vert \vphantom{\varint}\hspace{-2pt}\left\Vert \nabla F\left(\boldsymbol{x},\hspace{-1pt}\boldsymbol{W}\right)\right\Vert _{2}\right\Vert _{{\cal L}_{\infty}}\equiv\sup_{\boldsymbol{x}\in{\cal X}}\underset{\omega\in\Omega}{\mathrm{ess\hspace{1bp}sup}}\left\Vert \boldsymbol{\boldsymbol{W}}_{1}\right\Vert _{2}\le\sqrt{N}.\label{eq:ESSSUP}
\end{equation}
In this case, condition ${\bf C3}$ is loosened to
\begin{equation}
\sup_{\boldsymbol{x}\in{\cal X}}\left\Vert \vphantom{\varint}\hspace{-2pt}\left|\mathds{1}_{\left\{ F\left(\boldsymbol{x},\boldsymbol{W}\right)\ge y_{1}\right\} }-\mathds{1}_{\left\{ F\left(\boldsymbol{x},\boldsymbol{W}\right)\ge y_{2}\right\} }\right|\right\Vert _{{\cal L}_{1}}\le D\left|y_{1}-y_{2}\right|,
\end{equation}
for all $\left(y_{1},y_{2}\right)\in\mathbb{R}^{2}$, whose validity
may now be verified for appropriate choices of the distribution ${\cal P}_{\boldsymbol{W}}$,
as Proposition \ref{prop:C3_Valid} suggests.

We have seen that the price to be paid for choosing a lower value
for the exponent $Q$ is a potentially stronger requirement on the
size of the random subgradient $\underline{\nabla}F\left(\cdot,\boldsymbol{W}\right)$
(condition $\mathbf{C1}$). Still, such a requirement is relatively
easy to satisfy for many interesting models, other than our particular
example discussed above. For example, in the extreme case where $P\equiv\infty$,
${\bf C1}$ will indeed be satisfied in cases involving a compact
feasible set ${\cal X}$ and a Borel measure ${\cal P}_{\boldsymbol{W}}$
with bounded essential support (recall that, by assumption, the domain
of $F\left(\cdot,\boldsymbol{W}\right)$ is the whole Euclidean space
$\mathbb{R}^{N}$).
\begin{rem}
\label{rem:We-would-like}We would like to emphasize that, for $P\equiv\infty$,
a sometimes more easily verifiable sufficient condition for $\mathbf{C1}$
is the existence of an event $\Omega_{\widetilde{F}}\subseteq\Omega$,
with ${\cal P}\left(\Omega_{\widetilde{F}}\right)\equiv1$, as well
as a number $G<\infty$ such that
\begin{equation}
\sup_{\boldsymbol{x}\in{\cal X}}\sup_{\omega\in\Omega_{\widetilde{F}}}\left\Vert \underline{\nabla}F\left(\boldsymbol{x},\hspace{-1pt}\boldsymbol{W}\hspace{-2pt}\left(\omega\right)\right)\right\Vert _{2}\equiv\sup_{\omega\in\Omega_{\widetilde{F}}}\sup_{\boldsymbol{x}\in{\cal X}}\left\Vert \underline{\nabla}F\left(\boldsymbol{x},\hspace{-1pt}\boldsymbol{W}\hspace{-2pt}\left(\omega\right)\right)\right\Vert _{2}\le G.
\end{equation}
This follows by definition of the essential supremum (\citep{Bogachev2007},
p. 250); indeed, we may write
\begin{flalign}
\sup_{\boldsymbol{x}\in{\cal X}}\left\Vert \vphantom{\varint}\left\Vert \underline{\nabla}F\left(\boldsymbol{x},\hspace{-1pt}\boldsymbol{W}\right)\right\Vert _{2}\right\Vert _{{\cal L}_{\infty}} & \triangleq\sup_{\boldsymbol{x}\in{\cal X}}\underset{\omega\in\Omega}{\mathrm{ess\hspace{1bp}sup}}\left\Vert \underline{\nabla}F\left(\boldsymbol{x},\hspace{-1pt}\boldsymbol{W}\hspace{-2pt}\left(\omega\right)\right)\right\Vert _{2}\nonumber \\
 & \triangleq\sup_{\boldsymbol{x}\in{\cal X}}\hspace{1bp}\inf_{\mathscr{F}\ni\Omega'\subseteq\Omega:{\cal P}\left(\Omega'\right)\equiv1}\hspace{1bp}\sup_{\omega\in\Omega'}\left\Vert \underline{\nabla}F\left(\boldsymbol{x},\hspace{-1pt}\boldsymbol{W}\hspace{-2pt}\left(\omega\right)\right)\right\Vert _{2}\nonumber \\
 & \le\sup_{\boldsymbol{x}\in{\cal X}}\sup_{\omega\in\widetilde{\Omega}}\left\Vert \underline{\nabla}F\left(\boldsymbol{x},\hspace{-1pt}\boldsymbol{W}\hspace{-2pt}\left(\omega\right)\right)\right\Vert _{2},
\end{flalign}
where $\widetilde{\Omega}\subseteq\Omega$ is \textit{any} measurable
set in $\mathscr{F}$, such that ${\cal P}\left(\widetilde{\Omega}\right)\equiv1$.
This technical fact was previously utilized in (\ref{eq:ESSSUP}).\hfill{}\ensuremath{\blacksquare}
\end{rem}
Let us also comment on the hard boundedness condition ${\bf C4}$,
which is assumed \textit{only when $p>1$}. Although condition ${\bf C4}$
may not impose significant restrictions on the choice of ${\cal R}$,
it \textit{does} require that $F\left(\cdot,\boldsymbol{W}\right)$
is \textit{uniformly bounded} on ${\cal X}$, almost everywhere on
$\Omega$ relative to ${\cal P}$. This assumption is explicitly made
in condition ${\bf C4}$ mainly for analytical tractability, and is
due to the slightly more complicated form of the gradient $\nabla\phi^{\widetilde{F}}$
(see Lemma \ref{lem:Sub_Grad}). Without ${\bf C4}$, asymptotic analysis
of the \textit{$\textit{MESSAGE}^{p}$} algorithm becomes unnecessarily
and uninsightfully complicated, when $p$ is chosen greater than one.
Still, uniform boundedness of $F\left(\cdot,\boldsymbol{W}\right)$
may be verified in many common optimization settings, such as when
${\cal X}$ is compact and ${\cal P}_{\boldsymbol{W}}$ has bounded
essential support (see also the discussion above), or in case $F$
is itself uniformly bounded on ${\cal X}\times\mathbb{R}^{M}$. 

Another important structural reason for imposing condition ${\bf C4}$
is that for every choice of $p>1$, we have implicitly assumed that
$F\left(\cdot,\boldsymbol{W}\right)\in{\cal Z}_{q}$, for some $q\ge p$,
so that problem (\ref{eq:MAIN_PROB}) is well defined. Thus, choosing
larger values for $p$ implies that $F\left(\cdot,\boldsymbol{W}\right)$
behaves more or less like a bounded function (\textit{pointwise on
${\cal X}$}). Therefore, condition ${\bf C4}$ may be regarded as
an easy way of exploiting this approximate boundedness, compared to
the imposition of integral ${\cal L}_{q}$-norm constraints, which
are more complicated and harder to handle.

Nevertheless, there are important cases where the choice of ${\cal R}$
might make it very difficult to guarantee that $\varepsilon\equiv{\cal R}\left(m_{l}-m_{h}\right)>0$,
even if $m_{l}$ and $m_{h}$ are finite. For example, simply take
${\cal R}\left(\cdot\right)\equiv\left(\cdot\right)_{+}$ (note that
$m_{l}<m_{h}$, by definition); in this case, $\varepsilon\equiv0$.
Fortunately, there is a simple, cheap-trick remedy to this technical
issue. For fixed slack $\eta>0$, consider a function ${\cal R}_{\eta}:\mathbb{R}\rightarrow\mathbb{R}$,
defined as
\begin{equation}
{\cal R}_{\eta}\left(x\right)\triangleq{\cal R}\left(x\right)+\eta,\quad\forall x\in\mathbb{R}.
\end{equation}
It can be readily verified that ${\cal R}_{\eta}$ is a valid risk
regularizer and may be seen as a variable, \textit{lower-biased} version
of ${\cal R}$. Then, problem (\ref{eq:MAIN_PROB}) is replaced by
its \textit{slack-adjusted} version
\begin{equation}
\begin{array}{rl}
\underset{\boldsymbol{x}}{\mathrm{minimize}} & \mathbb{E}\left\{ F\left(\boldsymbol{x},\boldsymbol{W}\right)\right\} +c\left\Vert {\cal R}_{\eta}\left(F\left(\boldsymbol{x},\boldsymbol{W}\right)-\mathbb{E}\left\{ F\left(\boldsymbol{x},\boldsymbol{W}\right)\right\} \right)\right\Vert _{{\cal L}_{p}}\\
\mathrm{subject\,to} & \boldsymbol{x}\in{\cal X}
\end{array},\label{eq:MAIN_Adjusted}
\end{equation}
and that it is always true that $\varepsilon\ge\eta$, satisfying
the respective requirement of condition ${\bf C4}$. One then hopes
that, as $\eta\rightarrow0$, (\ref{eq:MAIN_Adjusted}) becomes increasingly
equivalent to (\ref{eq:MAIN_PROB}). The size of $\eta$ should be
chosen so that a certain trade-off between algorithmic stability and
closeness to the original problem (\ref{eq:MAIN_PROB}) is satisfied. 

Lastly, condition ${\bf C2}$ constitutes a common restriction of
SSD-type algorithms (either compositional or not) \citep{Kushner2003,ShapiroLectures_2ND,Wang2017,Wang2018},
and will also be made here, without any further comment.

Next, we will exploit Assumption \ref{assu:F_AS_Main}, in order to
show asymptotic consistency for Algorithm \ref{alg:SCGD-3}, in a
strong, pathwise sense.

\subsubsection{\label{subsec:Pathwise-Convergence}Pathwise Convergence of\textit{
}the\textit{ $\textit{MESSAGE}^{p}$} Algorithm}

Proving convergence of Algorithm \ref{alg:SCGD-3} will be based on
the so-called \textit{$T$-Level Almost-Supermartingale Convergence
Lemma }by Yang, Wang \& Fang \citep{Wang2018}, presented below. This
is an inductive generalization of the \textit{Coupled} \textit{Almost-Supermartingale
Convergence Lemma }by Wang \& Bertsekas \citep{Wang2016,Wang2017},
which in turn generalizes the well-known \textit{Almost-Supermartingale
Convergence Lemma} by Robbins and Siegmund \citep{Robbins1971}.
\begin{lem}
\textbf{\textup{($T$-Level Almost-Supermartingale Convergence Lemma
\citep{Wang2018})}}\label{lem:SUPER_M} Let $\left\{ \xi^{n}\right\} _{n\in\mathbb{N}}$,
$\left\{ \eta^{n}\right\} _{n\in\mathbb{N}}$, $\{\zeta^{n,j}\}_{n\in\mathbb{N}}$,
$\{\theta^{n,j}\}_{n\in\mathbb{N}}$, for $j\in\mathbb{N}_{T-1}^{+}$,
$\{u^{n,j}\}_{n\in\mathbb{N}}$ and $\{\mu^{n,j}\}_{n\in\mathbb{N}}$,
for $j\in\mathbb{N}_{T}^{+}$, be nonnegative random sequences on
$\left(\Omega,\mathscr{F}\right)$, and consider the global filtration
$\left\{ \mathscr{G}^{n}\subseteq\mathscr{F}\right\} _{n\in\mathbb{N}}$,
where
\begin{equation}
\mathscr{G}^{n}\triangleq\sigma\left\{ \xi^{i},\eta^{i},\zeta^{i,j},\theta^{i,j},u^{i,j},\mu^{i,j},u^{i,T},\mu^{i,T},\;\forall j\in\mathbb{N}_{T-1}^{+}\;\text{and}\;\forall i\in\mathbb{N}_{n}\right\} .
\end{equation}
Let $c_{j}>0,j\in\mathbb{N}_{T}^{+}$ and suppose that
\begin{flalign}
\mathbb{E}\left\{ \left.\xi^{n+1}\right|\mathscr{G}^{n}\right\}  & \le\left(1+\eta^{n}\right)\xi^{n}-u^{n,T}+\sum_{j\in\mathbb{N}_{T-1}^{+}}c_{j}\theta^{n,j}\zeta^{n,j}+\mu^{n,T}\quad\text{and}\label{eq:Almost_1}\\
\mathbb{E}\left\{ \left.\zeta^{n+1,j}\right|\mathscr{G}^{n}\right\}  & \le\left(1-\theta^{n,j}\right)\zeta^{n,j}-u^{n,j}+\mu^{n,j},\quad\forall j\in\mathbb{N}_{T-1}^{+},
\end{flalign}
for all $n\in\mathbb{N},$ and that $\sum_{n\in\mathbb{N}}\eta^{n}<\infty$,
$\sum_{n\in\mathbb{N}}\mu^{n,j}<\infty$, for all $j\in\mathbb{N}_{T}^{+}$,
all almost everywhere relative to ${\cal P}$. Then, there exist random
variables $\xi_{*}$ and $\zeta_{*}^{j}$, $j\in\mathbb{N}_{T-1}^{+}$
such that $\xi^{n}\underset{n\rightarrow\infty}{\longrightarrow}\xi_{*}$
and $\zeta^{n,j}\underset{n\rightarrow\infty}{\longrightarrow}\zeta_{*}^{j}$,
for all $j\in\mathbb{N}_{T-1}^{+}$ and $\sum_{n\in\mathbb{N}}u^{n,j}<\infty$,
for all $j\in\mathbb{N}_{T}^{+}$, $\sum_{n\in\mathbb{N}}\theta^{n,j}\zeta^{n,j}<\infty$,
for all $j\in\mathbb{N}_{T-1}^{+}$, all almost everywhere relative
to ${\cal P}$.
\end{lem}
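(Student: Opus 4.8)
The plan is to recognize Lemma~\ref{lem:SUPER_M} as a two-stage cascade of the classical Robbins--Siegmund almost-supermartingale convergence lemma \citep{Robbins1971}, exploiting the \emph{one-directional} coupling between the lower-level sequences $\zeta^{n,j}$ and the top-level sequence $\xi^{n}$. The essential observation is that each lower recursion is self-contained: for every fixed $j\in\mathbb{N}_{T-1}^{+}$, the bound on $\mathbb{E}\{\zeta^{n+1,j}\,|\,\mathscr{G}^{n}\}$ involves only $\zeta^{n,j}$ itself and not the other lower levels, so the $T-1$ lower recursions may be resolved in parallel, with no induction in $T$ actually required.

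First I would handle the lower levels. Fixing $j$, I would rewrite the multiplicative contraction additively via $(1-\theta^{n,j})\zeta^{n,j}\equiv\zeta^{n,j}-\theta^{n,j}\zeta^{n,j}$, so that the recursion reads $\mathbb{E}\{\zeta^{n+1,j}\,|\,\mathscr{G}^{n}\}\le\zeta^{n,j}-(u^{n,j}+\theta^{n,j}\zeta^{n,j})+\mu^{n,j}$. Since all of $\zeta^{i,j}$, $\theta^{i,j}$, $u^{i,j}$, $\mu^{i,j}$ for $i\le n$ are $\mathscr{G}^{n}$-measurable by the very definition of the global filtration, and since $u^{n,j}+\theta^{n,j}\zeta^{n,j}\ge0$ while $\sum_{n}\mu^{n,j}<\infty$, this is precisely of Robbins--Siegmund form with a vanishing multiplicative perturbation. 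Applying \citep{Robbins1971} then yields, almost everywhere relative to ${\cal P}$, a finite limit $\zeta^{n,j}\to\zeta_{*}^{j}$ together with the summability $\sum_{n}(u^{n,j}+\theta^{n,j}\zeta^{n,j})<\infty$; in particular both $\sum_{n}u^{n,j}<\infty$ and $\sum_{n}\theta^{n,j}\zeta^{n,j}<\infty$ for each $j\in\mathbb{N}_{T-1}^{+}$, which already secures the lower-level conclusions.

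Next I would feed these summability facts into the top recursion. On the full-measure event on which all $T-1$ series $\sum_{n}\theta^{n,j}\zeta^{n,j}$ converge, the coupling term $\sum_{j}c_{j}\theta^{n,j}\zeta^{n,j}$ is a finite nonnegative combination of summable sequences, hence itself summable; combining it with $\mu^{n,T}$ into a single perturbation $Z_{n}\triangleq\sum_{j}c_{j}\theta^{n,j}\zeta^{n,j}+\mu^{n,T}$ gives $\sum_{n}Z_{n}<\infty$ almost everywhere. The top recursion then reads $\mathbb{E}\{\xi^{n+1}\,|\,\mathscr{G}^{n}\}\le(1+\eta^{n})\xi^{n}-u^{n,T}+Z_{n}$, which is again of Robbins--Siegmund form, now carrying a multiplicative perturbation $\eta^{n}$ with $\sum_{n}\eta^{n}<\infty$. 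A second application of \citep{Robbins1971} delivers a finite limit $\xi^{n}\to\xi_{*}$ and $\sum_{n}u^{n,T}<\infty$, completing every claimed conclusion.

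I do not expect any genuine obstacle: the argument is a careful but routine cascade. The only points demanding attention are bookkeeping ones---verifying that every sequence entering each Robbins--Siegmund application is $\mathscr{G}^{n}$-adapted and nonnegative, and intersecting the full-measure sets produced by the $T-1$ lower-level applications \emph{before} invoking the top-level one, so that the coupling term is summable on a single event of probability one. It is worth noting that specializing to $T\equiv2$ recovers the Coupled Almost-Supermartingale Convergence Lemma of \citep{Wang2016,Wang2017}, and that, because the lower levels are mutually decoupled under the stated hypotheses, the general $T$ case needs no inductive step beyond the two-stage cascade described above.
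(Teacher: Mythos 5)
Your proof is correct. Note, however, that the paper does not prove this lemma at all: it is imported verbatim from \citep{Wang2018}, where (as the paper's surrounding text indicates) it is obtained as an \emph{inductive} generalization of the coupled two-level almost-supermartingale lemma of Wang and Bertsekas \citep{Wang2016,Wang2017}, itself a descendant of Robbins--Siegmund \citep{Robbins1971}. Your route is genuinely different from that framing and, for the statement as written, simpler: you observe that each of the $T-1$ lower recursions is self-contained (it involves only $\zeta^{n,j}$, $\theta^{n,j}$, $u^{n,j}$, $\mu^{n,j}$ for the same $j$), so one Robbins--Siegmund application per level, with the combined decrement $\theta^{n,j}\zeta^{n,j}+u^{n,j}\ge 0$, simultaneously yields $\zeta^{n,j}\rightarrow\zeta_{*}^{j}$, $\sum_{n}u^{n,j}<\infty$ and $\sum_{n}\theta^{n,j}\zeta^{n,j}<\infty$; a final application at the top level, after intersecting the $T-1$ full-measure events so that $\sum_{j}c_{j}\theta^{n,j}\zeta^{n,j}+\mu^{n,T}$ is summable on a single event of probability one, closes the argument. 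The two points on which this hinges are exactly the ones you flag: all sequences entering each application are $\mathscr{G}^{n}$-measurable by the definition of the global filtration, and Robbins--Siegmund must be invoked in its ``on the event'' form (its conclusions hold almost surely on the event where the perturbation series converge), which is what legitimizes the second stage, where summability of the coupling term is only almost sure rather than deterministic. As for what each approach buys: the inductive route of \citep{Wang2018} is the natural one when intermediate levels are chained to one another, as they are in the raw $T$-SCGD error analysis before the recursions are reduced to the present form; your parallel cascade exploits the fact that, in the lemma as stated, all cross-level interaction has been pushed into the top inequality, so no induction in $T$ is needed.
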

Our proof roadmap is similar to that presented in, say, \citep{Wang2017,Wang2018},
and is somewhat standard in the literature of stochastic approximation,
in general. Before proceeding, let us define the filtration $\left\{ \mathscr{D}^{n}\subseteq\mathscr{F}\right\} _{n\in\mathbb{N}}$,
generated from \textit{all data observed so far, by both the user
and the} ${\cal SO}$, with each sub $\sigma$-algebra $\mathscr{D}^{n}$
given by
\begin{equation}
\mathscr{D}^{n}\triangleq\sigma\left\{ \boldsymbol{x}^{0},\ldots,\boldsymbol{x}^{n},y^{0},\ldots,y^{n},z^{0},\ldots,z^{n},\boldsymbol{W}_{1}^{0},\ldots,\boldsymbol{W}_{1}^{n},\boldsymbol{W}_{2}^{0},\ldots,\boldsymbol{W}_{2}^{n}\right\} ,\quad\forall n\in\mathbb{N}.
\end{equation}
Also, for the sake of clarity, if $\mathscr{C}$ is some sub $\sigma$-algebra
of $\mathscr{F}$, we will employ the more compact notation $\mathbb{E}\left\{ \cdot\left|\mathscr{C}\right.\right\} \equiv\mathbb{E}_{\mathscr{C}}\left\{ \cdot\right\} $,
especially for larger expressions involving conditional expectations.

Our first basic result follows, characterizing the rate of decay of
the squared ${\cal L}_{2}$-norm of the inter-iteration error $\boldsymbol{x}^{n+1}-\boldsymbol{x}^{n}$,
relative to $\mathscr{D}^{n}$.
\begin{lem}
\textbf{\textup{(Deep SA Level: Iterate Increment Growth)}}\label{lem:INTER_1}
Let Assumption \ref{assu:F_AS_Main} be in effect and define a constant
\begin{equation}
\mathsf{R}_{p}\equiv\begin{cases}
1, & \text{if }p\equiv1\\
\left(\dfrac{{\cal E}}{\varepsilon}\right)^{p-1} & \text{if }p>1
\end{cases}.
\end{equation}
Then, for every $p\in\left[1,\infty\right)$, the process $\left\{ \boldsymbol{x}^{n}\right\} _{n\in\mathbb{N}}$
generated by the $\textit{MESSAGE}^{p}$ algorithm satisfies
\begin{equation}
{\cal O}\left(\alpha_{n}^{2}\right)\equiv\mathbb{E}_{\mathscr{D}^{n}}\left\{ \left\Vert \boldsymbol{x}^{n+1}-\boldsymbol{x}^{n}\right\Vert _{2}^{2}\right\} \le\alpha_{n}^{2}\left(2c\mathsf{R}_{p}+1\right)^{2}G^{2},\quad\forall n\in\mathbb{N},
\end{equation}
almost everywhere relative to ${\cal P}$.
\end{lem}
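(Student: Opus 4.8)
The plan is to bound the increment $\left\Vert\boldsymbol{x}^{n+1}-\boldsymbol{x}^{n}\right\Vert_{2}$ pathwise in terms of the stochastic gradient approximation, and only take conditional expectations at the very end, where condition $\mathbf{C1}$ enters. First I would observe that every iterate satisfies $\boldsymbol{x}^{n}\in{\cal X}$, hence $\boldsymbol{x}^{n}\equiv\Pi_{{\cal X}}(\boldsymbol{x}^{n})$; applying nonexpansiveness of the Euclidean projection to the third-level update (\ref{eq:HIER_2-1}) immediately gives
\[
\left\Vert\boldsymbol{x}^{n+1}-\boldsymbol{x}^{n}\right\Vert_{2}\le\alpha_{n}\left\Vert\widehat{\nabla}^{n+1}\phi^{\widetilde{F}}\left(\boldsymbol{x}^{n},y^{n},z^{n}\right)\right\Vert_{2}.
\]
It then suffices to control the $\ell_{2}$-norm of the gradient approximation $\underline{\nabla}F(\boldsymbol{x}^{n},\boldsymbol{W}_{2}^{n+1})+c\Delta^{n+1}$, in which $\Delta^{n+1}$ is a scalar multiple of the subgradient difference $\boldsymbol{\delta}^{\underline{\nabla}}=\underline{\nabla}F(\boldsymbol{x}^{n},\boldsymbol{W}_{2}^{n+1})-\underline{\nabla}F(\boldsymbol{x}^{n},\boldsymbol{W}_{1}^{n+1})$.

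The crux is bounding the scalar prefactor $(z^{n})^{(1-p)/p}\left({\cal R}(\delta)\right)^{p-1}\left|\underline{\nabla}{\cal R}(\delta)\right|$ multiplying $\boldsymbol{\delta}^{\underline{\nabla}}$, where $\delta=F(\boldsymbol{x}^{n},\boldsymbol{W}_{2}^{n+1})-y^{n}$. Nonexpansiveness of ${\cal R}$ (Proposition \ref{prop:CharacterizationR}) together with monotonicity yields $0\le\underline{\nabla}{\cal R}(\delta)\le1$, so that factor drops out. For $p\equiv1$ the remaining product is identically $1$, matching $\mathsf{R}_{1}=1$. For $p>1$, I would argue by induction — using $\beta_{n},\gamma_{n}\in[0,1]$ and appropriate initialization $y^{0}\in[m_{l},m_{h}]$, $z^{0}\in[\varepsilon^{p},{\cal E}^{p}]$ — that $y^{n}$ stays in $[m_{l},m_{h}]$ and $z^{n}$ stays in $[\varepsilon^{p},{\cal E}^{p}]$, each being a convex combination of quantities in the respective interval. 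Condition $\mathbf{C4}$ is exactly what powers this: since $F(\cdot,\boldsymbol{W})\in[m_{l},m_{h}]$ almost everywhere, $\delta\in\mathfrak{R}^{\widetilde{F}}$, so monotonicity of ${\cal R}$ forces $\varepsilon\le{\cal R}(\delta)\le{\cal E}$, whence $\left({\cal R}(\delta)\right)^{p-1}\le{\cal E}^{p-1}$, while $z^{n}\ge\varepsilon^{p}$ gives $(z^{n})^{(1-p)/p}\le\varepsilon^{1-p}$. Multiplying produces the clean bound $({\cal E}/\varepsilon)^{p-1}\equiv\mathsf{R}_{p}$. A triangle inequality on $\boldsymbol{\delta}^{\underline{\nabla}}$ then delivers the pathwise estimate
\[
\left\Vert\widehat{\nabla}^{n+1}\phi^{\widetilde{F}}\right\Vert_{2}\le\left(1+c\mathsf{R}_{p}\right)\left\Vert\underline{\nabla}F(\boldsymbol{x}^{n},\boldsymbol{W}_{2}^{n+1})\right\Vert_{2}+c\mathsf{R}_{p}\left\Vert\underline{\nabla}F(\boldsymbol{x}^{n},\boldsymbol{W}_{1}^{n+1})\right\Vert_{2}.
\]

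To finish, I would square this bound and exploit that the two coefficients sum to $1+2c\mathsf{R}_{p}$: writing the right-hand side as $(1+2c\mathsf{R}_{p})$ times a convex combination of the two norms and invoking convexity of $t\mapsto t^{2}$ distributes the square, giving $\left\Vert\widehat{\nabla}^{n+1}\phi^{\widetilde{F}}\right\Vert_{2}^{2}\le(1+2c\mathsf{R}_{p})\bigl[(1+c\mathsf{R}_{p})\left\Vert\underline{\nabla}F(\boldsymbol{x}^{n},\boldsymbol{W}_{2}^{n+1})\right\Vert_{2}^{2}+c\mathsf{R}_{p}\left\Vert\underline{\nabla}F(\boldsymbol{x}^{n},\boldsymbol{W}_{1}^{n+1})\right\Vert_{2}^{2}\bigr]$. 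Taking $\mathbb{E}_{\mathscr{D}^{n}}\{\cdot\}$ and using that $\boldsymbol{x}^{n}$ is $\mathscr{D}^{n}$-measurable while $\boldsymbol{W}_{1}^{n+1},\boldsymbol{W}_{2}^{n+1}$ are independent of $\mathscr{D}^{n}$, the substitution rule reduces each conditional second moment to $\left.\mathbb{E}\{\left\Vert\underline{\nabla}F(\boldsymbol{x},\boldsymbol{W})\right\Vert_{2}^{2}\}\right|_{\boldsymbol{x}\equiv\boldsymbol{x}^{n}}$; since $P\ge2$, the power-mean (Jensen) inequality and condition $\mathbf{C1}$ bound this by $G^{2}$ uniformly on ${\cal X}$. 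The two surviving coefficients then recombine to $(1+2c\mathsf{R}_{p})^{2}$, yielding the claimed $\alpha_{n}^{2}(2c\mathsf{R}_{p}+1)^{2}G^{2}$. The main obstacle is the second paragraph — specifically, securing the uniform lower bound $z^{n}\ge\varepsilon^{p}$ that prevents the negative-power factor $(z^{n})^{(1-p)/p}$ from blowing up; this is precisely where the strict positivity $\varepsilon>0$ of $\mathbf{C4}$ is indispensable, and where the invariance of the intervals under the first two SA recursions must be verified with care.
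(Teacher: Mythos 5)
Your proposal is correct and follows essentially the same route as the paper's proof: projection nonexpansiveness, the interval-invariance induction keeping $y^{n}\in\left[m_{l},m_{h}\right]$ and $z^{n}\in\left[\varepsilon^{p},{\cal E}^{p}\right]$ (this is exactly the paper's Lemma \ref{lem:Iterate_Boundedness}, which indeed rests on Assumption \ref{assu:Initial-Values} in addition to condition $\mathbf{C4}$) to bound the scalar prefactor by $\mathsf{R}_{p}$, then the triangle inequality and condition $\mathbf{C1}$ with $P\ge2$. The only cosmetic difference is the final squaring step: you distribute the square pathwise via convexity of $t\mapsto t^{2}$ applied to the convex combination with total weight $1+2c\mathsf{R}_{p}$, whereas the paper takes conditional expectations first and applies the ${\cal L}_{2}$ (Minkowski) triangle inequality on the product space; both yield the identical constant $\left(2c\mathsf{R}_{p}+1\right)^{2}G^{2}$.
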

\begin{proof}[Proof of Lemma \ref{lem:INTER_1}]
See Section \ref{subsec:RM_1} (Appendix). 
\end{proof}
Exploiting Lemma \ref{lem:INTER_1}, one may also prove the following
result, which will be useful later in our analysis. The proof is omitted,
since it is essentially provided in (\citep{Wang2017}, Supplementary
Material, Section G.1).
\begin{lem}
\textbf{\textup{(Iterate Increment Summability)}}\label{lem:INTER_1-1-2}
Let Assumption \ref{assu:F_AS_Main} be in effect. Also, consider
a sequence $\left\{ \delta_{n}>0\right\} _{n\in\mathbb{N}}$, such
that $\sum_{n\in\mathbb{N}}\alpha_{n}^{2}\delta_{n}^{-1}<\infty$.
Then, the iterate process $\left\{ \boldsymbol{x}^{n}\right\} _{n\in\mathbb{N}}$
generated by the $\textit{MESSAGE}^{p}$ algorithm satisfies
\begin{equation}
\sum_{n\in\mathbb{N}}\delta_{n}^{-1}\mathbb{E}_{\mathscr{D}^{n}}\left\{ \left\Vert \boldsymbol{x}^{n+1}\hspace{-1pt}\hspace{-1pt}-\hspace{-1pt}\boldsymbol{x}^{n}\right\Vert _{2}^{2}\right\} <\infty,
\end{equation}
almost everywhere relative to ${\cal P}$. 
\end{lem}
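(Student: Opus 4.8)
The plan is to reduce the claim directly to Lemma \ref{lem:INTER_1}, which already supplies a deterministic, $n$-dependent upper bound on each conditional increment; everything else is summation. First I would invoke Lemma \ref{lem:INTER_1}, which asserts that, for every $n\in\mathbb{N}$,
\begin{equation}
\mathbb{E}_{\mathscr{D}^{n}}\left\{ \left\Vert \boldsymbol{x}^{n+1}-\boldsymbol{x}^{n}\right\Vert _{2}^{2}\right\} \le\alpha_{n}^{2}\left(2c\mathsf{R}_{p}+1\right)^{2}G^{2},
\end{equation}
almost everywhere relative to ${\cal P}$. Multiplying through by $\delta_{n}^{-1}>0$ yields the pointwise termwise bound
\begin{equation}
\delta_{n}^{-1}\mathbb{E}_{\mathscr{D}^{n}}\left\{ \left\Vert \boldsymbol{x}^{n+1}-\boldsymbol{x}^{n}\right\Vert _{2}^{2}\right\} \le\left(2c\mathsf{R}_{p}+1\right)^{2}G^{2}\,\alpha_{n}^{2}\delta_{n}^{-1},
\end{equation}
again almost everywhere relative to ${\cal P}$.

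Second, I would address the only genuinely measure-theoretic point: each inequality above is an almost-everywhere statement attached to its own probability-one event, say $A_{n}\in\mathscr{F}$ with ${\cal P}\left(A_{n}\right)\equiv1$. Since the index set $\mathbb{N}$ is countable, the intersection $A\triangleq\bigcap_{n\in\mathbb{N}}A_{n}$ still satisfies ${\cal P}\left(A\right)\equiv1$, and on $A$ every termwise bound holds \emph{simultaneously}. This aggregation is what allows the almost-everywhere qualifier to propagate cleanly to the infinite sum.

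Third, on the event $A$ I would sum the termwise bound over $n$ and factor out the finite, nonrandom constant $\left(2c\mathsf{R}_{p}+1\right)^{2}G^{2}$:
\begin{equation}
\sum_{n\in\mathbb{N}}\delta_{n}^{-1}\mathbb{E}_{\mathscr{D}^{n}}\left\{ \left\Vert \boldsymbol{x}^{n+1}-\boldsymbol{x}^{n}\right\Vert _{2}^{2}\right\} \le\left(2c\mathsf{R}_{p}+1\right)^{2}G^{2}\sum_{n\in\mathbb{N}}\alpha_{n}^{2}\delta_{n}^{-1}.
\end{equation}
Here $G<\infty$ is guaranteed by condition $\mathbf{C1}$, and $\mathsf{R}_{p}$ is finite by its very definition together with condition $\mathbf{C4}$, which ensures $\varepsilon>0$ whenever $p>1$, so that the ratio ${\cal E}/\varepsilon$ (and hence $\mathsf{R}_{p}$) is well defined and finite. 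By the standing hypothesis $\sum_{n\in\mathbb{N}}\alpha_{n}^{2}\delta_{n}^{-1}<\infty$, the right-hand side is finite, and the claim follows on $A$, hence almost everywhere relative to ${\cal P}$. The proof presents essentially no obstacle, since all the nontrivial analytic work (controlling the projected stochastic gradient step in squared-${\cal L}_{2}$ norm) is already contained in Lemma \ref{lem:INTER_1}; the only step warranting care is the countable intersection of null-complement events carried out above, which justifies interchanging ``almost everywhere'' with the summation.
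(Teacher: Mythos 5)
Your proof is correct and takes essentially the route the paper intends: the paper omits the argument, noting only that the result follows by exploiting Lemma \ref{lem:INTER_1} (deferring the details to the supplementary material of \citep{Wang2017}), and your termwise summation of the deterministic bound $\alpha_{n}^{2}\left(2c\mathsf{R}_{p}+1\right)^{2}G^{2}$, combined with the countable intersection of the almost-sure events on which those bounds hold, is precisely that argument. Nothing is missing, since the finiteness of the constant is guaranteed by conditions $\mathbf{C1}$ and $\mathbf{C4}$, exactly as you note.
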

Let us now consider Borel measurable functions ${\cal S}^{\widetilde{F}}:{\cal X}\rightarrow\mathbb{R}$,
${\cal D}^{\widetilde{F}}:{\cal X}\times\mathbb{R}\rightarrow\mathbb{R}_{+}$
and ${\cal D}^{\widetilde{F}}:\mathbb{R}\rightarrow\mathbb{R}_{+}$,
defined as\footnote{Without any risk of confusion, we use the same name ${\cal D}^{\widetilde{F}}$
to refer to the two very similar functions (\ref{eq:SIM_1}) and (\ref{eq:SIM_2}).
The two functions will be distinguished by their different number
of arguments. }
\begin{flalign}
{\cal S}^{\widetilde{F}}\left(\boldsymbol{x}\right) & \triangleq\mathbb{E}\left\{ F\left(\boldsymbol{x},\boldsymbol{W}'\right)\right\} ,\\
{\cal D}^{\widetilde{F}}\left(\boldsymbol{x},y\right) & \triangleq\mathbb{E}\hspace{-2pt}\left\{ \left({\cal R}\left(F\hspace{-2pt}\left(\boldsymbol{x},\hspace{-1pt}\boldsymbol{W}'\right)\hspace{-2pt}-\hspace{-2pt}y\right)\right)^{p}\right\} \quad\text{and}\label{eq:SIM_1}\\
{\cal D}^{\widetilde{F}}\left(\boldsymbol{x}\right) & \triangleq\mathbb{E}\hspace{-2pt}\left\{ \left({\cal R}\left(F\hspace{-2pt}\left(\boldsymbol{x},\hspace{-1pt}\boldsymbol{W}'\right)\hspace{-2pt}-\hspace{-2pt}{\cal S}^{\widetilde{F}}\left(\boldsymbol{x}\right)\right)\right)^{p}\right\} .\label{eq:SIM_2}
\end{flalign}
where the random element $\boldsymbol{W}':\Omega\rightarrow\mathbb{R}^{M}$
is distributed according to the Borel measure ${\cal P}_{\boldsymbol{W}}$,
and is arbitrarily taken as independent of the \textit{whole} filtration
$\left\{ \mathscr{D}^{n}\right\} _{n\in\mathbb{N}}$. For instance,
for each $n\in\mathbb{N}$, $\boldsymbol{W}'$ may be substituted
by the information stream $\boldsymbol{W}_{2}^{n+1}$, which is by
assumption statistically independent of the sub $\sigma$-algebra
$\sigma\left\{ \mathscr{D}^{n}\right\} $. The main purpose of the
auxiliary expectation functions ${\cal S}^{\widetilde{F}}$ and ${\cal D}^{\widetilde{F}}$
is convenience.

Utilizing ${\cal S}^{\widetilde{F}}$, the behavior of the running
approximation error $y^{n+1}-{\cal S}^{\widetilde{F}}\left(\boldsymbol{x}^{n+1}\right)$
may be analyzed in a similar manner as the respective quantity of
Lemma \ref{lem:INTER_1}. The relevant result follows.
\begin{lem}
\textbf{\textup{(First SA Level: Error Growth)}}\label{lem:INTER_1-1}
Let Assumption \ref{assu:F_AS_Main} be in effect. Also, let $\beta_{n}\in\left(0,1\right]$,
for all $n\in\mathbb{N}$. Then, the composite process $\left\{ \left(\boldsymbol{x}^{n},y^{n}\right)\right\} _{n\in\mathbb{N}}$
generated by the $\textit{MESSAGE}^{p}$ algorithm satisfies
\begin{align}
\hspace{-2pt}\mathbb{E}_{\mathscr{D}^{n}}\hspace{-2pt}\left\{ \left|y^{n+1}\hspace{-1pt}\hspace{-1pt}-\hspace{-1pt}{\cal S}^{\widetilde{F}}\hspace{-1pt}\hspace{-1pt}\left(\boldsymbol{x}^{n+1}\right)\right|^{2}\right\}  & \hspace{-2pt}\le\hspace{-1pt}\left(1\hspace{-1pt}-\hspace{-1pt}\beta_{n}\right)\hspace{-1pt}\left|y^{n}\hspace{-1pt}\hspace{-1pt}-\hspace{-1pt}{\cal S}^{\widetilde{F}}\hspace{-1pt}\hspace{-1pt}\left(\boldsymbol{x}^{n}\right)\right|^{2}\hspace{-2pt}+\hspace{-1pt}\beta_{n}^{-1}2G^{2}\hspace{-1pt}\mathbb{E}_{\mathscr{D}^{n}}\hspace{-2pt}\left\{ \left\Vert \boldsymbol{x}^{n+1}\hspace{-1pt}\hspace{-1pt}-\hspace{-1pt}\boldsymbol{x}^{n}\right\Vert _{2}^{2}\right\} \hspace{-1pt}+\hspace{-1pt}\beta_{n}^{2}2V,\label{eq:Deep_SA}
\end{align}
for all $n\in\mathbb{N}$, almost everywhere relative to ${\cal P}$.
\end{lem}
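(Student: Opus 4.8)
The plan is to control the First-SA-Level tracking error $e^{n}\triangleq y^{n}-{\cal S}^{\widetilde{F}}\left(\boldsymbol{x}^{n}\right)$ by splitting its one-step increment into a ``contraction-plus-noise'' piece, governed by the conditional mean and variance of the oracle sample, and a deterministic ``drift'' piece, measuring the displacement of the moving target ${\cal S}^{\widetilde{F}}$ as $\boldsymbol{x}^{n}$ is advanced to $\boldsymbol{x}^{n+1}$. Using the update (\ref{eq:HIER_1-1}), I would first write
\[
e^{n+1}=\underbrace{\left(1-\beta_{n}\right)e^{n}+\beta_{n}\xi^{n+1}}_{\triangleq\,a^{n+1}}+\underbrace{{\cal S}^{\widetilde{F}}\left(\boldsymbol{x}^{n}\right)-{\cal S}^{\widetilde{F}}\left(\boldsymbol{x}^{n+1}\right)}_{\triangleq\,d^{n+1}},
\]
where $\xi^{n+1}\triangleq F\left(\boldsymbol{x}^{n},\boldsymbol{W}_{1}^{n+1}\right)-{\cal S}^{\widetilde{F}}\left(\boldsymbol{x}^{n}\right)$. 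Because $\boldsymbol{W}_{1}^{n+1}$ is independent of $\mathscr{D}^{n}$ whereas $\boldsymbol{x}^{n}$ is $\mathscr{D}^{n}$-measurable, the substitution rule for conditional expectations yields $\mathbb{E}_{\mathscr{D}^{n}}\left\{ \xi^{n+1}\right\} \equiv0$ and $\mathbb{E}_{\mathscr{D}^{n}}\left\{ \left(\xi^{n+1}\right)^{2}\right\} \equiv\left.\mathbb{V}\left\{ F\left(\boldsymbol{x},\boldsymbol{W}\right)\right\} \right|_{\boldsymbol{x}\equiv\boldsymbol{x}^{n}}\le V$, the bound being condition $\mathbf{C2}$.

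The two constituent pieces are then handled separately. For $a^{n+1}$, the $\mathscr{D}^{n}$-measurability of $\left(1-\beta_{n}\right)e^{n}$ together with the zero conditional mean of $\xi^{n+1}$ kills the martingale cross term, so $\mathbb{E}_{\mathscr{D}^{n}}\{(a^{n+1})^{2}\}\le\left(1-\beta_{n}\right)^{2}|e^{n}|^{2}+\beta_{n}^{2}V$. For the drift $d^{n+1}$, I would use that ${\cal S}^{\widetilde{F}}\left(\cdot\right)\equiv\mathbb{E}\left\{ F\left(\cdot,\boldsymbol{W}\right)\right\} $ is convex with subgradients $\mathbb{E}\{\underline{\nabla}F\left(\cdot,\boldsymbol{W}\right)\}$ (cf. the Jacobian $\nabla\boldsymbol{h}^{\widetilde{F}}$ in Lemma \ref{lem:Sub_Grad}), whose $\ell_{2}$-norm is at most $G$ by Jensen's inequality and the monotonicity of ${\cal L}_{p}$-norms applied to condition $\mathbf{C1}$ (since $P\ge1$); hence ${\cal S}^{\widetilde{F}}$ is $G$-Lipschitz and $\left(d^{n+1}\right)^{2}\le G^{2}\|\boldsymbol{x}^{n+1}-\boldsymbol{x}^{n}\|_{2}^{2}$ pointwise.

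The delicate step, and the crux of the argument, is that $a^{n+1}$ and $d^{n+1}$ are \emph{statistically coupled}: the drift $d^{n+1}$ depends on $\boldsymbol{x}^{n+1}$, which through the Third-SA-Level update (\ref{eq:HIER_2-1}) is built from the very same sample $\boldsymbol{W}_{1}^{n+1}$ that enters $\xi^{n+1}$ (in addition to $\boldsymbol{W}_{2}^{n+1}$). Consequently the cross term $\mathbb{E}_{\mathscr{D}^{n}}\{a^{n+1}d^{n+1}\}$ does \emph{not} vanish, and one cannot rely on a martingale cancellation here. To circumvent this I would separate the two pieces \emph{before} conditioning, applying the pointwise convexity inequality $\left|a+b\right|^{2}\le t^{-1}|a|^{2}+\left(1-t\right)^{-1}|b|^{2}$, valid $\omega$-by-$\omega$ for any $t\in\left(0,1\right)$, which sidesteps the correlation entirely. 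Taking $\mathbb{E}_{\mathscr{D}^{n}}$ and inserting the two bounds above gives
\[
\mathbb{E}_{\mathscr{D}^{n}}\left\{ |e^{n+1}|^{2}\right\} \le\dfrac{\left(1-\beta_{n}\right)^{2}}{t}|e^{n}|^{2}+\dfrac{\beta_{n}^{2}V}{t}+\dfrac{G^{2}}{1-t}\mathbb{E}_{\mathscr{D}^{n}}\left\{ \|\boldsymbol{x}^{n+1}-\boldsymbol{x}^{n}\|_{2}^{2}\right\} .
\]

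It remains to choose the weight $t$. Keeping $\beta_{n}\xi^{n+1}$ bundled inside $a^{n+1}$ (rather than in the drift) is exactly what preserves the favorable $\beta_{n}^{2}$-scaling of the variance term instead of the weaker $\beta_{n}$-scaling; the selection $t\equiv1-\beta_{n}/2\in\left[1/2,1\right)$ then collapses all three coefficients to the advertised values, since one checks directly that $\left(1-\beta_{n}\right)^{2}/\left(1-\beta_{n}/2\right)\le1-\beta_{n}$, $\beta_{n}^{2}/\left(1-\beta_{n}/2\right)\le2\beta_{n}^{2}$, and $G^{2}/\left(1-t\right)\equiv2G^{2}/\beta_{n}$, uniformly for $\beta_{n}\in\left(0,1\right]$ and in particular at the boundary $\beta_{n}\equiv1$ (where $t\equiv1/2$). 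The main obstacle is thus not any single estimate but the need to decouple the injected noise from the induced drift deterministically, while simultaneously shielding the noise behind the contraction so that it contributes only at order $\beta_{n}^{2}$, and to tune $t$ so the resulting bound remains valid all the way up to $\beta_{n}\equiv1$.
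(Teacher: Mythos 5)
Your proposal is correct and follows essentially the same route as the paper's proof: the identical three-term decomposition (contraction, oracle noise, drift of the moving target ${\cal S}^{\widetilde{F}}$), the same martingale cancellation of the $e^{n}\xi^{n+1}$ cross term under $\mathbb{E}_{\mathscr{D}^{n}}$, the same $G$-Lipschitz bound on ${\cal S}^{\widetilde{F}}$ from condition $\mathbf{C1}$, and the same Young-type peeling inequality to decouple the drift from the noise block before conditioning (exactly because $\boldsymbol{x}^{n+1}$ shares the sample $\boldsymbol{W}_{1}^{n+1}$ with $\xi^{n+1}$). The only difference is cosmetic: the paper takes weights $\left(1+\beta_{n},\,1+\beta_{n}^{-1}\right)$, i.e., $t\equiv1/\left(1+\beta_{n}\right)$, while you take $t\equiv1-\beta_{n}/2$; both choices collapse, via $\beta_{n}\le1$, to the identical constants $\left(1-\beta_{n},\,2\beta_{n}^{2}V,\,2\beta_{n}^{-1}G^{2}\right)$ in the final bound.
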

\begin{proof}[Proof of Theorem \ref{lem:INTER_1-1}]
See Section \ref{subsec:RM_0} (Appendix).
\end{proof}
At this point, let us make the following additional assumption, related
with the initial conditions of the first and second SA levels of the
\textit{$\textit{MESSAGE}^{p}$} algorithm.
\begin{assumption}
\textbf{\textup{(Initial Values)}}\label{assu:Initial-Values} \uline{Whenever
\mbox{$p>1$}}, $y^{0}$,
$\beta_{0}$ and $z^{0}$, $\gamma_{0}$ are chosen such that
\begin{equation}
\left\{ \hspace{-1pt}\hspace{-1pt}\hspace{-1pt}\hspace{-1pt}\begin{array}{l}
\text{either}\quad y^{0}\in\left[m_{l},m_{h}\right],\quad\text{or}\quad\beta_{0}\equiv1\\
\text{either}\quad z^{0}\in\left[\varepsilon^{p},{\cal E}^{p}\right],\quad\text{or}\quad\gamma_{0}\equiv1
\end{array}\right..
\end{equation}
\end{assumption}
It is trivial to see that Assumption \ref{assu:Initial-Values} \textit{can
always be satisfied}, one way or another. However, choosing \textbf{$\beta_{0}\triangleq1$
}and $\gamma_{0}\triangleq1$ might be advantageous in practice, especially
for cases where specific values of the constants $m_{l},m_{h},\varepsilon,{\cal E}$
are unknown. In our analysis, Assumption \ref{assu:Initial-Values}
will help us guarantee uniform boundedness of the iterates $\left\{ y^{n}\right\} _{n\in\mathbb{N}}$
and $\left\{ z^{n}\right\} _{n\in\mathbb{N}}$ of the first and second
SA levels of the \textit{$\textit{MESSAGE}^{p}$} algorithm, respectively,
whenever the semideviation order is chosen greater than unity, that
is, $p>1$ (see Lemma \ref{lem:Iterate_Boundedness} in Section \ref{subsec:Some-Auxiliary-Results}
(Appendix)). 

Now, similarly to Lemma \ref{lem:INTER_1-1}, the growth of the running
approximation error $z^{n}-{\cal D}^{\widetilde{F}}\left(\boldsymbol{x}^{n},y^{n}\right)$
may be characterized as follows.
\begin{lem}
\textbf{\textup{(Second SA Level: Error Growth)}}\label{lem:INTER_Middle}
Let Assumptions \ref{assu:F_AS_Main} and \ref{assu:Initial-Values}
be in effect. Also, choose $p>1$, and let $\beta_{n}\in\left(0,1\right]$,
$\gamma_{n}\in\left(0,1\right]$, for all $n\in\mathbb{N}$. Then,
the composite process $\left\{ \left(\boldsymbol{x}^{n},y^{n},z^{n}\right)\right\} _{n\in\mathbb{N}}$
generated by the $\textit{MESSAGE}^{p}$ algorithm satisfies
\begin{flalign}
\mathbb{E}_{\mathscr{D}^{n}}\left\{ \left|z^{n+1}-{\cal D}^{\widetilde{F}}\left(\boldsymbol{x}^{n+1},y^{n+1}\right)\right|^{2}\right\}  & \le\left(1-\gamma_{n}\right)\left|z^{n}-{\cal D}^{\widetilde{F}}\left(\boldsymbol{x}^{n},y^{n}\right)\right|^{2}\nonumber \\
 & \hspace{-2pt}\hspace{-2pt}\hspace{-2pt}\hspace{-2pt}\hspace{-2pt}\hspace{-2pt}\hspace{-2pt}\hspace{-2pt}\hspace{-2pt}\hspace{-2pt}\hspace{-2pt}\hspace{-2pt}\hspace{-2pt}\hspace{-2pt}\hspace{-2pt}\hspace{-2pt}\hspace{-2pt}\hspace{-2pt}\hspace{-2pt}\hspace{-2pt}\hspace{-2pt}\hspace{-2pt}\hspace{-2pt}\hspace{-2pt}\hspace{-2pt}\hspace{-2pt}\hspace{-2pt}\hspace{-2pt}\hspace{-2pt}\hspace{-2pt}\hspace{-2pt}\hspace{-2pt}\hspace{-2pt}\hspace{-2pt}\hspace{-2pt}\hspace{-2pt}\hspace{-2pt}\hspace{-2pt}\hspace{-2pt}\hspace{-2pt}\hspace{-2pt}\hspace{-2pt}\hspace{-2pt}\hspace{-2pt}\hspace{-2pt}\hspace{-2pt}\hspace{-2pt}\hspace{-2pt}\hspace{-2pt}\hspace{-2pt}\hspace{-2pt}\hspace{-2pt}\hspace{-2pt}\hspace{-2pt}\hspace{-2pt}\hspace{-2pt}\hspace{-2pt}\hspace{-2pt}\hspace{-2pt}\hspace{-2pt}\hspace{-2pt}\hspace{-2pt}\hspace{-2pt}+\gamma_{n}^{-1}16G^{2}{\cal E}^{2p-2}p^{2}\mathbb{E}_{\mathscr{D}^{n}}\left\{ \left\Vert \boldsymbol{x}^{n+1}-\boldsymbol{x}^{n}\right\Vert _{2}^{2}\right\} +\beta_{n}^{2}\gamma_{n}^{-1}4{\cal E}^{2p-2}p^{2}\left(m_{h}-m_{l}\right)^{2}+\gamma_{n}^{2}2{\cal E}^{2p},\label{eq:Middle_SA}
\end{flalign}
for all $n\in\mathbb{N}$, almost everywhere relative to ${\cal P}$.
\end{lem}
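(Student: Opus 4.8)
The plan is to follow the same ``contraction plus martingale-noise plus drift'' roadmap used for the first SA level in Lemma~\ref{lem:INTER_1-1}, now applied to the second-level recursion $z^{n+1}=(1-\gamma_n)z^n+\gamma_n\left({\cal R}\left(F(\boldsymbol{x}^n,\boldsymbol{W}_2^{n+1})-y^n\right)\right)^p$ valid for $p>1$. Writing $e^n\triangleq z^n-{\cal D}^{\widetilde{F}}(\boldsymbol{x}^n,y^n)$, $s^{n+1}\triangleq\left({\cal R}(F(\boldsymbol{x}^n,\boldsymbol{W}_2^{n+1})-y^n)\right)^p-{\cal D}^{\widetilde{F}}(\boldsymbol{x}^n,y^n)$ and $d^{n+1}\triangleq{\cal D}^{\widetilde{F}}(\boldsymbol{x}^{n+1},y^{n+1})-{\cal D}^{\widetilde{F}}(\boldsymbol{x}^n,y^n)$, I would first record the algebraic identity $e^{n+1}=(1-\gamma_n)e^n+\gamma_n s^{n+1}-d^{n+1}$, which isolates a contractive term, a sampling-noise term, and a drift term.

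Before squaring, I would assemble four ingredients. (i) \emph{Iterate boundedness}: under Assumption~\ref{assu:Initial-Values} and the auxiliary boundedness result (Lemma~\ref{lem:Iterate_Boundedness}), for $p>1$ the iterates satisfy $y^n\in[m_l,m_h]$ and $z^n\in[\varepsilon^p,{\cal E}^p]$, so that the relevant arguments $F(\boldsymbol{x}^n,\boldsymbol{W}_2^{n+1})-y^n$ lie in $\mathfrak{R}^{\widetilde{F}}$ and condition $\mathbf{C4}$ applies. (ii) \emph{Unbiasedness}: since $\boldsymbol{W}_2^{n+1}$ is independent of $\mathscr{D}^n$, the substitution rule for conditional expectations gives $\mathbb{E}_{\mathscr{D}^n}\{s^{n+1}\}\equiv0$. (iii) \emph{Noise bound}: by $\mathbf{C4}$, both $\left({\cal R}(\cdot)\right)^p$ and ${\cal D}^{\widetilde{F}}$ lie in $[\varepsilon^p,{\cal E}^p]$, whence $|s^{n+1}|\le{\cal E}^p$ and $\mathbb{E}_{\mathscr{D}^n}\{|s^{n+1}|^2\}\le{\cal E}^{2p}$. (iv) \emph{Lipschitz drift}: using $|\underline{\nabla}{\cal R}|\le1$ (condition $\mathbf{S4}$ and Proposition~\ref{prop:CharacterizationR}) together with the $\mathbf{C4}$ bound ${\cal R}\le{\cal E}$ shows that $z\mapsto({\cal R}(z))^p$ is $p{\cal E}^{p-1}$-Lipschitz on $\mathfrak{R}^{\widetilde{F}}$; combined with $\mathbf{C1}$ (which gives $\mathbb{E}\{\|\underline{\nabla}F\|_2\}\le G$), this yields that ${\cal D}^{\widetilde{F}}(\cdot,y)$ is $p{\cal E}^{p-1}G$-Lipschitz in $\boldsymbol{x}$ and ${\cal D}^{\widetilde{F}}(\boldsymbol{x},\cdot)$ is $p{\cal E}^{p-1}$-Lipschitz in $y$. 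Splitting $d^{n+1}=d_{\boldsymbol{x}}^{n+1}+d_y^{n+1}$ into its $\boldsymbol{x}$- and $y$-increments and invoking $|y^{n+1}-y^n|=\beta_n|F(\boldsymbol{x}^n,\boldsymbol{W}_1^{n+1})-y^n|\le\beta_n(m_h-m_l)$ then bounds $|d_{\boldsymbol{x}}^{n+1}|\le p{\cal E}^{p-1}G\|\boldsymbol{x}^{n+1}-\boldsymbol{x}^n\|_2$ and $|d_y^{n+1}|\le p{\cal E}^{p-1}\beta_n(m_h-m_l)$.

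The core computation is to expand $\mathbb{E}_{\mathscr{D}^n}\{|e^{n+1}|^2\}$ and apply Young's inequality $2ab\le\lambda a^2+\lambda^{-1}b^2$ with the free parameter tuned to $\gamma_n$, so that the terms $(1-\gamma_n)^2|e^n|^2$ and $\gamma_n(1-\gamma_n)|e^n|^2$ coalesce into the advertised contraction factor $(1-\gamma_n)|e^n|^2$. The main obstacle — and the essential difference from the first-level Lemma~\ref{lem:INTER_1-1} — is that here the sampling noise $s^{n+1}$ and the $\boldsymbol{x}$-drift $d_{\boldsymbol{x}}^{n+1}$ are built from the \emph{same} information stream $\boldsymbol{W}_2^{n+1}$ (the third SA level reuses $\boldsymbol{W}_2^{n+1}$ to form $\boldsymbol{x}^{n+1}$), so the cross term $\mathbb{E}_{\mathscr{D}^n}\{s^{n+1}d_{\boldsymbol{x}}^{n+1}\}$ does \emph{not} vanish and must be absorbed by a further Young step, producing the inflated constant $16$ on the $\gamma_n^{-1}\|\boldsymbol{x}^{n+1}-\boldsymbol{x}^n\|_2^2$ term. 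By contrast, the $y$-drift $d_y^{n+1}$ depends only on $\boldsymbol{W}_1^{n+1}$, which is independent of $\boldsymbol{W}_2^{n+1}$ given $\mathscr{D}^n$, so $\mathbb{E}_{\mathscr{D}^n}\{s^{n+1}d_y^{n+1}\}\equiv0$ and its contribution stays at the smaller constant $4$. Collecting the contraction term, the $\gamma_n^{-1}$-weighted drift bounds from ingredient (iv), and the $\gamma_n^2 2{\cal E}^{2p}$ noise term from ingredient (iii) yields~(\ref{eq:Middle_SA}); the remaining work is routine constant-chasing, with all probabilistic statements holding almost everywhere relative to ${\cal P}$.
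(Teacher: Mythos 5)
Your roadmap is essentially the paper's: the same decomposition $e^{n+1}=\left(1-\gamma_{n}\right)e^{n}+\gamma_{n}s^{n+1}-d^{n+1}$, the same four ingredients (boundedness of $y^{n},z^{n}$ via Assumption \ref{assu:Initial-Values} and Lemma \ref{lem:Iterate_Boundedness}; conditional unbiasedness of $s^{n+1}$; a noise bound; Lipschitz drift bounds together with $\left|y^{n+1}-y^{n}\right|\le\beta_{n}\left(m_{h}-m_{l}\right)$), and a Young-type absorption tuned to $\gamma_{n}$. Your tighter variance bound $\mathbb{E}_{\mathscr{D}^{n}}\{|s^{n+1}|^{2}\}\le{\cal E}^{2p}$ is even the one the stated coefficient $2$ on $\gamma_{n}^{2}{\cal E}^{2p}$ actually requires (the paper's own proof, using the cruder bound $2{\cal E}^{2p}$, ends with coefficient $4$ there). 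However, two of your quantitative claims are wrong, and they mask each other. First, ${\cal D}^{\widetilde{F}}\left(\cdot,y\right)$ is not $p{\cal E}^{p-1}G$-Lipschitz in $\boldsymbol{x}$ but $2p{\cal E}^{p-1}G$-Lipschitz: since $F\left(\cdot,\boldsymbol{W}\right)$ is convex and possibly nonsmooth, sandwiching $\left|F\left(\boldsymbol{x}_{1},\boldsymbol{W}\right)-F\left(\boldsymbol{x}_{2},\boldsymbol{W}\right)\right|$ requires subgradients at \emph{both} endpoints, which is exactly why Lemma \ref{prop:EF_LIP} carries the factor $2$, i.e., $\mathbb{E}\left\{ \left|F\left(\boldsymbol{x}_{1},\boldsymbol{W}\right)-F\left(\boldsymbol{x}_{2},\boldsymbol{W}\right)\right|\right\} \le2G\left\Vert \boldsymbol{x}_{1}-\boldsymbol{x}_{2}\right\Vert _{2}$. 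Second, the constant $16$ has nothing to do with the cross term $\mathbb{E}_{\mathscr{D}^{n}}\{s^{n+1}d_{\boldsymbol{x}}^{n+1}\}$. The paper never evaluates any drift cross term: it applies the weighted splitting
\begin{equation}
\left|a-b\right|^{2}\le\left(1+\gamma_{n}\right)\left|a\right|^{2}+\left(1+\gamma_{n}^{-1}\right)\left|b\right|^{2},\quad a\equiv\left(1-\gamma_{n}\right)e^{n}+\gamma_{n}s^{n+1},\quad b\equiv d^{n+1},
\end{equation}
\emph{pathwise, before taking any expectation}, so every cross term involving the drift ($\boldsymbol{x}$-part and $y$-part alike, correlated with $s^{n+1}$ or not) is absorbed deterministically; the only cross term needing probabilistic structure is $e^{n}s^{n+1}$, which dies by unbiasedness. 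The $16$-versus-$4$ asymmetry then reads off as $16=2\cdot2\cdot\left(2G\right)^{2}/G^{2}$ against $4=2\cdot2\cdot1$: it is entirely the factor-$2$ Lipschitz constant, not a correlation effect. For the same reason, your claimed ``essential difference'' from the first SA level does not exist: there, too, the drift ${\cal S}^{\widetilde{F}}\left(\boldsymbol{x}^{n}\right)-{\cal S}^{\widetilde{F}}\left(\boldsymbol{x}^{n+1}\right)$ is correlated with the noise, since $\boldsymbol{x}^{n+1}$ uses $\boldsymbol{W}_{1}^{n+1}$ through $\Delta^{n+1}$, and Lemma \ref{lem:INTER_1-1} is proved with the very same grouping.

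This is not merely a narrative issue, because the lemma asserts specific constants. Running your route --- term-by-term expansion, independence to kill $\mathbb{E}_{\mathscr{D}^{n}}\{s^{n+1}d_{y}^{n+1}\}$, and separate Young steps for $\mathbb{E}_{\mathscr{D}^{n}}\{e^{n}d^{n+1}\}$ and $\mathbb{E}_{\mathscr{D}^{n}}\{s^{n+1}d_{\boldsymbol{x}}^{n+1}\}$ --- with the \emph{correct} constant $2G$ produces, under the natural choices of Young parameters, a coefficient of roughly $20\gamma_{n}^{-1}G^{2}{\cal E}^{2p-2}p^{2}$ on the increment term, overshooting the stated $16$; with your understated constant $G$ you would land below $16$, but only by virtue of a false intermediate bound. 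The repair is to group as the paper does: split off $d^{n+1}$ with weights $\left(1+\gamma_{n}\right)$, $\left(1+\gamma_{n}^{-1}\right)$ first, use unbiasedness only against $e^{n}$, then bound $\left(1+\gamma_{n}^{-1}\right)\le2\gamma_{n}^{-1}$ and $\left(a+b\right)^{2}\le2a^{2}+2b^{2}$, and insert the $2G$-Lipschitz bound from Lemma \ref{prop:EF_LIP}; this yields $16$ and $4$ exactly, and your variance bound on $s^{n+1}$ supplies the coefficient $2$.
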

\begin{proof}[Proof of Theorem \ref{lem:INTER_Middle}]
See Section \ref{subsec:RM_2} (Appendix).
\end{proof}
Let $\boldsymbol{x}^{*}\in{\cal X}$ be an optimal solution of problem
(\ref{eq:MAIN_PROB}), assuming such solution exists. We now characterize
the evolution of optimality error $\boldsymbol{x}^{n+1}\hspace{-1pt}-\hspace{-1pt}\boldsymbol{x}^{*}$,
showing that the quantity $\left\Vert \boldsymbol{x}^{n+1}\hspace{-1pt}-\hspace{-1pt}\boldsymbol{x}^{*}\right\Vert _{2}^{2}$
is an almost-supermartingale nonnegative sequence, of the form (\ref{eq:Almost_1})
in Lemma \ref{lem:SUPER_M}.
\begin{lem}
\textbf{\textup{(Third SA Level: Optimality Error Growth)}}\label{lem:INTER_1-1-1}
Let Assumptions \ref{assu:F_AS_Main} and \ref{assu:Initial-Values}
be in effect, let $\beta_{n}\in\left(0,1\right]$, $\gamma_{n}\in\left(0,1\right]$,
for all $n\in\mathbb{N}$, and define the constant
\begin{equation}
\mathsf{B}_{p}\triangleq\begin{cases}
D, & \text{if }p\equiv1\\
\left(1+{\cal E}^{p-1}p\right)\max\left\{ \hspace{-2pt}\left(\dfrac{1}{\varepsilon}\right)^{\hspace{-1pt}p-1}\hspace{-2pt}D,\left(p-1\right)\dfrac{{\cal E}^{p-1}}{\varepsilon^{2p-1}}\right\} , & \text{if }p>1
\end{cases}<\infty.
\end{equation}
Also, suppose that ${\cal X}^{*}\triangleq\mathrm{arg\,min}_{\boldsymbol{x}\in{\cal X}}\phi^{\widetilde{F}}\left(\boldsymbol{x}\right)\neq\emptyset$
and consider any $\boldsymbol{x}^{*}\in{\cal X}^{*}$. Then, the composite
process $\left\{ \left(\boldsymbol{x}^{n},y^{n},z^{n}\right)\right\} _{n\in\mathbb{N}}$
generated by the $\textit{MESSAGE}^{p}$ algorithm satisfies
\begin{flalign}
 & \hspace{-2pt}\hspace{-2pt}\hspace{-2pt}\hspace{-2pt}\hspace{-2pt}\hspace{-2pt}\mathbb{E}_{\mathscr{D}^{n}}\left\{ \left\Vert \boldsymbol{x}^{n+1}\hspace{-1pt}-\hspace{-1pt}\boldsymbol{x}^{*}\right\Vert _{2}^{2}\right\} \nonumber \\
 & \le\hspace{-2pt}\hspace{-2pt}\left(\hspace{-1pt}1\hspace{-1pt}+\hspace{-1pt}4\mathsf{B}_{p}^{2}G^{2}c^{2}\left(\dfrac{\alpha_{n}^{2}}{\beta_{n}}+\dfrac{\alpha_{n}^{2}}{\gamma_{n}}\mathds{1}_{\left\{ p>1\right\} }\right)\hspace{-2pt}\hspace{-1pt}\right)\hspace{-2pt}\left\Vert \boldsymbol{x}^{n}\hspace{-1pt}-\hspace{-1pt}\boldsymbol{x}^{*}\right\Vert _{2}^{2}\hspace{-1pt}+\hspace{-1pt}\alpha_{n}^{2}\left(2c\mathsf{R}_{p}\hspace{-2pt}+\hspace{-1pt}1\right)^{2}G^{2}\hspace{-1pt}-\hspace{-1pt}2\alpha_{n}\hspace{-1pt}\left(\phi^{\widetilde{F}}\hspace{-1pt}\left(\boldsymbol{x}^{n}\right)\hspace{-1pt}-\hspace{-1pt}\phi_{*}^{\widetilde{F}}\right)\nonumber \\
 & \quad\quad+\beta_{n}\hspace{-1pt}\left|y^{n}\hspace{-1pt}\hspace{-1pt}-\hspace{-1pt}{\cal S}^{\widetilde{F}}\hspace{-1pt}\hspace{-1pt}\left(\boldsymbol{x}^{n}\right)\right|^{2}\hspace{-2pt}+\hspace{-1pt}\gamma_{n}\left|z^{n}-{\cal D}^{\widetilde{F}}\left(\boldsymbol{x}^{n},y^{n}\right)\right|^{2}\mathds{1}_{\left\{ p>1\right\} },\label{eq:BIG_P1_3-1}
\end{flalign}
for all $n\in\mathbb{N}$, almost everywhere relative to ${\cal P}$,
where $\phi_{*}^{\widetilde{F}}\in\mathbb{R}$ is the optimal value
of problem (\ref{eq:MAIN_PROB}).
\end{lem}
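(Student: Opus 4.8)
The plan is to start from the third-level projection step (\ref{eq:HIER_2-1}) and exploit nonexpansiveness of $\Pi_{\cal X}$. Since $\boldsymbol{x}^*\in{\cal X}$ satisfies $\Pi_{\cal X}(\boldsymbol{x}^*)=\boldsymbol{x}^*$, I would first write
\[
\left\Vert\boldsymbol{x}^{n+1}-\boldsymbol{x}^*\right\Vert_2^2\le\left\Vert\boldsymbol{x}^n-\boldsymbol{x}^*-\alpha_n\widehat{\nabla}^{n+1}\phi^{\widetilde{F}}\left(\boldsymbol{x}^n,y^n,z^n\right)\right\Vert_2^2,
\]
and expand the right-hand side into $\left\Vert\boldsymbol{x}^n-\boldsymbol{x}^*\right\Vert_2^2$, a quadratic term $\alpha_n^2\Vert\widehat{\nabla}^{n+1}\phi^{\widetilde{F}}\Vert_2^2$, and a cross term $-2\alpha_n\langle\widehat{\nabla}^{n+1}\phi^{\widetilde{F}},\boldsymbol{x}^n-\boldsymbol{x}^*\rangle$. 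Taking $\mathbb{E}_{\mathscr{D}^n}\{\cdot\}$, the quadratic term is bounded by the very estimate underlying Lemma \ref{lem:INTER_1}, namely $\alpha_n^2(2c\mathsf{R}_p+1)^2G^2$, which accounts directly for the first explicit term on the right-hand side of (\ref{eq:BIG_P1_3-1}).

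The heart of the argument is the conditional expectation of the cross term. Because $y^n$ and $z^n$ are not the exact inner expectations, $\mathbb{E}_{\mathscr{D}^n}\{\widehat{\nabla}^{n+1}\phi^{\widetilde{F}}(\boldsymbol{x}^n,y^n,z^n)\}$ is a conditionally biased estimate of $\nabla\phi^{\widetilde{F}}(\boldsymbol{x}^n)$. I would decompose
\[
\mathbb{E}_{\mathscr{D}^n}\left\{\widehat{\nabla}^{n+1}\phi^{\widetilde{F}}\left(\boldsymbol{x}^n,y^n,z^n\right)\right\}=\nabla\phi^{\widetilde{F}}\left(\boldsymbol{x}^n\right)+\boldsymbol{b}_y^n+\boldsymbol{b}_z^n,
\]
where $\boldsymbol{b}_y^n$ collects the error from replacing ${\cal S}^{\widetilde{F}}(\boldsymbol{x}^n)$ by $y^n$, and $\boldsymbol{b}_z^n$ (present only when $p>1$) the error from replacing ${\cal D}^{\widetilde{F}}(\boldsymbol{x}^n,y^n)$ by $z^n$ in the factor $(z^n)^{(1-p)/p}$. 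Here I would crucially use the mutual independence of the streams $\boldsymbol{W}_1^{n+1}$ and $\boldsymbol{W}_2^{n+1}$ from $\mathscr{D}^n$, so that $\mathbb{E}_{\mathscr{D}^n}\{\cdot\}$ of the difference-of-subgradients factor $\boldsymbol{\delta}^{\underline{\nabla}}$ against the slope factor reproduces exactly the nested-expectation product $\nabla\boldsymbol{h}^{\widetilde{F}}\nabla g^{\widetilde{F}}\nabla\varrho$ of Lemma \ref{lem:Sub_Grad}. The $y$-bias $\Vert\boldsymbol{b}_y^n\Vert_2$ is then controlled by condition $\mathbf{C3}$ (a Lipschitz-in-expectation bound on the slope $p({\cal R})^{p-1}\underline{\nabla}{\cal R}$ as the centering shifts from $y^n$ to ${\cal S}^{\widetilde{F}}(\boldsymbol{x}^n)$) together with condition $\mathbf{C1}$ (which furnishes the factor $G$ bounding the subgradient difference), giving $\Vert\boldsymbol{b}_y^n\Vert_2\le cG\mathsf{B}_p|y^n-{\cal S}^{\widetilde{F}}(\boldsymbol{x}^n)|$ after absorbing constants. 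For $p>1$, the $z$-bias is controlled by the local Lipschitz constant of $x\mapsto x^{(1-p)/p}$ on the invariant interval $[\varepsilon^p,{\cal E}^p]$ --- guaranteed by condition $\mathbf{C4}$ and the iterate-boundedness granted by Assumption \ref{assu:Initial-Values} (Lemma \ref{lem:Iterate_Boundedness}) --- yielding $\Vert\boldsymbol{b}_z^n\Vert_2\le cG\mathsf{B}_p|z^n-{\cal D}^{\widetilde{F}}(\boldsymbol{x}^n,y^n)|$. The constant $\mathsf{B}_p$ is defined precisely as the maximum of these two competing Lipschitz estimates.

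With this decomposition in hand, I would invoke convexity of $\phi^{\widetilde{F}}$ (Corollary \ref{cor:Convex_Prog1}) through the gradient inequality $\langle\nabla\phi^{\widetilde{F}}(\boldsymbol{x}^n),\boldsymbol{x}^n-\boldsymbol{x}^*\rangle\ge\phi^{\widetilde{F}}(\boldsymbol{x}^n)-\phi_*^{\widetilde{F}}$, producing the descent term $-2\alpha_n(\phi^{\widetilde{F}}(\boldsymbol{x}^n)-\phi_*^{\widetilde{F}})$. The residual bias contribution $-2\alpha_n\langle\boldsymbol{b}_y^n+\boldsymbol{b}_z^n,\boldsymbol{x}^n-\boldsymbol{x}^*\rangle$ is then handled by Cauchy--Schwarz followed by Young's inequality, splitting each product $2\alpha_n cG\mathsf{B}_p|y^n-{\cal S}^{\widetilde{F}}|\,\Vert\boldsymbol{x}^n-\boldsymbol{x}^*\Vert_2$ (and its $z$-analogue) with scaling parameters $\beta_n$ and $\gamma_n$, respectively; this generates the coefficient $4\mathsf{B}_p^2G^2c^2(\alpha_n^2/\beta_n+\alpha_n^2\mathds{1}_{\{p>1\}}/\gamma_n)$ on $\Vert\boldsymbol{x}^n-\boldsymbol{x}^*\Vert_2^2$ together with the terms $\beta_n|y^n-{\cal S}^{\widetilde{F}}(\boldsymbol{x}^n)|^2$ and $\gamma_n|z^n-{\cal D}^{\widetilde{F}}(\boldsymbol{x}^n,y^n)|^2\mathds{1}_{\{p>1\}}$. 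Collecting every contribution yields (\ref{eq:BIG_P1_3-1}). I expect the main obstacle to be the bias analysis above: cleanly separating the $y$- and $z$-errors while respecting the independence structure of the two sampling streams, and verifying that conditions $\mathbf{C3}$ and $\mathbf{C4}$ combine to give exactly the uniform constant $\mathsf{B}_p$; the convexity step and Young's-inequality bookkeeping that follow are routine.
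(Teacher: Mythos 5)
Your proposal is correct and follows essentially the same route as the paper's proof: nonexpansiveness of $\Pi_{{\cal X}}$ plus expansion of the square, the quadratic term absorbed via the bound of Lemma \ref{lem:INTER_1}, the cross term split into a conditionally unbiased part evaluated at the exact inner expectations (handled by the substitution rule, stream independence, and convexity of $\phi^{\widetilde{F}}$) plus $y$- and $z$-bias terms controlled through conditions $\mathbf{C1}$/$\mathbf{C3}$ (generalized H\"older with the exponent pair $\left(P,Q\right)$) and the Lipschitz behavior of $x^{\left(1-p\right)/p}$ on $\left[\varepsilon^{p},{\cal E}^{p}\right]$, followed by Cauchy--Schwarz and Young's inequality with scalings $\beta_{n},\gamma_{n}$. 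One bookkeeping point to be aware of: the natural splitting produces the deviation $\left|z^{n}-{\cal D}^{\widetilde{F}}\left(\boldsymbol{x}^{n}\right)\right|$ (dispersion centered at the \emph{exact} mean), and converting it to $\left|z^{n}-{\cal D}^{\widetilde{F}}\left(\boldsymbol{x}^{n},y^{n}\right)\right|$ via Lemma \ref{lem:P_is_LIP} injects an extra $y$-error term, which is precisely why $\mathsf{B}_{p}$ carries the prefactor $\left(1+{\cal E}^{p-1}p\right)$ multiplying the maximum of the two Lipschitz estimates, rather than being that maximum alone as you state.
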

\begin{proof}[Proof of Lemma \ref{lem:INTER_1-1-1}]
See Section \ref{subsec:RM_1-1-1} (Appendix). 
\end{proof}
Under the proposed Assumption \ref{assu:F_AS_Main}, pathwise convergence
of the \textit{$\textit{MESSAGE}^{p}$} algorithm is established next,
in a rather strong sense. Here, we directly invoke Lemma \ref{lem:SUPER_M}.
\begin{thm}
\textbf{\textup{(Pathwise Convergence of the }}\textbf{$\textit{MESSAGE}^{p}$
}\textbf{\textup{Algorithm)}}\label{thm:Alg_1_Conv} Let Assumptions
\ref{assu:F_AS_Main} and \ref{assu:Initial-Values} be in effect,
and let $\beta_{n}\in\left(0,1\right]$, $\gamma_{n}\in\left(0,1\right]$,
for all $n\in\mathbb{N}$. Whenever $p\equiv1$, suppose that
\begin{gather}
\sum_{n\in\mathbb{N}}\alpha_{n}\equiv\infty\quad\text{and}\quad\sum_{n\in\mathbb{N}}\alpha_{n}^{2}+\beta_{n}^{2}+\dfrac{\alpha_{n}^{2}}{\beta_{n}}<\infty,
\end{gather}
whereas, whenever $p>1$, suppose additionally that
\begin{gather}
\sum_{n\in\mathbb{N}}\gamma_{n}^{2}+\dfrac{\alpha_{n}^{2}}{\gamma_{n}}+\dfrac{\beta_{n}^{2}}{\gamma_{n}}<\infty.
\end{gather}
Then, as long as ${\cal X}^{*}\equiv\mathrm{arg\,min}_{\boldsymbol{x}\in{\cal X}}\phi^{\widetilde{F}}\left(\boldsymbol{x}\right)\neq\emptyset$,
the process $\left\{ \boldsymbol{x}^{n}\right\} _{n\in\mathbb{N}}$
generated by the $\textit{MESSAGE}^{p}$ algorithm satisfies\footnote{Note that (\ref{eq:BIG_PROB}) is meaningful only if the involved
outcome set is an event, that is, $\mathscr{F}$-measurable. In our
case, such measurability follows by completeness of the base space
$\left(\Omega,\mathscr{F},{\cal P}\right)$. }
\begin{equation}
{\cal P}\left(\left\{ \omega\in\Omega\left|\exists\,\boldsymbol{x}^{*}\left(\omega\right)\in{\cal X}^{*}\;\text{such that}\;\,\boldsymbol{x}^{n}\left(\omega\right)\underset{n\rightarrow\infty}{\longrightarrow}\boldsymbol{x}^{*}\left(\omega\right)\right.\right\} \right)\equiv1.\label{eq:BIG_PROB}
\end{equation}
In other words, almost everywhere relative to ${\cal P}$, the process
$\left\{ \boldsymbol{x}^{n}\right\} _{n\in\mathbb{N}}$ converges
to a random point in the set of optimal solutions of (\ref{eq:MAIN_PROB_2}).
\end{thm}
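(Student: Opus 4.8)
The plan is to cast the three coupled recursions established above into the template of the $T$-Level Almost-Supermartingale Convergence Lemma (Lemma \ref{lem:SUPER_M}), with $T\equiv 2$ when $p\equiv 1$ and $T\equiv 3$ when $p>1$, and then to promote the resulting summability of the weighted optimality-gap sum into genuine pathwise convergence. Concretely, I would identify $\xi^{n}\equiv\|\boldsymbol{x}^{n}-\boldsymbol{x}^{*}\|_{2}^{2}$ as the shallowest sequence, $\zeta^{n,1}\equiv|y^{n}-{\cal S}^{\widetilde{F}}(\boldsymbol{x}^{n})|^{2}$ as the first coupled sequence, and, only when $p>1$, $\zeta^{n,2}\equiv|z^{n}-{\cal D}^{\widetilde{F}}(\boldsymbol{x}^{n},y^{n})|^{2}$ as the second (the indicators $\mathds{1}_{\{p>1\}}$ collapsing the scheme to $T\equiv 2$ when $p\equiv 1$). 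The master recursion (\ref{eq:BIG_P1_3-1}) of Lemma \ref{lem:INTER_1-1-1} then matches (\ref{eq:Almost_1}) with $\eta^{n}\equiv 4\mathsf{B}_{p}^{2}G^{2}c^{2}(\alpha_{n}^{2}/\beta_{n}+(\alpha_{n}^{2}/\gamma_{n})\mathds{1}_{\{p>1\}})$, the nonnegative progress term $u^{n,T}\equiv 2\alpha_{n}(\phi^{\widetilde{F}}(\boldsymbol{x}^{n})-\phi_{*}^{\widetilde{F}})$ (nonnegative precisely because $\boldsymbol{x}^{*}$ is optimal), the residual $\mu^{n,T}\equiv\alpha_{n}^{2}(2c\mathsf{R}_{p}+1)^{2}G^{2}$, and coupling constants $c_{1}\equiv c_{2}\equiv 1$.

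Second, I would verify the two almost-sure summability hypotheses of Lemma \ref{lem:SUPER_M}. For $\sum_{n}\eta^{n}<\infty$ I invoke the assumed $\sum_{n}\alpha_{n}^{2}/\beta_{n}<\infty$ and, for $p>1$, $\sum_{n}\alpha_{n}^{2}/\gamma_{n}<\infty$. For the coupled recursions I recast (\ref{eq:Deep_SA}) and (\ref{eq:Middle_SA}) with $u^{n,1}\equiv u^{n,2}\equiv 0$, absorbing all remaining terms into $\mu^{n,1}$ and $\mu^{n,2}$. The only nonobvious contributions are the stepsize-inflated increment terms $\beta_{n}^{-1}2G^{2}\mathbb{E}_{\mathscr{D}^{n}}\{\|\boldsymbol{x}^{n+1}-\boldsymbol{x}^{n}\|_{2}^{2}\}$ and $\gamma_{n}^{-1}16G^{2}{\cal E}^{2p-2}p^{2}\mathbb{E}_{\mathscr{D}^{n}}\{\|\boldsymbol{x}^{n+1}-\boldsymbol{x}^{n}\|_{2}^{2}\}$, which are handled by Lemma \ref{lem:INTER_1-1-2} applied with $\delta_{n}\equiv\beta_{n}$ and $\delta_{n}\equiv\gamma_{n}$, whose hypotheses are exactly the assumed $\sum_{n}\alpha_{n}^{2}/\beta_{n}<\infty$ and $\sum_{n}\alpha_{n}^{2}/\gamma_{n}<\infty$. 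The remaining deterministic tails $\sum_{n}\beta_{n}^{2}$, $\sum_{n}\gamma_{n}^{2}$ and $\sum_{n}\beta_{n}^{2}/\gamma_{n}$ are finite by hypothesis. Lemma \ref{lem:SUPER_M} then yields, almost everywhere, that $\|\boldsymbol{x}^{n}-\boldsymbol{x}^{*}\|_{2}^{2}$ converges for this fixed $\boldsymbol{x}^{*}$ and that $\sum_{n}\alpha_{n}(\phi^{\widetilde{F}}(\boldsymbol{x}^{n})-\phi_{*}^{\widetilde{F}})<\infty$.

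Third, combining $\sum_{n}\alpha_{n}(\phi^{\widetilde{F}}(\boldsymbol{x}^{n})-\phi_{*}^{\widetilde{F}})<\infty$ with $\sum_{n}\alpha_{n}\equiv\infty$ and the nonnegativity of the summands forces $\liminf_{n}(\phi^{\widetilde{F}}(\boldsymbol{x}^{n})-\phi_{*}^{\widetilde{F}})=0$ almost surely. Since $\|\boldsymbol{x}^{n}-\boldsymbol{x}^{*}\|_{2}^{2}$ converges, $\{\boldsymbol{x}^{n}\}$ is almost surely bounded; extracting a subsequence along which the objective gap tends to zero and, by boundedness, a further subsequence converging to some $\bar{\boldsymbol{x}}$, the continuity of $\phi^{\widetilde{F}}$ (guaranteed by convexity, indeed differentiability, via Lemma \ref{lem:Sub_Grad}) gives $\phi^{\widetilde{F}}(\bar{\boldsymbol{x}})=\phi_{*}^{\widetilde{F}}$, so $\bar{\boldsymbol{x}}\in{\cal X}^{*}$; finally, since $\|\boldsymbol{x}^{n}-\bar{\boldsymbol{x}}\|_{2}^{2}$ itself converges and has a subsequence tending to zero, its whole-sequence limit is zero, i.e.\ $\boldsymbol{x}^{n}\to\bar{\boldsymbol{x}}$.

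I expect the genuine obstacle to lie in the measurability bookkeeping of this last step. Lemma \ref{lem:SUPER_M} delivers convergence of $\|\boldsymbol{x}^{n}-\boldsymbol{x}^{*}\|_{2}^{2}$ only on a full-measure event that a priori depends on the fixed optimal point $\boldsymbol{x}^{*}$, whereas the limit $\bar{\boldsymbol{x}}(\omega)\in{\cal X}^{*}$ is random and ${\cal X}^{*}$ need not be a singleton. To obtain a single full-measure event on which convergence holds simultaneously for all $\boldsymbol{x}^{*}\in{\cal X}^{*}$, I would fix a countable dense subset of the closed convex set ${\cal X}^{*}$, intersect the corresponding countably many full-measure events, and extend the conclusion to every $\boldsymbol{x}^{*}\in{\cal X}^{*}$ using continuity of $\boldsymbol{x}^{*}\mapsto\|\boldsymbol{x}^{n}-\boldsymbol{x}^{*}\|_{2}$ together with boundedness of the iterates. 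The $\mathscr{F}$-measurability of the exceptional outcome set in (\ref{eq:BIG_PROB}) is then secured by completeness of $(\Omega,\mathscr{F},{\cal P})$, exactly as flagged in the footnote to the statement.
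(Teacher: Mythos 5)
Your proposal is correct and follows essentially the same route as the paper: the identifications $\xi^{n}\equiv\left\Vert \boldsymbol{x}^{n}-\boldsymbol{x}^{*}\right\Vert _{2}^{2}$, $\zeta^{n,1}$, $\zeta^{n,2}$, $\eta^{n}$, $u^{n,T}$, $\mu^{n,j}$ with $c_{1}\equiv c_{2}\equiv1$ fed into Lemma \ref{lem:SUPER_M}, with the summability hypotheses checked via Lemmata \ref{lem:INTER_1-1}, \ref{lem:INTER_1-1-2} (with $\delta_{n}\equiv\beta_{n}$ and $\delta_{n}\equiv\gamma_{n}$) and \ref{lem:INTER_Middle}, are exactly those in the paper's proof of Theorem \ref{thm:Alg_1_Conv}. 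The only difference is that you spell out the final step (the $\liminf$ argument, the limit-point extraction, and the countable-dense-subset device over ${\cal X}^{*}$), which the paper instead delegates by citation to the proofs of Theorem 1(a) in \citep{Wang2017} and Theorem 2.1(a) in \citep{Wang2018}; your rendition of that step, including the measurability caveat, is the standard and correct one.
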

\begin{proof}[Proof of Theorem \ref{thm:Alg_1_Conv}]
We present the proof assuming that $p>1$. If $p\equiv1$, the proof
is almost the same, albeit simpler. Under the assumptions of the theorem,
Lemmata \ref{lem:INTER_1-1}, \ref{lem:INTER_1-1-2} and \ref{lem:INTER_Middle}
imply that
\begin{equation}
\sum_{n\in\mathbb{N}}\beta_{n}^{-1}2G^{2}\hspace{-1pt}\mathbb{E}_{\mathscr{D}^{n}}\left\{ \left\Vert \boldsymbol{x}^{n+1}\hspace{-1pt}\hspace{-1pt}-\hspace{-1pt}\boldsymbol{x}^{n}\right\Vert _{2}^{2}\right\} <\infty,
\end{equation}
and
\begin{equation}
\sum_{n\in\mathbb{N}}\gamma_{n}^{-1}16G^{2}{\cal E}^{2p-2}p^{2}\mathbb{E}_{\mathscr{D}^{n}}\left\{ \left\Vert \boldsymbol{x}^{n+1}\hspace{-1pt}\hspace{-1pt}-\hspace{-1pt}\boldsymbol{x}^{n}\right\Vert _{2}^{2}\right\} <\infty,
\end{equation}
whereas it is also true that
\begin{gather}
\sum_{n\in\mathbb{N}}\alpha_{n}^{2}\left(2c\mathsf{R}_{p}+1\right)^{2}G^{2}<\infty,\quad\sum_{n\in\mathbb{N}}\beta_{n}^{2}\gamma_{n}^{-1}4{\cal E}^{2p-2}p^{2}\left(m_{h}-m_{l}\right)^{2}<\infty,\\
\sum_{n\in\mathbb{N}}\beta_{n}^{2}V<\infty\quad\text{and}\quad\sum_{n\in\mathbb{N}}\gamma_{n}^{2}2{\cal E}^{2p}<\infty,
\end{gather}
as well. Therefore, we may apply Lemma \ref{lem:SUPER_M} with the
identifications
\begin{gather*}
\begin{array}{ccc}
\xi^{n}\equiv\left\Vert \boldsymbol{x}^{n}\hspace{-1pt}-\hspace{-1pt}\boldsymbol{x}^{*}\right\Vert _{2}^{2}, & \eta^{n}\equiv4\mathsf{B}_{p}^{2}G^{2}c^{2}\left(\dfrac{\alpha_{n}^{2}}{\beta_{n}}\hspace{-1pt}+\hspace{-1pt}\dfrac{\alpha_{n}^{2}}{\gamma_{n}}\right), & u^{n,3}\equiv2\alpha_{n}\left(\phi^{\widetilde{F}}\hspace{-1pt}\left(\boldsymbol{x}^{n}\right)\hspace{-1pt}-\hspace{-1pt}\phi_{*}^{\widetilde{F}}\right)\\
\zeta^{n,1}\equiv\left|y^{n}\hspace{-1pt}\hspace{-1pt}-\hspace{-1pt}{\cal S}^{\widetilde{F}}\hspace{-1pt}\hspace{-1pt}\left(\boldsymbol{x}^{n}\right)\right|^{2}, & \theta^{n,1}\equiv\beta_{n}, & u^{n,1}\equiv0,\\
\zeta^{n,2}\equiv\left|z^{n}-{\cal D}^{\widetilde{F}}\left(\boldsymbol{x}^{n},y^{n}\right)\right|^{2}, & \theta^{n,2}\equiv\gamma_{n}, & u^{n,2}\equiv0,
\end{array}\\
\mu^{n,3}\equiv\alpha_{n}^{2}\left(2c\mathsf{R}_{p}+1\right)^{2}G^{2}\\
\mu^{n,1}\equiv\beta_{n}^{-1}2G^{2}\hspace{-1pt}\mathbb{E}_{\mathscr{D}^{n}}\left\{ \left\Vert \boldsymbol{x}^{n+1}-\boldsymbol{x}^{n}\right\Vert _{2}^{2}\right\} \hspace{-1pt}+\hspace{-1pt}\beta_{n}^{2}V,\\
\mu^{n,2}\equiv\gamma_{n}^{-1}16G^{2}{\cal E}^{2p-2}p^{2}\mathbb{E}_{\mathscr{D}^{n}}\left\{ \left\Vert \boldsymbol{x}^{n+1}-\boldsymbol{x}^{n}\right\Vert _{2}^{2}\right\} +\beta_{n}^{2}\gamma_{n}^{-1}4{\cal E}^{2p-2}p^{2}\left(m_{h}-m_{l}\right)^{2}+\gamma_{n}^{2}2{\cal E}^{2p},\\
\mathscr{G}^{n}\equiv\mathscr{D}^{n},
\end{gather*}
and with $c_{1}\equiv c_{2}\equiv1$. The rest of the proof is identical
to (\citep{Wang2017}, Proof of Theorem 1 (a)), or (\citep{Wang2018},
Proof of Theorem 2.1 (a) \& Proof of Lemma 2.5).
\end{proof}
\begin{rem}
\label{rem:We-would-like-1}Note that, in both (\citep{Wang2017},
Theorem 1) and (\citep{Wang2018}, Theorem 2.1), in addition to the
stepsize requirements of Theorem \ref{thm:Alg_1_Conv}, it is assumed
that
\begin{equation}
\sum_{n\in\mathbb{N}^{+}}\beta_{n}\equiv\infty\quad\text{and}\quad\sum_{n\in\mathbb{N}^{+}}\gamma_{n}\equiv\infty,
\end{equation}
as well. To be best of our knowledge, however, although they do not
hurt, \textit{none} of the aforementioned (non)summability conditions
are necessary in order to guarantee pathwise convergence of the \textit{$\textit{MESSAGE}^{p}$}
algorithm, and the same is true for the general purpose \textit{SCGD
}algorithm of \citep{Wang2017} (see statement and proof of Theorem
1 in \citep{Wang2017}) and \textit{$T$-SCGD }algorithm of \citep{Wang2018}
(see statement and proof of Theorem 2.1 in \citep{Wang2018}).\hfill{}\ensuremath{\blacksquare}
\end{rem}
Besides the classical Robbins-Monro (RM) conditions \citep{Robbins1951}
on the stepsize sequence $\left\{ \alpha_{n}\right\} _{n\in\mathbb{N}^{+}}$
and the square summability conditions on $\left\{ \beta_{n}\right\} _{n\in\mathbb{N}^{+}}$and
$\left\{ \gamma_{n}\right\} _{n\in\mathbb{N}^{+}}$, and in agreement
with (\citep{Wang2018}, Theorem 2.1), Theorem \ref{thm:Alg_1_Conv}
demands that 
\begin{equation}
\sum_{n\in\mathbb{N}^{+}}\dfrac{\alpha_{n}^{2}}{\beta_{n}}<\infty,\quad\sum_{n\in\mathbb{N}^{+}}\dfrac{\alpha_{n}^{2}}{\gamma_{n}}<\infty\quad\text{and}\quad\sum_{n\in\mathbb{N}^{+}}\dfrac{\beta_{n}^{2}}{\gamma_{n}}<\infty.\label{eq:STEP_D}
\end{equation}
These stepsize requirements imposed by Theorem \ref{thm:Alg_1_Conv}
might seem quite complicated. Nevertheless, there are lots of viable
choices for the sequences $\left\{ \alpha_{n}\right\} _{n\in\mathbb{N}^{+}}$,
$\left\{ \beta_{n}\right\} _{n\in\mathbb{N}^{+}}$and $\left\{ \gamma_{n}\right\} _{n\in\mathbb{N}^{+}}$,
satisfying the conditions in (\ref{eq:STEP_D}).

Let us present a simple, but instructive example. Take, for every
$n\in\mathbb{N}^{+}$, 
\begin{equation}
\alpha_{n}\equiv\dfrac{1}{n^{\tau_{1}}},\quad\beta_{n}\equiv\dfrac{1}{n^{\tau_{2}}}\quad\text{and}\quad\gamma_{n}\equiv\dfrac{1}{n^{\tau_{3}}},
\end{equation}
for some $\tau_{j}\in\left(0.5,1\right]$, for $j\in\left\{ 1,2,3\right\} $.
In such a case, the RM conditions are automatically satisfied for
all three stepsizes since
\begin{equation}
\sum_{n\in\mathbb{N}^{+}}\dfrac{1}{n^{\tau_{j}}}\equiv\infty\quad\text{and}\quad\sum_{n\in\mathbb{N}^{+}}\dfrac{1}{n^{2\tau_{j}}}<\infty,\quad j\in\left\{ 1,2,3\right\} .
\end{equation}
We would like to see how we may choose $\tau_{j}$, $j\in\left\{ 1,2,3\right\} $,
such that the summability conditions in (\ref{eq:STEP_D}) are satisfied.
First, we demand that
\begin{equation}
\sum_{n\in\mathbb{N}^{+}}\dfrac{\alpha_{n}^{2}}{\beta_{n}}\equiv\sum_{n\in\mathbb{N}^{+}}\dfrac{1}{n^{2\tau_{1}-\tau_{2}}}<\infty,
\end{equation}
which equivalently yields
\begin{equation}
1<2\tau_{1}-\tau_{2}\quad\iff\quad\tau_{2}<2\tau_{1}-1.
\end{equation}
Also, for the preceding inequality to yield a feasible lower bound
for $\tau_{2}$, it must be true that
\begin{equation}
2\tau_{1}-1>\dfrac{1}{2}\quad\iff\quad\tau_{1}>\dfrac{3}{4}.
\end{equation}
Consequently, we obtain the conditions
\begin{equation}
\dfrac{3}{4}<\tau_{1}\le1\quad\text{and}\quad\dfrac{1}{2}<\tau_{2}<2\tau_{1}-1.\label{eq:STEP1}
\end{equation}
Similarly, for the second condition of (\ref{eq:STEP_D})
\begin{equation}
\sum_{n\in\mathbb{N}^{+}}\dfrac{\alpha_{n}^{2}}{\gamma_{n}}\equiv\sum_{n\in\mathbb{N}^{+}}\dfrac{1}{n^{2\tau_{1}-\tau_{3}}}<\infty,
\end{equation}
we obtain the constraints
\begin{equation}
\dfrac{3}{4}<\tau_{1}\le1\quad\text{and}\quad\dfrac{1}{2}<\tau_{3}<2\tau_{1}-1.\label{eq:STEP2}
\end{equation}
Now, for the third condition of (\ref{eq:STEP_D}), we demand that
\begin{figure}
\centering\hspace{-14bp}\includegraphics[bb=16bp 245bp 580bp 533bp,clip,scale=0.63]{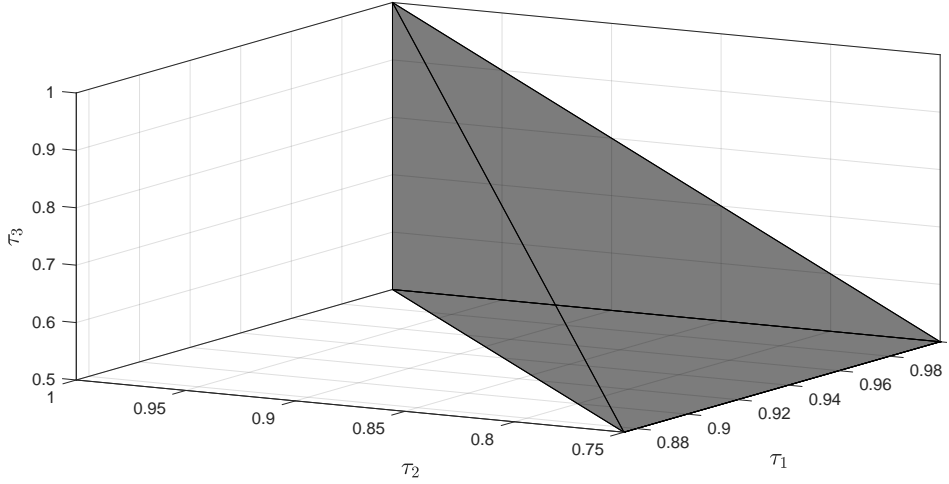}\caption{\label{fig:Comparison-of-stepsize} A graphical representation of
the stepsize constraint set (\ref{eq:STEP_1_W})-(\ref{eq:STEP_2_W}).}
\end{figure}
\begin{equation}
\sum_{n\in\mathbb{N}^{+}}\dfrac{\beta_{n}^{2}}{\gamma_{n}}\equiv\sum_{n\in\mathbb{N}^{+}}\dfrac{1}{n^{2\tau_{2}-\tau_{3}}}<\infty,
\end{equation}
yielding
\begin{equation}
\dfrac{3}{4}<\tau_{2}\le1\quad\text{and}\quad\dfrac{1}{2}<\tau_{3}<2\tau_{2}-1.\label{eq:STEP3}
\end{equation}
Of course, the linear constraints (\ref{eq:STEP1}), (\ref{eq:STEP2})
and (\ref{eq:STEP3}) need to be satisfied simultaneously, yielding
the feasible set
\begin{flalign}
\dfrac{7}{8}<\, & \tau_{1}\le1,\label{eq:STEP_1_W}\\
\dfrac{3}{4}<\, & \tau_{2}<2\tau_{1}-1\quad\text{and}\\
\dfrac{1}{2}<\, & \tau_{3}<2\tau_{2}-1.\label{eq:STEP_2_W}
\end{flalign}

We observe that there are lots of feasible choices for the exponents
$\tau_{1}$, $\tau_{2}$ and $\tau_{3}$. For example, one may take
$\tau_{1}\equiv1$, $\tau_{2}\equiv0.9$ and $\tau_{3}\equiv0.7$.
A graphical representation of the constraint set (\ref{eq:STEP_1_W})-(\ref{eq:STEP_2_W})
is shown in Fig. \ref{fig:Comparison-of-stepsize}.

\subsubsection{Convergence Rates of the\textit{ $\textit{MESSAGE}^{p}$} Algorithm}

We study two standard settings considered in the literature, namely,
that involving a convex risk-averse objective, matching all problem
assumptions we have made so far, and that involving a \textit{strongly
convex objective}, which, as we will shortly see, results naturally
by imposing strong convexity \textit{directly} on the random cost
function under consideration.

For the convex case, we employ \textit{iterate smoothing}, and we
provide detailed bounds on the ${\cal L}_{1}$ \textit{objective suboptimality
rate} of the \textit{$\textit{MESSAGE}^{p}$} algorithm. The proof
of our result follows directly, by appealing to the respective results
developed recently in \citep{Wang2018}.

For the strongly convex case, we develop completely new, detailed
and much stronger results on the \textit{squared-${\cal L}_{2}$ solution
suboptimality rate} of the \textit{$\textit{MESSAGE}^{p}$} algorithm,
which provide substantial improvement over the convex case, and are
much more comparable to rates achievable in risk-neutral stochastic
optimization.

The next basic technical result will be useful in our analysis.
\begin{lem}
\textbf{\textup{(Approximation Error Boundedness)}}\label{lem:Iter_Error_Bound}
Let Assumptions \ref{assu:F_AS_Main}, \ref{assu:Initial-Values}
be in effect, and let $\beta_{n}\in\left(0,1\right]$, $\gamma_{n}\in\left(0,1\right]$,
for all $n\in\mathbb{N}$. Also, whenever $p\equiv1$, suppose that
$\sup_{n\in\mathbb{N}}\alpha_{n}^{2}/\beta_{n}^{2}<\infty.$ Then,
it is true that
\begin{equation}
\sup_{n\in\mathbb{N}}\mathbb{E}\left\{ \left|y^{n}-{\cal S}^{\widetilde{F}}\hspace{-1pt}\hspace{-1pt}\left(\boldsymbol{x}^{n}\right)\right|^{2}\right\} <\infty,
\end{equation}
for every choice of $p\in\left[1,\infty\right)$, and
\begin{equation}
\sup_{n\in\mathbb{N}}\mathbb{E}\left\{ \left|z^{n}-{\cal D}^{\widetilde{F}}\hspace{-1pt}\hspace{-1pt}\left(\boldsymbol{x}^{n},y^{n}\right)\right|^{2}\right\} <\infty,
\end{equation}
for every choice of $p\in\left(1,\infty\right)$.
\end{lem}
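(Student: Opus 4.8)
The plan is to split the argument by whether $p\equiv1$ or $p>1$, since the two regimes are controlled by genuinely different mechanisms. For $p>1$ I would exploit the \emph{hard} (almost sure) boundedness of the iterates $\{y^n\}$ and $\{z^n\}$ that is available under condition $\mathbf{C4}$ and Assumption \ref{assu:Initial-Values}, as formalized in Lemma \ref{lem:Iterate_Boundedness}; this renders both supremum bounds essentially immediate, with no recursion at all. For $p\equiv1$, where $\mathbf{C4}$ is not in force and the cost need not be bounded, I would instead propagate a scalar recursion for the expected squared first-level error, built from Lemmata \ref{lem:INTER_1} and \ref{lem:INTER_1-1}, and close it by a stability induction that hinges on the extra hypothesis $\sup_{n}\alpha_n^2/\beta_n^2<\infty$.

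For $p>1$, Lemma \ref{lem:Iterate_Boundedness} confines $y^n\in[m_l,m_h]$ and $z^n\in[\varepsilon^p,{\cal E}^p]$ almost surely, for every $n$ (this follows because the updates are convex combinations, so the convex hull of the sampled range is preserved). I would then check that the auxiliary expectation functions land in the same intervals: since $F(\boldsymbol{x},\boldsymbol{W}')\in[m_l,m_h]$ a.s.\ for every $\boldsymbol{x}\in{\cal X}$, taking expectations gives ${\cal S}^{\widetilde{F}}(\boldsymbol{x})\in[m_l,m_h]$; and for any $y\in[m_l,m_h]$ the shifted cost obeys $F(\boldsymbol{x},\boldsymbol{W}')-y\in\mathfrak{R}^{\widetilde{F}}$, so by monotonicity of ${\cal R}$ (condition $\mathbf{S3}$) together with $\mathbf{C4}$ we get ${\cal R}(F(\boldsymbol{x},\boldsymbol{W}')-y)\in[\varepsilon,{\cal E}]$ and hence ${\cal D}^{\widetilde{F}}(\boldsymbol{x},y)\in[\varepsilon^p,{\cal E}^p]$. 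Evaluating at $(\boldsymbol{x}^n,y^n)$ (with $y^n\in[m_l,m_h]$ a.s.) therefore forces $|y^n-{\cal S}^{\widetilde{F}}(\boldsymbol{x}^n)|\le m_h-m_l$ and $|z^n-{\cal D}^{\widetilde{F}}(\boldsymbol{x}^n,y^n)|\le{\cal E}^p-\varepsilon^p$ almost surely, uniformly in $n$; passing to expectations of the squares yields both claimed suprema, bounded by $(m_h-m_l)^2$ and $({\cal E}^p-\varepsilon^p)^2$ respectively.

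For $p\equiv1$ (only the first supremum is asserted), I would set $a_n\triangleq\mathbb{E}\{|y^n-{\cal S}^{\widetilde{F}}(\boldsymbol{x}^n)|^2\}$, note that $a_0<\infty$ since it is deterministic and ${\cal S}^{\widetilde{F}}(\boldsymbol{x}^0)$ is finite, and take total expectations in Lemma \ref{lem:INTER_1-1}, substituting the increment bound of Lemma \ref{lem:INTER_1} (with $\mathsf{R}_1\equiv1$). This gives
\begin{equation}
a_{n+1}\le(1-\beta_n)a_n+2G^4(2c+1)^2\dfrac{\alpha_n^2}{\beta_n}+2V\beta_n^2,\quad\forall n\in\mathbb{N}.
\end{equation}
Using $\beta_n\le1$ and $\alpha_n^2/\beta_n=(\alpha_n^2/\beta_n^2)\beta_n\le S\beta_n$ with $S\triangleq\sup_n\alpha_n^2/\beta_n^2<\infty$, the two forcing terms are jointly dominated by $C\beta_n$, where $C\triangleq2G^4(2c+1)^2S+2V$. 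A one-line induction then shows $a_n\le M\triangleq\max\{a_0,C\}$ for all $n$: if $a_n\le M$, then $a_{n+1}\le M-\beta_n(M-C)\le M$ since $M\ge C$ and $\beta_n>0$. Hence $\sup_n a_n<\infty$.

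The computations are routine; the only delicate point is structural rather than technical, namely understanding \emph{why} the hypothesis $\sup_n\alpha_n^2/\beta_n^2<\infty$ is required \emph{only} for $p\equiv1$. For $p>1$, hard boundedness of $F(\cdot,\boldsymbol{W})$ confines the iterates to compact intervals and short-circuits the recursion entirely; for $p\equiv1$ the error can be controlled only through the recursion, where linear-in-$\beta_n$ domination of the perturbation—and thus stability of the fixed-point induction—precisely demands that $\alpha_n^2/\beta_n$ be of order $\beta_n$, i.e.\ that $\alpha_n^2/\beta_n^2$ be bounded. I would also verify that the conditional-expectation manipulations are legitimate: $|y^n-{\cal S}^{\widetilde{F}}(\boldsymbol{x}^n)|^2$ is $\mathscr{D}^n$-measurable and the increment bound of Lemma \ref{lem:INTER_1} holds a.s., so the tower property applies cleanly when passing to unconditional expectations.
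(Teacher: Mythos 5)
Your proposal is correct and follows essentially the same route as the paper: for $p>1$ the bounds are immediate from the almost-sure iterate confinement of Lemma \ref{lem:Iterate_Boundedness} (together with the corresponding ranges of ${\cal S}^{\widetilde{F}}$ and ${\cal D}^{\widetilde{F}}$), and for $p\equiv1$ one closes the recursion obtained by combining Lemmata \ref{lem:INTER_1} and \ref{lem:INTER_1-1} via a stability induction under $\sup_{n}\alpha_{n}^{2}/\beta_{n}^{2}<\infty$. The only difference is that the paper defers this induction to the proof of Lemma 2.3(c) in \citep{Wang2018}, whereas you carry it out explicitly (with the correct domination $\alpha_{n}^{2}/\beta_{n}\le S\beta_{n}$, $\beta_{n}^{2}\le\beta_{n}$ and the fixed-point bound $M=\max\{a_{0},C\}$), which makes the argument self-contained.
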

\begin{proof}[Proof of Lemma \ref{lem:Iter_Error_Bound}]
If $p>1$, the conclusion of the lemma is trivial, due to Assumptions
\ref{assu:F_AS_Main} and \ref{assu:Initial-Values}, which imply
that the involved quantities $y^{n}$, ${\cal S}^{\widetilde{F}}\hspace{-1pt}\hspace{-1pt}\left(\cdot\right)$,
$z^{n}$ and ${\cal D}^{\widetilde{F}}\hspace{-2pt}\left(\cdot,\cdot\right)$
are uniformly bounded almost everywhere relative to ${\cal P}$ (see
Lemma \ref{lem:Iterate_Boundedness} in the Appendix (Section \ref{sec:Appendix:-Proofs})).

For the remaining case where $p\equiv1$, if $\sup_{n\in\mathbb{N}}\alpha_{n}^{2}\beta_{n}^{-2}<\infty$,
we may use simple induction exactly as in (\citep{Wang2018}, Appendix,
Proof of Lemma 2.3 (c)), exploiting the recursion (\ref{eq:Deep_SA})
in Lemma (\ref{lem:INTER_1-1}), respectively.
\end{proof}

\paragraph{\label{par:Convex-Random-Cost}Convex Random Cost with Iterate Smoothing}

For the convex case, we consider \textit{iterate smoothing} on top
of the \textit{$\textit{MESSAGE}^{p}$} algorithm, by defining averages
\begin{equation}
\widehat{\boldsymbol{x}}^{n}\triangleq\dfrac{1}{\left\lceil n/2\right\rceil }\sum_{i\in\mathbb{N}_{n}^{n-\left\lceil n/2\right\rceil }}\widehat{\boldsymbol{x}}^{i},\quad n\in\mathbb{N}^{+},
\end{equation}
exactly as in \citep{Wang2017,Wang2018}. Under this setting, the
next result characterizes the ${\cal L}_{1}$ objective suboptimality
rate of the \textit{$\textit{MESSAGE}^{p}$} algorithm, when iterate
smoothing is employed, for any choice of the semideviation order $p$.
\begin{thm}
\textbf{\textup{(Rate | Convex Case | Subharmonic Stepsizes)}}\label{lem:Rate_Convex}
Let Assumptions \ref{assu:F_AS_Main}, \ref{assu:Initial-Values}
be in effect, and let the stepsize sequences $\left\{ \alpha_{n}\right\} _{n\in\mathbb{N}}$,
$\left\{ \beta_{n}\right\} _{n\in\mathbb{N}}$ and $\left\{ \gamma_{n}\right\} _{n\in\mathbb{N}}$
follow the subharmonic rules\renewcommand{\CaseStretch}{2}
\begin{equation}
\begin{cases}
\alpha_{n}\triangleq\dfrac{1}{n^{\tau_{1}}},\quad\beta_{n}\triangleq\dfrac{1}{n^{\tau_{2}}}, & \text{if }p\equiv1\text{ with }1/2\le\tau_{2}<\tau_{1}<1\\
\alpha_{n}\triangleq\dfrac{1}{n^{\tau_{1}}},\quad\beta_{n}\triangleq\dfrac{1}{n^{\tau_{2}}},\quad\gamma_{n}\triangleq\dfrac{1}{n^{\tau_{3}}}, & \text{if }p>1\text{ with }1/2\le\tau_{3}<\tau_{2}<\tau_{1}<1
\end{cases},\quad\forall n\in\mathbb{N}^{+},
\end{equation}
\renewcommand{\CaseStretch}{1.2}with initial values $\alpha_{0}\equiv\beta_{0}\equiv\gamma_{0}\equiv1$.
Additionally, suppose that $\sup_{n\in\mathbb{N}}\mathbb{E}\left\{ \left\Vert \boldsymbol{x}^{n}\hspace{-1pt}-\hspace{-1pt}\boldsymbol{x}^{*}\right\Vert _{2}^{2}\right\} <\infty$,
where $\boldsymbol{x}^{*}\in{\cal X}^{*}$. Then, for every $n\in\mathbb{N}^{+}$,
it is true that
\begin{equation}
\left\Vert \phi^{\widetilde{F}}\hspace{-1pt}\left(\widehat{\boldsymbol{x}}^{n}\right)\hspace{-1pt}-\hspace{-1pt}\phi_{*}^{\widetilde{F}}\right\Vert _{{\cal L}_{1}}\equiv\mathbb{E}\left\{ \phi^{\widetilde{F}}\hspace{-1pt}\left(\widehat{\boldsymbol{x}}^{n}\right)\hspace{-1pt}-\hspace{-1pt}\phi_{*}^{\widetilde{F}}\right\} \le\begin{cases}
{\cal K}_{1}n^{-\min\left\{ 1-\tau_{1},\tau_{1}-\tau_{2},2\tau_{2}-\tau_{1}\right\} }, & \text{if }p\equiv1\\
{\cal K}_{p}n^{-\min\left\{ 1-\tau_{1},\tau_{1}-\tau_{2},2\tau_{3}-\tau_{1},2\tau_{2}-\tau_{1}-\tau_{3}\right\} }, & \text{if }p>1
\end{cases},
\end{equation}
where $0<{\cal K}_{p}<\infty,p\in\left[1,\infty\right)$ is a problem
dependent constant. In particular, if, for some $\epsilon\in\left[0,1\right)$,
$\delta\in\left(0,1\right)$ and $\zeta\in\left(0,1\right)$ such
that $\delta\ge\zeta$, \renewcommand{\CaseStretch}{2}
\begin{equation}
\begin{cases}
\tau_{1}\equiv\dfrac{3+\epsilon}{4}\quad\text{and}\quad\tau_{2}\equiv\dfrac{1+\delta\epsilon}{2}, & \text{if }p\equiv1\\
\tau_{1}\equiv\dfrac{7+\epsilon}{8},\quad\tau_{2}\equiv\dfrac{3+\delta\epsilon}{4}\quad\text{and}\quad\tau_{3}\equiv\dfrac{1+\zeta\epsilon}{2}, & \text{if }p>1
\end{cases},
\end{equation}
\renewcommand{\CaseStretch}{1.2}then the $\textit{MESSAGE}^{p}$
algorithm satisfies
\begin{equation}
\mathbb{E}\left\{ \phi^{\widetilde{F}}\hspace{-1pt}\left(\widehat{\boldsymbol{x}}^{n}\right)\hspace{-1pt}-\hspace{-1pt}\phi_{*}^{\widetilde{F}}\right\} \le{\cal K}_{p}n^{-\left(1-\epsilon\right)/\left(4\mathds{1}_{\left\{ p>1\right\} }+4\right)},
\end{equation}
for every $n\in\mathbb{N}^{+}$, for each fixed $\epsilon$.
\end{thm}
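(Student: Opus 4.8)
The plan is to prove the general subharmonic rate bound first and then recover the advertised $(1-\epsilon)$-family of rates by a short algebraic substitution. For the general bound, I would assemble the three single-level recursions already in hand---Lemmata \ref{lem:INTER_1-1}, \ref{lem:INTER_Middle} and \ref{lem:INTER_1-1-1}---into the coupled structure required by Lemma \ref{lem:SUPER_M}. The optimality-gap recursion of Lemma \ref{lem:INTER_1-1-1} has exactly the form $\mathbb{E}_{\mathscr{D}^n}\{\|\boldsymbol{x}^{n+1}-\boldsymbol{x}^*\|_2^2\}\le(1+\eta_n)\|\boldsymbol{x}^n-\boldsymbol{x}^*\|_2^2-2\alpha_n(\phi^{\widetilde{F}}(\boldsymbol{x}^n)-\phi_*^{\widetilde{F}})+\beta_n|y^n-{\cal S}^{\widetilde{F}}(\boldsymbol{x}^n)|^2+\gamma_n|z^n-{\cal D}^{\widetilde{F}}(\boldsymbol{x}^n,y^n)|^2\mathds{1}_{\{p>1\}}$. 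Taking total expectations, isolating $2\alpha_n\mathbb{E}\{\phi^{\widetilde{F}}(\boldsymbol{x}^n)-\phi_*^{\widetilde{F}}\}$, and summing over the averaging window of length $\lceil n/2\rceil$ bounds $\sum_i\alpha_i\,\mathbb{E}\{\phi^{\widetilde{F}}(\boldsymbol{x}^i)-\phi_*^{\widetilde{F}}\}$ by the hypothesis $\sup_n\mathbb{E}\{\|\boldsymbol{x}^n-\boldsymbol{x}^*\|_2^2\}<\infty$ plus the summed error terms; convexity of $\phi^{\widetilde{F}}$ and Jensen's inequality then yield $\mathbb{E}\{\phi^{\widetilde{F}}(\widehat{\boldsymbol{x}}^n)-\phi_*^{\widetilde{F}}\}\le(\sum_i\alpha_i)^{-1}(\text{bounded}+\text{error sums})$. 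This is precisely the convex-case argument of \citep{Wang2018} with $T\equiv3$ (or $T\equiv2$ when $p\equiv1$), which I invoke after checking that the recursions match their template.

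The heart of the general bound is identifying the competing exponents. The error sums are controlled through Lemmata \ref{lem:INTER_1-1} and \ref{lem:INTER_Middle}: iterating the first-level recursion against $\beta_n$ (using $\mathbb{E}_{\mathscr{D}^n}\{\|\boldsymbol{x}^{n+1}-\boldsymbol{x}^n\|_2^2\}={\cal O}(\alpha_n^2)$ from Lemma \ref{lem:INTER_1}) and, when $p>1$, the second-level recursion against $\gamma_n$, produces polynomially decaying driving terms; Lemma \ref{lem:Iter_Error_Bound} guarantees the relevant second moments stay bounded so every sum is well-defined. After dividing by $\sum_i\alpha_i=\Theta(n^{1-\tau_1})$, the slowest-decaying contributions are of orders $n^{-(1-\tau_1)}$, $n^{-(\tau_1-\tau_2)}$, $n^{-(2\tau_2-\tau_1)}$ for $p\equiv1$, and $n^{-(1-\tau_1)}$, $n^{-(\tau_1-\tau_2)}$, $n^{-(2\tau_3-\tau_1)}$, $n^{-(2\tau_2-\tau_1-\tau_3)}$ for $p>1$, whence the two stated minima.

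For the special choices the computation is direct. Take $p>1$ with $\tau_1=(7+\epsilon)/8$, $\tau_2=(3+\delta\epsilon)/4$, $\tau_3=(1+\zeta\epsilon)/2$; then $1-\tau_1=(1-\epsilon)/8$, $\tau_1-\tau_2=(1+\epsilon(1-2\delta))/8$, $2\tau_3-\tau_1=(1+\epsilon(8\zeta-1))/8$ and $2\tau_2-\tau_1-\tau_3=(1+\epsilon(4\delta-1-4\zeta))/8$. The first term equals the target $(1-\epsilon)/8$ exactly, while the other three exceed it because $\epsilon(2-2\delta)\ge0$ (from $\delta<1$), $8\epsilon\zeta\ge0$ (from $\zeta>0$) and $4\epsilon(\delta-\zeta)\ge0$ (from $\delta\ge\zeta$); hence the minimum is $(1-\epsilon)/8=(1-\epsilon)/(4\mathds{1}_{\{p>1\}}+4)$. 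The identical pattern for $p\equiv1$ with $\tau_1=(3+\epsilon)/4$, $\tau_2=(1+\delta\epsilon)/2$ gives $1-\tau_1=(1-\epsilon)/4$ as the binding term, the remaining two dominating via $2\epsilon(1-\delta)\ge0$ and $4\epsilon\delta\ge0$. In both regimes one also verifies the admissibility ordering $1/2\le\tau_3<\tau_2<\tau_1<1$ (resp.\ $1/2\le\tau_2<\tau_1<1$), which reduces to the elementary inequalities $\epsilon(2\zeta-\delta)<1$, $\epsilon(2\delta-1)<1$ and $\tau_i<1$, each following from $\delta\ge\zeta$, $\delta<1$, $\zeta<1$ and $\epsilon<1$.

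I expect the main obstacle to be the general bound rather than the final substitution: specifically, carefully propagating the first- and second-level approximation errors---with their explicit $G,{\cal E},\varepsilon,p$-dependent constants---through the window summation and matching them term-by-term to the \citep{Wang2018} framework, so that the exponents $2\tau_2-\tau_1$ (for $p\equiv1$) and $2\tau_3-\tau_1$, $2\tau_2-\tau_1-\tau_3$ (for $p>1$) emerge in exactly the claimed form and are absorbed into a single finite constant ${\cal K}_p$. Once the minimum-of-exponents bound is established, the verification of the special stepsize rules is the routine algebra sketched above.
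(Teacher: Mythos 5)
Your proposal is correct and follows essentially the same route as the paper: the paper's own proof simply defers to the structure of (\citep{Wang2018}, Proof of Theorem 2.2), invoking exactly the ingredients you assemble---the recursions of Lemmata \ref{lem:INTER_1-1}, \ref{lem:INTER_Middle} and \ref{lem:INTER_1-1-1}, the increment bound of Lemma \ref{lem:INTER_1}, the moment boundedness of Lemma \ref{lem:Iter_Error_Bound}, and convexity of $\phi^{\widetilde{F}}$ with window averaging. Your closing algebra for the special stepsize choices (e.g., $1-\tau_{1}\equiv\left(1-\epsilon\right)/8$ binding via $\epsilon\left(2-2\delta\right)\ge0$, $8\epsilon\zeta\ge0$ and $4\epsilon\left(\delta-\zeta\right)\ge0$ when $p>1$) checks out and supplies detail the paper omits.
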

\begin{proof}[Proof of Theorem \ref{lem:Rate_Convex}]
Although the \textit{$\textit{MESSAGE}^{p}$} algorithm is different
from the \textit{$T$-SCGD }algorithm of \citep{Wang2018}, the proof
of Theorem \ref{lem:Rate_Convex} shares essentially the same structure
with (\citep{Wang2018}, Proof of Theorem 2.2). In a nutshell, except
for its native assumptions, the proof exploits Lemma \ref{lem:Iter_Error_Bound}
discussed above, the bound of Lemma \ref{lem:INTER_1} and the recursions
of Lemmata \ref{lem:INTER_1-1}, \ref{lem:INTER_Middle}, and \ref{lem:INTER_1-1-1},
developed in Section \ref{subsec:Pathwise-Convergence}, the convexity
of $\phi^{\widetilde{F}}$, as well as the stepsize exponent constraint
set (\ref{eq:STEP_1_W})-(\ref{eq:STEP_2_W}). The details of the
proof are omitted, and the reader is referred to \citep{Wang2018},
instead.
\end{proof}
It should be mentioned that, for $\epsilon>0$, the exponents of the
subharmonic stepsizes of Theorem \ref{lem:Rate_Convex} \textit{simultaneously}
satisfy the constraints (\ref{eq:STEP_1_W})-(\ref{eq:STEP_2_W}),
as discussed in Section \ref{subsec:Pathwise-Convergence}, which
are sufficient for guaranteeing convergence of the \textit{$\textit{MESSAGE}^{p}$}
algorithm in the pathwise sense. Therefore, for $\epsilon>0$, the
\textit{$\textit{MESSAGE}^{p}$} algorithm attains a ${\cal L}_{1}$
objective suboptimality rate of order \textit{arbitrarily close to}
${\cal O}(n^{-1/\left(4\mathds{1}_{\left\{ p>1\right\} }+4\right)})$,
while provably exhibiting pathwise stability, as well. On the other
hand, if $\epsilon\equiv0$, then the \textit{$\textit{MESSAGE}^{p}$}
algorithm attains a rate of order precisely ${\cal O}(n^{-1/\left(4\mathds{1}_{\left\{ p>1\right\} }+4\right)})$,
that is, ${\cal O}(n^{-1/4})$, if $p\equiv1$, and ${\cal O}(n^{-1/4})$,
if $p>1$, but pathwise convergence is not guaranteed, at least based
on the results presented in Section \ref{subsec:Pathwise-Convergence}.

Indeed, the conclusions of Theorem \ref{lem:Rate_Convex}, which match
the respective rate results previously developed for the general purpose
\textit{$T$-SCGD }algorithm in \citep{Wang2018}, are somewhat disappointing,
especially when $p>1$. However, we should note that this result assumes
nothing but mere convexity on the random cost $F\left(\cdot,\boldsymbol{W}\right)$
and, therefore, on $\phi^{\widetilde{F}}$, as well. This means that
Theorem \ref{lem:Rate_Convex} is valid for any problematic or pathological
choice of the potentially nonsmooth cost $F\left(\cdot,\boldsymbol{W}\right)$,
as long as it is convex (and of course satisfying any additional regularity
assumptions made throughout this work). 

Nonetheless, the situation changes dramatically if we strengthen our
assumptions on the convexity of $\phi^{\widetilde{F}}$, which, as
we will see shortly, can be guaranteed very naturally by in turn strengthening
the convexity of $F\left(\cdot,\boldsymbol{W}\right)$, as in classical
risk-neutral stochastic optimization. This is the subject of the next
paragraph.

\paragraph{Strongly Convex Random Cost}

Here, we assume that the risk-averse objective under consideration,
$\phi^{\widetilde{F}}$, is $\sigma$\textit{-strongly convex}, \textit{in
the sense that} there exists $\sigma>0$, such that
\begin{equation}
\phi^{\widetilde{F}}\left(\boldsymbol{x}\right)-\phi_{*}^{\widetilde{F}}\ge\sigma\left\Vert \boldsymbol{x}-\boldsymbol{x}^{*}\right\Vert _{2}^{2},\quad\forall\boldsymbol{x}\in{\cal X},\label{eq:Strong_Convex_O}
\end{equation}
where $\boldsymbol{x}^{*}\in{\cal X}^{*}$, and ${\cal X}^{*}$ is
singleton. Although condition (\ref{eq:Strong_Convex_O}) will turn
out to be very central in our analysis, is not very useful per se,
unless we can show that it can be satisfied under reasonable choices
of the random cost $F\left(\cdot,\boldsymbol{W}\right)$, and the
risk measure $\rho$, such that $\rho\left(F\left(\cdot,\boldsymbol{W}\right)\right)\equiv\phi^{\widetilde{F}}\left(\cdot\right)$.
In other words, it is important to be able to satisfy condition (\ref{eq:Strong_Convex_O})
\textit{constructively} within our problem setting, starting from
appropriate assumptions on its basic components (bottom-up).

In fact, it turns out that imposing $\sigma$-strong convexity on
$F\left(\cdot,\boldsymbol{W}\right)$ \textit{in the usual sense that}
there exists $\sigma>0$, such that
\begin{equation}
F\left(\cdot,\boldsymbol{w}\right)-\sigma\left\Vert \cdot\right\Vert _{2}^{2}\text{ is convex, for all }\boldsymbol{w}\in\mathbb{R}^{M},
\end{equation}
is \textit{all that is needed} in order to guarantee condition (\ref{eq:Strong_Convex_O})
for the objective of our base problem, $\phi^{\widetilde{F}}$. This
is a very simple, but important consequence of the fact that mean-semideviations
are convex risk measures (that is, convex, monotone and translation
equivariant real-valued functionals on ${\cal Z}_{q}$). The relevant
results follow.
\begin{prop}
\textbf{\textup{(Strong Convexity of Risk-Function Compositions)\label{prop:StrongConvexity-of-Compositions}}}
Consider a real-valued random function $f:\mathbb{R}^{N}\times\Omega\rightarrow\mathbb{R}$,
as well as a real-valued risk measure $\rho:{\cal Z}_{q}\rightarrow\mathbb{R}$.
Suppose that, for every $\omega\in\Omega$, $f\left(\cdot,\omega\right)$
is $\sigma$-strongly convex, and that $\rho$ is convex. Then, the
real-valued composite function $\phi^{f}\left(\cdot\right)\equiv\text{\ensuremath{\rho}}\left(f\left(\cdot,\bullet\right)\right):\mathbb{R}^{N}\rightarrow\mathbb{R}$
is $\sigma$-strongly convex, as well.
\end{prop}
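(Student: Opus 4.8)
The plan is to reduce the claim to the already-established Proposition~\ref{prop:Convexity-of-Compositions} by peeling off the quadratic that encodes strong convexity. First I would record the definition of $\sigma$-strong convexity in the form used throughout Section~\ref{sec:Message}: for every $\omega\in\Omega$, the function $g\left(\cdot,\omega\right)\triangleq f\left(\cdot,\omega\right)-\sigma\left\Vert \cdot\right\Vert _{2}^{2}$ is (plainly) convex. Since $f\left(\boldsymbol{x},\cdot\right)\in{\cal Z}_{q}$ and, for each fixed $\boldsymbol{x}$, the scalar $\sigma\left\Vert \boldsymbol{x}\right\Vert _{2}^{2}$ is a deterministic constant lying in ${\cal Z}_{q}$, the random variable $g\left(\boldsymbol{x},\cdot\right)=f\left(\boldsymbol{x},\cdot\right)-\sigma\left\Vert \boldsymbol{x}\right\Vert _{2}^{2}$ is again an element of ${\cal Z}_{q}$, so $\rho$ may legitimately be evaluated at it.

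The crux is then \emph{translation equivariance}. For each fixed $\boldsymbol{x}\in\mathbb{R}^{N}$ I would write $f\left(\boldsymbol{x},\cdot\right)=g\left(\boldsymbol{x},\cdot\right)+\sigma\left\Vert \boldsymbol{x}\right\Vert _{2}^{2}$ and apply $\rho$; because $\sigma\left\Vert \boldsymbol{x}\right\Vert _{2}^{2}$ is a constant and $\rho$ is translation equivariant (which every convex risk measure, and in particular every mean-semideviation, satisfies, as noted in the remark on coherence), this yields
\[
\phi^{f}\left(\boldsymbol{x}\right)\equiv\rho\left(f\left(\boldsymbol{x},\cdot\right)\right)=\rho\left(g\left(\boldsymbol{x},\cdot\right)\right)+\sigma\left\Vert \boldsymbol{x}\right\Vert _{2}^{2}\equiv\phi^{g}\left(\boldsymbol{x}\right)+\sigma\left\Vert \boldsymbol{x}\right\Vert _{2}^{2},\quad\forall\boldsymbol{x}\in\mathbb{R}^{N}.
\]
Next, since $g\left(\cdot,\omega\right)$ is convex for every $\omega$ and $\rho$ is convex-monotone, Proposition~\ref{prop:Convexity-of-Compositions} applies verbatim to the composition $\phi^{g}\equiv\rho\left(g\left(\cdot,\bullet\right)\right)$ and gives that $\phi^{g}$ is convex on $\mathbb{R}^{N}$. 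Consequently $\phi^{f}-\sigma\left\Vert \cdot\right\Vert _{2}^{2}=\phi^{g}$ is convex, which is exactly the statement that $\phi^{f}$ is $\sigma$-strongly convex, with the modulus $\sigma$ preserved.

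The step I expect to carry the real weight is the translation-equivariance identity: without it the modulus of strong convexity is \emph{not} preserved under composition. For instance, a convex-monotone but non-equivariant functional such as $Z\mapsto\mathbb{E}\left\{ Z\right\} /2$, composed with a $\sigma$-strongly convex $f$, produces only a $\left(\sigma/2\right)$-strongly convex objective. Hence, although the statement reads \textquotedblleft$\rho$ is convex,\textquotedblright{} the argument genuinely relies on $\rho$ being a convex risk measure in the full sense---convex, monotone, and translation equivariant---which is precisely the class of mean-semideviations under study. An equivalent, more hands-on route I could take instead is to start from the pointwise strong-convexity inequality $f\left(\alpha\boldsymbol{x}+\left(1-\alpha\right)\boldsymbol{y},\omega\right)\le\alpha f\left(\boldsymbol{x},\omega\right)+\left(1-\alpha\right)f\left(\boldsymbol{y},\omega\right)-\sigma\alpha\left(1-\alpha\right)\left\Vert \boldsymbol{x}-\boldsymbol{y}\right\Vert _{2}^{2}$, push it through $\rho$ using monotonicity ($\mathbf{R2}$), strip off the constant term via translation equivariance, and close with convexity ($\mathbf{R1}$) of $\rho$; this reproves strong convexity directly but invokes exactly the same three properties.
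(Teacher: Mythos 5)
Your proof is correct and follows essentially the same route as the paper's own argument: peel off the quadratic $\sigma\left\Vert \cdot\right\Vert _{2}^{2}$, apply the convexity-of-compositions result (Proposition \ref{prop:Convexity-of-Compositions}) to the convex remainder, and use translation equivariance of $\rho$ to transfer the quadratic back outside. Your side remark is also well taken—and implicitly conceded by the paper's own proof, which invokes monotonicity and translation equivariance despite the statement's hypothesis reading only ``$\rho$ is convex''—namely that the result genuinely requires $\rho$ to be a convex risk measure in the full sense, not merely a convex functional.
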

\begin{proof}[Proof of Proposition \ref{prop:StrongConvexity-of-Compositions}]
By $\sigma$-strong convexity of $f\left(\cdot,\omega\right)$ for
all $\omega\in\Omega$, it is true that $f\left(\cdot,\omega\right)-\sigma\left\Vert \cdot\right\Vert _{2}^{2}$
is convex, for all $\omega\in\Omega$. But $\rho$ is a convex-monotone
risk measure and, thus, $\text{\ensuremath{\rho}}\left(f\left(\cdot,\bullet\right)-\sigma\left\Vert \cdot\right\Vert _{2}^{2}\right)$
is also convex. Since, additionally, $\rho$ is translation equivariant,
it is true that, for every $\boldsymbol{x}\in\mathbb{R}^{N}$,
\begin{equation}
\text{\ensuremath{\rho}}\left(f\left(\boldsymbol{x},\bullet\right)-\sigma\left\Vert \boldsymbol{x}\right\Vert _{2}^{2}\right)\equiv\text{\ensuremath{\rho}}\left(f\left(\boldsymbol{x},\bullet\right)\right)-\sigma\left\Vert \boldsymbol{x}\right\Vert _{2}^{2}\equiv\phi^{f}\left(\boldsymbol{x}\right)-\sigma\left\Vert \boldsymbol{x}\right\Vert _{2}^{2},
\end{equation}
which, of course, implies that the function $\phi^{f}\left(\cdot\right)-\sigma\left\Vert \cdot\right\Vert _{2}^{2}$
is convex. Enough said.
\end{proof}
For the special case of mean-semideviation models, Proposition \ref{prop:StrongConvexity-of-Compositions}
may be specialized accordingly, as follows. The proof is trivial,
and therefore is omitted.
\begin{prop}
\textbf{\textup{(Strong Convexity of $\phi^{\widetilde{F}}$)\label{prop:StrongConvexity-of-MS}}}
Fix $p\in\left[1,\infty\right)$ and choose any risk regularizer ${\cal R}:\mathbb{R}\rightarrow\mathbb{R}$.
Suppose that, for every $\boldsymbol{w}\in\mathbb{R}^{M}$, $F\left(\cdot,\boldsymbol{w}\right)$
is $\sigma$-strongly convex on ${\cal X}$. Then, as long as $c\in\left[0,1\right]$,
the composite function $\phi^{\widetilde{F}}\left(\cdot\right)\equiv\rho\left(F\left(\cdot,\boldsymbol{W}\right)\right)$
is $\sigma$-strongly convex on ${\cal X}$, as well, and satisfies
condition (\ref{eq:Strong_Convex_O}).
\end{prop}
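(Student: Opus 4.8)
The plan is to obtain this statement as an immediate corollary of Proposition \ref{prop:StrongConvexity-of-Compositions}, by simply verifying that its two hypotheses hold in the mean-semideviation setting. First I would note that the composite random cost $\widetilde{F}\left(\cdot,\omega\right)\equiv F\left(\cdot,\boldsymbol{W}\left(\omega\right)\right)$ inherits $\sigma$-strong convexity pathwise: since $F\left(\cdot,\boldsymbol{w}\right)$ is assumed $\sigma$-strongly convex for every $\boldsymbol{w}\in\mathbb{R}^{M}$, fixing $\boldsymbol{w}\equiv\boldsymbol{W}\left(\omega\right)$ shows that $\widetilde{F}\left(\cdot,\omega\right)$ is $\sigma$-strongly convex for every $\omega\in\Omega$, which is precisely the cost-side hypothesis of Proposition \ref{prop:StrongConvexity-of-Compositions}.

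Second, I would confirm that the risk measure $\rho\equiv\rho_{p}^{{\cal R}}\left(\cdot;c\right)$ qualifies. The proof of Proposition \ref{prop:StrongConvexity-of-Compositions} draws on three properties of $\rho$: convexity, monotonicity, and translation equivariance. For $c\in\left[0,1\right]$, Theorem \ref{thm:ConvexMONO} already supplies convexity ($\mathbf{R1}$) and monotonicity ($\mathbf{R2}$), so it remains only to invoke translation equivariance, which holds for all mean-semideviations as recorded in the Coherence remark: the shift $Z\mapsto Z+a$ leaves the centered cost $Z-\mathbb{E}\left\{ Z\right\} $, and hence the dispersion term $\mathbb{D}_{p}^{{\cal R}}$, unchanged, while adding $a$ to the expectation term. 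With these three properties in hand, Proposition \ref{prop:StrongConvexity-of-Compositions} applies verbatim and yields $\sigma$-strong convexity of $\phi^{\widetilde{F}}\left(\cdot\right)\equiv\rho\left(F\left(\cdot,\boldsymbol{W}\right)\right)$ on ${\cal X}$.

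Third, I would translate this abstract strong convexity into the concrete inequality (\ref{eq:Strong_Convex_O}). Strong convexity means $\phi^{\widetilde{F}}-\sigma\left\Vert \cdot\right\Vert _{2}^{2}$ is convex on ${\cal X}$; since ${\cal X}$ is closed and convex, the constrained minimizer $\boldsymbol{x}^{*}$ satisfies the associated first-order variational inequality, and combining it with the strong-convexity lower bound anchored at $\boldsymbol{x}^{*}$ gives $\phi^{\widetilde{F}}\left(\boldsymbol{x}\right)-\phi_{*}^{\widetilde{F}}\ge\sigma\left\Vert \boldsymbol{x}-\boldsymbol{x}^{*}\right\Vert _{2}^{2}$ for all $\boldsymbol{x}\in{\cal X}$; uniqueness of the minimizer, so that ${\cal X}^{*}$ is a singleton, follows from the same strong convexity. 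The only genuine (though mild) point of care is this last translation step, since everything preceding it is merely a matter of checking that the preconditions of the already-established compositional proposition are met, which is exactly why the authors regard the proof as trivial and omit it.
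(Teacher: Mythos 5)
Your proposal is correct and follows essentially the same route as the paper, which omits the proof precisely because it is the specialization you carry out: apply Proposition \ref{prop:StrongConvexity-of-Compositions} after noting that Theorem \ref{thm:ConvexMONO} (for $c\in\left[0,1\right]$) and translation equivariance supply the properties its proof actually uses, and then pass from convexity of $\phi^{\widetilde{F}}-\sigma\left\Vert \cdot\right\Vert _{2}^{2}$ to the inequality (\ref{eq:Strong_Convex_O}) via the standard optimality argument at $\boldsymbol{x}^{*}$. Your filling in of the verification steps (pathwise inheritance of strong convexity, the three risk-measure properties, and the final translation step with uniqueness of the minimizer) is exactly what the authors deem trivial and leave implicit.
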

Consequently, we see that strong convexity of $F\left(\cdot,\boldsymbol{W}\right)$
suffices for $\phi^{\widetilde{F}}$ being strongly convex, as well.
This fact is very important from an operational/practical point of
view, because it implies that guaranteeing strong convexity for a
risk-averse problem is in principle no harder than guaranteeing strong
convexity for the respective risk-neutral problem when mean-semideviations,
or, more generally, convex risk measures, are involved. In particular,
Proposition \ref{prop:StrongConvexity-of-Compositions} holds true
for all coherent risk measures, being also convex. 

What remains now is to quantitatively characterize the effect of strong
convexity on the convergence rates achieved by the \textit{$\textit{MESSAGE}^{p}$}
algorithm, and how those compare to the more general convex case,
briefly analyzed in Section \ref{par:Convex-Random-Cost}.

Our first result is a parametric \textit{``rate generator''}, which
provides general stepsize conditions, under which the optimality and
approximation errors of all three levels of the \textit{$\textit{MESSAGE}^{p}$}
algorithm may be jointly combined into a recursive inequality, resembling
the respective recursions arising in the rate analysis of standard,
risk-neutral SSD algorithms. This result is new, and builds upon some
basic ideas found in the earlier work of \citep{Wang2017}, concerning
the rate of the general purpose ($2$-level) \textit{SCGD} algorithm
developed therein.
\begin{lem}
\textbf{\textup{(Rate Generator | Strongly Convex Case)}}\label{lem:Rate_Generator}
Let Assumptions \ref{assu:F_AS_Main}, \ref{assu:Initial-Values}
be in effect, and let $\beta_{n}\in\left(0,1\right]$, $\gamma_{n}\in\left(0,1\right]$,
for all $n\in\mathbb{N}$. Also, suppose that $\phi^{\widetilde{F}}$
is $\sigma$-strongly convex, and that there exists $n_{o}\in\mathbb{N}^{+}$,
such that, for all $n\in\mathbb{N}^{n_{o}}$, the following conditions
hold simultaneously:
\begin{description}
\item [{$\mathbf{G1}$}] $\sigma\alpha_{n}\le\dfrac{K-1}{K}\min\left\{ \beta_{n-1},\gamma_{n-1}\right\} $,
for some bounded constant $K>1$.
\item [{$\mathbf{G2}$}] $\alpha_{n+1}\beta_{n-1}\le\alpha_{n}\beta_{n}$
and, likewise, $\alpha_{n+1}\gamma_{n-1}\le\alpha_{n}\gamma_{n}$.
\end{description}
For nonnegative sequences $\left\{ \boldsymbol{\Delta}_{B}^{n}\right\} _{n\in\mathbb{N}}$
and $\left\{ \boldsymbol{\Delta}_{C}^{n}\right\} _{n\in\mathbb{N}}$,
consider the process
\begin{flalign}
J^{n} & \triangleq\mathbb{E}\left\{ \left\Vert \boldsymbol{x}^{n}-\boldsymbol{x}^{*}\right\Vert _{2}^{2}\right\} +\boldsymbol{\Delta}_{B}^{n-1}\mathbb{E}\left\{ \left|y^{n-1}-{\cal S}^{\widetilde{F}}\hspace{-1pt}\hspace{-1pt}\left(\boldsymbol{x}^{n-1}\right)\right|^{2}\right\} \hspace{-1pt}\nonumber \\
 & \quad\quad\quad\quad\quad\quad\quad+\boldsymbol{\Delta}_{C}^{n-1}\mathbb{E}\left\{ \left|z^{n-1}-{\cal D}^{\widetilde{F}}\hspace{-2pt}\left(\boldsymbol{x}^{n-1},y^{n-1}\right)\right|^{2}\right\} \mathds{1}_{\left\{ p>1\right\} },\quad n\in\mathbb{N}^{+}.
\end{flalign}
Then, $\left\{ \boldsymbol{\Delta}_{B}^{n}\right\} _{n\in\mathbb{N}}$
and $\left\{ \boldsymbol{\Delta}_{C}^{n}\right\} _{n\in\mathbb{N}}$
may be chosen such that
\begin{flalign}
J^{n+1} & \le\left(1-\sigma\alpha_{n}\right)J^{n}+\widetilde{\Sigma}\left(\sigma^{2}\alpha_{n}^{2}+\dfrac{\sigma^{3}\alpha_{n}\alpha_{n-1}^{2}}{\beta_{n-1}^{2}}+\sigma\alpha_{n}\beta_{n-1}\right)\nonumber \\
 & \quad\quad\quad\quad\quad\quad+\widetilde{\Sigma}\left(\dfrac{\sigma^{3}\alpha_{n}\alpha_{n-1}^{2}}{\gamma_{n-1}^{2}}+\dfrac{\sigma\alpha_{n}\beta_{n-1}^{2}}{\gamma_{n-1}^{2}}+\sigma\alpha_{n}\gamma_{n-1}\right)\mathds{1}_{\left\{ p>1\right\} },\quad\forall n\in\mathbb{N}^{n_{o}},
\end{flalign}
for some constant $0<\widetilde{\Sigma}<\infty$. Additionally, under
the assumptions of Lemma \ref{lem:Iter_Error_Bound}, and for the
same choices of $\left\{ \boldsymbol{\Delta}_{B}^{n}\right\} _{n\in\mathbb{N}}$
and $\left\{ \boldsymbol{\Delta}_{C}^{n}\right\} _{n\in\mathbb{N}}$,
it is true that $\sup_{n\in\mathbb{N}^{+}}J^{n}<\infty.$
\end{lem}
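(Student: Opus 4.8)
The plan is to compress the four one-step estimates already in hand---the uniform increment bound of Lemma \ref{lem:INTER_1}, the first-level recursion of Lemma \ref{lem:INTER_1-1}, the second-level recursion of Lemma \ref{lem:INTER_Middle}, and the third-level optimality-error recursion of Lemma \ref{lem:INTER_1-1-1}---into a single contraction inequality for the lagged, weighted Lyapunov process $J^n$. First I would take unconditional expectations of all three recursions (tower property over $\left\{ \mathscr{D}^n\right\} $) and insert the strong-convexity estimate $\mathbb{E}\left\{ \phi^{\widetilde{F}}\left(\boldsymbol{x}^n\right)-\phi_*^{\widetilde{F}}\right\} \ge\sigma\,\mathbb{E}\left\{ \left\Vert \boldsymbol{x}^n-\boldsymbol{x}^*\right\Vert _2^2\right\} $, valid by (\ref{eq:Strong_Convex_O}), into the third-level recursion; this converts its term $-2\alpha_n\mathbb{E}\left\{ \phi^{\widetilde{F}}\left(\boldsymbol{x}^n\right)-\phi_*^{\widetilde{F}}\right\} $ into the negative drift $-2\sigma\alpha_n\mathbb{E}\left\{ \left\Vert \boldsymbol{x}^n-\boldsymbol{x}^*\right\Vert _2^2\right\} $. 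Wherever an increment $\mathbb{E}\left\{ \left\Vert \boldsymbol{x}^{n+1}-\boldsymbol{x}^n\right\Vert _2^2\right\} $ appears in the first- and second-level recursions I would substitute the ${\cal O}\left(\alpha_n^2\right)$ bound of Lemma \ref{lem:INTER_1}, so that the driving noise injected at each level becomes an explicit function of the stepsizes---of order $\alpha_{n-1}^2/\beta_{n-1}+\beta_{n-1}^2$ at the first level and $\alpha_{n-1}^2/\gamma_{n-1}+\beta_{n-1}^2/\gamma_{n-1}+\gamma_{n-1}^2$ at the second. When $p\equiv1$ the $z$-channel and its weight drop out ($\mathds{1}_{\left\{ p>1\right\} }\equiv0$) and the reduction specializes automatically.

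The heart of the argument, extending the two-level construction of \citep{Wang2017} to three coupled levels, is the selection of the weights $\left\{ \boldsymbol{\Delta}_B^n\right\} $ and $\left\{ \boldsymbol{\Delta}_C^n\right\} $. These are chosen so that, after expressing the time-$n$ errors $\left|y^n-{\cal S}^{\widetilde{F}}\left(\boldsymbol{x}^n\right)\right|^2$ and $\left|z^n-{\cal D}^{\widetilde{F}}\left(\boldsymbol{x}^n,y^n\right)\right|^2$ through their lagged counterparts via the first- and second-level recursions, the net coefficients of the lagged errors collapse into $\left(1-\sigma\alpha_n\right)\boldsymbol{\Delta}_B^{n-1}$ and $\left(1-\sigma\alpha_n\right)\boldsymbol{\Delta}_C^{n-1}$, reproducing the weights appearing in $J^n$. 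Conditions $\mathbf{G1}$ and $\mathbf{G2}$ do the essential work here: $\mathbf{G1}$ guarantees both that the multiplicative expansion $4\mathsf{B}_p^2G^2c^2\left(\alpha_n^2/\beta_n+\alpha_n^2/\gamma_n\mathds{1}_{\left\{ p>1\right\} }\right)$ of the third level is dominated by $\sigma\alpha_n$ (so that a clean factor $1-\sigma\alpha_n$ survives on $\left\Vert \boldsymbol{x}^n-\boldsymbol{x}^*\right\Vert _2^2$) and that each level retains a strictly positive contraction margin $\beta_{n-1}-\sigma\alpha_n\ge\beta_{n-1}/K$ (and likewise for $\gamma_{n-1}$), while $\mathbf{G2}$ forces the stepsize-ratio weights to be monotone, making the induced backward recursion for $\boldsymbol{\Delta}_B^n,\boldsymbol{\Delta}_C^n$ consistent so the telescoping closes. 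The residual noise injected at each level, scaled by its weight, then aggregates precisely into the advertised sum of stepsize products, with all the fixed constants ($G,V,{\cal E},\varepsilon,p,\mathsf{B}_p,\mathsf{R}_p,c,\sigma$) absorbed into the single constant $\widetilde{\Sigma}$.

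For the closing claim $\sup_{n\in\mathbb{N}^+}J^n<\infty$, I would invoke the additional hypotheses of Lemma \ref{lem:Iter_Error_Bound}, which secure uniform boundedness in expectation of the two lagged error terms; since the weights $\boldsymbol{\Delta}_B^n,\boldsymbol{\Delta}_C^n$ are themselves bounded, the lagged contributions to $J^n$ are controlled, and it remains only to bound $\mathbb{E}\left\{ \left\Vert \boldsymbol{x}^n-\boldsymbol{x}^*\right\Vert _2^2\right\} $. This follows from the drift inequality by the standard contraction argument: each residual term is of the form $\sigma\alpha_n$ times a bounded (indeed vanishing) factor, so that $\limsup_n J^n$ is dominated by the $\limsup$ of those factors, hence finite, and boundedness over the initial segment $n<n_o$ is trivial.

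The main obstacle is exactly this weight construction together with the exact error bookkeeping. The third level couples into the first and second through the \emph{larger} stepsizes $\beta_n$ and $\gamma_n$ (not through $\alpha_n$), so a naive constant or single-power weight either violates the telescoping identity or yields residuals strictly coarser than the stated ones; reconciling the telescoping constraint with the target error magnitudes forces a carefully balanced, stepsize-dependent weight, and it is precisely $\mathbf{G1}$ and $\mathbf{G2}$ that render such a balance feasible. Propagating the three mutually coupled channels through this reduction \emph{simultaneously}---rather than the two channels of \citep{Wang2017}---is what makes the computation delicate, though it remains conceptually routine once the weights are pinned down.
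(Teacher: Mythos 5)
Your overall architecture is the right one---the weighted, lagged Lyapunov process, the telescoping requirement on $\boldsymbol{\Delta}_{B}^{n},\boldsymbol{\Delta}_{C}^{n}$, the roles assigned to $\mathbf{G1}$/$\mathbf{G2}$, and the closing boundedness argument via Lemma \ref{lem:Iter_Error_Bound} all match the paper---but your reduction is anchored to the \emph{final statement} of Lemma \ref{lem:INTER_1-1-1}, and that is where the argument breaks. In that statement the cross term $\mathbb{E}_{\mathscr{D}^{n}}\{\boldsymbol{U}^{n+1}\}$ has \emph{already} been split by Young's inequality against the stepsizes $\beta_{n},\gamma_{n}$ (the split tailored to the Robbins--Siegmund pathwise analysis): the error channels enter the iterate recursion with coefficients $\beta_{n}$ and $\gamma_{n}$, and the iterate picks up the expansion $4\mathsf{B}_{p}^{2}G^{2}c^{2}(\alpha_{n}^{2}/\beta_{n}+\alpha_{n}^{2}/\gamma_{n}\mathds{1}_{\{p>1\}})$. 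Two things then fail. First, your claim that $\mathbf{G1}$ forces this expansion to be dominated by $\sigma\alpha_{n}$ is unfounded: $\mathbf{G1}$ only yields $\alpha_{n}/\beta_{n-1}\le(K-1)/(K\sigma)$, so the expansion is of size $C\alpha_{n}$ with $C$ proportional to the fixed problem constant $4\mathsf{B}_{p}^{2}G^{2}c^{2}$, and nothing in $\mathbf{G1}$--$\mathbf{G2}$ forces $C\le\sigma$; the clean factor $(1-\sigma\alpha_{n})$ on $\|\boldsymbol{x}^{n}-\boldsymbol{x}^{*}\|_{2}^{2}$ therefore need not survive. Second, and irreducibly, whatever nonnegative weights you pick, the coupling term $\beta_{n}\mathbb{E}\{|y^{n}-{\cal S}^{\widetilde{F}}(\boldsymbol{x}^{n})|^{2}\}$ injects, after substituting the first-level recursion of Lemma \ref{lem:INTER_1-1}, a noise contribution of at least $\beta_{n}\big(\alpha_{n-1}^{2}/\beta_{n-1}+\beta_{n-1}^{2}\big)\Sigma$, which carries no factor of $\alpha_{n}$ and cannot be removed by any choice of $\boldsymbol{\Delta}_{B}^{n}\ge0$. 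Under $\mathbf{G1}$--$\mathbf{G2}$ alone this can strictly dominate every term of the target bound (all of which are proportional to $\sigma\alpha_{n}$): for $p\equiv1$, take $\alpha_{n}\asymp n^{-1}$ and $\beta_{n}\asymp n^{-2/5}$, which satisfy $\mathbf{G1}$--$\mathbf{G2}$ for large $n$, yet $\beta_{n}\beta_{n-1}^{2}\asymp n^{-6/5}\gg n^{-7/5}\asymp\sigma\alpha_{n}\beta_{n-1}$. So the ``carefully balanced, stepsize-dependent weight'' you invoke does not exist within your reduction.

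The missing idea is that the rate generator cannot reuse the pathwise-analysis split of $\boldsymbol{U}^{n+1}$; it must re-split it against the strong-convexity modulus. The paper's proof goes back \emph{inside} the proof of Lemma \ref{lem:INTER_1-1-1}, to the product-form bound $\mathbb{E}_{\mathscr{D}^{n}}\{\boldsymbol{U}^{n+1}\}\le4\mathsf{B}_{p}Gc\,\alpha_{n}\|\boldsymbol{x}^{n}-\boldsymbol{x}^{*}\|_{2}\big(|y^{n}-{\cal S}^{\widetilde{F}}(\boldsymbol{x}^{n})|+|z^{n}-{\cal D}^{\widetilde{F}}(\boldsymbol{x}^{n},y^{n})|\big)$, and applies Young's inequality with weights tied to $\sigma$, obtaining $\mathbb{E}_{\mathscr{D}^{n}}\{\boldsymbol{U}^{n+1}\}\le\sigma\alpha_{n}\|\boldsymbol{x}^{n}-\boldsymbol{x}^{*}\|_{2}^{2}+\tfrac{8\mathsf{B}_{p}^{2}G^{2}c^{2}}{\sigma}\alpha_{n}\big(|y^{n}-{\cal S}^{\widetilde{F}}(\boldsymbol{x}^{n})|^{2}+|z^{n}-{\cal D}^{\widetilde{F}}(\boldsymbol{x}^{n},y^{n})|^{2}\big)$. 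This spends exactly half of the $2\sigma\alpha_{n}$ strong-convexity drift to absorb the cross term---so no smallness condition on problem constants is ever needed---leaves the clean factor $(1-\sigma\alpha_{n})$, and, crucially, makes \emph{every} coupling coefficient proportional to $\alpha_{n}$, namely $\alpha_{n}\Sigma/\sigma$. Only then do the weights $\boldsymbol{\Delta}_{B}^{n}=\tfrac{\Sigma}{\sigma}\,\tfrac{\alpha_{n}}{\beta_{n-1}-\sigma\alpha_{n}}$ (well defined and bounded by $K\tfrac{\Sigma}{\sigma}\alpha_{n}/\beta_{n-1}$ via $\mathbf{G1}$, nonincreasing via $\mathbf{G2}$), together with their $z$-channel analogue, satisfy the telescoping inequality $(1-\beta_{n-1})(\boldsymbol{\Delta}_{B}^{n}+\alpha_{n}\Sigma/\sigma)\le(1-\sigma\alpha_{n})\boldsymbol{\Delta}_{B}^{n-1}$, and the residual noise aggregates into the stated form in which every term carries the factor $\sigma\alpha_{n}$. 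Your sketch of the final claim $\sup_{n}J^{n}<\infty$ is essentially the paper's argument, but it rests on the contraction recursion that your reduction does not actually establish.
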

\begin{proof}[Proof of Lemma \ref{lem:Rate_Generator}]
See Section \ref{subsec:Rate_Gen} (Appendix).
\end{proof}
Leveraging Lemma \ref{lem:Rate_Generator}, along with a simple generalization
of Chung's Lemma (see Section \ref{subsec:Chung} (Appendix)), we
may characterize the convergence rates of the \textit{$\textit{MESSAGE}^{p}$}
algorithm in the strongly convex case, in full and transparent technical
detail. We start with the case where $p>1$.
\begin{thm}
\textbf{\textup{(Rate | Strongly Convex Case | Subharmonic Stepsizes
| $p>1$)}}\label{lem:Rate_SConvex} Let Assumptions \ref{assu:F_AS_Main}
and \ref{assu:Initial-Values} be in effect. Suppose that $\phi^{\widetilde{F}}$
is $\sigma$-strongly convex, and that the stepsize sequences $\left\{ \alpha_{n}\right\} _{n\in\mathbb{N}}$,
$\left\{ \beta_{n}\right\} _{n\in\mathbb{N}}$ and $\left\{ \gamma_{n}\right\} _{n\in\mathbb{N}}$
satisfy the subharmonic rules
\begin{equation}
\alpha_{n}\triangleq\dfrac{1}{\sigma n},\quad\beta_{n}\triangleq\dfrac{1}{n^{\tau_{2}}}\quad\text{and}\quad\gamma_{n}\triangleq\dfrac{1}{n^{\tau_{3}}},\quad\forall n\in\mathbb{N}^{+},
\end{equation}
where $1/2\le\tau_{3}<\tau_{2}<1$, and with initial values $\alpha_{0}\equiv\beta_{0}\equiv\gamma_{0}\equiv1$.
Also, define the quantities
\begin{equation}
\text{\ensuremath{n_{o}}}\hspace{-1pt}\left(\tau_{2}\right)\equiv\left\lceil \dfrac{1}{1-\tau_{2}^{1/\left(\tau_{2}+1\right)}}\right\rceil \in\mathbb{N}^{3}\quad\text{and}\quad\mathsf{R}\left(\tau_{2},\tau_{3}\right)\triangleq\dfrac{1}{1-\max\left\{ 2-2\tau_{2},2\tau_{2}-2\tau_{3},\tau_{3}\right\} }>1.
\end{equation}
Then, for every $n\in\mathbb{N}^{n_{o}\left(\tau_{2}\right)}$, it
is true that
\begin{equation}
\mathbb{E}\left\{ \left\Vert \boldsymbol{x}^{n+1}-\boldsymbol{x}^{*}\right\Vert _{2}^{2}\right\} \le\widehat{\Sigma}\dfrac{\text{\ensuremath{n_{o}}}\hspace{-1pt}\left(\tau_{2}\right)}{n}+\widehat{\Sigma}\dfrac{\mathsf{R}\left(\tau_{2},\tau_{3}\right)}{n^{2\min\left\{ 1-\tau_{2},\tau_{2}-\tau_{3}\right\} }},
\end{equation}
for some constant $0<\widehat{\Sigma}<\infty$. In particular, if,
for some $\epsilon\in\left[0,1\right)$ and $\delta\in\left(0,1\right)$,
\begin{equation}
\tau_{2}\equiv\dfrac{3+\epsilon}{4}\quad\text{and}\quad\tau_{3}\equiv\dfrac{1+\delta\epsilon}{2},
\end{equation}
then the $\textit{MESSAGE}^{p}$ algorithm satisfies
\begin{equation}
{\cal O}\left(n^{-\left(1-\epsilon\right)/2}\right)\equiv\mathbb{E}\left\{ \left\Vert \boldsymbol{x}^{n+1}-\boldsymbol{x}^{*}\right\Vert _{2}^{2}\right\} \le\dfrac{\widehat{\Sigma}\left(n_{o}\left(\epsilon\right)+\dfrac{2}{1-\epsilon}\right)}{n^{\left(1-\epsilon\right)/2}},
\end{equation}
for every $n\in\mathbb{N}^{n_{o}\left(\epsilon\right)}$, for each
fixed $\epsilon$. 
\end{thm}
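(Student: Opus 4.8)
The plan is to collapse the three coupled error recursions into the single scalar recursion furnished by Lemma~\ref{lem:Rate_Generator}, and then invert that recursion with a Chung-type argument. First I would check that the subharmonic choices $\alpha_n=1/(\sigma n)$, $\beta_n=n^{-\tau_2}$, $\gamma_n=n^{-\tau_3}$ meet the structural stepsize conditions $\mathbf{G1}$ and $\mathbf{G2}$ of that lemma for all sufficiently large $n$. Since $\sigma\alpha_n\equiv 1/n$ and, because $\tau_3<\tau_2$, we have $\min\{\beta_{n-1},\gamma_{n-1}\}\equiv\beta_{n-1}=(n-1)^{-\tau_2}$, condition $\mathbf{G1}$ reduces to $(n-1)^{\tau_2}/n\le (K-1)/K$ for a suitable constant $K>1$, and condition $\mathbf{G2}$ reduces to $n^{1+\tau_2}\le (n+1)(n-1)^{\tau_2}$ (its $\beta$-branch being the binding one, precisely because $\tau_3<\tau_2$ makes the $\gamma$-branch slacker). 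Solving this $\tau_2$-governed binding monotonicity inequality is what pins down the explicit threshold $n_o(\tau_2)=\lceil 1/(1-\tau_2^{1/(\tau_2+1)})\rceil$, beyond which both $\mathbf{G1}$ and $\mathbf{G2}$ hold.

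With $\mathbf{G1}$ and $\mathbf{G2}$ in force for $n\ge n_o(\tau_2)$, Lemma~\ref{lem:Rate_Generator} supplies the one-dimensional recursion $J^{n+1}\le (1-\sigma\alpha_n)J^n+\Theta^n$, where the Lyapunov quantity $J^n$ dominates $\mathbb{E}\{\|\boldsymbol{x}^n-\boldsymbol{x}^*\|_2^2\}$ (the additional $\boldsymbol{\Delta}_B,\boldsymbol{\Delta}_C$-weighted terms being nonnegative) and $\Theta^n$ is the displayed six-term driving sequence. The same lemma, combined with Lemma~\ref{lem:Iter_Error_Bound} (whose hypotheses are automatic here, since $p>1$ makes the first- and second-level approximation errors uniformly bounded), guarantees $\sup_{n}J^n<\infty$, so the recursion is anchored at a finite value $J^{n_o(\tau_2)}$. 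Substituting the subharmonic stepsizes, I would then record the polynomial order of each driving term: writing them in the normalized form $n^{-(1+s)}$, the exponents are $s\in\{1,\;2-2\tau_2,\;\tau_2,\;2-2\tau_3,\;2\tau_2-2\tau_3,\;\tau_3\}$, arising respectively from $\sigma^2\alpha_n^2$, $\sigma^3\alpha_n\alpha_{n-1}^2/\beta_{n-1}^2$, $\sigma\alpha_n\beta_{n-1}$, $\sigma^3\alpha_n\alpha_{n-1}^2/\gamma_{n-1}^2$, $\sigma\alpha_n\beta_{n-1}^2/\gamma_{n-1}^2$ and $\sigma\alpha_n\gamma_{n-1}$.

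The core of the argument is the Chung-type inversion. Because $\sigma\alpha_n\equiv 1/n$, the homogeneous factor $\prod_{k=n_o(\tau_2)}^{n}(1-1/k)\sim n_o(\tau_2)/n$ produces the first bound term $\widehat\Sigma\, n_o(\tau_2)/n$, while a single driving term of order $n^{-(1+s)}$ with $0<s<1$ contributes $O(n^{-s})$ with multiplicative constant $1/(1-s)$. Invoking the generalized Chung's Lemma of Section~\ref{subsec:Chung} on the summed driving sequence, the overall decay exponent equals the smallest $s$, and the decisive algebraic step is to show this minimum is $\min\{2-2\tau_2,\;2\tau_2-2\tau_3\}\equiv 2\min\{1-\tau_2,\tau_2-\tau_3\}$. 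Here the constraint $\tau_3\ge 1/2$ is essential: it gives $\tau_3>\tau_2/2$, hence $\tau_2>2\tau_2-2\tau_3$, so the exponent $\tau_2$ is dominated; likewise $2-2\tau_3>2-2\tau_2$, and a short case split on whether $\min\{2-2\tau_2,2\tau_2-2\tau_3\}$ equals $2-2\tau_2$ or $2\tau_2-2\tau_3$ shows $\tau_3$ and $1$ also exceed it. Absorbing all the $1/(1-s)$ factors into the uniform constant $\mathsf{R}(\tau_2,\tau_3)$ and the remaining constants into $\widehat\Sigma$ then yields the stated two-term bound for $n\ge n_o(\tau_2)$.

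Finally I would carry out the specialization. Setting $\tau_2=(3+\epsilon)/4$ and $\tau_3=(1+\delta\epsilon)/2$ gives $1-\tau_2=(1-\epsilon)/4$ and $\tau_2-\tau_3=(1+\epsilon(1-2\delta))/4$, and $\delta\le 1$ forces $\tau_2-\tau_3\ge 1-\tau_2$, so $2\min\{1-\tau_2,\tau_2-\tau_3\}=(1-\epsilon)/2$. One then checks directly that $\mathsf{R}(\tau_2,\tau_3)\le 2/(1-\epsilon)$ (because the three quantities defining its denominator are each at most $(1+\epsilon)/2$) and that $n_o(\tau_2)/n\le n_o(\tau_2)/n^{(1-\epsilon)/2}$, collapsing the two terms onto the common exponent $(1-\epsilon)/2$ and producing the final bound. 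The main obstacle is this bookkeeping-heavy stage: correctly identifying which two of the six driving exponents are the bottleneck (which genuinely relies on $\tau_3\ge 1/2$) while tracking all the accompanying constants through the multi-term Chung inversion into the explicit factors $\mathsf{R}(\tau_2,\tau_3)$ and $\widehat\Sigma$.
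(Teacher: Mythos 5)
Your proposal is correct and follows essentially the same route as the paper's own proof: verifying conditions $\mathbf{G1}$--$\mathbf{G2}$ of Lemma \ref{lem:Rate_Generator} (with the $\beta$-branch of $\mathbf{G2}$ pinning down $n_{o}(\tau_{2})$), invoking that lemma's recursion together with $\sup_{n}J^{n}<\infty$ via Lemma \ref{lem:Iter_Error_Bound}, inverting through the generalized Chung's Lemma (Lemma \ref{lem:Chung}), isolating the bottleneck exponent $2\min\left\{ 1-\tau_{2},\tau_{2}-\tau_{3}\right\}$ by exploiting $\tau_{3}\ge1/2$, and then specializing the stepsize exponents. The only detail you gloss over is the harmonic driving term ($s\equiv1$), whose Chung inversion produces a $\log\left(n\right)/n$ contribution rather than your $1/\left(1-s\right)$ formula; the paper absorbs it via $\log\left(n\right)/n\le n^{-1/2}\le n^{-s_{\min}}$, which rests on exactly the same $\tau_{3}\ge1/2$ fact you already invoke.
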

\begin{proof}[Proof of Theorem \ref{lem:Rate_SConvex}]
See Section \ref{subsec:Rate_Sconvex} (Appendix).
\end{proof}
The main conclusion of Theorem \ref{lem:Rate_SConvex} is that, for
fixed semideviation order $p>1$ and for any choice of the user-specified
parameter $\epsilon\in\left[0,1\right)$, the $\textit{MESSAGE}^{p}$
algorithm achieves a squared-${\cal L}_{2}$ solution suboptimality
rate of the order of ${\cal O}(n^{-\left(1-\epsilon\right)/2})$ iterations.
If, additionally, $\epsilon$ is chosen to be strictly positive, that
is, for $\epsilon>0$, pathwise convergence is \textit{simultaneously}
guaranteed, since the constraints (\ref{eq:STEP_1_W})-(\ref{eq:STEP_2_W})
of Section \ref{subsec:Pathwise-Convergence} are also satisfied.
Similarly to the convex case, this completely novel result establishes
a convergence rate of order arbitrarily close to ${\cal O}(n^{-1/2})$
as $\epsilon\rightarrow0$, while ensuring stable pathwise operation
of the algorithm. Of course, when $\epsilon\equiv0$, the rate of
${\cal O}(n^{-1/2})$ iterations is attained, but pathwise convergence
of the algorithm is not guaranteed.

Setting aside the fact that rate quantification is different for the
convex and strongly convex cases, and by looking at the respective
rate exponents, we observe that Theorem \ref{lem:Rate_SConvex} provides
a rate \textit{strictly four ($4$) times faster} than that provided
by Theorem \ref{lem:Rate_Convex}. Of course, this substantial improvement
on the rate of convergence of the $\textit{MESSAGE}^{p}$ algorithm
is made possible due to imposition of strong convexity on the risk-averse
objective $\phi^{\widetilde{F}}$.

When $p\equiv1$, we also have the following simpler result. As the
proof is very similar to that of Theorem \ref{lem:Rate_SConvex},
it is omitted.
\begin{thm}
\textbf{\textup{(Rate | Strongly Convex Case | Subharmonic Stepsizes
| $p\equiv1$)}}\label{lem:Rate_SConvex-1} Let Assumptions \ref{assu:F_AS_Main}
and \ref{assu:Initial-Values} be in effect. Suppose that $\phi^{\widetilde{F}}$
is $\sigma$-strongly convex, and that the stepsize sequences $\left\{ \alpha_{n}\right\} _{n\in\mathbb{N}}$,
$\left\{ \beta_{n}\right\} _{n\in\mathbb{N}}$ follow the subharmonic
rules
\begin{equation}
\alpha_{n}\triangleq\dfrac{1}{\sigma n},\quad\text{and}\quad\beta_{n}\triangleq\dfrac{1}{n^{\tau_{2}}}\quad\forall n\in\mathbb{N}^{+},
\end{equation}
where $1/2<\tau_{2}<1$, and with initial values $\alpha_{0}\equiv\beta_{0}\equiv1$.
Choose $\text{\ensuremath{n_{o}}}\hspace{-1pt}\left(\tau_{2}\right)$
as in Theorem \ref{lem:Rate_SConvex}, and define
\begin{equation}
\mathsf{R}\left(\tau_{2}\right)\triangleq\dfrac{1}{1-\max\left\{ 2-2\tau_{2},\tau_{2}\right\} }>1
\end{equation}
 Then, for every $n\in\mathbb{N}^{n_{o}\left(\tau_{2}\right)}$, it
is true that
\begin{equation}
\mathbb{E}\left\{ \left\Vert \boldsymbol{x}^{n+1}-\boldsymbol{x}^{*}\right\Vert _{2}^{2}\right\} \le\dfrac{\overline{\Sigma}\left(n_{o}\left(\tau_{2}\right)+\mathsf{R}\left(\tau_{2}\right)\right)}{n^{\min\left\{ 2-2\tau_{2},\tau_{2}\right\} }},
\end{equation}
for some constant $0<\widehat{\Sigma}<\infty$. In particular, the
exponent in the denominator is maximized at $\tau_{2}^{*}\equiv2/3$,
yielding a rate of the order of ${\cal O}(n^{-2/3})$.
\end{thm}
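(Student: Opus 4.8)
The plan is to run the proof of Theorem~\ref{lem:Rate_SConvex} with $p\equiv1$ fixed throughout, which collapses the second (middle) SA level and every $\mathds{1}_{\{p>1\}}$-flagged quantity, leaving only the first-level smoothing error $y^{n}-{\cal S}^{\widetilde{F}}(\boldsymbol{x}^{n})$ and the optimality error $\boldsymbol{x}^{n}-\boldsymbol{x}^{*}$. The two engines are the Rate Generator (Lemma~\ref{lem:Rate_Generator}) and the generalized Chung-type lemma of Section~\ref{subsec:Chung}. With $p\equiv1$ the Lyapunov functional of Lemma~\ref{lem:Rate_Generator} reduces to $J^{n}=\mathbb{E}\{\|\boldsymbol{x}^{n}-\boldsymbol{x}^{*}\|_{2}^{2}\}+\boldsymbol{\Delta}_{B}^{n-1}\mathbb{E}\{|y^{n-1}-{\cal S}^{\widetilde{F}}(\boldsymbol{x}^{n-1})|^{2}\}$, and for an appropriate weight sequence $\{\boldsymbol{\Delta}_{B}^{n}\}$ the lemma supplies the scalar recursion
\[
J^{n+1}\le(1-\sigma\alpha_{n})J^{n}+\widetilde{\Sigma}\Bigl(\sigma^{2}\alpha_{n}^{2}+\frac{\sigma^{3}\alpha_{n}\alpha_{n-1}^{2}}{\beta_{n-1}^{2}}+\sigma\alpha_{n}\beta_{n-1}\Bigr),\quad n\ge n_{o}(\tau_{2}).
\]

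First I would check that the chosen stepsizes $\alpha_{n}=1/(\sigma n)$, $\beta_{n}=n^{-\tau_{2}}$ activate Lemma~\ref{lem:Rate_Generator}. Condition $\mathbf{G1}$ becomes $1/n=\sigma\alpha_{n}\le\tfrac{K-1}{K}(n-1)^{-\tau_{2}}$ (its $\gamma$-branch being vacuous for $p\equiv1$), which holds for all $n$ beyond the threshold $n_{o}(\tau_{2})=\lceil(1-\tau_{2}^{1/(\tau_{2}+1)})^{-1}\rceil$, inherited unchanged from the $p>1$ analysis; condition $\mathbf{G2}$, namely $\alpha_{n+1}\beta_{n-1}\le\alpha_{n}\beta_{n}$, follows from the monotonicity of the subharmonic sequences for $n\ge n_{o}(\tau_{2})$. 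I would also verify the hypothesis behind the claim $\sup_{n}J^{n}<\infty$ in Lemma~\ref{lem:Rate_Generator}: since $\alpha_{n}^{2}/\beta_{n}^{2}=\sigma^{-2}n^{-2(1-\tau_{2})}\to0$ when $\tau_{2}<1$, the $p\equiv1$ hypothesis $\sup_{n}\alpha_{n}^{2}/\beta_{n}^{2}<\infty$ of Lemma~\ref{lem:Iter_Error_Bound} is met, so the first-level error is ${\cal L}_{2}$-bounded and $J^{n}$ stays uniformly bounded; this uniform bound is what lets the Chung step absorb the transient indices $n<n_{o}(\tau_{2})$ into the ${\cal O}(n_{o}(\tau_{2})/n)$ offset.

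Substituting the stepsizes exposes the decay orders of the three perturbation terms: $\sigma^{2}\alpha_{n}^{2}=n^{-2}$, $\sigma^{3}\alpha_{n}\alpha_{n-1}^{2}\beta_{n-1}^{-2}\asymp n^{-(1+(2-2\tau_{2}))}$ and $\sigma\alpha_{n}\beta_{n-1}\asymp n^{-(1+\tau_{2})}$, so the aggregate perturbation is of order $n^{-(1+\min\{2-2\tau_{2},\tau_{2}\})}$, the fastest-decaying $n^{-2}$ term being sub-dominant and carried by the ${\cal O}(1/n)$ initial-condition contribution. Since $1/2<\tau_{2}<1$ forces $2-2\tau_{2}<1$ and $\tau_{2}<1$, the contraction rate $c=1$ coming from $\sigma\alpha_{n}=1/n$ strictly exceeds the perturbation exponent $s=\min\{2-2\tau_{2},\tau_{2}\}$; this strictness is precisely why the hypothesis is $\tau_{2}>1/2$ rather than $\tau_{2}\ge1/2$. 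The generalized Chung's Lemma with $c=1>s$ then bounds the particular solution by $\mathsf{R}(\tau_{2})\,n^{-s}$, where the constant $\mathsf{R}(\tau_{2})=(1-\max\{2-2\tau_{2},\tau_{2}\})^{-1}$ uniformly dominates the individual term-constants $(1-\cdot)^{-1}$; adding the homogeneous/initial ${\cal O}(n_{o}(\tau_{2})/n)$ contribution and coarsening $n^{-1}\le n^{-\min\{2-2\tau_{2},\tau_{2}\}}$ delivers the stated bound. Maximizing the exponent $\min\{2-2\tau_{2},\tau_{2}\}$ over $(1/2,1)$ equates the two branches at $2-2\tau_{2}=\tau_{2}$, i.e.\ $\tau_{2}^{*}=2/3$, giving the optimal rate ${\cal O}(n^{-2/3})$.

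The hard part is not the rate extraction, which is a clean Chung's-Lemma application once the recursion is available, but the bookkeeping hidden inside Lemma~\ref{lem:Rate_Generator}: selecting the weight $\{\boldsymbol{\Delta}_{B}^{n}\}$ so that the cross terms coupling the smoothing error to the optimality error telescope into the single contraction factor $(1-\sigma\alpha_{n})$, and confirming that the $p\equiv1$ specialization of that construction still respects $\mathbf{G1}$--$\mathbf{G2}$ with the same $n_{o}(\tau_{2})$. Because all of this is inherited directly from the proof of Theorem~\ref{lem:Rate_SConvex} with the $\gamma$-level excised, the residual work is genuinely routine, which is why the detailed argument is omitted.
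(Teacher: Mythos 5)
Your proposal is correct and follows exactly the route the paper intends for this omitted proof: specialize the Rate Generator (Lemma \ref{lem:Rate_Generator}) to $p\equiv1$ (dropping the $\gamma$-level and all $\mathds{1}_{\left\{ p>1\right\} }$ terms), verify $\mathbf{G1}$--$\mathbf{G2}$ and the $p\equiv1$ hypothesis $\sup_{n}\alpha_{n}^{2}/\beta_{n}^{2}<\infty$ of Lemma \ref{lem:Iter_Error_Bound}, and then run the generalized Chung's Lemma on the resulting recursion, optimizing $\min\left\{ 2-2\tau_{2},\tau_{2}\right\} $ at $\tau_{2}^{*}\equiv2/3$. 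The only (harmless) deviation from the paper's $p>1$ template is that you coarsen all perturbation terms to the dominant order $n^{-(1+s)}$ \emph{before} applying Chung rather than summing each term separately afterwards; this also quietly sidesteps the $\log(n)/n$ term that the separate-summation route must absorb via $\log n\le n^{1-s}/(1-s)$ rather than the cruder $\log(n)/n\le n^{-1/2}$ bound used in the $p>1$ proof, which would be too weak here since the target exponent $2/3$ exceeds $1/2$.
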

In the structurally simpler case where $p\equiv1$, the rate order
improves to ${\cal O}(n^{-2/3})$, which is sufficient for pathwise
convergence as well, and matches existing results in compositional
stochastic optimization, developed earlier along the lines of \citep{Wang2017}.
Compared to the convex case (Theorem \ref{lem:Rate_Convex}), Theorem
\ref{lem:Rate_SConvex-1} provides a rate which is \textit{at most
$8/3\approx2.67$ times faster}. Note, however, that whereas in the
strongly convex case pathwise convergence is always guaranteed for
the particular selection of stepsizes, this does not happen in the
convex case, which also involves the choice of $\epsilon$. This seems
to be a unique feature of mean-semideviation problems of order $p\equiv1$
(two SA levels), since if $p>1$ (three SA levels), the trade-off
between achieving pathwise convergence and a fast rate of convergence
exists in both the convex and strongly convex cases.

\subsection{\label{subsec:The-Choice-of-R}The Choice of ${\cal R}$: Comparison
with Assumption 2.1 of \citep{Wang2018}}

We now present a detailed comparison between Assumption \ref{assu:F_AS_Main},
which is proposed in this paper, and \textit{Assumption 2.1 of \citep{Wang2018}},
which is utilized for analyzing and proving convergence of the general
purpose \textit{$T$-SCGD }algorithm, formulated therein. In the following,
we rigorously show that, as far as problem (\ref{eq:MAIN_PROB}) is
concerned, Assumption \ref{assu:F_AS_Main} imposes substantially
weaker restrictions on problem structure, compared with (\citep{Wang2018},
Assumption 2.1), therefore providing a much broader structural framework
for recursive, compositional SSD-type optimization of mean-semideviation
risk measures.

Recall from (\ref{eq:MAIN_4}) that the objective of our base problem
(\ref{eq:MAIN_PROB}) may be equivalently represented in the form
considered in \citep{Wang2017,Wang2018} as
\begin{flalign}
\phi^{\widetilde{F}}\left(\boldsymbol{x}\right) & \equiv\widehat{\varrho}\left(\widehat{\boldsymbol{g}}^{\widetilde{F}}\left(\widehat{\boldsymbol{h}}^{\widetilde{F}}\left(\boldsymbol{x}\right)\right)\hspace{-2pt}\right),\quad\forall\boldsymbol{x}\in{\cal X},\quad\text{with}\\
\widehat{\varrho}\left(x,y\right) & \equiv\mathbb{E}\left\{ x+cy^{1/p}\right\} \equiv x+cy^{1/p},\\
\widehat{\boldsymbol{g}}^{\widetilde{F}}\left(\boldsymbol{x},y\right) & \equiv\mathbb{E}\left\{ \left[y\:\left({\cal R}\left(F\left(\boldsymbol{x},\boldsymbol{W}\right)-y\right)\right)^{p}\right]\right\} \nonumber \\
 & \triangleq\mathbb{E}\left\{ \widehat{\boldsymbol{g}}_{\boldsymbol{W}}^{\widetilde{F}}\left(\boldsymbol{x},y\right)\right\} \quad\text{and}\\
\widehat{\boldsymbol{h}}^{\widetilde{F}}\left(\boldsymbol{x}\right) & \equiv\mathbb{E}\left\{ \left[\boldsymbol{x}\:F\left(\boldsymbol{x},\boldsymbol{W}\right)\right]\right\} \nonumber \\
 & \triangleq\mathbb{E}\left\{ \widehat{\boldsymbol{h}}_{\boldsymbol{W}}^{\widetilde{F}}\left(\boldsymbol{x}\right)\right\} ,\quad\boldsymbol{x}\in{\cal X}.\label{eq:Mengdi_1}
\end{flalign}

Via careful, but relatively straightforward comparison, it follows
that, relative to problem (\ref{eq:MAIN_PROB}), (\citep{Wang2018},
Assumption 2.1) translates into the following structural requirements,
where the quantities $\varepsilon$ and ${\cal E}$ are defined precisely
as in condition ${\bf C4}$ of Assumption \ref{assu:F_AS_Main}. Recall
that $\varepsilon$ and ${\cal E}$ characterize the essential range
of ${\cal R}\left(F\left(\cdot,\boldsymbol{W}\right)-\bullet\right)$
and the iterate process $\left\{ z^{n}\right\} _{n\in\mathbb{N}^{+}}$,
if $z^{0}\in\left[\varepsilon^{p},{\cal E}^{p}\right]$ (see Lemma
\ref{lem:Iterate_Boundedness}). Here, though, we allow the possibility
of $\varepsilon$ and ${\cal E}$ attaining the values zero and infinity,
respectively, and we explicitly adopt the generalized definitions
\begin{flalign}
\varepsilon & \triangleq\lim_{x\rightarrow m_{l}-m_{h}}{\cal R}\left(x\right)\quad\text{and}\\
{\cal E} & \triangleq\lim_{x\rightarrow m_{h}-m_{l}}{\cal R}\left(x\right),
\end{flalign}
where $m_{l}\in\left[-\infty,\infty\right]$ and $m_{h}\in\left[-\infty,\infty\right]$,
respecting the constraint $m_{l}\le m_{h}$ (note that, by our assumptions,
$m_{l}$ and $m_{h}$ cannot be equal and infinite at the same time).
Also, ${\cal E}$ is finite if and only if \textit{both} $m_{l}$
and $m_{h}$ are finite.
\begin{description}
\item [{$\mathbf{W1}$}] $\,\,\hspace{0.9bp}$The random functions $\widehat{\boldsymbol{g}}_{\boldsymbol{W}}^{\widetilde{F}}$
and $\widehat{\boldsymbol{h}}_{\boldsymbol{W}}^{\widetilde{F}}$ are
of uniformly bounded variance.
\item [{$\mathbf{W2}$}] $\,\,\hspace{0.9bp}$Almost everywhere on $\Omega$,
$\widehat{g}_{\boldsymbol{W}}^{\widetilde{F}}$ is \textit{differentiable
everywhere} on ${\cal X}\times\textrm{cl}\left\{ \left(m_{l},m_{h}\right)\right\} $.
In other words, it is true that
\begin{equation}
{\cal P}\left(\left\{ \omega\in\Omega\left|\nabla\widehat{g}_{\boldsymbol{W}\left(\omega\right)}^{\widetilde{F}}\text{ exists for all }{\cal X}\times\textrm{cl}\left\{ \left(m_{l},m_{h}\right)\right\} \right.\right\} \right)\equiv1.
\end{equation}
\item [{$\mathbf{W3}$}] $\,\,\hspace{0.9bp}$The \textit{squared} induced
operator $\ell_{2}$-norms of the random subgradient $\underline{\nabla}\widehat{\boldsymbol{h}}_{\boldsymbol{W}}^{\widetilde{F}}$
and the almost everywhere existent random Jacobian function $\nabla\widehat{\boldsymbol{g}}_{\boldsymbol{W}}^{\widetilde{F}}$
are uniformly bounded in expectation.
\item [{$\mathbf{W4}$}] $\,\,\hspace{0.9bp}$The random Jacobian of $\widehat{\boldsymbol{g}}_{\boldsymbol{W}}^{\widetilde{F}}$
is \textit{uniformly Lipschitz} on ${\cal X}\times\textrm{cl}\left\{ \left(m_{l},m_{h}\right)\right\} $,
that is, there exists some constant, say $L<\infty$, such that
\begin{equation}
\left\Vert \nabla\widehat{\boldsymbol{g}}_{\boldsymbol{W}}^{\widetilde{F}}\left(\boldsymbol{x}_{1},y_{1}\right)-\nabla\widehat{\boldsymbol{g}}_{\boldsymbol{W}}^{\widetilde{F}}\left(\boldsymbol{x}_{2},y_{2}\right)\right\Vert _{2}\le L\sqrt{\left\Vert \boldsymbol{x}_{1}-\boldsymbol{x}_{2}\right\Vert _{2}^{2}+\left|y_{1}-y_{2}\right|^{2}},
\end{equation}
for all $\left(\left[\boldsymbol{x}_{1}\,y_{1}\right],\left[\boldsymbol{x}_{2}\,y_{2}\right]\right)\in\left[{\cal X}\times\textrm{cl}\left\{ \left(m_{l},m_{h}\right)\right\} \right]^{2}$,
\textit{almost everywhere on} $\Omega$.
\item [{$\mathbf{W5}$}] $\,\,\hspace{0.9bp}$The expectation function
$\widehat{\boldsymbol{h}}^{\widetilde{F}}$ is Lipschitz on ${\cal X}$.
\item [{$\mathbf{W6}$}] $\,\,\hspace{0.9bp}$The gradient $\nabla\widehat{\varrho}$
of the outer function $\widehat{\varrho}$ is both uniformly bounded
(relative to any $\ell_{p}$-norm) and Lipschitz on $\textrm{cl}\left\{ \left(m_{l},m_{h}\right)\right\} \times\left[\varepsilon^{p},{\cal E}^{p}\right]$.
\end{description}
Conditions ${\bf W1}-{\bf W6}$ match precisely (\citep{Wang2018},
Assumption 2.1), when the latter is applied to the class of risk-averse
problems considered in this paper. In our analysis, we will also impose
the following condition \textit{in addition to} ${\bf W1}-{\bf W6}$,
closely resembling condition ${\bf C4}$ of Assumption \ref{assu:F_AS_Main}.
\begin{description}
\item [{$\mathbf{\widetilde{W}7}$}] $\,\,\hspace{0.9bp}$Whenever $p>1$,
it is true that ${\cal E}<\infty$.
\end{description}
Although condition $\mathbf{\widetilde{W}7}$ is not explicitly considered
in \citep{Wang2018}, it is made here in order to simplify and free
the comparison with our proposed Assumption \ref{assu:F_AS_Main}
from unnecessary technical complications. In effect, considering condition
$\mathbf{\widetilde{W}7}$ together with conditions ${\bf W1}-{\bf W6}$
slightly restricts the class of problems supported by the latter.
Nonetheless, such restriction is by no means that severe. On the other
hand, imposing condition $\mathbf{\widetilde{W}7}$ provides great
analytical flexibility; without it, verification of conditions $\mathbf{W1}$
and $\mathbf{W3}$ within the framework of \citep{Wang2018}, referring
in particular to (\citep{Wang2018}, Assumption 2.1 (iii) \& (iv)),
becomes rather problematic and uninsightful, for reasons very similar
to those justifying condition ${\bf C4}$ as part of Assumption \ref{assu:F_AS_Main}.
The usefulness of condition $\mathbf{\widetilde{W}7}$ in addition
to conditions ${\bf W1}-{\bf W6}$ in the framework of \citep{Wang2018}
is clearly demonstrated in our discussion below. 

Of course, condition $\mathbf{\widetilde{W}7}$ is trivially equivalent
with \textit{almost half of} condition ${\bf C4}$ of Assumption \ref{assu:F_AS_Main};
thus, no further comment is necessary. Amongst all remaining conditions
${\bf W1}-{\bf W6}$, conditions $\mathbf{W5}$ and $\mathbf{W6}$
are the easiest to discuss and may be almost trivially shown to be
automatically satisfied by all problems considered in this paper.
Between the latter, let us strategically consider condition $\mathbf{W6}$
first, which requires that the gradient function
\begin{equation}
\nabla\widehat{\varrho}\left(x,y\right)\equiv\begin{bmatrix}1\\
c\dfrac{1}{p}y^{\frac{1-p}{p}}
\end{bmatrix}
\end{equation}
is uniformly bounded and Lipschitz on $\textrm{cl}\left\{ \left(m_{l},m_{h}\right)\right\} \times\left[\varepsilon^{p},{\cal E}^{p}\right]$.
If $p>1$ (if not, the situation is trivial), in order for $\nabla\widehat{\varrho}$
to be uniformly bounded, we of course need to verify that (any $\ell_{p}$-norm
is fine)
\begin{flalign}
\sup_{\left(x,y\right)\in\textrm{cl}\left\{ \left(m_{l},m_{h}\right)\right\} \times\left[\varepsilon^{p},{\cal E}^{p}\right]}\left\Vert \nabla\widehat{\varrho}\left(x,y\right)\right\Vert _{2} & \equiv\sup_{y\in\left[\varepsilon^{p},{\cal E}^{p}\right]}\sqrt{1+c^{2}\dfrac{1}{p^{2}}y^{\frac{2\left(1-p\right)}{p}}}\nonumber \\
 & \equiv\sqrt{1+c^{2}\dfrac{1}{p^{2}}\sup_{y\in\left[\varepsilon^{p},{\cal E}^{p}\right]}y^{\frac{2\left(1-p\right)}{p}}}<\infty,
\end{flalign}
which, due to the fact that $\left(\cdot\right)^{\frac{2\left(1-p\right)}{p}}$
is a hyperbola, \textit{is} \textit{only possible if} $\varepsilon>0$,
yielding
\begin{equation}
\sup_{y\in\left[\varepsilon^{p},{\cal E}^{p}\right]}y^{\frac{2\left(1-p\right)}{p}}=\dfrac{1}{\varepsilon^{2\left(p-1\right)}}.
\end{equation}
By taking the Jacobian of $\nabla\widehat{\varrho}$, it can be easily
shown that strict positivity of $\varepsilon$ ensures that $\nabla\widehat{\varrho}$
is Lipschitz on $\textrm{cl}\left\{ \left(m_{l},m_{h}\right)\right\} \times\left[\varepsilon^{p},{\cal E}^{p}\right]$,
as well. Consequently, we see that condition $\mathbf{W6}$ is implied
by condition ${\bf C4}$ of Assumption \ref{assu:F_AS_Main}. It is
also relatively easy to show that condition $\mathbf{W6}$ \textit{together
with} $\mathbf{\widetilde{W}7}$ are in fact equivalent to ${\bf C4}$.
As far as condition $\mathbf{W5}$ is concerned, this can be directly
verified exploiting Lipschitz continuity of the function $\mathbb{E}\left\{ F\left(\cdot,\boldsymbol{W}\right)\right\} $
on ${\cal X}$ (see Lemma \ref{prop:EF_LIP}). In the following, we
examine the less obvious, remaining conditions ${\bf W1}-{\bf W4}$,
in greater detail. 

We start with condition ${\bf W1}$. In order for $\widehat{\boldsymbol{g}}_{\boldsymbol{W}}^{\widetilde{F}}$
and $\widehat{\boldsymbol{h}}_{\boldsymbol{W}}^{\widetilde{F}}$ to
be of uniformly bounded variance, it must be true that
\begin{flalign}
\sup_{\boldsymbol{x}\in{\cal X}}\sup_{y\in\textrm{cl}\left\{ \left(m_{l},m_{h}\right)\right\} }\mathbb{E}\left\{ \left\Vert \widehat{\boldsymbol{g}}_{\boldsymbol{W}}^{\widetilde{F}}\left(\boldsymbol{x},y\right)-\widehat{\boldsymbol{g}}^{\widetilde{F}}\left(\boldsymbol{x},y\right)\right\Vert _{2}^{2}\right\}  & <\infty\quad\text{and}\\
\sup_{\boldsymbol{x}\in{\cal X}}\mathbb{E}\left\{ \left\Vert \widehat{\boldsymbol{h}}_{\boldsymbol{W}}^{\widetilde{F}}\left(\boldsymbol{x}\right)-\widehat{\boldsymbol{h}}^{\widetilde{F}}\left(\boldsymbol{x}\right)\right\Vert _{2}^{2}\right\}  & <\infty,
\end{flalign}
respectively. Let $\boldsymbol{W}':\Omega\rightarrow\mathbb{R}^{M}$
be an independent copy of the information variable $\boldsymbol{W}$.
Then, regarding $\widehat{\boldsymbol{g}}_{\boldsymbol{W}}^{\widetilde{F}}$,
we have, for every $\boldsymbol{x}\in{\cal X}$ and for every $y\in\textrm{cl}\left\{ \left(m_{l},m_{h}\right)\right\} $,
\begin{flalign}
 & \hspace{-2pt}\hspace{-2pt}\hspace{-2pt}\hspace{-2pt}\hspace{-2pt}\hspace{-2pt}\hspace{-2pt}\hspace{-2pt}\hspace{-2pt}\mathbb{E}\left\{ \left\Vert \widehat{\boldsymbol{g}}_{\boldsymbol{W}}^{\widetilde{F}}\left(\boldsymbol{x},y\right)-\widehat{\boldsymbol{g}}^{\widetilde{F}}\left(\boldsymbol{x},y\right)\right\Vert _{2}^{2}\right\} \nonumber \\
 & \equiv\mathbb{E}\left\{ \left\Vert \left[0\:\left({\cal R}\left(F\left(\boldsymbol{x},\boldsymbol{W}\right)-y\right)\right)^{p}-\mathbb{E}\left\{ \left({\cal R}\left(F\left(\boldsymbol{x},\boldsymbol{W}\right)-y\right)\right)^{p}\right\} \right]\right\Vert _{2}^{2}\right\} \nonumber \\
 & \equiv\mathbb{E}\left\{ \left(\left({\cal R}\left(F\left(\boldsymbol{x},\boldsymbol{W}\right)-y\right)\right)^{p}-\mathbb{E}\left\{ \left({\cal R}\left(F\left(\boldsymbol{x},\boldsymbol{W}\right)-y\right)\right)^{p}\right\} \right)^{2}\right\} \nonumber \\
 & \equiv\mathbb{E}\left\{ \left(\left({\cal R}\left(F\left(\boldsymbol{x},\boldsymbol{W}\right)-y\right)\right)^{p}-\mathbb{E}\left\{ \left.\left({\cal R}\left(F\left(\boldsymbol{x},\boldsymbol{W}'\right)-y\right)\right)^{p}\right|\boldsymbol{W}\right\} \right)^{2}\right\} \nonumber \\
 & \equiv\mathbb{E}\left\{ \left(\mathbb{E}\left\{ \left({\cal R}\left(F\left(\boldsymbol{x},\boldsymbol{W}\right)-y\right)\right)^{p}-\left.\left({\cal R}\left(F\left(\boldsymbol{x},\boldsymbol{W}'\right)-y\right)\right)^{p}\right|\boldsymbol{W}\right\} \right)^{2}\right\} ,
\end{flalign}
which, by Jensen, yields
\begin{equation}
\mathbb{E}\left\{ \left\Vert \widehat{\boldsymbol{g}}_{\boldsymbol{W}}^{\widetilde{F}}\left(\boldsymbol{x},y\right)-\widehat{\boldsymbol{g}}^{\widetilde{F}}\left(\boldsymbol{x},y\right)\right\Vert _{2}^{2}\right\} \le\mathbb{E}\left\{ \left|\left({\cal R}\left(F\left(\boldsymbol{x},\boldsymbol{W}\right)-y\right)\right)^{p}-\left({\cal R}\left(F\left(\boldsymbol{x},\boldsymbol{W}'\right)-y\right)\right)^{p}\right|^{2}\right\} .
\end{equation}
If $p\equiv1$, nonexpansiveness of ${\cal R}$ (condition ${\bf S4}$)
further implies that
\begin{flalign}
\mathbb{E}\left\{ \left\Vert \widehat{\boldsymbol{g}}_{\boldsymbol{W}}^{\widetilde{F}}\left(\boldsymbol{x},y\right)-\widehat{\boldsymbol{g}}^{\widetilde{F}}\left(\boldsymbol{x},y\right)\right\Vert _{2}^{2}\right\}  & \le\mathbb{E}\left\{ \left|F\left(\boldsymbol{x},\boldsymbol{W}\right)-F\left(\boldsymbol{x},\boldsymbol{W}'\right)\right|^{2}\right\} \nonumber \\
 & \equiv\mathbb{E}\left\{ \left(F\left(\boldsymbol{x},\boldsymbol{W}\right)\right)^{2}+\left(F\left(\boldsymbol{x},\boldsymbol{W}'\right)\right)^{2}-2F\left(\boldsymbol{x},\boldsymbol{W}\right)F\left(\boldsymbol{x},\boldsymbol{W}'\right)\right\} \nonumber \\
 & \equiv2\mathbb{V}\left\{ F\left(\boldsymbol{x},\boldsymbol{W}\right)\right\} ,
\end{flalign}
for all $\boldsymbol{x}\in{\cal X}$ and for all $y\in\textrm{cl}\left\{ \left(m_{l},m_{h}\right)\right\} $.
For $p>1$, Lemma \ref{lem:P_is_LIP} similarly implies that (recall
that we have assumed that condition $\mathbf{\widetilde{W}7}$ is
true)
\begin{equation}
\mathbb{E}\left\{ \left\Vert \widehat{\boldsymbol{g}}_{\boldsymbol{W}}^{\widetilde{F}}\left(\boldsymbol{x},y\right)-\widehat{\boldsymbol{g}}^{\widetilde{F}}\left(\boldsymbol{x},y\right)\right\Vert _{2}^{2}\right\} \le2{\cal E}^{p-1}p\mathbb{V}\left\{ F\left(\boldsymbol{x},\boldsymbol{W}\right)\right\} ,
\end{equation}
for all $\boldsymbol{x}\in{\cal X}$ and for all $y\in\textrm{cl}\left\{ \left(m_{l},m_{h}\right)\right\} $.
Consequently, $F\left(\cdot,\boldsymbol{W}\right)$ being uniformly
in ${\cal Z}_{2}$ is \textit{sufficient}, so that $\widehat{\boldsymbol{g}}_{\boldsymbol{W}}^{\widetilde{F}}$
is also uniformly in ${\cal Z}_{2}$, as required. Now, note that,
for every $\boldsymbol{x}\in{\cal X}$,
\begin{equation}
\mathbb{E}\left\{ \left\Vert \widehat{\boldsymbol{h}}_{\boldsymbol{W}}^{\widetilde{F}}\left(\boldsymbol{x}\right)-\widehat{\boldsymbol{h}}^{\widetilde{F}}\left(\boldsymbol{x}\right)\right\Vert _{2}^{2}\right\} \equiv\mathbb{E}\left\{ \left(F\left(\boldsymbol{x},\boldsymbol{W}\right)-\mathbb{E}\left\{ F\left(\boldsymbol{x},\boldsymbol{W}\right)\right\} \right)^{2}\right\} \equiv\mathbb{V}\left\{ F\left(\boldsymbol{x},\boldsymbol{W}\right)\right\} ,
\end{equation}
and thus $F\left(\cdot,\boldsymbol{W}\right)$ being uniformly in
${\cal Z}_{2}$ is \textit{equivalent} to $\widehat{\boldsymbol{h}}_{\boldsymbol{W}}^{\widetilde{F}}$
being uniformly in ${\cal Z}_{2}$. Apparently, condition ${\bf W1}$
\textit{implies} condition ${\bf C2}$ of Assumption \ref{assu:F_AS_Main},
which directly requires that $F\left(\cdot,\boldsymbol{W}\right)$
is uniformly in ${\cal Z}_{2}$, in turn implying condition ${\bf W1}$.
Therefore, conditions ${\bf C2}$ and ${\bf W1}$ are \textit{equivalent}.

Second, we examine the consequences of assuming everywhere differentiability
of $\widehat{\boldsymbol{g}}_{\boldsymbol{W}}^{\widetilde{F}}$ primarily
on the smoothness on the risk regularizer ${\cal R}$, but also that
of the random cost function $F\left(\cdot,\boldsymbol{W}\right)$.
Suppose that there exists a measurable set $\widehat{\Omega}\subseteq\Omega$,
with ${\cal P}\left(\widehat{\Omega}\right)\equiv1$, such that, for
all $\omega\in\widehat{\Omega}$, $\widehat{\boldsymbol{g}}_{\boldsymbol{W}\left(\omega\right)}^{\widetilde{F}}$
is differentiable everywhere on ${\cal X}\times\textrm{cl}\left\{ \left(m_{l},m_{h}\right)\right\} $,
as in condition ${\bf W2}$. Without loss of generality, we can take
$\widehat{\Omega}\equiv\Omega_{E}$. Then, for every $\omega\in\widehat{\Omega}$
and for every $\left(\boldsymbol{x},y\right)\in{\cal X}\times\textrm{cl}\left\{ \left(m_{l},m_{h}\right)\right\} $,
and due to convexity (see also Proof of Lemma \ref{lem:Sub_Grad}
in Section \ref{subsec:MUS-1-Direct-Derivation}), the (random) Jacobian
of $\widehat{\boldsymbol{g}}_{\boldsymbol{W}}^{\widetilde{F}}$ may
be expressed as\renewcommand{\arraystretch}{1.2}
\begin{flalign}
\nabla\widehat{\boldsymbol{g}}_{\boldsymbol{W}}^{\widetilde{F}}\left(\boldsymbol{x},y\right) & =\begin{bmatrix}{\bf 0}_{N} & \nabla_{\boldsymbol{x}}\left[\left({\cal R}\left(F\left(\boldsymbol{x},\boldsymbol{W}\right)-y\right)\right)^{p}\right]\\
1 & \nabla_{y}\left[\left({\cal R}\left(F\left(\boldsymbol{x},\boldsymbol{W}\right)-y\right)\right)^{p}\right]
\end{bmatrix}\nonumber \\
 & =\begin{bmatrix}{\bf 0}_{N} & p\left({\cal R}\left(F\left(\boldsymbol{x},\boldsymbol{W}\right)-y\right)\right)^{p-1}\underline{\nabla}{\cal R}\left(F\left(\boldsymbol{x},\hspace{-1pt}\boldsymbol{W}\right)-y\right)\underline{\nabla}F\left(\boldsymbol{x},\boldsymbol{W}\right)\\
1 & -p\left({\cal R}\left(F\left(\boldsymbol{x},\boldsymbol{W}\right)-y\right)\right)^{p-1}\underline{\nabla}{\cal R}\left(F\left(\boldsymbol{x},\hspace{-1pt}\boldsymbol{W}\right)-y\right)
\end{bmatrix}\nonumber \\
 & \equiv\begin{bmatrix}{\bf 0}_{N} & \left.\underline{\nabla}\left({\cal R}\left(z\right)\right)^{p}\right|_{z\equiv F\left(\boldsymbol{x},\boldsymbol{W}\right)-y}\times\underline{\nabla}F\left(\boldsymbol{x},\boldsymbol{W}\right)\\
1 & -\left.\underline{\nabla}\left({\cal R}\left(z\right)\right)^{p}\right|_{z\equiv F\left(\boldsymbol{x},\boldsymbol{W}\right)-y}
\end{bmatrix},
\end{flalign}
\renewcommand{\arraystretch}{1}where we have assumed that, although
$\nabla\widehat{\boldsymbol{g}}_{\boldsymbol{W}}^{\widetilde{F}}$
exists, the convex functions $F\left(\cdot,\boldsymbol{W}\right)$
and ${\cal R}$ may \textit{not} be differentiable \textit{everywhere}
on ${\cal X}$ and $\mathfrak{R}^{\widetilde{F}}$ (the effective
domain of ${\cal R}\left(F\left(\cdot,\boldsymbol{W}\right)-\left(\bullet\right)\right)$),
respectively. Then, the following proposition is true.
\begin{prop}
\textbf{\textup{(Masks of Nondifferentiability)\label{prop:NonDiff}}}
Assume that, for some fixed value of $p\in\left[1,\infty\right)$,
condition ${\bf W2}$ is satisfied. Then, the following statements
are necessarily true:
\begin{enumerate}
\item The $p$-th power of ${\cal R}$ is differentiable everywhere on $\mathfrak{R}^{\widetilde{F}}$.
\item Either:
\begin{enumerate}
\item \uline{Everywhere on \mbox{$\widehat{\Omega}$}}, the random cost
function $F\left(\cdot,\boldsymbol{W}\right)$ is differentiable everywhere
on ${\cal X}$,

\hspace{-23.7bp}Or:
\item If, \uline{for at least one \mbox{$\omega\in\widehat{\Omega}$}},
$F\left(\cdot,\boldsymbol{W}\right)$ is nondifferentiable at some
$\boldsymbol{x}\in{\cal X}$, it must be true that
\begin{equation}
{\cal R}\left(z\right)\equiv C_{ND},\quad\forall z\in\bigcup_{\omega\in\widehat{\Omega}_{ND}}\mathrm{cl}\left\{ \left(-\infty,F_{ND}^{*}\left(\omega\right)-m_{l}\right)\right\} ,\label{eq:COMP_2-1}
\end{equation}
where $0\le C_{ND}<\infty$ is some constant, the function $F_{ND}^{*}:\widehat{\Omega}\rightarrow\left[-\infty,\infty\right]$
is defined as
\begin{equation}
F_{ND}^{*}\left(\omega\right)\triangleq\sup\hspace{-1pt}\left\{ \hspace{-1pt}F\left(\boldsymbol{x},\boldsymbol{W}\left(\omega\right)\right)\in\mathbb{R}\left|F\left(\cdot,\boldsymbol{W}\left(\omega\right)\right)\text{ is nondifferentiable at }\boldsymbol{x}\right.\hspace{-1pt}\right\} \hspace{-2pt},\;\omega\in\widehat{\Omega},
\end{equation}
and the set of elementary events $\widehat{\Omega}_{ND}\subseteq\widehat{\Omega}$
is defined as
\begin{equation}
\widehat{\Omega}_{ND}\triangleq\left\{ \hspace{-1pt}\left.\omega\in\widehat{\Omega}\right|F\left(\cdot,\boldsymbol{W}\left(\omega\right)\right)\text{ is nondifferentiable at some }\boldsymbol{x}\right\} .\label{eq:Omega_ND}
\end{equation}
 In other words, ${\cal R}$ must be partially constant, as prescribed
by (\ref{eq:COMP_2-1}).
\end{enumerate}
\end{enumerate}
\end{prop}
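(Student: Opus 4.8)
The plan is to read everything directly off the displayed formula for the random Jacobian $\nabla\widehat{\boldsymbol{g}}_{\boldsymbol{W}}^{\widetilde{F}}$, treating its $y$-entry and its $\boldsymbol{x}$-entry separately, and to combine this with the convexity and monotonicity of ${\cal R}$. The first observation is that, since ${\cal R}$ is convex and nonnegative ($\mathbf{S1}$, $\mathbf{S2}$) and $t\mapsto t^{p}$ is convex and nondecreasing on $\mathbb{R}_{+}$ for $p\ge1$, the scalar map $h\triangleq({\cal R})^{p}$ is itself convex; hence its one-sided derivatives exist everywhere, it is Lipschitz on compact subsets of $\mathfrak{R}^{\widetilde{F}}$, and its set of nondifferentiability points is at most countable. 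Condition $\mathbf{W2}$ states that, for every $\omega$ in a full-measure set $\widehat{\Omega}\equiv\Omega_{E}$, the map $(\boldsymbol{x},y)\mapsto\widehat{\boldsymbol{g}}_{\boldsymbol{W}(\omega)}^{\widetilde{F}}(\boldsymbol{x},y)$ is differentiable on all of ${\cal X}\times\mathrm{cl}\{(m_{l},m_{h})\}$. Joint differentiability implies, in particular, differentiability of the slices $y\mapsto h\big(F(\boldsymbol{x},\boldsymbol{W}(\omega))-y\big)$ and $\boldsymbol{x}\mapsto h\big(F(\boldsymbol{x},\boldsymbol{W}(\omega))-y\big)$, whose derivatives are, by the displayed Jacobian, $-h'(z)$ and $h'(z)\,\underline{\nabla}F(\boldsymbol{x},\boldsymbol{W}(\omega))$ respectively, evaluated at $z\equiv F(\boldsymbol{x},\boldsymbol{W}(\omega))-y$.

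For statement~1 I would use differentiability in the $y$-slot alone. Fixing such an $\omega$ and any $\boldsymbol{x}\in{\cal X}$, differentiability of $y\mapsto h\big(F(\boldsymbol{x},\boldsymbol{W}(\omega))-y\big)$ on $[m_{l},m_{h}]$ forces $h$ to be differentiable at every point of the interval $[\,F(\boldsymbol{x},\boldsymbol{W}(\omega))-m_{h},\,F(\boldsymbol{x},\boldsymbol{W}(\omega))-m_{l}\,]$. Taking the union of these intervals over all $\boldsymbol{x}\in{\cal X}$ and all $\omega\in\widehat{\Omega}$, and using that $m_{l}$ and $m_{h}$ are the infimum and supremum of $F$ over ${\cal X}\times\Omega_{E}$, the achievable values $z=F-y$ exhaust the open interval $(m_{l}-m_{h},m_{h}-m_{l})$; continuity of the convex map $h$ together with its monotone one-sided derivatives then upgrades differentiability to the closed set $\mathfrak{R}^{\widetilde{F}}$. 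Equivalently, were $h$ nondifferentiable at some interior $z^{*}$, one could realize $z^{*}=F(\boldsymbol{x},\boldsymbol{W}(\omega))-y$ with a feasible triple $(\boldsymbol{x},y,\omega)$ and contradict $\mathbf{W2}$.

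For statement~2 I would turn to the $\boldsymbol{x}$-slot. The crux is an elementary claim I would prove separately: if $F(\cdot,\boldsymbol{W}(\omega))$ is convex but nondifferentiable at $\boldsymbol{x}$ and $h$ is differentiable at $w_{0}\equiv F(\boldsymbol{x},\boldsymbol{W}(\omega))-y$, then $\boldsymbol{x}\mapsto h\big(F(\boldsymbol{x},\boldsymbol{W}(\omega))-y\big)$ is differentiable at $\boldsymbol{x}$ \emph{if and only if} $h'(w_{0})=0$. The ``if'' direction holds because $h(w)=h(w_{0})+o(|w-w_{0}|)$ absorbs the Lipschitz increments of $F$, yielding a zero gradient; the ``only if'' direction holds because $h'(w_{0})\neq0$ makes $h$ strictly monotone near $w_{0}$ (its one-sided slopes increase up to $h'(w_{0})>0$), hence locally invertible with a derivative at $h(w_{0})$, so differentiability of the composition would, by a one-point inverse-function argument, force $F(\cdot,\boldsymbol{W}(\omega))$ to be differentiable at $\boldsymbol{x}$ --- a contradiction. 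Since $\mathbf{W2}$ requires differentiability for \emph{all} $y\in[m_{l},m_{h}]$, this gives $h'\equiv0$ on $[\,v-m_{h},\,v-m_{l}\,]$ with $v\triangleq F(\boldsymbol{x},\boldsymbol{W}(\omega))$, so $h$, and therefore ${\cal R}$ itself (as $t\mapsto t^{1/p}$ is injective on $\mathbb{R}_{+}$), is constant on that interval.

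Finally I would propagate this flatness and identify a single constant. The neat point is that a convex \emph{and} nondecreasing function constant on $[\,v-m_{h},\,v-m_{l}\,]$ must have vanishing subgradient on all of $(-\infty,\,v-m_{l}]$: convexity makes the slopes nondecreasing, so they are $\le0$ to the left of the flat stretch, while $\mathbf{S3}$ forces them $\ge0$, hence $0$. Thus ${\cal R}\equiv C$ on the whole half-line $(-\infty,\,v-m_{l}]$. Because every kink value satisfies $v\ge m_{l}$, each such half-line contains $(-\infty,0]$, so all the constants coincide on this common overlap and equal a single value $C_{ND}\in[0,\infty)$. Taking the union over every kink location of every $\omega\in\widehat{\Omega}_{ND}$, letting the cost values run up to $F_{ND}^{*}(\omega)$, and closing to absorb a possibly non-attained supremum, yields exactly (\ref{eq:COMP_2-1}); the dichotomy 2(a)/2(b) is then the trivial split according to whether $\widehat{\Omega}_{ND}$ is empty. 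The main obstacle is the $\boldsymbol{x}$-slot equivalence: one must show carefully that a nonzero $h'(w_{0})$ genuinely transmits the corner of $F$ through the merely convex, possibly nonsmooth outer map $h$, which is why I would argue via local strict monotonicity of $h$ and a one-point inverse-function argument rather than a naive chain rule. The boundary behaviour of $\mathfrak{R}^{\widetilde{F}}$ in statement~1 and the non-attainment of suprema are secondary nuisances, both dispatched by continuity of $h$.
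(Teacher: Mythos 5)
Your proof is correct, and its skeleton is the same as the paper's: statement 1 is extracted from the $y$-slot of $\mathbf{W2}$, statement 2 from the $\boldsymbol{x}$-slot at kink points of $F\left(\cdot,\boldsymbol{W}\left(\omega\right)\right)$, constancy of ${\cal R}$ on $\left[v-m_{h},v-m_{l}\right]$ is propagated to the half-line $\left(-\infty,v-m_{l}\right]$ by playing convexity (nondecreasing slopes) against monotonicity $\mathbf{S3}$, and the half-lines are assembled over kinks and over $\omega\in\widehat{\Omega}_{ND}$ into (\ref{eq:COMP_2-1}). Where you genuinely add something is at the central dichotomy: the paper merely asserts (``it must be the case that'') that differentiability of $\boldsymbol{x}\mapsto\left({\cal R}\left(F\left(\boldsymbol{x},\boldsymbol{W}\left(\omega\right)\right)-y\right)\right)^{p}$ at a nondifferentiability point of $F$ forces the stationary-point condition, whereas you prove it: the ``if'' direction by absorbing the locally Lipschitz increments of the finite convex $F$ into $h\left(w\right)=h\left(w_{0}\right)+o\left(\left|w-w_{0}\right|\right)$, and the ``only if'' direction by local strict monotonicity of $h=\left({\cal R}\right)^{p}$ near $w_{0}$ (which indeed holds, via left-continuity of the nondecreasing one-sided derivatives of a convex function) together with a one-point inverse-function argument transmitting differentiability back to $F$. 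A still shorter alternative, closer in spirit to what the paper leaves implicit, is to compare one-sided directional derivatives along a kink direction $\boldsymbol{d}$: differentiability of the composition gives $h'\left(w_{0}\right)\left[F'\left(\boldsymbol{x};\boldsymbol{d}\right)+F'\left(\boldsymbol{x};-\boldsymbol{d}\right)\right]=0$, and the bracket is strictly positive at a kink. You also make explicit a point the paper glosses over, namely that the constants obtained for different kinks and different $\omega$ coincide, since every half-line $\left(-\infty,F\left(\boldsymbol{x}_{0},\boldsymbol{W}\left(\omega\right)\right)-m_{l}\right]$ contains $\left(-\infty,0\right]$. One caveat, which you share with the paper rather than resolve: when $m_{l}$, $m_{h}$ are not attained, $\mathbf{W2}$ only forces differentiability of $\left({\cal R}\right)^{p}$ on the open interval $\left(m_{l}-m_{h},m_{h}-m_{l}\right)$, and a convex function differentiable on an open interval can still have a corner at an endpoint of its closure; so ``dispatched by continuity'' is accurate for the closure in (\ref{eq:COMP_2-1}) (constancy does extend by continuity) but not for differentiability at the endpoints of $\mathfrak{R}^{\widetilde{F}}$ in statement 1. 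This is a defect of the proposition as stated, not of your argument relative to the paper's.
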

\begin{proof}[Proof of Proposition \ref{prop:NonDiff}]
First, \textit{existence} of the gradient $\nabla_{y}\left[\left({\cal R}\left(F\left(\boldsymbol{x},\boldsymbol{W}\right)-y\right)\right)^{p}\right]$
\textit{for all} $\left(\boldsymbol{x},y\right)\in{\cal X}\times\textrm{cl}\left\{ \left(m_{l},m_{h}\right)\right\} $,
which is true by our hypothesis, necessarily implies that the function
$\left({\cal R}\left(\cdot\right)\right)^{p}$ is \textit{differentiable
everywhere on} $\mathfrak{R}^{\widetilde{F}}$, for the particular
choice of $p\in\left[1,\infty\right)$. The case of the gradient $\nabla_{\boldsymbol{x}}\left[\left({\cal R}\left(F\left(\boldsymbol{x},\boldsymbol{W}\right)-y\right)\right)^{p}\right]$
is slightly more complicated. In order for $\nabla_{\boldsymbol{x}}\left[\left({\cal R}\left(F\left(\boldsymbol{x},\boldsymbol{W}\right)-y\right)\right)^{p}\right]$
to exist, it must be the case that, for each fixed $\boldsymbol{x}_{0}\in{\cal X}$,
either $F\left(\cdot,\boldsymbol{W}\right)$ is differentiable at
$\boldsymbol{x}_{0}$, or whenever $\boldsymbol{x}_{0}$ is in the
restriction of the set of nondifferentiability points of $F\left(\cdot,\boldsymbol{W}\right)$
to ${\cal X}$, say ${\cal C}_{\boldsymbol{W}\left(\cdot\right)}^{F}\left.\hspace{-2pt}\hspace{-2pt}\vphantom{{\cal C}}\right|_{{\cal X}}:\widehat{\Omega}\rightrightarrows\mathbb{R}^{N}$,
defined as
\begin{equation}
{\cal C}_{\boldsymbol{W}\left(\omega\right)}^{F}\left.\hspace{-2pt}\hspace{-2pt}\vphantom{{\cal C}}\right|_{{\cal X}}\triangleq\left\{ \hspace{-1pt}\left.\boldsymbol{x}\in{\cal X}\right|F\left(\cdot,\boldsymbol{W}\left(\omega\right)\right)\text{ is nondifferentiable at }\boldsymbol{x}\right\} ,\quad\omega\in\widehat{\Omega},
\end{equation}
the stationary point condition 
\begin{equation}
\nabla_{y}\left[\left({\cal R}\left(F\left(\boldsymbol{x}_{0},\boldsymbol{W}\right)-y\right)\right)^{p}\right]\equiv0
\end{equation}
is satisfied, \textit{for all} $y\in\textrm{cl}\left\{ \left(m_{l},m_{h}\right)\right\} $.
Equivalently, we demand that
\begin{equation}
\nabla\left[\left({\cal R}\left(z\right)\right)^{p}\right]\equiv0,
\end{equation}
\textit{for all} $z\in\textrm{cl}\left\{ \left(F\left(\boldsymbol{x}_{0},\boldsymbol{W}\right)-m_{h},F\left(\boldsymbol{x}_{0},\boldsymbol{W}\right)-m_{l}\right)\right\} $.
Due to convexity, nonnegativity, and monotonicity of ${\cal R}$ (conditions
${\bf S1}$, ${\bf S2}$ and ${\bf S3}$), it is not hard to see that,
for every \textit{qualifying} $\omega\in\widehat{\Omega}$ and for
every $\boldsymbol{x}_{0}\in{\cal C}_{\boldsymbol{W}\left(\omega\right)}^{F}\left.\hspace{-2pt}\hspace{-2pt}\vphantom{{\cal C}}\right|_{{\cal X}}$,
${\cal R}$ must partially be of the form 
\begin{equation}
{\cal R}\left(z\right)\equiv C_{ND},\quad\forall z\in\textrm{cl}\left\{ \left(-\infty,F\left(\boldsymbol{x}_{0},\boldsymbol{W}\left(\omega\right)\right)-m_{l}\right)\right\} ,
\end{equation}
where $0\le C_{ND}<\infty$ is some constant. Working in the same
fashion, utilizing the fact that the multifunction ${\cal C}_{\boldsymbol{W}\left(\cdot\right)}^{F}\left.\hspace{-2pt}\hspace{-2pt}\vphantom{{\cal C}}\right|_{{\cal X}}$
is countable-valued and by defining the function $F_{ND}^{*}:\widehat{\Omega}\rightarrow\left[-\infty,\infty\right]$
as
\begin{equation}
F_{ND}^{*}\left(\omega\right)\triangleq\sup\left\{ \hspace{-1pt}F\left(\boldsymbol{x},\boldsymbol{W}\left(\omega\right)\right)\in\mathbb{R}\left|\boldsymbol{x}\in{\cal C}_{\boldsymbol{W}\left(\omega\right)}^{F}\left.\hspace{-2pt}\hspace{-2pt}\vphantom{{\cal C}}\right|_{{\cal X}}\right.\right\} ,\quad\omega\in\widehat{\Omega},
\end{equation}
we may also obtain the uniform requirement
\begin{equation}
{\cal R}\left(z\right)\equiv C_{ND},\quad\forall z\in\bigcup_{\omega\in\widehat{\Omega}_{ND}}\mathrm{cl}\left\{ \left(-\infty,F_{ND}^{*}\left(\omega\right)-m_{l}\right)\right\} ,\label{eq:COMP_2}
\end{equation}
where the set $\widehat{\Omega}_{ND}\subseteq\widehat{\Omega}$ is
defined as in (\ref{eq:Omega_ND}). Therefore, if, for at least one
$\omega\in\widehat{\Omega}$, $F\left(\cdot,\boldsymbol{W}\right)$
is nondifferentiable at some $\boldsymbol{x}\in{\cal X}$, ${\cal R}$
must be \textit{partially constant}, as prescribed by (\ref{eq:COMP_2}).
\end{proof}
As implied by Proposition \ref{prop:NonDiff}, condition ${\bf W2}$
\textit{always} requires differentiability of the $p$-th power of
${\cal R}$, everywhere on $\mathfrak{R}^{\widetilde{F}}$. Additionally,
any potential nonsmoothness of $F\left(\cdot,\boldsymbol{W}\right)$
\textit{always} imposes further requirements on the structure of ${\cal R}$,
significantly restricting the allowable choices in regard to the latter.
On the contrary, this is not the case as far as Assumption \ref{assu:F_AS_Main}
is concerned, regarding the choice of ${\cal R}$. Specifically, there
are a lot of cases where, not only ${\cal R}$ and/or its powers are
allowed to exhibit corner points, but also $F\left(\cdot,\boldsymbol{W}\right)$
may be nonsmooth, as well. To show that indeed this is the case, let
us consider the following, simple example.

Let $p\equiv1$ (for simplicity), let ${\cal R}$ be \textit{any risk
regularizer}, and consider the objective function
\begin{equation}
F\left(\boldsymbol{x},\boldsymbol{W}\right)\equiv F\left(x,W\right)\triangleq\left|x-W\right|,
\end{equation}
where $W\sim{\cal N}\left(0,1\right)$. Although nonsmooth, the random
cost function $F\left(\cdot,W\right)$ is differentiable almost everywhere
relative to ${\cal P}$, at each fixed $x\in\mathbb{R}$, thus satisfying
condition ${\bf P1}$. It may also easily argued that condition ${\bf P2}$
is also satisfied, as well. Then, we are interested in the scalar-decision,
risk-averse stochastic program
\begin{equation}
\begin{array}{rl}
\underset{x}{\mathrm{minimize}} & \mathbb{E}\hspace{-1pt}\left\{ \left|x-W\right|\right\} +c\mathbb{E}\hspace{-1pt}\left\{ \vphantom{\int^{\int}}{\cal R}\left(\left|x-W\right|-\mathbb{E}\left\{ \left|x-W\right|\right\} \right)\right\} \\
\mathrm{subject\,to} & x\in{\cal X}
\end{array},\label{eq:Example_2}
\end{equation}
for some nonempty, non-singleton, convex and compact set ${\cal X}$.
Note that, for every choice of ${\cal X}$, it is true that
\begin{equation}
m_{l}\equiv0\quad\text{and}\quad m_{h}\equiv+\infty,
\end{equation}
since $W$ is unbounded. Thus, $\mathrm{cl}\left\{ \left(m_{l},m_{h}\right)\right\} \equiv\left[0,\infty\right)$.
The random subdifferential multifunction of $F\left(\cdot,W\right)$
may be expressed as
\begin{equation}
\partial F\left(x,W\right)=\begin{cases}
\left\{ 1\right\} , & \text{if }x>W\\
\left[-1,1\right], & \text{if }x\equiv W\\
\left\{ -1\right\} , & \text{if }x<W
\end{cases},
\end{equation}
and, thus, every subgradient of $F\left(\cdot,W\right)$ has the form
\begin{equation}
\underline{\nabla}F\left(x,W\right)=\mathds{1}_{\left\{ x>W\right\} }-\mathds{1}_{\left\{ x<W\right\} }+\delta\mathds{1}_{\left\{ x\equiv W\right\} },
\end{equation}
where $\delta\in\left[-1,1\right]$. Consequently, it is true that
$\left|\underline{\nabla}F\left(\cdot,W\right)\right|\le1$ \textit{uniformly
on} ${\cal X}\times\Omega$, and condition ${\bf C1}$ of Assumption
\ref{assu:F_AS_Main} is satisfied with $P\equiv\infty$. Also, due
to $W$ being integrable and ${\cal X}$ being compact, it is easy
to see that condition ${\bf C2}$ of Assumption \ref{assu:F_AS_Main}
is satisfied, as well. Let us now study condition ${\bf C3}$, related
to the choice of ${\cal R}$. For each $x\in{\cal X}$, the cost $F\left(x,W\right)$
follows a \textit{folded normal distribution} with scale $x$ and
location $1$, since $x-W\sim{\cal N}\left(x,1\right)$. On $\left[0,\infty\right)$
and for fixed $x\in{\cal X}$, the cdf of $F\left(x,W\right)$ is
given by
\begin{flalign}
F_{W}^{x}\left(y\right) & =\Phi\left(y+x\right)+\Phi\left(y-x\right)-1.
\end{flalign}
Hence, $F_{W}^{x}$ is (uniformly) Lipschitz on $\left[0,\infty\right)$,
since the Gaussian cdf $\Phi$ is Lipschitz on $\mathbb{R}$. Consequently,
by choosing $\underline{\nabla}{\cal R}\equiv{\cal R}'_{+}$, case
${\bf \left(2\right)}$ of Proposition \ref{prop:C3_Valid} implies
that the choice of ${\cal R}$ can be \textit{completely unconstrained}.

Now, let us see if problem (\ref{eq:Example_2}) is supported within
the framework set by the necessary conditions of Proposition \ref{prop:NonDiff}.
First, case ${\bf \left(1\right)}$ of Proposition \ref{prop:NonDiff}
directly implies that ${\cal R}$ \textit{must be differentiable}
\textit{on} $\mathfrak{R}^{\widetilde{F}}$. Second, case ${\bf \left(2\right)}$
of Proposition \ref{prop:NonDiff} implies that, since almost everywhere
on $\Omega,$ $F\left(\cdot,W\right)$ is not differentiable everywhere
on ${\cal X}$, \textit{${\cal R}$ must be partially constant, in
addition to the differentiability requirement}. In particular, for
every choice of a \textit{certain} event $\widehat{\Omega}$, there
exists a pair $\left(x_{0},\omega_{0}\right)\in{\cal X}\times\widehat{\Omega}$,
such that $W\left(\omega_{0}\right)\equiv x_{0}$, implying the existence
of \textit{at least} \textit{one }point of nondifferentiability of
$F\left(\cdot,W\right)$ on ${\cal X}$ (however happening with ${\cal P}$-measure
zero, since $W$ is Gaussian). This fact may be shown by the following
simple argument. Let $\widehat{\Omega}\subseteq\Omega$ be \textit{any}
event such that ${\cal P}\left(\widehat{\Omega}\right)\equiv1$, and
consider the preimage
\begin{equation}
W^{-1}\left({\cal X}\right)\triangleq\left\{ \omega\in\Omega\left|W\left(\omega\right)\in{\cal X}\right.\right\} \in\mathscr{F}.
\end{equation}
Of course, we have ${\cal P}\left(W^{-1}\left({\cal X}\right)\right)>0$.
Suppose that $\widehat{\Omega}\bigcap W^{-1}\left({\cal X}\right)$
is empty, implying that\linebreak{}
${\cal P}\left(\widehat{\Omega}\bigcap W^{-1}\left({\cal X}\right)\right)\equiv0$.
But then it would be true that
\begin{flalign}
{\cal P}\left(\widehat{\Omega}\bigcup W^{-1}\left({\cal X}\right)\right) & \equiv{\cal P}\left(\widehat{\Omega}\right)+{\cal P}\left(W^{-1}\left({\cal X}\right)\right)-{\cal P}\left(\widehat{\Omega}\bigcap W^{-1}\left({\cal X}\right)\right)\nonumber \\
 & \equiv1+{\cal P}\left(W^{-1}\left({\cal X}\right)\right)>1,
\end{flalign}
which is, of course, absurd. Therefore, the events $\widehat{\Omega}$
and $W^{-1}\left({\cal X}\right)$ must necessarily have at least
one element in common. Call this element $\omega_{0}$. Since $\omega_{0}\in W^{-1}\left({\cal X}\right)$,
there must exist some $x_{0}$ in ${\cal X}$, such that $W\left(\omega_{0}\right)\equiv x_{0}$.
Now, for every possible choice of $\widehat{\Omega}$, it is trivially
true that, if $\omega\in\widehat{\Omega}$ and $F\left(\cdot,W\left(\omega\right)\right)$
is nondifferentiable at some $x\in{\cal X}$, then $W\left(\omega\right)\equiv x$
and $F\left(x,W\left(\omega\right)\right)\equiv0$. Consequently,
with the notation of Proposition \ref{prop:NonDiff}, it follows that,
\textit{regardless of} the choice of the nonempty feasible set ${\cal X}$,
\begin{flalign}
F_{ND}^{*}\left(\omega\right) & \equiv0,\quad\forall\omega\in\widehat{\Omega},
\end{flalign}
implying that ${\cal R}$ \textit{must be constant on} $\left(-\infty,0\right]$.
This yields a major limitation of condition ${\bf W2}$. We should
also mention that, for this very simple example, even the choice ${\cal R}\left(\cdot\right)\equiv\left(\cdot\right)_{+}$,
which gives the mean-upper-semideviation risk measure, is excluded
if condition ${\bf W2}$ is imposed.

Next, let us consider condition ${\bf W3}$. In this case, the situation
is very similar to condition ${\bf W1}$. Consider the \textit{Frobenius
norm} of $\nabla\widehat{\boldsymbol{g}}_{\boldsymbol{W}}^{\widetilde{F}}$
and $\underline{\nabla}\widehat{\boldsymbol{h}}_{\boldsymbol{W}}^{\widetilde{F}}$,
respectively. Regarding the Jacobian $\nabla\widehat{\boldsymbol{g}}_{\boldsymbol{W}}^{\widetilde{F}}$,
it is true that
\begin{flalign}
\left\Vert \nabla\widehat{\boldsymbol{g}}_{\boldsymbol{W}}^{\widetilde{F}}\left(\boldsymbol{x},y\right)\right\Vert _{F}^{2} & \equiv\left\Vert \hspace{-2pt}\begin{bmatrix}{\bf 0}_{N} & \left.\nabla\left({\cal R}\left(z\right)\right)^{p}\right|_{z\equiv F\left(\boldsymbol{x},\boldsymbol{W}\right)-y}\underline{\nabla}F\left(\boldsymbol{x},\boldsymbol{W}\right)\\
1 & \left.\nabla\left({\cal R}\left(z\right)\right)^{p}\right|_{z\equiv F\left(\boldsymbol{x},\boldsymbol{W}\right)-y}
\end{bmatrix}\hspace{-2pt}\right\Vert _{F}^{2}\nonumber \\
 & \equiv1+\left(\left.\nabla\left({\cal R}\left(z\right)\right)^{p}\right|_{z\equiv F\left(\boldsymbol{x},\boldsymbol{W}\right)-y}\right)^{2}+\left(\left.\nabla\left({\cal R}\left(z\right)\right)^{p}\right|_{z\equiv F\left(\boldsymbol{x},\boldsymbol{W}\right)-y}\right)^{2}\left\Vert \underline{\nabla}F\left(\boldsymbol{x},\boldsymbol{W}\right)\right\Vert _{2}^{2}\nonumber \\
 & \le\begin{cases}
2+\left\Vert \underline{\nabla}F\left(\boldsymbol{x},\boldsymbol{W}\right)\right\Vert _{2}^{2}, & \text{if }p\equiv1\\
1+p^{2}{\cal E}^{2\left(p-1\right)}\left(1+\left\Vert \underline{\nabla}F\left(\boldsymbol{x},\boldsymbol{W}\right)\right\Vert _{2}^{2}\right), & \text{if }p>1
\end{cases},
\end{flalign}
for all $\boldsymbol{x}\in{\cal X}$ and for all $y\in\textrm{cl}\left\{ \left(m_{l},m_{h}\right)\right\} $,
as a result of Lemma \ref{lem:Iterate_Boundedness}, implying that,
as long as $\left\Vert \underline{\nabla}F\left(\boldsymbol{x},\boldsymbol{W}\right)\right\Vert _{2}$
is uniformly in ${\cal Z}_{2}$, $\left\Vert \nabla\widehat{\boldsymbol{g}}_{\boldsymbol{W}}^{\widetilde{F}}\left(\boldsymbol{x},y\right)\right\Vert _{F}$
must be uniformly in ${\cal Z}_{2}$. Since $\nabla\widehat{\boldsymbol{g}}_{\boldsymbol{W}}^{\widetilde{F}}$
is of rank at most two, we also have
\begin{equation}
\left\Vert \nabla\widehat{\boldsymbol{g}}_{\boldsymbol{W}}^{\widetilde{F}}\left(\boldsymbol{x},y\right)\right\Vert _{2}^{2}\le\left\Vert \nabla\widehat{\boldsymbol{g}}_{\boldsymbol{W}}^{\widetilde{F}}\left(\boldsymbol{x},y\right)\right\Vert _{F}^{2}\le2\left\Vert \nabla\widehat{\boldsymbol{g}}_{\boldsymbol{W}}^{\widetilde{F}}\left(\boldsymbol{x},y\right)\right\Vert _{2}^{2},
\end{equation}
which means that, if $\left\Vert \nabla\widehat{\boldsymbol{g}}_{\boldsymbol{W}}^{\widetilde{F}}\left(\boldsymbol{x},y\right)\right\Vert _{F}$
is uniformly in ${\cal Z}_{2}$, so is the spectral norm $\left\Vert \nabla\widehat{\boldsymbol{g}}_{\boldsymbol{W}}^{\widetilde{F}}\left(\boldsymbol{x},y\right)\right\Vert _{2}$
(and conversely). Similarly, the Frobenius norm of $\underline{\nabla}\widehat{\boldsymbol{h}}_{\boldsymbol{W}}^{\widetilde{F}}$
may be explicitly expressed as
\begin{flalign}
\left\Vert \underline{\nabla}\widehat{\boldsymbol{h}}_{\boldsymbol{W}}^{\widetilde{F}}\left(\boldsymbol{x}\right)\right\Vert _{F}^{2} & \equiv\left\Vert \hspace{-2pt}\left[\hspace{-2pt}\hspace{-2pt}\begin{array}{c|c}
\\
\boldsymbol{I}_{N}\hspace{-2pt} & \underline{\nabla}F\left(\boldsymbol{x},\boldsymbol{W}\right)\\
\\
\end{array}\hspace{-2pt}\hspace{-2pt}\right]\hspace{-2pt}\right\Vert _{F}^{2}=N+\left\Vert \underline{\nabla}F\left(\boldsymbol{x},\boldsymbol{W}\right)\right\Vert _{2}^{2},
\end{flalign}
and because $\underline{\nabla}\widehat{\boldsymbol{h}}_{\boldsymbol{W}}^{\widetilde{F}}$
is of rank at most $N$, it is true that

\begin{equation}
\left\Vert \underline{\nabla}\widehat{\boldsymbol{h}}_{\boldsymbol{W}}^{\widetilde{F}}\left(\boldsymbol{x}\right)\right\Vert _{2}^{2}\le\left\Vert \underline{\nabla}\widehat{\boldsymbol{h}}_{\boldsymbol{W}}^{\widetilde{F}}\left(\boldsymbol{x}\right)\right\Vert _{F}^{2}\equiv N+\left\Vert \underline{\nabla}F\left(\boldsymbol{x},\boldsymbol{W}\right)\right\Vert _{2}^{2}\le N\left\Vert \underline{\nabla}\widehat{\boldsymbol{h}}_{\boldsymbol{W}}^{\widetilde{F}}\left(\boldsymbol{x}\right)\right\Vert _{2}^{2},
\end{equation}
for all $\boldsymbol{x}\in{\cal X}$. Apparently, we get that the
spectral norm of $\underline{\nabla}\widehat{\boldsymbol{h}}_{\boldsymbol{W}}^{\widetilde{F}}$
is uniformly in ${\cal Z}_{2}$ if and only if $\left\Vert \underline{\nabla}F\left(\boldsymbol{x},\boldsymbol{W}\right)\right\Vert _{2}$
is uniformly in ${\cal Z}_{2}$, as well. This simply means that condition
${\bf C1}$ of Assumption \ref{assu:F_AS_Main} is\textit{ equivalent
to} condition ${\bf W3}$, as with the case of condition ${\bf W1}$
and condition ${\bf C2}$ of Assumption \ref{assu:F_AS_Main}, discussed
above.

We now continue with condition ${\bf W4}$. Utilizing the fact that,
for almost all $\omega\in\Omega$, $\widehat{\boldsymbol{g}}_{\boldsymbol{W}}^{\widetilde{F}}$
is differentiable everywhere on ${\cal X}\times\textrm{cl}\left\{ \left(m_{l},m_{h}\right)\right\} $,
condition ${\bf W4}$ demands that, for almost all $\omega\in\Omega$,
it is true that\renewcommand{\arraystretch}{1.2}
\begin{flalign}
 & \hspace{-2pt}\hspace{-2pt}\hspace{-2pt}\left\Vert \nabla\widehat{\boldsymbol{g}}_{\boldsymbol{W}}^{\widetilde{F}}\left(\boldsymbol{x}_{1},y_{1}\right)-\nabla\widehat{\boldsymbol{g}}_{\boldsymbol{W}}^{\widetilde{F}}\left(\boldsymbol{x}_{2},y_{2}\right)\right\Vert _{2}\nonumber \\
 & \equiv\left\Vert \hspace{-2pt}\begin{bmatrix}{\bf 0}_{N} & \nabla_{\boldsymbol{x}}\left[\left({\cal R}\left(F\left(\boldsymbol{x}_{1},\boldsymbol{W}\right)-y_{1}\right)\right)^{p}\right]\\
1 & \nabla_{y}\left[\left({\cal R}\left(F\left(\boldsymbol{x}_{1},\boldsymbol{W}\right)-y_{1}\right)\right)^{p}\right]
\end{bmatrix}\hspace{-2pt}-\hspace{-2pt}\begin{bmatrix}{\bf 0}_{N} & \nabla_{\boldsymbol{x}}\left[\left({\cal R}\left(F\left(\boldsymbol{x}_{2},\boldsymbol{W}\right)-y_{2}\right)\right)^{p}\right]\\
1 & \nabla_{y}\left[\left({\cal R}\left(F\left(\boldsymbol{x}_{2},\boldsymbol{W}\right)-y_{2}\right)\right)^{p}\right]
\end{bmatrix}\hspace{-2pt}\right\Vert _{2}\nonumber \\
 & \equiv\left\Vert \hspace{-2pt}\begin{bmatrix}{\bf 0}_{N} & \nabla_{\boldsymbol{x}}\left[\left({\cal R}\left(F\left(\boldsymbol{x}_{1},\boldsymbol{W}\right)-y_{1}\right)\right)^{p}\right]-\nabla_{\boldsymbol{x}}\left[\left({\cal R}\left(F\left(\boldsymbol{x}_{2},\boldsymbol{W}\right)-y_{2}\right)\right)^{p}\right]\\
0 & \nabla_{y}\left[\left({\cal R}\left(F\left(\boldsymbol{x}_{1},\boldsymbol{W}\right)-y_{1}\right)\right)^{p}\right]-\nabla_{y}\left[\left({\cal R}\left(F\left(\boldsymbol{x}_{2},\boldsymbol{W}\right)-y_{2}\right)\right)^{p}\right]
\end{bmatrix}\hspace{-2pt}\right\Vert _{2}\nonumber \\
 & =\left\Vert \hspace{-2pt}\begin{bmatrix}\left.\nabla\left({\cal R}\left(z\right)\right)^{p}\right|_{z\equiv F\left(\boldsymbol{x}_{1},\boldsymbol{W}\right)-y_{1}}\hspace{-2pt}\times\hspace{-2pt}\underline{\nabla}F\left(\boldsymbol{x}_{1},\boldsymbol{W}\right)-\left.\nabla\left({\cal R}\left(z\right)\right)^{p}\right|_{z\equiv F\left(\boldsymbol{x}_{2},\boldsymbol{W}\right)-y_{2}}\hspace{-2pt}\times\hspace{-2pt}\underline{\nabla}F\left(\boldsymbol{x}_{2},\boldsymbol{W}\right)\\
-\left.\nabla\left({\cal R}\left(z\right)\right)^{p}\right|_{z\equiv F\left(\boldsymbol{x}_{1},\boldsymbol{W}\right)-y_{1}}+\left.\nabla\left({\cal R}\left(z\right)\right)^{p}\right|_{z\equiv F\left(\boldsymbol{x}_{2},\boldsymbol{W}\right)-y_{2}}
\end{bmatrix}\hspace{-2pt}\right\Vert _{2}\nonumber \\
 & \le L\sqrt{\left\Vert \boldsymbol{x}_{1}-\boldsymbol{x}_{2}\right\Vert _{2}^{2}+\left|y_{1}-y_{2}\right|^{2}},\label{eq:Mengdi_2}
\end{flalign}
\renewcommand{\arraystretch}{1}for all $\left(\left[\boldsymbol{x}_{1}\,y_{1}\right],\left[\boldsymbol{x}_{2}\,y_{2}\right]\right)\in\left[{\cal X}\times\textrm{cl}\left\{ \left(m_{l},m_{h}\right)\right\} \right]^{2}$,
where $L<\infty$. Without loss of generality, let us call $\widehat{\Omega}$
the certain subset of $\Omega$, such that (\ref{eq:Mengdi_2}) is
true. As with condition ${\bf W2}$, without loss of generality, we
can take $\widehat{\Omega}\equiv\Omega_{E}$.

We compare (\ref{eq:Mengdi_2}) with the \textit{strongest variation}
of condition $\mathbf{C3}$, that is, case ${\bf \left(1\right)}$
of Proposition \ref{prop:C3_Valid}. We will see that, \textit{under
no additional assumptions}, case ${\bf \left(1\right)}$ of Proposition
\ref{prop:C3_Valid} cannot imply (\ref{eq:Mengdi_2}), by construction.
Indeed, \textit{even when} $\boldsymbol{x}_{1}\equiv\boldsymbol{x}_{2}\triangleq\boldsymbol{x}\in{\cal X}$,
we may write\renewcommand{\arraystretch}{1.2}
\begin{flalign}
 & \hspace{-2pt}\hspace{-2pt}\hspace{-2pt}\left\Vert \nabla\widehat{\boldsymbol{g}}_{\boldsymbol{W}}^{\widetilde{F}}\left(\boldsymbol{x},y_{1}\right)-\nabla\widehat{\boldsymbol{g}}_{\boldsymbol{W}}^{\widetilde{F}}\left(\boldsymbol{x},y_{2}\right)\right\Vert _{2}\nonumber \\
 & \equiv\left\Vert \hspace{-2pt}\begin{bmatrix}\left.\nabla\left({\cal R}\left(z\right)\right)^{p}\right|_{z\equiv F\left(\boldsymbol{x}_{1},\boldsymbol{W}\right)-y_{1}}\hspace{-2pt}\times\hspace{-2pt}\underline{\nabla}F\left(\boldsymbol{x},\boldsymbol{W}\right)-\left.\nabla\left({\cal R}\left(z\right)\right)^{p}\right|_{z\equiv F\left(\boldsymbol{x}_{2},\boldsymbol{W}\right)-y_{2}}\hspace{-2pt}\times\hspace{-2pt}\underline{\nabla}F\left(\boldsymbol{x},\boldsymbol{W}\right)\\
-\left.\nabla\left({\cal R}\left(z\right)\right)^{p}\right|_{z\equiv F\left(\boldsymbol{x},\boldsymbol{W}\right)-y_{1}}+\left.\nabla\left({\cal R}\left(z\right)\right)^{p}\right|_{z\equiv F\left(\boldsymbol{x},\boldsymbol{W}\right)-y_{2}}
\end{bmatrix}\hspace{-2pt}\right\Vert _{2}\nonumber \\
 & \equiv\left\Vert \hspace{-2pt}\left[\begin{array}{c}
\vphantom{{\displaystyle \int}}\underline{\nabla}F\hspace{-2pt}\left(\boldsymbol{x},\hspace{-1pt}\boldsymbol{W}\right)\\
\hline \vphantom{{\displaystyle \int}}-1
\end{array}\right]\hspace{-2pt}\hspace{-2pt}\left(\left.\nabla\left({\cal R}\left(z\right)\right)^{p}\right|_{z\equiv F\left(\boldsymbol{x},\boldsymbol{W}\right)-y_{1}}-\left.\nabla\left({\cal R}\left(z\right)\right)^{p}\right|_{z\equiv F\left(\boldsymbol{x},\boldsymbol{W}\right)-y_{2}}\right)\right\Vert _{2},\label{eq:Detail_1}
\end{flalign}
\renewcommand{\arraystretch}{1}for all $\left(y_{1},y_{2}\right)\in\left[\textrm{cl}\left\{ \left(m_{l},m_{h}\right)\right\} \right]^{2}$.
It is clear that, in order for (\ref{eq:Detail_1}) to yield a Lipschitz
inequality for the involved function, \textit{assuming that case ${\bf \left(1\right)}$
of Proposition \ref{prop:C3_Valid} is true}, it would be necessary
to impose assumptions on the size of $\nabla F\left(\cdot,\hspace{-1pt}\boldsymbol{W}\right)$.
Specifically, it is true that
\begin{flalign}
 & \hspace{-2pt}\hspace{-2pt}\hspace{-2pt}\hspace{-2pt}\hspace{-2pt}\hspace{-2pt}\left\Vert \hspace{-2pt}\left[\begin{array}{c}
\vphantom{{\displaystyle \int}}\underline{\nabla}F\hspace{-2pt}\left(\boldsymbol{x},\hspace{-1pt}\boldsymbol{W}\right)\\
\hline \vphantom{{\displaystyle \int}}-1
\end{array}\right]\hspace{-2pt}\hspace{-2pt}\left(\left.\nabla\left({\cal R}\left(z\right)\right)^{p}\right|_{z\equiv F\left(\boldsymbol{x},\boldsymbol{W}\right)-y_{1}}-\left.\nabla\left({\cal R}\left(z\right)\right)^{p}\right|_{z\equiv F\left(\boldsymbol{x},\boldsymbol{W}\right)-y_{2}}\right)\right\Vert _{2}\nonumber \\
 & \equiv\sqrt{\left\Vert \underline{\nabla}F\hspace{-2pt}\left(\boldsymbol{x},\hspace{-1pt}\boldsymbol{W}\right)\right\Vert _{2}^{2}+1}\left|\left.\nabla\left({\cal R}\left(z\right)\right)^{p}\right|_{z\equiv F\left(\boldsymbol{x},\boldsymbol{W}\right)-y_{1}}-\left.\nabla\left({\cal R}\left(z\right)\right)^{p}\right|_{z\equiv F\left(\boldsymbol{x},\boldsymbol{W}\right)-y_{2}}\right|\nonumber \\
 & \le\left(\left\Vert \underline{\nabla}F\hspace{-2pt}\left(\boldsymbol{x},\hspace{-1pt}\boldsymbol{W}\right)\right\Vert _{2}+1\right)\left|y_{1}-y_{2}\right|,
\end{flalign}
for all $\left(y_{1},y_{2}\right)\in\left[\textrm{cl}\left\{ \left(m_{l},m_{h}\right)\right\} \right]^{2}$,
demonstrating need of a bound on $\left\Vert \underline{\nabla}F\hspace{-2pt}\left(\boldsymbol{x},\hspace{-1pt}\boldsymbol{W}\right)\right\Vert _{2}$,
uniform on ${\cal X}\times\Omega'$, where $\Omega'\subseteq\Omega$
is a certain event, so that (\ref{eq:Mengdi_2}) can be verified.
Of course, such uniform boundedness assumption is not made directly
neither in Assumption \ref{assu:F_AS_Main}, nor in (\citep{Wang2018},
Assumption 2.1) (it is made\textit{ in expectation}, though). The
closest relative to our framework would be to assume that $\left\Vert \underline{\nabla}F\hspace{-2pt}\left(\boldsymbol{x},\hspace{-1pt}\boldsymbol{W}\right)\right\Vert _{2}$
is in ${\cal Z}_{\infty}$ (that is, with bounded essential supremum),
uniformly on ${\cal X}$, but this condition is too restrictive if
it is imposed \textit{together} \textit{with} assuming differentiability
of the $p$-th power of ${\cal R}$ (case ${\bf \left(1\right)}$
of Proposition \ref{prop:C3_Valid}).

On the other hand, suppose that (\ref{eq:Mengdi_2}) is true. Then,
for every $\boldsymbol{x}_{1}\equiv\boldsymbol{x}_{2}\equiv\boldsymbol{x}\in{\cal X}$
and everywhere on $\widehat{\Omega}\equiv\Omega_{E}$, it is true
that
\begin{flalign}
L\left|y_{1}-y_{2}\right| & \ge\sqrt{\left\Vert \underline{\nabla}F\hspace{-2pt}\left(\boldsymbol{x},\hspace{-1pt}\boldsymbol{W}\right)\right\Vert _{2}^{2}+1}\left|\left.\nabla\left({\cal R}\left(z\right)\right)^{p}\right|_{z\equiv F\left(\boldsymbol{x},\boldsymbol{W}\right)-y_{1}}-\left.\nabla\left({\cal R}\left(z\right)\right)^{p}\right|_{z\equiv F\left(\boldsymbol{x},\boldsymbol{W}\right)-y_{2}}\right|\nonumber \\
 & \ge\left|\left.\nabla\left({\cal R}\left(z\right)\right)^{p}\right|_{z\equiv F\left(\boldsymbol{x},\boldsymbol{W}\right)-y_{1}}-\left.\nabla\left({\cal R}\left(z\right)\right)^{p}\right|_{z\equiv F\left(\boldsymbol{x},\boldsymbol{W}\right)-y_{2}}\right|,
\end{flalign}
implying that
\begin{equation}
\hspace{-2pt}\left|\left.\nabla\left({\cal R}\left(z\right)\right)^{p}\right|_{z\equiv F\left(\boldsymbol{x},\boldsymbol{W}\right)-y_{1}}-\left.\nabla\left({\cal R}\left(z\right)\right)^{p}\right|_{z\equiv F\left(\boldsymbol{x},\boldsymbol{W}\right)-y_{2}}\right|\hspace{-2pt}\le L\left|F\left(\boldsymbol{x},\boldsymbol{W}\right)-y_{2}-\left(F\left(\boldsymbol{x},\boldsymbol{W}\right)-y_{1}\right)\right|,
\end{equation}
for all $\left(F\left(\boldsymbol{x},\boldsymbol{W}\right)-y_{1},F\left(\boldsymbol{x},\boldsymbol{W}\right)-y_{2}\right)\in\left[\mathfrak{R}^{\widetilde{F}}\right]^{2}$.
Therefore, it follows that case ${\bf \left(1\right)}$ of Proposition
\ref{prop:C3_Valid} is satisfied with $D_{{\cal R},p}\equiv L$.
This shows that \textit{condition} \textit{${\bf W4}$ is in general
stronger than the strongest assumption on the smoothness of $\left({\cal R}\left(z\right)\right)^{p}$}
\textit{considered in this paper} whatsoever.

Driven by the detailed discussion above, let us now formulate the
following proposition, which constitutes a precise statement of the
fact that the structural framework considered in this work is more
general than the one considered in \citep{Wang2017,Wang2018}. The
proof is based on the above and is omitted.
\begin{prop}
\textbf{\textup{(Structural Comparisons)\label{prop:Structural}}}
The class of mean-semideviation programs supported under Assumptions
\ref{assu:F_AS_1} and \ref{assu:F_AS_Main} contains the respective
class supported under conditions ${\bf W1}-{\bf W6}$ plus $\mathbf{\widetilde{W}7}$
(i.e., Assumption 2.1 of \citep{Wang2018} $+$ $\mathbf{\widetilde{W}7}$).
Further, the inclusion is strict.
\end{prop}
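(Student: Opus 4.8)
The plan is to prove the two assertions of Proposition \ref{prop:Structural}—the containment and its strictness—separately, in each case assembling the per-condition analysis already carried out in this subsection into a single logical chain. For the containment, I would show that every mean-semideviation program satisfying $\mathbf{W1}$–$\mathbf{W6}$ together with $\mathbf{\widetilde{W}7}$ necessarily satisfies both Assumption \ref{assu:F_AS_1} and Assumption \ref{assu:F_AS_Main}. The four conditions of Assumption \ref{assu:F_AS_Main} follow from the equivalences and implications established above: $\mathbf{C2}$ is equivalent to $\mathbf{W1}$, $\mathbf{C1}$ is equivalent to $\mathbf{W3}$, and $\mathbf{C4}$ is equivalent to $\mathbf{W6}$ conjoined with $\mathbf{\widetilde{W}7}$. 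For $\mathbf{C3}$ I would invoke the fact, derived via \eqref{eq:Detail_1}–\eqref{eq:Mengdi_2}, that $\mathbf{W4}$ is in general strictly stronger than case $\mathbf{(1)}$ of Proposition \ref{prop:C3_Valid} (with $D_{{\cal R},p}\equiv L$), and the latter already guarantees $\mathbf{C3}$ for every admissible exponent pair $(P,Q)$. Condition $\mathbf{W5}$ plays no role, being automatically satisfied through Lipschitz continuity of $\mathbb{E}\{F(\cdot,\boldsymbol{W})\}$ on ${\cal X}$.

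It then remains to recover Assumption \ref{assu:F_AS_1} from $\mathbf{W2}$, and here I would lean on Proposition \ref{prop:NonDiff}. Its first conclusion forces the $p$-th power of ${\cal R}$ to be differentiable everywhere on $\mathfrak{R}^{\widetilde{F}}$; combining this with positivity of ${\cal R}$ on the effective domain (trivial when $p\equiv1$, and guaranteed by $\varepsilon>0$—hence by $\mathbf{C4}$—when $p>1$) yields differentiability of ${\cal R}$ itself there, so that the countable nullset ${\cal A}$ of nondifferentiability points of ${\cal R}$ is disjoint from the essential range of the centered cost $F(\boldsymbol{x},\boldsymbol{W})-\mathbb{E}\{F(\boldsymbol{x},\boldsymbol{W})\}\in\mathfrak{R}^{\widetilde{F}}$, giving property $\mathbf{P2}$ directly. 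Property $\mathbf{P1}$ I would extract from the second conclusion of Proposition \ref{prop:NonDiff}: either $F(\cdot,\boldsymbol{W})$ is differentiable everywhere on ${\cal X}$ (whence differentiable almost everywhere at each $\boldsymbol{x}$), or the forced partial-constancy structure of ${\cal R}$ is in effect while differentiability of $\widehat{\boldsymbol{g}}_{\boldsymbol{W}}^{\widetilde{F}}$ in $\boldsymbol{x}$ is maintained, both scenarios being compatible with $\mathbf{P1}$.

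For the strictness I would exhibit a single instance lying in the class supported by Assumptions \ref{assu:F_AS_1} and \ref{assu:F_AS_Main} but excluded by $\mathbf{W2}$ (hence by the entire framework of \citep{Wang2018}). I would take the scalar example analyzed just above, namely $F(x,W)\triangleq|x-W|$ with $W\sim{\cal N}(0,1)$, semideviation order $p\equiv1$, over a nonempty, non-singleton, convex and compact set ${\cal X}$, and choose ${\cal R}\equiv(\cdot)_{+}$. As already verified, this problem satisfies $\mathbf{P1}$ and $\mathbf{P2}$, condition $\mathbf{C1}$ with $P\equiv\infty$ (since $|\underline{\nabla}F(\cdot,W)|\le1$ uniformly), condition $\mathbf{C2}$ (by integrability of $W$ and compactness of ${\cal X}$), and condition $\mathbf{C3}$ for arbitrary ${\cal R}$ via case $\mathbf{(2)}$ of Proposition \ref{prop:C3_Valid}, the folded-normal cdf $F_{W}^{x}$ being uniformly Lipschitz; condition $\mathbf{C4}$ is vacuous as $p\equiv1$. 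Thus the instance belongs to our supported class. On the other hand, here $m_{l}\equiv0$ and $m_{h}\equiv+\infty$, so $\mathfrak{R}^{\widetilde{F}}\equiv\mathbb{R}$, and case $\mathbf{(1)}$ of Proposition \ref{prop:NonDiff} demands differentiability of $(\cdot)_{+}$ everywhere on $\mathbb{R}$, which fails at the origin; consequently $\mathbf{W2}$ cannot hold. Since this instance realizes precisely the mean-upper-semideviation, it is a genuinely non-cornercase witness, establishing that the inclusion is strict.

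The step I expect to be the main obstacle is the recovery of $\mathbf{P1}$ from $\mathbf{W2}$ in the containment argument. Unlike $\mathbf{C1}$–$\mathbf{C4}$, which descend from clean per-condition equivalences, both $\mathbf{P1}$ and $\mathbf{P2}$ are regularity statements that must be teased out of the everywhere-differentiability of $\widehat{\boldsymbol{g}}_{\boldsymbol{W}}^{\widetilde{F}}$ through the delicate case split of Proposition \ref{prop:NonDiff}; the subtle point is handling the interaction between potential nonsmoothness of $F(\cdot,\boldsymbol{W})$ and the induced partial constancy of ${\cal R}$ so as to guarantee that almost-everywhere differentiability of $F$ at each fixed $\boldsymbol{x}$ is genuinely in force rather than merely consistent with the hypotheses.
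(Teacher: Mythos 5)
Your overall route is the same as the paper's: the containment is assembled from the per-condition comparisons ($\mathbf{W1}\Leftrightarrow\mathbf{C2}$, $\mathbf{W3}\Leftrightarrow\mathbf{C1}$, $\mathbf{W6}$ plus $\mathbf{\widetilde{W}7}$ $\Leftrightarrow\mathbf{C4}$, and $\mathbf{W4}\Rightarrow$ case $\mathbf{(1)}$ of Proposition \ref{prop:C3_Valid} $\Rightarrow\mathbf{C3}$), and strictness is witnessed by the problem $F\left(x,W\right)\equiv\left|x-W\right|$, $W\sim{\cal N}\left(0,1\right)$, $p\equiv1$, ${\cal R}\equiv\left(\cdot\right)_{+}$, which lies in the paper's class but violates $\mathbf{W2}$ because $\mathfrak{R}^{\widetilde{F}}\equiv\mathbb{R}$ and $\left(\cdot\right)_{+}$ has a corner at the origin. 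These parts of your proposal are correct and are exactly what the paper (which omits a formal proof and points to its preceding discussion) relies on; your derivation of $\mathbf{P2}$ from $\mathbf{W2}$ together with $\varepsilon>0$ is also sound.

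The genuine gap is precisely the step you flag: $\mathbf{W2}$ does \emph{not} imply $\mathbf{P1}$, so your scenario (b) claim (``the forced partial-constancy structure of ${\cal R}$ is in effect\ldots{} both scenarios being compatible with $\mathbf{P1}$'') cannot be upgraded to a proof. Concretely, take $F\left(x,W\right)\triangleq\left|x\right|+W$ with $W$ uniform on $\left[0,1\right]$ and ${\cal X}\equiv\left[-1,1\right]$ (so that $m_{l}\equiv0$, $m_{h}\equiv2$), and let ${\cal R}$ be constant and strictly positive on $\left(-\infty,1\right]$, continuing as a smooth, convex, nonexpansive ramp on $\left[1,\infty\right)$ glued so that ${\cal R}'\left(1\right)\equiv0$ and $\left({\cal R}^{p}\right)'$ is Lipschitz. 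Since $W-y\le1$ for every admissible $\left(y,\omega\right)$, the kink of $F$ at $x\equiv0$ is absorbed by the flat part of ${\cal R}$, and $\widehat{\boldsymbol{g}}_{\boldsymbol{W}}^{\widetilde{F}}$ is differentiable everywhere on ${\cal X}\times\left[0,2\right]$ for every $\omega$, so $\mathbf{W2}$ holds; conditions $\mathbf{W1}$, $\mathbf{W3}$--$\mathbf{W6}$ and $\mathbf{\widetilde{W}7}$ can be checked as well. Yet $F\left(\cdot,W\left(\omega\right)\right)$ is nondifferentiable at $x\equiv0$ for \emph{every} $\omega$, so $\mathbf{P1}$ fails. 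Hence the containment is unprovable in the form you set it up, where Assumption \ref{assu:F_AS_1} must be derived from the $\mathbf{W}$-conditions.

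The way out --- and what the paper implicitly does --- is to treat Assumption \ref{assu:F_AS_1} (together with the measurable-selection Assumption \ref{assu:F_AS_1_SUB}) as standing regularity of the problem setting, common to both frameworks, so that the comparison is carried out only between $\mathbf{C1}$--$\mathbf{C4}$ and $\mathbf{W1}$--$\mathbf{W6}$ plus $\mathbf{\widetilde{W}7}$; nowhere does the paper's discussion attempt $\mathbf{W2}\Rightarrow\mathbf{P1}$. Under that reading, your per-condition chain gives the containment, your example gives strictness, and your proof becomes complete once the attempted derivation of $\mathbf{P1}$ is deleted and replaced by an appeal to the standing-assumption convention.
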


\section{\label{sec:Conclusion}Conclusion}

We have introduced the $\textit{MESSAGE}^{p}$ \textit{algorithm},
which is an efficient, data-driven compositional stochastic subgradient
procedure for iteratively solving \textit{convex} mean-semideviation
risk-averse problems \textit{to optimality}, and constitutes a \textit{parallel}
variation of the recently developed, general purpose \textit{$T$-SCGD}
\textit{algorithm} of Yang, Wang \& Fang \citep{Wang2018}. We have
proposed a flexible and structure-exploiting set of problem assumptions,
under which we have rigorously analyzed the asymptotic behavior of
the $\textit{MESSAGE}^{p}$ algorithm. Specifically:
\begin{itemize}
\item We have established pathwise convergence of the $\textit{MESSAGE}^{p}$
algorithm in a strong technical sense, confirming its asymptotic consistency. 
\item In the case of a \textit{strongly convex} \textit{cost}, we have shown
that, for fixed semideviation order $p>1$, the $\textit{MESSAGE}^{p}$
algorithm achieves a squared-${\cal L}_{2}$ solution suboptimality
rate of the order of ${\cal O}(n^{-\left(1-\epsilon\right)/2})$ iterations,
where $\epsilon\in\left[0,1\right)$ is a user-specified constant,
related to the stepsize selection. In particular, for $\epsilon>0$,
pathwise convergence of the $\textit{MESSAGE}^{p}$ algorithm is \textit{simultaneously}
guaranteed, establishing a rate of order arbitrarily close to ${\cal O}(n^{-1/2})$,
while ensuring stable pathwise operation. For $p\equiv1$, the rate
order improves to ${\cal O}(n^{-2/3})$, which also suffices for pathwise
convergence, and matches previous results. 
\item Likewise, in the general case of a \textit{convex cost}, we have shown
that, for any $\epsilon\in\left[0,1\right)$, the $\textit{MESSAGE}^{p}$
algorithm \textit{with iterate smoothing} achieves an ${\cal L}_{1}$
objective suboptimality rate of the order of ${\cal O}(n^{-\left(1-\epsilon\right)/\left(4\mathds{1}_{\left\{ p>1\right\} }+4\right)})$.
This result provides maximal rates ${\cal O}(n^{-1/4})$, if $p\equiv1$,
and ${\cal O}(n^{-1/8})$, if $p>1$, matching the state of the art,
as well.
\end{itemize}
We have also discussed the superiority of the proposed framework for
convergence, as compared to that employed earlier in \citep{Wang2018},
within the risk-averse context under consideration. First, contrary
to \citep{Wang2018}, a unique feature of our framework is that it
clearly reveals a well-defined trade-off between the expansiveness
of the random cost and the smoothness of the particular mean-semideviation
risk measure. This provides great analytical flexibility, which is
very important for practical considerations. Additionally, we have
rigorously demonstrated that the class of mean-semideviation problems
supported herein is \textit{strictly larger} than the respective class
of problems supported in \citep{Wang2018}. As a result, this work
establishes the applicability of compositional stochastic optimization
for a significantly and strictly wider spectrum of convex mean-semideviation
risk-averse problems, as compared to the state of the art. Consequently,
the purpose of our work is justified from this perspective, as well.

\section{Acknowledgements}

We would like to thank our colleague Weidong Han for his useful suggestions.

This material is based upon work supported by the U.S. Navy / SPAWAR
Systems Center Pacific under Contract No. N66001-18-C-4031. Any opinions,
findings and conclusions or recommendations expressed in this material
are those of the author(s) and do not necessarily reflect the views
of the U.S. Navy / SPAWAR Systems Center Pacific.

\section{\label{sec:Appendix:-Proofs}Appendix: Proofs}

\subsection{\label{subsec:Proof-of-CHAR}Proof of Theorem \ref{thm:IFF_RR}}

The first part of the theorem has essentially already been proved
in earlier in Section \ref{subsec:CDFA_MS} (in particular, (\ref{eq:CDF_Rep})
with $C_{S}\equiv1$ and $C_{I}\equiv0$, and for some $Y\in{\cal Z}_{1}$,
which implies that $\mathbb{E}\left\{ \left(x-Y\right)_{+}\right\} <+\infty$,
for all $x\in\mathbb{R}$), except for explicitly showing equivalence
of interpreting the involved integral in the Lebesgue and improper
Riemann senses. Therefore, in addition to this detail, it suffices
to prove the second part of the theorem (the converse). The proof,
presented below, is technical, but clean and simple.

Consider any \textit{nonconstant} risk regularizer ${\cal R}:\mathbb{R}\rightarrow\mathbb{R}$.
By definition, ${\cal R}$ is convex on $\mathbb{R}$ ($\mathbf{S1}$)
and, thus, it admits both left and right (directional) derivatives,
which are \textit{nondecreasing}, \textit{everywhere} on $\mathbb{R}$.
Let ${\cal R}'_{+}:\mathbb{R}\rightarrow\mathbb{R}$ be the right
derivative of ${\cal R}$. Because ${\cal R}'_{+}$ is nondecreasing
on $\mathbb{R}$, it exhibits an at most countable number of discontinuities,
and of the jump type. By convexity, it follows that ${\cal R}'_{+}$
is right continuous at every such point of discontinuity, as well. 

By definition of ${\cal R}'_{+}$, it is true that, for every $x\in\mathbb{R}$,
${\cal R}'_{+}\left(x\right)\in\partial{\cal R}\left(x\right)$, where
the compact-valued multifunction $\partial{\cal R}:\mathbb{R}\rightrightarrows\mathbb{R}$
denotes the subdifferential of ${\cal R}$. Therefore, for every $x\in\mathbb{R}$,
the subderivative ${\cal R}'_{+}\left(x\right)$ satisfies the defining
inequality
\begin{equation}
{\cal R}\left(y\right)-{\cal R}\left(x\right)\ge{\cal R}'_{+}\left(x\right)\left(y-x\right),\label{eq:SubDerivative}
\end{equation}
for every $y\in\mathbb{R}$. Exploiting (\ref{eq:SubDerivative}),
monotonicity of ${\cal R}$ ($\mathbf{S3}$) readily implies that
${\cal R}'_{+}\left(x\right)\ge0$, for all $x\in\mathbb{R}$, whereas,
from nonexpansiveness of ${\cal R}$ $(\mathbf{S4})$, it easily follows
that ${\cal R}'_{+}\left(x\right)\le1$, for all $x\in\mathbb{R}$.
Additionally, from nonnegativity of ${\cal R}$ ($\mathbf{S2}$),
it is true that, for every $x\in\mathbb{R}$,
\begin{equation}
{\cal R}'_{+}\left(x\right)\le\dfrac{{\cal R}\left(y\right)-{\cal R}\left(x\right)}{y-x}\le\dfrac{{\cal R}\left(y\right)}{y-x},\quad\forall y\in\left(x,+\infty\right),
\end{equation}
and, using the fact that ${\cal R}'_{+}\left(x\right)\ge0$, for all
$x\in\mathbb{R}$, we may pass to the limit as $x\rightarrow-\infty$,
yielding
\begin{equation}
0\le\limsup_{x\rightarrow-\infty}{\cal R}'_{+}\left(x\right)\le\limsup_{x\rightarrow-\infty}\dfrac{{\cal R}\left(y\right)}{y-x}\equiv0,
\end{equation}
implying that ${\cal R}'_{+}\left(x\right)\underset{x\rightarrow-\infty}{\longrightarrow}0$,
as well. On the other hand, since ${\cal R}'_{+}\left(x\right)\le1$,
for all $x\in\mathbb{R}$, it is trivial to see that $0<\sup_{x\in\mathbb{R}}{\cal R}'_{+}\left(x\right)\le1$
(for nonconstant ${\cal R}$). Consequently, the function $F_{Y}:\mathbb{R}\rightarrow\left[0,1\right]$
defined as
\begin{equation}
F_{Y}\left(x\right)\triangleq\dfrac{{\cal R}'_{+}\left(x\right)}{\sup_{x\in\mathbb{R}}{\cal R}'_{+}\left(x\right)},\quad\forall x\in\mathbb{R},
\end{equation}
qualifies as the cdf of some random variable $Y:\Omega\rightarrow\mathbb{R}$,
and we may obviously write
\begin{equation}
{\cal R}'_{+}\left(x\right)\equiv F_{Y}\left(x\right)\sup_{x\in\mathbb{R}}{\cal R}'_{+}\left(x\right),\quad\forall x\in\mathbb{R}.\label{eq:CDF_Proof}
\end{equation}

Now, we know that ${\cal R}$ is convex on $\mathbb{R}$ and, if ${\cal A}$
denotes the countable set of points where ${\cal R}$ is \textit{nondifferentiable},
its derivative exists on $\mathbb{R}\setminus{\cal A}$. Let ${\cal R}':\mathbb{R}\rightarrow\mathbb{R}$
denote this derivative, defined on the set it exists. Then, by definition,
it is true that
\begin{equation}
{\cal R}'\left(x\right)\equiv{\cal R}'_{+}\left(x\right),\quad\forall x\in\mathbb{R}\setminus{\cal A},
\end{equation}
where, of course, ${\cal A}$ is of Lebesgue measure zero. Consequently,
for every $\left(\alpha,x\right)\in\mathbb{R}^{2}$, such that $\alpha\le x$,
it follows that ${\cal R}'\equiv{\cal R}'_{+}$, almost everywhere
relative to the Lebesgue measure on $\left[\alpha,x\right]$. Also
due to convexity on $\mathbb{R}$ (say), ${\cal R}$ is absolutely
continuous on $\left[\alpha,x\right]$, for every qualifying choice
of $\alpha$ and $x$. Therefore, Lebesgue's Fundamental Theorem of
Integral Calculus (Theorems 2.3.4 \& 2.3.10 in \citep{Ash2000Probability})
implies that
\begin{equation}
{\cal R}\left(x\right)-{\cal R}\left(\alpha\right)\equiv\int_{\alpha}^{x}{\cal R}'\left(x\right)\textrm{d}y\equiv\int_{\alpha}^{x}{\cal R}'_{+}\left(y\right)\textrm{d}y,\label{eq:Riemman}
\end{equation}
where integration is interpreted in the sense of Lebesgue, relative
to the Lebesgue measure on the Borel space $\left(\mathbb{R},\mathscr{B}\left(\mathbb{R}\right)\right)$.
By monotone convergence, we may deduce that, since ${\cal R}$ is
nondecreasing and uniformly bounded from below, its limit at $-\infty$
is finite and, in particular,
\begin{equation}
{\cal R}\left(x\right)\underset{x\rightarrow-\infty}{\longrightarrow}\inf_{x\in\mathbb{R}}{\cal R}\left(x\right)\ge0.
\end{equation}
Also, for every $x\in\mathbb{R}$, (\ref{eq:Riemman}) is true for
every $\mathbb{R}\ni\alpha\le x$. Therefore, we may pass to the limit
in (\ref{eq:Riemman}) as $\alpha\rightarrow-\infty$, to obtain
\begin{equation}
{\cal R}\left(x\right)-\inf_{x\in\mathbb{R}}{\cal R}\left(x\right)\equiv\lim_{\alpha\rightarrow-\infty}\int_{\alpha}^{x}{\cal R}'_{+}\left(y\right)\textrm{d}y,\quad\forall x\in\mathbb{R}.\label{eq:Improp}
\end{equation}
Invoking Lebesgue's Monotone Convergence Theorem and via a standard
sequential argument, it follows that
\begin{flalign}
\lim_{\alpha\rightarrow-\infty}\int_{\alpha}^{x}{\cal R}'_{+}\left(y\right)\textrm{d}y & \equiv\lim_{\alpha\rightarrow-\infty}\int{\cal R}'_{+}\left(y\right)\mathds{1}_{\left[\alpha,x\right]}\left(y\right)\textrm{d}y\nonumber \\
 & =\int\lim_{\alpha\rightarrow-\infty}{\cal R}'_{+}\left(y\right)\mathds{1}_{\left[\alpha,x\right]}\left(y\right)\textrm{d}y\nonumber \\
 & \equiv\int{\cal R}'_{+}\left(y\right)\mathds{1}_{\left[-\infty,x\right]}\left(y\right)\textrm{d}y\equiv\int_{-\infty}^{x}{\cal R}'_{+}\left(y\right)\textrm{d}y,\quad\forall x\in\mathbb{R},
\end{flalign}
which, together with (\ref{eq:CDF_Proof}), further implies that
\begin{equation}
{\cal R}\left(x\right)\equiv\left(\sup_{x\in\mathbb{R}}{\cal R}'_{+}\left(x\right)\right)\int_{-\infty}^{x}F_{Y}\left(y\right)\textrm{d}y+\inf_{x\in\mathbb{R}}{\cal R}\left(x\right),\quad\forall x\in\mathbb{R}.\label{eq:Riemman_2-1}
\end{equation}
In addition to the above, Fubini's Theorem (Theorem 2.6.6 in \citep{Ash2000Probability})
implies that
\begin{equation}
+\infty>\int_{-\infty}^{x}F_{Y}\left(y\right)\textrm{d}y\equiv\mathbb{E}\left\{ \left(x-Y\right)_{+}\right\} ,
\end{equation}
for all $x\in\mathbb{R}$ and for every random variable $Y:\Omega\rightarrow\mathbb{R}$
having $F_{Y}$ as its cdf.

To show that the integral involved in (\ref{eq:Riemman_2-1}) is well
defined in the improper Riemann sense, note first that the nondecreasing
function is Riemann integrable. Therefore, the Lebesgue integral in
(\ref{eq:Riemman}) is necessarily equal to the respective Riemann
integral. Equivalently, integration in (\ref{eq:Riemman}) may be
interpreted in the Riemann sense, as well. Then, (\ref{eq:Improp})
remains true, and the limit on the RHS may be interpreted as an improper
Riemann integral, by definition. The validity of (\ref{eq:Riemman_2-1}),
where integration is in the improper Riemann sense, follows. Note
that, as far as the direct statement of Theorem \ref{thm:IFF_RR}
is concerned, equivalence of the aforementioned Lebesgue and improper
Riemann integrals may be shown in exactly the same fashion as above.

Finally, let ${\cal R}$ be constant on $\mathbb{R}$. Then, it is
trivial to see that $\sup_{x\in\mathbb{R}}{\cal R}'_{+}\left(x\right)\equiv0$,
and $\inf_{x\in\mathbb{R}}{\cal R}\left(x\right)\equiv{\cal R}\left(x\right)$,
for all $x\in\mathbb{R}$. Then, for \textit{any} random variable
$Y:\Omega\rightarrow\mathbb{R}$, such that $\mathbb{E}\left\{ \left(x-Y\right)_{+}\right\} <+\infty$,
for all $x\in\mathbb{R}$, (\ref{eq:Riemman_2-1}) is trivially true,
and, apparently, there is at least one such random variable. The result
now follows.\hfill{}\ensuremath{\blacksquare}

\subsection{\label{subsec:MUS-1-Direct-Derivation}Proof of Lemma \ref{lem:Sub_Grad}}

Certainly, because $\phi^{\widetilde{F}}$ admits the compositional
representation
\begin{equation}
\phi^{\widetilde{F}}\left(\boldsymbol{x}\right)\equiv\mathbb{E}\left\{ F\left(\boldsymbol{x},\boldsymbol{W}\right)\right\} +c\varrho\left(g^{\widetilde{F}}\left(\boldsymbol{h}^{\widetilde{F}}\left(\boldsymbol{x}\right)\right)\right),\quad\forall\boldsymbol{x}\in{\cal X},\label{eq:comp_rep_proof}
\end{equation}
it follows that $\phi^{\widetilde{F}}$ will be differentiable as
long as the functions $\mathbb{E}\left\{ F\left(\cdot,\boldsymbol{W}\right)\right\} $,
$\varrho$, $g^{\widetilde{F}}$ and $\boldsymbol{h}^{\widetilde{F}}$
are in the respective effective domains, in which case it must be
true that
\begin{equation}
\nabla\phi^{\widetilde{F}}\left(\boldsymbol{x}\right)\equiv\nabla\mathbb{E}\left\{ F\left(\boldsymbol{x},\boldsymbol{W}\right)\right\} +c\nabla\boldsymbol{h}^{\widetilde{F}}\left(\boldsymbol{x}\right)\left.\nabla g^{\widetilde{F}}\left(\boldsymbol{y}\right)\right|_{\boldsymbol{y}\equiv\boldsymbol{h}^{\widetilde{F}}\left(\boldsymbol{x}\right)}\left.\nabla\varrho\left(z\right)\right|_{z\equiv g^{\widetilde{F}}\left(\boldsymbol{h}^{\widetilde{F}}\left(\boldsymbol{x}\right)\right)},\quad\forall\boldsymbol{x}\in{\cal X},\label{eq:CHAIN_1}
\end{equation}
where $\nabla\boldsymbol{h}^{\widetilde{F}}:\mathbb{R}^{N}\rightarrow\mathbb{R}^{N\times\left(N+1\right)}$
denotes the Jacobian of $\boldsymbol{h}^{\widetilde{F}}$, $\nabla g^{\widetilde{F}}:\mathbb{R}^{N+1}\rightarrow\mathbb{R}^{N+1}$
denotes the gradient of $g^{\widetilde{F}}$, assumed to exist at
least for all $\boldsymbol{y}\in\mathrm{Graph}_{{\cal X}}\left(\mathbb{E}\left\{ F\left(\cdot,\boldsymbol{W}\right)\right\} \right)$,
and $\nabla\varrho:\mathbb{R}\rightarrow\mathbb{R}$ denotes the derivative
of $\varrho$, also assumed to be well defined at least for every
$z$ in the range of $g^{\widetilde{F}}$.

Following a bottom-up approach, we first exploit our basic assumption
that $F\left(\cdot,\boldsymbol{W}\right)$ is convex on ${\cal X}$
(at least), for every realization $\boldsymbol{W}\equiv\boldsymbol{W}\left(\omega\right),\omega\in\Omega$,
as well as property $\mathbf{P1}$. Under this setting, we may invoke
(\citep{ShapiroLectures_2ND}, Theorem 7.51) for each $\boldsymbol{x}\in{\cal X}$,
from where it follows that the function $\mathbb{E}\left\{ F\left(\cdot,\boldsymbol{W}\right)\right\} $
is differentiable \textit{everywhere} on ${\cal X}$ and that, further,
we may interchange differentiation with integration, implying that
\begin{equation}
\nabla\mathbb{E}\left\{ F\left(\boldsymbol{x},\boldsymbol{W}\right)\right\} \equiv\mathbb{E}\left\{ \underline{\nabla}F\left(\boldsymbol{x},\boldsymbol{W}\right)\right\} ,\quad\forall\boldsymbol{x}\in{\cal X}.
\end{equation}
This result directly yields the existence of the Jacobian of $\boldsymbol{h}^{\widetilde{F}}$,
given by
\begin{equation}
\nabla\boldsymbol{h}^{\widetilde{F}}\left(\boldsymbol{x}\right)\equiv\left[\hspace{-2pt}\hspace{-2pt}\begin{array}{c|c}
\\
\boldsymbol{I}_{N}\hspace{-2pt} & \mathbb{E}\left\{ \underline{\nabla}F\left(\boldsymbol{x},\boldsymbol{W}\right)\right\} \\
\\
\end{array}\hspace{-2pt}\hspace{-2pt}\right],\quad\forall\boldsymbol{x}\in{\cal X},
\end{equation}
which, of course, is the same as (\ref{eq:JAC_MUS1}), in the statement
of Lemma \ref{lem:Sub_Grad}.

Next, let us discuss differentiability of $g^{\widetilde{F}}$. We
know that, due to convexity of $F\left(\cdot,\boldsymbol{W}\right)$,
${\cal R}$ and $\left(\cdot\right)^{p}$, and because of monotonicity
of ${\cal R}$ and $\left(\cdot\right)^{p}$, the composite function
$\left({\cal R}\left(F\left(\cdot,\boldsymbol{W}\right)-\left(\bullet\right)\right)\right)^{p}$
is convex in both variables (at least separately). We would also like
to show that $\left({\cal R}\left(F\left(\cdot,\boldsymbol{W}\right)-\left(\bullet\right)\right)\right)^{p}$
is differentiable at each $\left(\boldsymbol{x},y\right)\in\mathrm{Graph}_{{\cal X}}\left(\mathbb{E}\left\{ F\left(\cdot,\boldsymbol{W}\right)\right\} \right)$,
almost everywhere relative to ${\cal P}$.

Indeed, fix an \textit{arbitrary} point $\left(\boldsymbol{x},y_{\boldsymbol{x}}\right)\equiv\left(\boldsymbol{x},\mathbb{E}\left\{ F\left(\boldsymbol{x},\boldsymbol{W}\right)\right\} \right)$
in $\mathrm{Graph}_{{\cal X}}\left(\mathbb{E}\left\{ F\left(\cdot,\boldsymbol{W}\right)\right\} \right)$.
By property $\mathbf{P1}$, we know that there is a certain event
$\mathsf{D}_{\boldsymbol{x}}\subseteq\Omega$, such that $F\left(\cdot,\boldsymbol{W}\left(\omega\right)\right)$
is differentiable at $\boldsymbol{x}\in{\cal X}$, for all $\omega\in\mathsf{D}_{\boldsymbol{x}}$.
Consequently, by the fact that the identity $\left(\bullet\right):\mathbb{R}\rightarrow\mathbb{R}$
is differentiable everywhere on $\mathbb{R}$, it follows that, for
every $\omega\in\mathsf{D}_{\boldsymbol{x}}$, the function $F\left(\cdot,\boldsymbol{W}\right)-\left(\bullet\right)$
is differentiable at $\left(\boldsymbol{x},y_{\boldsymbol{x}}\right)$.
Utilizing property $\mathbf{P2}$, for the same fixed point $\left(\boldsymbol{x},y_{\boldsymbol{x}}\right)$,
there exists another certain event $\mathsf{N}_{\boldsymbol{x}}\subseteq\Omega$,
such that, for every $\omega\in\mathsf{N}_{\boldsymbol{x}}$, $F\left(\boldsymbol{x},\boldsymbol{W}\left(\omega\right)\right)-y_{\boldsymbol{x}}\equiv F\left(\boldsymbol{x},\boldsymbol{W}\left(\omega\right)\right)-\mathbb{E}\left\{ F\left(\boldsymbol{x},\boldsymbol{W}\right)\right\} $
is \textit{not }in ${\cal A}$, the countable nullset containing the
nondifferentiability points of ${\cal R}$. Therefore, for every $\omega\in\mathsf{D}_{\boldsymbol{x}}\bigcap\mathsf{N}_{\boldsymbol{x}}$,
with ${\cal P}\left(\mathsf{D}_{\boldsymbol{x}}\bigcap\mathsf{N}_{\boldsymbol{x}}\right)\equiv1$,
$F\left(\cdot,\boldsymbol{W}\left(\omega\right)\right)-\left(\bullet\right)$
is differentiable at $\left(\boldsymbol{x},y_{\boldsymbol{x}}\right)$,
and ${\cal R}$ is differentiable at $F\left(\boldsymbol{x},\boldsymbol{W}\left(\omega\right)\right)-y_{\boldsymbol{x}}$,
implying that the composite function ${\cal R}\left(F\left(\cdot,\boldsymbol{W}\left(\omega\right)\right)-\left(\bullet\right)\right)$
is differentiable at $\left(\boldsymbol{x},y_{\boldsymbol{x}}\right)$,
as well. In other words, we have shown that the function ${\cal R}\left(F\left(\cdot,\boldsymbol{W}\left(\omega\right)\right)-\left(\bullet\right)\right)$
is differentiable at each arbitrary point $\left(\boldsymbol{x},y_{\boldsymbol{x}}\right)$
in the set $\mathrm{Graph}_{{\cal X}}\left(\mathbb{E}\left\{ F\left(\cdot,\boldsymbol{W}\right)\right\} \right)$,
for ${\cal P}-$almost every $\omega\in\Omega$. And since the function
$\left(\cdot\right)^{p}$ is differentiable everywhere on $\mathbb{R}$,
the preceding statement also holds for $\left({\cal R}\left(F\left(\cdot,\boldsymbol{W}\right)-\left(\bullet\right)\right)\right)^{p}$.

Further, let us determine the structure of the subdifferential of
$\left({\cal R}\left(F\left(\cdot,\boldsymbol{W}\right)-\left(\bullet\right)\right)\right)^{p}$.
Simply, because the functions ${\cal R}\left(F\left(\cdot,\boldsymbol{W}\right)-\left(\bullet\right)\right)$
and $\left(\cdot\right)^{p}$ are convex, with the latter being nondecreasing,
any subgradient of $\left({\cal R}\left(F\left(\cdot,\boldsymbol{W}\right)-\left(\bullet\right)\right)\right)^{p}$
may be expressed as
\begin{equation}
\underline{\nabla}\left({\cal R}\left(F\left(\boldsymbol{x},\boldsymbol{W}\right)-y\right)\right)^{p}=p\left({\cal R}\left(F\left(\boldsymbol{x},\boldsymbol{W}\right)-y\right)\right)^{p-1}\underline{\nabla}\left[{\cal R}\left(F\left(\boldsymbol{x},\boldsymbol{W}\right)-y\right)\right],
\end{equation}
for all $\left(\boldsymbol{x},y\right)\in\mathrm{Graph}_{{\cal X}}\left(\mathbb{E}\left\{ F\left(\cdot,\boldsymbol{W}\right)\right\} \right)$
(at least), where $\underline{\nabla}\left[{\cal R}\left(\cdot,\boldsymbol{W}-\left(\bullet\right)\right)\right]$
denotes \textit{any} subgradient of ${\cal R}\left(\cdot,\boldsymbol{W}-\left(\bullet\right)\right)$.
This is a direct application of the composition rule in subgradient
calculus. Likewise, another application of the composition rule to
the function ${\cal R}\left(F\left(\cdot,\boldsymbol{W}\right)-\left(\bullet\right)\right)$
yields
\begin{equation}
\underline{\nabla}\left[{\cal R}\left(F\left(\cdot,\boldsymbol{W}\right)-y\right)\right]=\underline{\nabla}{\cal R}\left(F\left(\boldsymbol{x},\boldsymbol{W}\right)-y\right)\left[\begin{array}{c}
\vphantom{{\displaystyle \int}}\underline{\nabla}F\left(\boldsymbol{x},\boldsymbol{W}\right)\\
\hline \vphantom{{\displaystyle \int}}-1
\end{array}\right]
\end{equation}
and, thus,
\begin{equation}
\underline{\nabla}\left({\cal R}\left(F\left(\boldsymbol{x},\boldsymbol{W}\right)-y\right)\right)^{p}\equiv p\left({\cal R}\left(F\left(\boldsymbol{x},\boldsymbol{W}\right)-y\right)\right)^{p-1}\underline{\nabla}{\cal R}\left(F\left(\boldsymbol{x},\boldsymbol{W}\right)-y\right)\left[\begin{array}{c}
\vphantom{{\displaystyle \int}}\underline{\nabla}F\left(\boldsymbol{x},\boldsymbol{W}\right)\\
\hline \vphantom{{\displaystyle \int}}-1
\end{array}\right],
\end{equation}
for all $\left(\boldsymbol{x},y\right)\in\mathrm{Graph}_{{\cal X}}\left(\mathbb{E}\left\{ F\left(\cdot,\boldsymbol{W}\right)\right\} \right)$.

We may now invoke (\citep{ShapiroLectures_2ND}, Theorem 7.51) for
each $\left(\boldsymbol{x},y\right)\in\mathrm{Graph}_{{\cal X}}\left(\mathbb{E}\left\{ F\left(\cdot,\boldsymbol{W}\right)\right\} \right)$,
from where we obtain that the function $g^{\widetilde{F}}$ is differentiable
\textit{everywhere} on $\mathrm{Graph}_{{\cal X}}\left(\mathbb{E}\left\{ F\left(\cdot,\boldsymbol{W}\right)\right\} \right)$
and that its gradient is given by
\begin{flalign}
\nabla g^{\widetilde{F}}\left(\boldsymbol{x},y\right) & \equiv\nabla\mathbb{E}\left\{ \left({\cal R}\left(F\left(\boldsymbol{x},\boldsymbol{W}\right)-y\right)\right)^{p}\right\} \nonumber \\
 & \equiv\mathbb{E}\left\{ \underline{\nabla}\left({\cal R}\left(F\left(\boldsymbol{x},\boldsymbol{W}\right)-y\right)\right)^{p}\right\} \nonumber \\
 & \equiv\mathbb{E}\left\{ p\left({\cal R}\left(F\left(\boldsymbol{x},\boldsymbol{W}\right)-y\right)\right)^{p-1}\underline{\nabla}{\cal R}\left(F\left(\boldsymbol{x},\boldsymbol{W}\right)-y\right)\left[\begin{array}{c}
\vphantom{{\displaystyle \int}}\underline{\nabla}F\left(\boldsymbol{x},\boldsymbol{W}\right)\\
\hline \vphantom{{\displaystyle \int}}-1
\end{array}\right]\right\} ,\label{eq:g_grad}
\end{flalign}
and we are done, since (\ref{eq:g_grad}) is the same as (\ref{eq:GRAD_MUS1}).

As far as the derivative of $\varrho$ is concerned, if $p\in\left(1,\infty\right)$
(if not, $\varrho$ is the identity), it exists everywhere on the
nonnegative semiaxis, except for the origin, and (\ref{eq:rho_MUS1})
is obviously true. Thus, from (\ref{eq:comp_rep_proof}), it is clear
that we should demand that
\begin{equation}
g^{\widetilde{F}}\left(\boldsymbol{h}^{\widetilde{F}}\left(\boldsymbol{x}\right)\right)\equiv\mathbb{E}\left\{ \left({\cal R}\left(F\left(\boldsymbol{x},\boldsymbol{W}\right)-\mathbb{E}\left\{ F\left(\boldsymbol{x},\boldsymbol{W}\right)\right\} \right)\right)^{p}\right\} >0,\quad\forall\boldsymbol{x}\in{\cal X}.
\end{equation}
Fix $\boldsymbol{x}\in{\cal X}$. Of course, because ${\cal R}$ is
nonnegative on $\mathbb{R}$, it is true that
\begin{equation}
\mathbb{E}\left\{ \left({\cal R}\left(F\left(\boldsymbol{x},\boldsymbol{W}\right)\hspace{-1pt}-\mathbb{E}\left\{ F\left(\boldsymbol{x},\boldsymbol{W}\right)\right\} \right)\right)^{p}\right\} \equiv0\iff{\cal R}\left(F\left(\boldsymbol{x},\boldsymbol{W}\right)\hspace{-1pt}-\mathbb{E}\left\{ F\left(\boldsymbol{x},\boldsymbol{W}\right)\right\} \right)\equiv0,\;{\cal P}-a.e.\label{eq:0_AE}
\end{equation}
Since, additionally, ${\cal R}$ is nondecreasing on $\mathbb{R}$,
the RHS statement of $\eqref{eq:0_AE}$ is in turn \textit{equivalent
to}
\begin{equation}
F\left(\boldsymbol{x},\boldsymbol{W}\right)-\mathbb{E}\left\{ F\left(\boldsymbol{x},\boldsymbol{W}\right)\right\} \le\sup\left\{ x\in\mathbb{R}\left|{\cal R}\left(x\right)\equiv0\right.\right\} \triangleq\kappa_{{\cal R}}\in\overline{\mathbb{R}},\quad{\cal P}-a.e.,
\end{equation}
where, in general, $\kappa_{{\cal R}}\equiv+\infty$ if and only if
${\cal R}\left(x\right)\equiv0$, for all $x\in\mathbb{R}$. However,
recall that, by assumption, ${\cal R}$ is not identically equal to
zero everywhere on $\mathbb{R}$; thus, $\kappa_{{\cal R}}\in\left[-\infty,\infty\right)$.
Finally, we have shown that
\begin{equation}
g^{\widetilde{F}}\left(\boldsymbol{h}^{\widetilde{F}}\left(\boldsymbol{x}\right)\right)\equiv0\iff{\cal P}\left(F\left(\boldsymbol{x},\boldsymbol{W}\right)-\mathbb{E}\left\{ F\left(\boldsymbol{x},\boldsymbol{W}\right)\right\} \le\kappa_{{\cal R}}\right)\equiv1,
\end{equation}
which means that, if ${\cal P}\left(F\left(\boldsymbol{x},\boldsymbol{W}\right)-\mathbb{E}\left\{ F\left(\boldsymbol{x},\boldsymbol{W}\right)\right\} \le\kappa_{{\cal R}}\right)<1$,
where $\kappa_{{\cal R}}$ is fixed in $\left[-\infty,\infty\right)$,
then
\begin{equation}
g^{\widetilde{F}}\left(\boldsymbol{h}^{\widetilde{F}}\left(\boldsymbol{x}\right)\right)\neq0\implies g^{\widetilde{F}}\left(\boldsymbol{h}^{\widetilde{F}}\left(\boldsymbol{x}\right)\right)>0.
\end{equation}
Enough said.\hfill{}\ensuremath{\blacksquare}

\subsection{\label{subsec:Some-Auxiliary-Results}Some Auxiliary Results}

In this subsection, let us state some basic, elementary results stemming
from Assumption \ref{assu:F_AS_Main}, which will be helpful in both
the further characterization of condition ${\bf C3}$ of Assumption
\ref{assu:F_AS_Main}, and the asymptotic analysis of the \textit{$\textit{MESSAGE}^{p}$}
algorithm. First, a direct, but very useful consequence of condition
$\mathbf{C1}$ is summarized in the next proposition.
\begin{lem}
\textbf{\textup{(}}$\mathbb{E}\left\{ F\left(\cdot,\boldsymbol{W}\right)\right\} $\textbf{\textup{
is Lipschitz on ${\cal X}$ \& More)\label{prop:EF_LIP}}} Let condition
$\mathbf{C1}$ of Assumption \ref{assu:F_AS_1} be in effect. Then,
the functions $F\left(\cdot,\boldsymbol{W}\right)$ and $\mathbb{E}\left\{ F\left(\cdot,\boldsymbol{W}\right)\right\} $
satisfy
\begin{flalign}
\left|\mathbb{E}\left\{ F\left(\boldsymbol{x}_{1},\hspace{-1pt}\boldsymbol{W}\right)\right\} -\mathbb{E}\left\{ F\left(\boldsymbol{x}_{2},\hspace{-1pt}\boldsymbol{W}\right)\right\} \right| & \le G\left\Vert \boldsymbol{x}_{1}-\boldsymbol{x}_{2}\right\Vert _{2}\quad\text{and}\label{eq:EF_LIP_1}\\
\mathbb{E}\left\{ \left|F\left(\boldsymbol{x}_{1},\hspace{-1pt}\boldsymbol{W}\right)-F\left(\boldsymbol{x}_{2},\hspace{-1pt}\boldsymbol{W}\right)\right|\right\}  & \le2G\left\Vert \boldsymbol{x}_{1}-\boldsymbol{x}_{2}\right\Vert _{2},\label{eq:EF_LIP_2}
\end{flalign}
for all $\left(\boldsymbol{x}_{1},\boldsymbol{x}_{2}\right)\in{\cal X}\times{\cal X}$.
\end{lem}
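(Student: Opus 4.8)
The plan is to reduce both estimates to the single first-moment consequence of condition $\mathbf{C1}$, namely $\sup_{\boldsymbol{x}\in{\cal X}}\mathbb{E}\{\|\underline{\nabla}F(\boldsymbol{x},\boldsymbol{W})\|_2\}\le G$. This follows at once from the bound in $\mathbf{C1}$ together with the monotonicity of Lebesgue norms on a probability space: since $P\ge2\ge1$, the ${\cal L}_1$-norm of the random variable $\|\underline{\nabla}F(\boldsymbol{x},\boldsymbol{W})\|_2$ is dominated by its ${\cal L}_P$-norm, which is bounded by $G$ uniformly over ${\cal X}$. Both displayed inequalities then follow by exploiting the convexity of $F(\cdot,\boldsymbol{W})$ through the subgradient inequality.

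For (\ref{eq:EF_LIP_2}), I would argue pathwise first. Fixing $\boldsymbol{w}\in\mathbb{R}^M$, convexity and subdifferentiability of $F(\cdot,\boldsymbol{w})$ give the subgradient inequality at both $\boldsymbol{x}_1$ and $\boldsymbol{x}_2$; combining the two and applying Cauchy--Schwarz to each inner product yields the deterministic bound $|F(\boldsymbol{x}_1,\boldsymbol{w})-F(\boldsymbol{x}_2,\boldsymbol{w})|\le\max\{\|\underline{\nabla}F(\boldsymbol{x}_1,\boldsymbol{w})\|_2,\|\underline{\nabla}F(\boldsymbol{x}_2,\boldsymbol{w})\|_2\}\,\|\boldsymbol{x}_1-\boldsymbol{x}_2\|_2$. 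Taking expectations, bounding the maximum by the sum of the two subgradient norms, and invoking the first-moment bound on each term then produces (\ref{eq:EF_LIP_2}) with the constant $2G$.

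The sharper constant $G$ in (\ref{eq:EF_LIP_1}) requires integrating \emph{before}, rather than after, taking the norm. I would take expectations directly in the subgradient inequality for $F(\cdot,\boldsymbol{W})$, which exhibits $\mathbb{E}\{\underline{\nabla}F(\boldsymbol{x}_2,\boldsymbol{W})\}$ as a subgradient of the convex function $\mathbb{E}\{F(\cdot,\boldsymbol{W})\}$ at $\boldsymbol{x}_2$; the interchange of expectation with the subgradient is licensed exactly as in the proof of Lemma~\ref{lem:Sub_Grad}, through Theorem 7.51 of \citep{ShapiroLectures_2ND}. Applying Cauchy--Schwarz and then the Jensen-type inequality $\|\mathbb{E}\{\underline{\nabla}F(\boldsymbol{x}_2,\boldsymbol{W})\}\|_2\le\mathbb{E}\{\|\underline{\nabla}F(\boldsymbol{x}_2,\boldsymbol{W})\|_2\}\le G$ bounds the linear term, and repeating the argument with the roles of $\boldsymbol{x}_1$ and $\boldsymbol{x}_2$ interchanged yields the two-sided estimate with constant $G$.

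The only delicate point is the factor-of-two bookkeeping that separates the two inequalities: the gap is precisely the difference between bounding $\|\mathbb{E}\{\underline{\nabla}F\}\|_2$, which costs only $G$, and bounding $\mathbb{E}\{|F(\boldsymbol{x}_1,\boldsymbol{W})-F(\boldsymbol{x}_2,\boldsymbol{W})|\}$, where the two subgradient contributions cannot be cancelled before integrating and one must pay $2G$. Measurability of the chosen subgradient selection, needed for all the expectations above to be well defined, is furnished by Assumption~\ref{assu:F_AS_1_SUB}.
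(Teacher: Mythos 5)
Your proposal is correct and follows essentially the same route as the paper: for (\ref{eq:EF_LIP_2}), the paper likewise applies the pathwise subgradient inequality at both points, bounds the increment via Cauchy--Schwarz by $\left\Vert \boldsymbol{x}_{1}-\boldsymbol{x}_{2}\right\Vert _{2}$ times the sum of the two subgradient norms, and then integrates, paying $2G$ through condition $\mathbf{C1}$ and the monotonicity of ${\cal L}_{p}$-norms on a probability space (your intermediate maximum bound is immediately relaxed to the same sum, so the difference is cosmetic). The paper omits the proof of (\ref{eq:EF_LIP_1}) as straightforward, and your argument---that $\mathbb{E}\left\{ \underline{\nabla}F\left(\boldsymbol{x}_{2},\boldsymbol{W}\right)\right\} $ is a subgradient of $\mathbb{E}\left\{ F\left(\cdot,\boldsymbol{W}\right)\right\} $ at $\boldsymbol{x}_{2}$, followed by Cauchy--Schwarz and Jensen---is exactly the intended one. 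One small refinement: you do not need Theorem 7.51 of \citep{ShapiroLectures_2ND} (nor the machinery of Lemma \ref{lem:Sub_Grad}) to license this step; simply integrating the pointwise subgradient inequality term by term suffices, since all terms are integrable by $\mathbf{C1}$ and $F\left(\boldsymbol{x},\cdot\right)\in{\cal Z}_{q}\subseteq{\cal Z}_{1}$, and this avoids importing the differentiability hypotheses $\mathbf{P1}$--$\mathbf{P2}$ of Assumption \ref{assu:F_AS_1}, which the lemma does not presuppose.
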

\begin{proof}[Proof of Lemma \ref{prop:EF_LIP}]
Proof of (\ref{eq:EF_LIP_1}) is straightforward, and omitted. To
prove (\ref{eq:EF_LIP_2}), we use the definition of a subgradient
and our assumption that the random cost function $F\left(\cdot,\hspace{-1pt}\boldsymbol{W}\right)$
is measurably subdifferentiable on ${\cal X}$. For every $\left(\boldsymbol{x}_{1},\boldsymbol{x}_{2}\right)\in{\cal X}\times{\cal X}$
and everywhere on $\Omega$, it is true that
\begin{equation}
F\left(\boldsymbol{x}_{1},\hspace{-1pt}\boldsymbol{W}\right)-F\left(\boldsymbol{x}_{2},\hspace{-1pt}\boldsymbol{W}\right)\ge\left(\boldsymbol{x}_{1}-\boldsymbol{x}_{2}\right)^{\boldsymbol{T}}\underline{\nabla}F\left(\boldsymbol{x}_{2},\hspace{-1pt}\boldsymbol{W}\right),
\end{equation}
implying that
\begin{flalign}
F\left(\boldsymbol{x}_{2},\hspace{-1pt}\boldsymbol{W}\right)-F\left(\boldsymbol{x}_{1},\hspace{-1pt}\boldsymbol{W}\right) & \le\left(\boldsymbol{x}_{2}-\boldsymbol{x}_{1}\right)^{\boldsymbol{T}}\underline{\nabla}F\left(\boldsymbol{x}_{2},\hspace{-1pt}\boldsymbol{W}\right)\nonumber \\
 & \le\left\Vert \boldsymbol{x}_{2}-\boldsymbol{x}_{1}\right\Vert _{2}\left\Vert \underline{\nabla}F\left(\boldsymbol{x}_{2},\hspace{-1pt}\boldsymbol{W}\right)\right\Vert _{2}.
\end{flalign}
Since $\boldsymbol{x}_{1}$ and $\boldsymbol{x}_{2}$ are arbitrary,
it follows by symmetry that
\begin{flalign}
-\left(F\left(\boldsymbol{x}_{2},\hspace{-1pt}\boldsymbol{W}\right)-F\left(\boldsymbol{x}_{1},\hspace{-1pt}\boldsymbol{W}\right)\right) & \le\left\Vert \boldsymbol{x}_{2}-\boldsymbol{x}_{1}\right\Vert _{2}\left\Vert \underline{\nabla}F\left(\boldsymbol{x}_{1},\hspace{-1pt}\boldsymbol{W}\right)\right\Vert _{2}.
\end{flalign}
Consequently, we may write
\begin{equation}
\left|F\left(\boldsymbol{x}_{2},\hspace{-1pt}\boldsymbol{W}\right)-F\left(\boldsymbol{x}_{1},\hspace{-1pt}\boldsymbol{W}\right)\right|\le\left\Vert \boldsymbol{x}_{2}-\boldsymbol{x}_{1}\right\Vert _{2}\left(\left\Vert \underline{\nabla}F\left(\boldsymbol{x}_{1},\hspace{-1pt}\boldsymbol{W}\right)\right\Vert _{2}+\left\Vert \underline{\nabla}F\left(\boldsymbol{x}_{2},\hspace{-1pt}\boldsymbol{W}\right)\right\Vert _{2}\right),\label{eq:ELA_1}
\end{equation}
and taking expectations on both sides of (\ref{eq:ELA_1}) yields
\begin{flalign}
\mathbb{E}\left\{ \left|F\left(\boldsymbol{x}_{2},\hspace{-1pt}\boldsymbol{W}\right)-F\left(\boldsymbol{x}_{1},\hspace{-1pt}\boldsymbol{W}\right)\right|\right\}  & \le\left\Vert \boldsymbol{x}_{2}-\boldsymbol{x}_{1}\right\Vert _{2}\mathbb{E}\left\{ \left\Vert \underline{\nabla}F\left(\boldsymbol{x}_{1},\hspace{-1pt}\boldsymbol{W}\right)\right\Vert _{2}+\left\Vert \underline{\nabla}F\left(\boldsymbol{x}_{2},\hspace{-1pt}\boldsymbol{W}\right)\right\Vert _{2}\right\} \nonumber \\
 & \equiv\left\Vert \boldsymbol{x}_{2}-\boldsymbol{x}_{1}\right\Vert _{2}\left(\mathbb{E}\left\{ \left\Vert \underline{\nabla}F\left(\boldsymbol{x}_{1},\hspace{-1pt}\boldsymbol{W}\right)\right\Vert _{2}\right\} +\mathbb{E}\left\{ \left\Vert \underline{\nabla}F\left(\boldsymbol{x}_{2},\hspace{-1pt}\boldsymbol{W}\right)\right\Vert _{2}\right\} \right)\nonumber \\
 & \le\left\Vert \boldsymbol{x}_{2}-\boldsymbol{x}_{1}\right\Vert _{2}\left(\left\Vert \vphantom{\varint}\hspace{-2pt}\left\Vert \underline{\nabla}F\left(\boldsymbol{x}_{1},\hspace{-1pt}\boldsymbol{W}\right)\right\Vert _{2}\right\Vert _{{\cal L}_{P}}+\left\Vert \vphantom{\varint}\hspace{-2pt}\left\Vert \underline{\nabla}F\left(\boldsymbol{x}_{2},\hspace{-1pt}\boldsymbol{W}\right)\right\Vert _{2}\right\Vert _{{\cal L}_{P}}\right)\nonumber \\
 & \le\left\Vert \boldsymbol{x}_{2}-\boldsymbol{x}_{1}\right\Vert _{2}2G,
\end{flalign}
where we have exploited condition $\mathbf{C1}$. The claim is proved.
\end{proof}
Second, the next result is on the boundedness of the processes generated
by the \textit{$\textit{MESSAGE}^{p}$} algorithm, when $p>1$. It
is based on condition $\mathbf{C4}$ of Assumption \ref{assu:F_AS_Main},
as well as Assumption \ref{assu:Initial-Values}.
\begin{lem}
\textbf{\textup{(Case $p>1$: Iterate Boundedness)}}\label{lem:Iterate_Boundedness}
Fix $p>1$, let condition $\mathbf{C4}$ of Assumption \ref{assu:F_AS_Main}
be in effect. Also, choose $y^{0}$, $\beta_{0}$ and $z^{0}$, $\gamma_{0}$
according to Assumption \ref{assu:Initial-Values} and suppose that
$\beta_{n}\in\left(0,1\right]$, $\gamma_{n}\in\left(0,1\right]$,
for all $n\in\mathbb{N}$. Then, the composite process $\left\{ \left(\boldsymbol{x}^{n},y^{n},z^{n}\right)\right\} _{n\in\mathbb{N}}$
generated by the $\textit{MESSAGE}^{p}$ algorithm satisfies the uniform
pointwise bounds
\begin{flalign}
y^{n+1} & \in\left[m_{l},m_{h}\right]\\
z^{n+1} & \in\left[\varepsilon^{p},{\cal E}^{p}\right]\\
{\cal R}\hspace{-1pt}\left(\hspace{-1pt}F\left(\boldsymbol{x}^{n},\hspace{-1pt}\boldsymbol{W}_{2}^{n+1}\right)-y^{n}\right) & \in\left[\varepsilon,{\cal E}\right]\quad\text{and}\\
{\cal R}\hspace{-1pt}\left(\hspace{-1pt}F\left(\boldsymbol{x}^{n},\hspace{-1pt}\boldsymbol{W}_{2}^{n+1}\right)-\mathbb{E}\left\{ F\left(\boldsymbol{x}^{n},\boldsymbol{W}'\right)\right\} \right) & \in\left[\varepsilon,{\cal E}\right],\quad\forall n\in\mathbb{N},
\end{flalign}
almost everywhere relative to ${\cal P}$, where $\boldsymbol{W}'\sim{\cal P}_{\boldsymbol{W}}$.
\end{lem}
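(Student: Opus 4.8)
The plan is to argue by induction on $n$, exploiting the single structural feature that drives everything: when $\beta_n,\gamma_n\in(0,1]$, each of the first two SA updates is an \emph{affine interpolation} (a convex combination) of the previous iterate and a freshly sampled quantity, and a convex combination of two points of a closed interval stays in that interval. The relevant intervals are $[m_l,m_h]$ for the $y$-recursion and $[\varepsilon^p,\mathcal{E}^p]$ for the $z$-recursion. The only external inputs needed are the almost-everywhere bound $F(\boldsymbol{x},\boldsymbol{W})\in[m_l,m_h]$, valid for every $\boldsymbol{x}\in\mathcal{X}$ by the very definition of $m_l,m_h$ on $\Omega_E$ and finite by condition $\mathbf{C4}$, together with the monotonicity of $\mathcal{R}$ (condition $\mathbf{S3}$), which with $\mathbf{C4}$ maps $\mathfrak{R}^{\widetilde{F}}=[m_l-m_h,\,m_h-m_l]$ into $[\varepsilon,\mathcal{E}]$.

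First I would establish $y^{n+1}\in[m_l,m_h]$. Since $F(\boldsymbol{x}^n,\boldsymbol{W}_1^{n+1})\in[m_l,m_h]$ a.e. and $y^{n+1}=(1-\beta_n)y^n+\beta_n F(\boldsymbol{x}^n,\boldsymbol{W}_1^{n+1})$ with $\beta_n\in(0,1]$, the inductive step is immediate once $y^n\in[m_l,m_h]$; the base case is furnished by Assumption \ref{assu:Initial-Values}, since either $y^0\in[m_l,m_h]$ (and the interpolation argument applies from $n=0$), or $\beta_0\equiv1$, forcing $y^1=F(\boldsymbol{x}^0,\boldsymbol{W}_1^{1})\in[m_l,m_h]$ irrespective of $y^0$ so that the induction proceeds from $n=1$. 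Next, given $y^n\in[m_l,m_h]$, the a.e. bound on $F$ yields $F(\boldsymbol{x}^n,\boldsymbol{W}_2^{n+1})-y^n\in\mathfrak{R}^{\widetilde{F}}$ a.e., whence monotonicity of $\mathcal{R}$ and $\mathbf{C4}$ give $\mathcal{R}(F(\boldsymbol{x}^n,\boldsymbol{W}_2^{n+1})-y^n)\in[\varepsilon,\mathcal{E}]$ and therefore its $p$-th power lies in $[\varepsilon^p,\mathcal{E}^p]$. Running the same reasoning with $\mathcal{S}^{\widetilde{F}}(\boldsymbol{x}^n)=\mathbb{E}\{F(\boldsymbol{x}^n,\boldsymbol{W}')\}$ in place of $y^n$ settles the fourth bound unconditionally, for \emph{every} $n$ including $n=0$, because integrating the a.e. bound on $F$ over $\boldsymbol{W}'$ places $\mathcal{S}^{\widetilde{F}}(\boldsymbol{x}^n)$ in $[m_l,m_h]$ with no reference to any iterate. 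The $z$-bound then follows by the identical interpolation step: $z^{n+1}=(1-\gamma_n)z^n+\gamma_n(\mathcal{R}(F(\boldsymbol{x}^n,\boldsymbol{W}_2^{n+1})-y^n))^p$ convex-combines $z^n\in[\varepsilon^p,\mathcal{E}^p]$ with a point of $[\varepsilon^p,\mathcal{E}^p]$, the base case again seeded by Assumption \ref{assu:Initial-Values} (either $z^0\in[\varepsilon^p,\mathcal{E}^p]$, or $\gamma_0\equiv1$, which sets $z^1$ equal to the sampled $p$-th power outright).

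The main obstacle is not the inductive step, which is entirely routine, but the careful bookkeeping of the base case $n=0$ for those two claims written in terms of $y^n$ rather than $\mathcal{S}^{\widetilde{F}}(\boldsymbol{x}^n)$: these involve $y^0$, which is unconstrained in the branch $\beta_0\equiv1$, so $\mathcal{R}(F(\boldsymbol{x}^0,\boldsymbol{W}_2^{1})-y^0)$ and the interpolated $z^1$ could in principle leave $[\varepsilon,\mathcal{E}]$ and $[\varepsilon^p,\mathcal{E}^p]$ at that single index. The resolution I would adopt is to track the admissible initial configurations jointly rather than separately: the $y$-branch and the $z$-branch of Assumption \ref{assu:Initial-Values} are precisely the hypotheses that seed each recursion, and the $n=0$ instance of the $z$- and $\mathcal{R}$-bounds is validated under exactly the configurations that retain $y^0\in[m_l,m_h]$ or reset the recursion, after which the convex-combination/monotonicity propagation carries all four bounds to arbitrary $n$. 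Everything else reduces to the two elementary facts that a closed interval is stable under convex combination and that a nondecreasing $\mathcal{R}$ respects the endpoints of $\mathfrak{R}^{\widetilde{F}}$, neither of which requires any assumption beyond $\mathbf{C4}$, $\mathbf{S3}$, and $\beta_n,\gamma_n\in(0,1]$.
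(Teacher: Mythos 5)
Your proof is, in its core mechanism, exactly the paper's own argument: induct on $n$, read the first two SA updates as convex combinations of points of the closed intervals $\left[m_{l},m_{h}\right]$ and $\left[\varepsilon^{p},{\cal E}^{p}\right]$, place the sampled terms in those intervals via the a.e.\ bound $F\left(\boldsymbol{x},\boldsymbol{W}\right)\in\left[m_{l},m_{h}\right]$ together with monotonicity of ${\cal R}$ (condition $\mathbf{S3}$) and condition $\mathbf{C4}$, and seed the induction with Assumption \ref{assu:Initial-Values}. On that score the proposal is correct and matches the paper.

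The one place you go beyond the paper is the $n\equiv0$ corner case, and your instinct there is sound: the paper's own display for $z^{1}$ lower-bounds the sampled term $\left({\cal R}\left(F\left(\boldsymbol{x}^{0},\boldsymbol{W}_{2}^{1}\right)-y^{0}\right)\right)^{p}$ by $\varepsilon^{p}$ in \emph{both} of its branches, which tacitly requires $y^{0}\in\left[m_{l},m_{h}\right]$ even in the branch where only $\beta_{0}\equiv1$ is assumed; so the defect you flag is present, silently, in the paper's proof as well. Be aware, however, that your proposed resolution does not actually close it: the disjunct ``or reset the recursion'' is false. In the configuration $\beta_{0}\equiv1$ with $y^{0}\notin\left[m_{l},m_{h}\right]$, which Assumption \ref{assu:Initial-Values} permits, the third claim at $n\equiv0$ genuinely fails (take $y^{0}$ far below $m_{l}$, so that ${\cal R}\left(F\left(\boldsymbol{x}^{0},\boldsymbol{W}_{2}^{1}\right)-y^{0}\right)>{\cal E}$), and the second claim can then fail for \emph{every} $n$: with $\gamma_{0}\equiv1$ one gets $z^{1}>{\cal E}^{p}$, after which $z^{n+1}-{\cal E}^{p}\le\left(1-\gamma_{n}\right)\left(z^{n}-{\cal E}^{p}\right)$, a violation that decays geometrically but never vanishes while $\gamma_{n}<1$. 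So no bookkeeping rescues the statement verbatim in that branch; the honest repair is either to require $y^{0}\in\left[m_{l},m_{h}\right]$ whenever the two claims involving $y^{n}$ are wanted at $n\equiv0$, or to assert those bounds only from the first index at which the seeding has taken effect (which is all the downstream lemmata actually need, up to adjusted constants). This is a blemish of the lemma and of the paper's proof, not of your inductive argument, which is fine.
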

\begin{proof}[Proof of Lemma \ref{lem:Iterate_Boundedness}]
Let us start with $\left\{ \left(y^{n+1}\right)\right\} _{n\in\mathbb{N}}$.
It is true that
\begin{flalign}
y^{1} & \equiv\left(1-\beta_{0}\right)y^{0}+\beta_{0}F\left(\boldsymbol{x}^{0},\hspace{-1pt}\boldsymbol{W}_{1}^{1}\right)\nonumber \\
 & \ge\begin{cases}
\left(1-\beta_{0}\right)m_{l}+\beta_{0}m_{l}\equiv m_{l}, & \text{if }y^{0}\in\left[m_{l},m_{h}\right]\\
\left(1-1\right)y^{0}+m_{l}\equiv m_{l}, & \text{if }\beta_{0}\equiv1
\end{cases},\quad{\cal P}-a.e.,
\end{flalign}
and the result follows trivially by induction for all $n\in\mathbb{N}^{+}$,
and the fact that $\mathbb{N}$ is countable. The procedure bounding
$y^{n+1},n\in\mathbb{N}$ from above is exactly the same. Since we
have shown that $y^{n+1}\in\left[m_{l},m_{h}\right]$, for all $n\in\mathbb{N}$,
it also readily follows that
\begin{equation}
\varepsilon\equiv{\cal R}\hspace{-1pt}\left(m_{l}-m_{h}\right)\le{\cal R}\hspace{-1pt}\left(\hspace{-1pt}F\left(\boldsymbol{x}^{n},\hspace{-1pt}\boldsymbol{W}_{2}^{n+1}\right)-y^{n}\right)\le{\cal R}\hspace{-1pt}\left(m_{h}-m_{l}\right)\equiv{\cal E},
\end{equation}
and
\begin{equation}
\varepsilon\le{\cal R}\hspace{-1pt}\left(\hspace{-1pt}F\left(\boldsymbol{x}^{n},\hspace{-1pt}\boldsymbol{W}_{2}^{n+1}\right)-\mathbb{E}\left\{ F\left(\boldsymbol{x}^{n},\boldsymbol{W}'\right)\right\} \right)\le{\cal E},
\end{equation}
as well, almost everywhere relative to ${\cal P}$. As far as $\left\{ \left(z^{n+1}\right)\right\} _{n\in\mathbb{N}}$
is concerned, we work as above, that is,
\begin{flalign}
z^{1} & \equiv\left(1-\gamma_{0}\right)z^{0}+\gamma_{0}\left({\cal R}\hspace{-1pt}\left(\hspace{-1pt}F\left(\boldsymbol{x}^{0},\hspace{-1pt}\boldsymbol{W}_{2}^{1}\right)-y^{0}\right)\right)^{p}\nonumber \\
 & \ge\begin{cases}
\left(1-\gamma_{0}\right)\varepsilon^{p}+\gamma_{0}\varepsilon^{p}\equiv\varepsilon^{p}, & \text{if }z^{0}\in\left[\varepsilon^{p},{\cal E}^{p}\right]\\
\left(1-1\right)z^{0}+\varepsilon^{p}\equiv\varepsilon^{p}, & \text{if }\gamma_{0}\equiv1
\end{cases},\quad{\cal P}-a.e.,
\end{flalign}
and then we use induction, and similarly for the case of the upper
bound.
\end{proof}
Third, another expected, but also useful consequence of condition
$\mathbf{C4}$ is on the expansiveness of the composite function $\left({\cal R}\hspace{-1pt}\left(\left(\cdot\right)-\bullet\right)\right)^{p}$,
as follows.
\begin{lem}
\textbf{\textup{(}}$\left({\cal R}\hspace{-1pt}\left(\left(\cdot\right)-\bullet\right)\right)^{p}$\textbf{\textup{
is Lipschitz)}}\label{lem:P_is_LIP} Fix $p>1$ and let condition
$\mathbf{C4}$ of Assumption \ref{assu:F_AS_Main} be in effect. Then,
it is true that
\begin{flalign}
 & \hspace{-2pt}\hspace{-2pt}\hspace{-2pt}\hspace{-2pt}\hspace{-2pt}\hspace{-2pt}\hspace{-2pt}\hspace{-2pt}\hspace{-2pt}\hspace{-2pt}\hspace{-2pt}\hspace{-2pt}\left|\left({\cal R}\hspace{-1pt}\left(\hspace{-1pt}F\left(\boldsymbol{x}_{1},\hspace{-1pt}\boldsymbol{W}\right)-y_{1}\right)\right)^{p}-\left({\cal R}\hspace{-1pt}\left(\hspace{-1pt}F\left(\boldsymbol{x}_{2},\hspace{-1pt}\boldsymbol{W}'\right)-y_{2}\right)\right)^{p}\right|\nonumber \\
 & \quad\quad\quad\quad\le{\cal E}^{p-1}p\left(\left|F\left(\boldsymbol{x}_{1},\hspace{-1pt}\boldsymbol{W}\right)-F\left(\boldsymbol{x}_{2},\hspace{-1pt}\boldsymbol{W}'\right)\right|+\left|y_{1}-y_{2}\right|\right),
\end{flalign}
almost everywhere relative to ${\cal P}$, for all $\left(\left[\boldsymbol{x}_{1}\,y_{1}\right],\left[\boldsymbol{x}_{2}\,y_{2}\right]\right)\in\left[{\cal X}\times\mathrm{cl}\left\{ \left(m_{l},m_{h}\right)\right\} \right]^{2}$,
where $\boldsymbol{W}':\Omega\rightarrow\mathbb{R}^{M}$ may be taken
as any copy of $\boldsymbol{W}$.
\end{lem}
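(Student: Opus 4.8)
The plan is to reduce the claim to a short chain of three one-dimensional inequalities, the only probabilistic input being the uniform boundedness of $F$ furnished by condition $\mathbf{C4}$. First I would fix an arbitrary admissible pair $([\boldsymbol{x}_1\,y_1],[\boldsymbol{x}_2\,y_2])\in[{\cal X}\times\mathrm{cl}\{(m_l,m_h)\}]^2$ and work on the almost-sure event $\Omega_E$ on which the defining bounds $m_l\le F(\boldsymbol{x},\boldsymbol{W}(\omega))\le m_h$ hold uniformly over ${\cal X}$; since $\boldsymbol{W}'$ is distributed as ${\cal P}_{\boldsymbol{W}}$, the same bounds apply to $F(\cdot,\boldsymbol{W}')$. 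With $y_1,y_2\in\mathrm{cl}\{(m_l,m_h)\}\equiv[m_l,m_h]$, the two arguments $F(\boldsymbol{x}_1,\boldsymbol{W})-y_1$ and $F(\boldsymbol{x}_2,\boldsymbol{W}')-y_2$ both lie in the interval $\mathfrak{R}^{\widetilde{F}}\equiv\mathrm{cl}\{(m_l-m_h,m_h-m_l)\}$. By monotonicity of ${\cal R}$ (condition $\mathbf{S3}$) combined with condition $\mathbf{C4}$, this localizes the two scalars $a\triangleq{\cal R}(F(\boldsymbol{x}_1,\boldsymbol{W})-y_1)$ and $b\triangleq{\cal R}(F(\boldsymbol{x}_2,\boldsymbol{W}')-y_2)$ inside the compact interval $[\varepsilon,{\cal E}]\subset[0,\infty)$, which is the crucial step.

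Next I would bound $|a^p-b^p|$. Since $p>1$, the map $t\mapsto t^p$ is continuously differentiable on $[0,{\cal E}]$ with nondecreasing derivative $p\,t^{p-1}$, so by the mean value theorem there is some $\xi$ between $a$ and $b$ (hence $\xi\in[0,{\cal E}]$) with $a^p-b^p=p\,\xi^{p-1}(a-b)$, giving
\[
\left|a^p-b^p\right|\le p\,{\cal E}^{p-1}\left|a-b\right|.
\]
The finiteness of ${\cal E}$, guaranteed precisely by condition $\mathbf{C4}$, is exactly what keeps this constant finite. Finally I would control $|a-b|$ using nonexpansiveness of ${\cal R}$ (condition $\mathbf{S4}$, equivalently Proposition \ref{prop:CharacterizationR}) followed by the triangle inequality:
\[
\left|a-b\right|\le\left|\left(F(\boldsymbol{x}_1,\boldsymbol{W})-y_1\right)-\left(F(\boldsymbol{x}_2,\boldsymbol{W}')-y_2\right)\right|\le\left|F(\boldsymbol{x}_1,\boldsymbol{W})-F(\boldsymbol{x}_2,\boldsymbol{W}')\right|+\left|y_1-y_2\right|.
\]
Chaining the two displays produces the stated inequality, and since the only stochastic ingredient is the uniform boundedness of $F$ on $\Omega_E$, the estimate holds almost everywhere relative to ${\cal P}$, uniformly over admissible pairs.

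I do not expect a genuine obstacle here: the lemma is essentially a composition of three textbook estimates. The one point requiring mild care is the localization of the arguments inside $\mathfrak{R}^{\widetilde{F}}$, which is what permits the intermediate value $\xi$ to be bounded above by ${\cal E}$; without condition $\mathbf{C4}$ (finite $m_l,m_h$ and finite ${\cal E}$), the power map would fail to be globally Lipschitz and the constant $p\,{\cal E}^{p-1}$ would blow up. It is worth noting that strict positivity of $\varepsilon$, the other half of $\mathbf{C4}$, is not needed for this particular estimate, since $p-1>0$ makes $t^{p-1}$ increasing and only the upper bound ${\cal E}$ enters the constant.
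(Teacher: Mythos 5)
Your proposal is correct and reaches the same conclusion with the same constant, but it obtains the key one-dimensional estimate by a different device than the paper. The paper's proof bounds $\left|a^{p}-b^{p}\right|$ (with $a\triangleq{\cal R}\left(F\left(\boldsymbol{x}_{1},\boldsymbol{W}\right)-y_{1}\right)$, $b\triangleq{\cal R}\left(F\left(\boldsymbol{x}_{2},\boldsymbol{W}'\right)-y_{2}\right)$) via the algebraic telescoping factorization $a^{p}-b^{p}=\left(a-b\right)\sum_{j\in\mathbb{N}_{p-1}}a^{j}b^{p-1-j}$, bounding each of the $p$ summands by ${\cal E}^{p-1}$ using the boundedness supplied by condition $\mathbf{C4}$ (through Lemma \ref{lem:Iterate_Boundedness}), and then invokes nonexpansiveness of ${\cal R}$ exactly as you do. You instead apply the mean value theorem to $t\mapsto t^{p}$ on $\left[0,{\cal E}\right]$, which yields the same Lipschitz constant $p\,{\cal E}^{p-1}$. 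The trade-off is worth noting: the telescoping identity is purely algebraic but is literally valid only when $p$ is a positive integer (the index set $\mathbb{N}_{p-1}$ presupposes this), whereas your MVT argument covers every real $p>1$, which is the generality the paper actually claims ($p\in\left[1,\infty\right)$ throughout). In that sense your route is slightly more robust; both arguments otherwise share the same skeleton (localization of the ${\cal R}$-values in $\left[\varepsilon,{\cal E}\right]$, power-map Lipschitz bound, then $\mathbf{S4}$ plus the triangle inequality), and your closing remark that only the upper bound ${\cal E}$ — not strict positivity of $\varepsilon$ — enters the estimate is accurate and consistent with how the paper uses the lemma.
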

\begin{proof}[Proof of Lemma \ref{lem:P_is_LIP}]
Simply, using a telescoping argument and due to the fact that ${\cal R}$
is nonexpansive, we proceed directly, also exploiting Lemma \ref{lem:Iterate_Boundedness}
(with generic $\boldsymbol{W}$ and $\boldsymbol{W}'$ instead of
$\boldsymbol{W}_{2}^{n+1}$), yielding the inequalities
\begin{flalign}
 & \hspace{-2pt}\hspace{-2pt}\hspace{-2pt}\hspace{-2pt}\hspace{-2pt}\hspace{-2pt}\left|\left({\cal R}\hspace{-1pt}\left(\hspace{-1pt}F\left(\boldsymbol{x}_{1},\hspace{-1pt}\boldsymbol{W}\right)-y_{1}\right)\right)^{p}-\left({\cal R}\hspace{-1pt}\left(\hspace{-1pt}F\left(\boldsymbol{x}_{2},\hspace{-1pt}\boldsymbol{W}'\right)-y_{2}\right)\right)^{p}\right|\nonumber \\
 & \le\left|F\left(\boldsymbol{x}_{1},\hspace{-1pt}\boldsymbol{W}\right)-y_{1}-F\left(\boldsymbol{x}_{2},\hspace{-1pt}\boldsymbol{W}'\right)+y_{2}\right|\sum_{j\in\mathbb{N}_{p-1}}\left({\cal R}\left(\hspace{-1pt}F\hspace{-2pt}\left(\boldsymbol{x}_{1},\boldsymbol{W}\right)-y_{1}\right)\right)^{j}\left({\cal R}\left(F\hspace{-2pt}\left(\boldsymbol{x}_{2},\boldsymbol{W}'\right)-y_{2}\right)\right)^{p-1-j}\nonumber \\
 & \le\left(\left|F\left(\boldsymbol{x}_{1},\hspace{-1pt}\boldsymbol{W}\right)-F\left(\boldsymbol{x}_{2},\hspace{-1pt}\boldsymbol{W}'\right)\right|+\left|y_{1}-y_{2}\right|\right)\sum_{j\in\mathbb{N}_{p-1}}{\cal E}^{j}{\cal E}^{p-1-j}\nonumber \\
 & \equiv\left(\left|F\left(\boldsymbol{x}_{1},\hspace{-1pt}\boldsymbol{W}\right)-F\left(\boldsymbol{x}_{2},\hspace{-1pt}\boldsymbol{W}'\right)\right|+\left|y_{1}-y_{2}\right|\right){\cal E}^{p-1}p,\quad{\cal P}-a.e.,
\end{flalign}
for all $\left(\left[\boldsymbol{x}_{1}\,y_{1}\right],\left[\boldsymbol{x}_{2}\,y_{2}\right]\right)\in\left[{\cal X}\times\mathrm{cl}\left\{ \left(m_{l},m_{h}\right)\right\} \right]^{2}$.
\end{proof}
\begin{rem}
Observe that, since $\boldsymbol{W}'$ may be taken as \textit{any}
copy of $\boldsymbol{W}$ in Lemma \ref{lem:P_is_LIP}, the choice
$\boldsymbol{W}'\equiv\boldsymbol{W}$ is also perfectly valid.\hfill{}\ensuremath{\blacksquare}
\end{rem}

\subsection{\label{subsec:Proof-of-Proposition_5}Proof of Proposition \ref{prop:C3_Valid}}

To show case ${\bf \left(1\right)}$ of the first part of the result,
simply observe that, by assumption, $\underline{\nabla}{\cal R}\equiv\nabla{\cal R}$.
Thus, for every qualifying choice of $Q$, for every $\boldsymbol{x}\in{\cal X}$
and for every $\left(y_{1},y_{2}\right)\in\left[\mathrm{cl}\left\{ \left(m_{l},m_{h}\right)\right\} \right]^{2}$,
we may write
\begin{flalign}
\left\Vert \vphantom{\varint}\hspace{-2pt}\left|\left.\underline{\nabla}\left({\cal R}\left(z\right)\right)^{p}\right|_{z\equiv F\left(\boldsymbol{x},\hspace{-1pt}\boldsymbol{W}\right)-y_{1}}-\left.\underline{\nabla}\left({\cal R}\left(z\right)\right)^{p}\right|_{z\equiv F\left(\boldsymbol{x},\hspace{-1pt}\boldsymbol{W}\right)-y_{2}}\right|\right\Vert _{{\cal L}_{Q}} & \le\left\Vert \vphantom{\varint}D_{{\cal R},p}\left|y_{1}-y_{2}\right|\right\Vert _{{\cal L}_{Q}}\nonumber \\
 & \equiv D_{{\cal R},p}\left|y_{1}-y_{2}\right|,
\end{flalign}
and we are done.

Cases ${\bf \left(2\right)}$ and ${\bf \left(3\right)}$ of the result
will be based on the cdf-based representation of risk regularizers
(Theorem \ref{thm:IFF_RR}). Without loss of generality, assume that
${\cal R}$ is nonconstant. If it is, the problem is trivial. For
nonconstant ${\cal R}$, Theorem \ref{thm:IFF_RR} implies the existence
of a random variable $Y:\Omega\rightarrow\mathbb{R}$, with $\mathbb{E}\left\{ \left(x-Y\right)_{+}\right\} <\infty$,
with cdf $F_{Y}:\mathbb{R}\rightarrow\left[0,1\right]$, and of a
constant $C_{S}\in\left(0,1\right]$, such that
\begin{equation}
{\cal R}'_{+}\left(x\right)\equiv C_{S}F_{Y}\left(x\right),\quad\forall x\in\mathbb{R}.
\end{equation}
Of course, the random variable $Y$ may be taken as independent of
$F\left(\boldsymbol{x},\hspace{-1pt}\boldsymbol{W}\right)$, for all
$\boldsymbol{x}\in{\cal X}$. First, \textit{whenever} $p>2$, we
have, for every $\boldsymbol{x}\in{\cal X}$ and for every $\left(y_{1},y_{2}\right)\in\left[\mathrm{cl}\left\{ \left(m_{l},m_{h}\right)\right\} \right]^{2}$,
\begin{flalign}
 & \hspace{-2pt}\hspace{-2pt}\hspace{-2pt}\left\Vert \vphantom{\varint}\hspace{-2pt}\left|\left.\underline{\nabla}\left({\cal R}\left(z\right)\right)^{p}\right|_{z\equiv F\left(\boldsymbol{x},\hspace{-1pt}\boldsymbol{W}\right)-y_{1}}-\left.\underline{\nabla}\left({\cal R}\left(z\right)\right)^{p}\right|_{z\equiv F\left(\boldsymbol{x},\hspace{-1pt}\boldsymbol{W}\right)-y_{2}}\right|\right\Vert _{{\cal L}_{1}}\nonumber \\
 & \le p\left\Vert \vphantom{\varint}\hspace{-2pt}\left|\left({\cal R}\left(F\left(\boldsymbol{x},\hspace{-1pt}\boldsymbol{W}\right)\hspace{-2pt}-\hspace{-2pt}y_{1}\right)\right)^{p-1}\underline{\nabla}{\cal R}\left(F\left(\boldsymbol{x},\hspace{-1pt}\boldsymbol{W}\right)\hspace{-2pt}-\hspace{-2pt}y_{1}\right)-\left({\cal R}\left(F\left(\boldsymbol{x},\hspace{-1pt}\boldsymbol{W}\right)\hspace{-2pt}-\hspace{-2pt}y_{2}\right)\right)^{p-1}\underline{\nabla}{\cal R}\left(F\left(\boldsymbol{x},\hspace{-1pt}\boldsymbol{W}\right)\hspace{-2pt}-\hspace{-2pt}y_{2}\right)\right|\right\Vert _{{\cal L}_{1}}\nonumber \\
 & \le p\left\Vert \vphantom{\varint}\hspace{-2pt}\left({\cal R}\left(F\left(\boldsymbol{x},\hspace{-1pt}\boldsymbol{W}\right)-y_{1}\right)\right)^{p-1}\left|\underline{\nabla}{\cal R}\left(F\left(\boldsymbol{x},\hspace{-1pt}\boldsymbol{W}\right)-y_{1}\right)-\underline{\nabla}{\cal R}\left(F\left(\boldsymbol{x},\hspace{-1pt}\boldsymbol{W}\right)-y_{2}\right)\right|\right\Vert _{{\cal L}_{1}}\nonumber \\
 & \quad\quad+p\left\Vert \vphantom{\varint}\underline{\nabla}{\cal R}\left(F\left(\boldsymbol{x},\hspace{-1pt}\boldsymbol{W}\right)-y_{2}\right)\left|\left({\cal R}\left(F\left(\boldsymbol{x},\hspace{-1pt}\boldsymbol{W}\right)-y_{1}\right)\right)^{p-1}-\left({\cal R}\left(F\left(\boldsymbol{x},\hspace{-1pt}\boldsymbol{W}\right)-y_{2}\right)\right)^{p-1}\right|\right\Vert _{{\cal L}_{1}}\label{eq:PropC3_1}\\
 & \le p{\cal E}^{p-1}\left\Vert \vphantom{\varint}\hspace{-2pt}\left|\underline{\nabla}{\cal R}\left(F\left(\boldsymbol{x},\hspace{-1pt}\boldsymbol{W}\right)-y_{1}\right)-\underline{\nabla}{\cal R}\left(F\left(\boldsymbol{x},\hspace{-1pt}\boldsymbol{W}\right)-y_{2}\right)\right|\right\Vert _{{\cal L}_{1}}+p{\cal E}^{p-2}\left(p-1\right)\left|y_{1}-y_{2}\right|,\label{eq:PropC3_2}
\end{flalign}
where (\ref{eq:PropC3_1}) follows by the triangle inequality and
(\ref{eq:PropC3_2}) follows from Lemma \ref{lem:P_is_LIP}. Similarly,
for $p\equiv2$, we get
\begin{flalign}
 & \hspace{-2pt}\hspace{-2pt}\hspace{-2pt}\left\Vert \vphantom{\varint}\hspace{-2pt}\left|\left.\underline{\nabla}\left({\cal R}\left(z\right)\right)^{2}\right|_{z\equiv F\left(\boldsymbol{x},\hspace{-1pt}\boldsymbol{W}\right)-y_{1}}-\left.\underline{\nabla}\left({\cal R}\left(z\right)\right)^{2}\right|_{z\equiv F\left(\boldsymbol{x},\hspace{-1pt}\boldsymbol{W}\right)-y_{2}}\right|\right\Vert _{{\cal L}_{1}}\nonumber \\
 & \le2{\cal E}\left\Vert \vphantom{\varint}\hspace{-2pt}\left|\underline{\nabla}{\cal R}\left(F\left(\boldsymbol{x},\hspace{-1pt}\boldsymbol{W}\right)-y_{1}\right)-\underline{\nabla}{\cal R}\left(F\left(\boldsymbol{x},\hspace{-1pt}\boldsymbol{W}\right)-y_{2}\right)\right|\right\Vert _{{\cal L}_{1}}+2\left|y_{1}-y_{2}\right|,\label{eq:PropC3_2-1}
\end{flalign}
whereas, for $p\equiv1$, no further derivation is needed. As far
the involved ${\cal L}_{1}$-norm is concerned, since $\underline{\nabla}{\cal R}\equiv{\cal R}'_{+}$
by assumption, we may write, for every $\boldsymbol{x}\in{\cal X}$
and for every $\left(y_{1},y_{2}\right)\in\left[\mathrm{cl}\left\{ \left(m_{l},m_{h}\right)\right\} \right]^{2}$,
\begin{flalign}
 & \hspace{-2pt}\hspace{-2pt}\hspace{-2pt}\hspace{-2pt}\hspace{-2pt}\hspace{-2pt}\hspace{-2pt}\hspace{-2pt}\hspace{-2pt}\left\Vert \vphantom{\varint}\hspace{-2pt}\left|\underline{\nabla}{\cal R}\left(F\left(\boldsymbol{x},\hspace{-1pt}\boldsymbol{W}\right)-y_{1}\right)-\underline{\nabla}{\cal R}\left(F\left(\boldsymbol{x},\hspace{-1pt}\boldsymbol{W}\right)-y_{2}\right)\right|\right\Vert _{{\cal L}_{1}}\nonumber \\
 & \equiv\mathbb{E}\left\{ \left|\underline{\nabla}{\cal R}\left(F\left(\boldsymbol{x},\hspace{-1pt}\boldsymbol{W}\right)-y_{1}\right)-\underline{\nabla}{\cal R}\left(F\left(\boldsymbol{x},\hspace{-1pt}\boldsymbol{W}\right)-y_{2}\right)\right|\right\} \nonumber \\
 & =C_{S}\mathbb{E}\left\{ \left|F_{Y}\left(F\left(\boldsymbol{x},\hspace{-1pt}\boldsymbol{W}\right)-y_{1}\right)-F_{Y}\left(F\left(\boldsymbol{x},\hspace{-1pt}\boldsymbol{W}\right)-y_{2}\right)\right|\right\} \\
 & =C_{S}\mathbb{E}\left\{ \left(F_{Y}\left(F\left(\boldsymbol{x},\hspace{-1pt}\boldsymbol{W}\right)-\min\left\{ y_{1},y_{2}\right\} \right)-F_{Y}\left(F\left(\boldsymbol{x},\hspace{-1pt}\boldsymbol{W}\right)-\max\left\{ y_{1},y_{2}\right\} \right)\right)\right\} \nonumber \\
 & \equiv C_{S}\mathbb{E}\left\{ {\cal P}\left(\left.F\left(\boldsymbol{x},\hspace{-1pt}\boldsymbol{W}\right)-\max\left\{ y_{1},y_{2}\right\} <Y\le F\left(\boldsymbol{x},\hspace{-1pt}\boldsymbol{W}\right)-\min\left\{ y_{1},y_{2}\right\} \right|\boldsymbol{W}\right)\right\} \nonumber \\
 & \equiv C_{S}\mathbb{E}\left\{ \int\mathds{1}_{\left(F\left(\boldsymbol{x},\hspace{-1pt}\boldsymbol{W}\right)-\max\left\{ y_{1},y_{2}\right\} ,F\left(\boldsymbol{x},\hspace{-1pt}\boldsymbol{W}\right)-\min\left\{ y_{1},y_{2}\right\} \right]}\left(y\right)\text{d}{\cal P}_{Y}\left(y\right)\right\} \nonumber \\
 & =C_{S}\int\mathbb{E}\left\{ \mathds{1}_{\left(F\left(\boldsymbol{x},\hspace{-1pt}\boldsymbol{W}\right)-\max\left\{ y_{1},y_{2}\right\} ,F\left(\boldsymbol{x},\hspace{-1pt}\boldsymbol{W}\right)-\min\left\{ y_{1},y_{2}\right\} \right]}\left(y\right)\right\} \text{d}{\cal P}_{Y}\left(y\right)\label{eq:FUB}\\
 & \equiv C_{S}\int\mathbb{E}\left\{ \mathds{1}_{\left[y+\min\left\{ y_{1},y_{2}\right\} ,y+\max\left\{ y_{1},y_{2}\right\} \right)}\left(F\left(\boldsymbol{x},\hspace{-1pt}\boldsymbol{W}\right)\right)\right\} \text{d}{\cal P}_{Y}\left(y\right)\nonumber \\
 & \equiv C_{S}\int{\cal P}\left(y+\min\left\{ y_{1},y_{2}\right\} \le F\left(\boldsymbol{x},\hspace{-1pt}\boldsymbol{W}\right)<y+\max\left\{ y_{1},y_{2}\right\} \right)\text{d}{\cal P}_{Y}\left(y\right),
\end{flalign}
where (\ref{eq:FUB}) follows from Fubini's Theorem (the involved
double integral is always finite) on the product measure space $\left(\mathbb{R}\times\mathbb{R},\mathscr{B}\left(\mathbb{R}\right)\otimes\mathscr{B}\left(\mathbb{R}\right),{\cal P}_{Y}\times{\cal P}_{\boldsymbol{W}}^{\boldsymbol{x}}\right)$,
with ${\cal P}_{\boldsymbol{W}}^{\boldsymbol{x}}$ denoting the Borel
measure inducing $F_{\boldsymbol{W}}^{\boldsymbol{x}}$. Exploiting
the assumed \textit{continuity} of $F_{\boldsymbol{W}}^{\boldsymbol{x}}$
(Lipschitz or not), we also have, for every $\boldsymbol{x}\in{\cal X}$,
\begin{flalign}
 & \hspace{-2pt}\hspace{-2pt}\hspace{-2pt}\hspace{-2pt}\hspace{-2pt}\hspace{-2pt}\hspace{-2pt}\hspace{-2pt}\hspace{-2pt}\left\Vert \vphantom{\varint}\hspace{-2pt}\left|\underline{\nabla}{\cal R}\left(F\left(\boldsymbol{x},\hspace{-1pt}\boldsymbol{W}\right)-y_{1}\right)-\underline{\nabla}{\cal R}\left(F\left(\boldsymbol{x},\hspace{-1pt}\boldsymbol{W}\right)-y_{2}\right)\right|\right\Vert _{{\cal L}_{1}}\nonumber \\
 & \equiv C_{S}\int{\cal P}\left(y+\min\left\{ y_{1},y_{2}\right\} <F\left(\boldsymbol{x},\hspace{-1pt}\boldsymbol{W}\right)\le y+\max\left\{ y_{1},y_{2}\right\} \right)\text{d}{\cal P}_{Y}\left(y\right)\nonumber \\
 & \equiv C_{S}\int F_{\boldsymbol{W}}^{\boldsymbol{x}}\left(y+\max\left\{ y_{1},y_{2}\right\} \right)-F_{\boldsymbol{W}}^{\boldsymbol{x}}\left(y+\min\left\{ y_{1},y_{2}\right\} \right)\text{d}{\cal P}_{Y}\left(y\right)\nonumber \\
 & =C_{S}\int\left|F_{\boldsymbol{W}}^{\boldsymbol{x}}\left(y+y_{1}\right)-F_{\boldsymbol{W}}^{\boldsymbol{x}}\left(y+y_{2}\right)\right|\text{d}{\cal P}_{Y}\left(y\right),\label{eq:pre_result}
\end{flalign}
for all $\left(y_{1},y_{2}\right)\in\left[\mathrm{cl}\left\{ \left(m_{l},m_{h}\right)\right\} \right]^{2}$.
If the Lipschitz condition of case ${\bf \left(2\right)}$ is true,
we further have
\begin{flalign}
\left\Vert \vphantom{\varint}\hspace{-2pt}\left|\underline{\nabla}{\cal R}\left(F\left(\boldsymbol{x},\hspace{-1pt}\boldsymbol{W}\right)-y_{1}\right)-\underline{\nabla}{\cal R}\left(F\left(\boldsymbol{x},\hspace{-1pt}\boldsymbol{W}\right)-y_{2}\right)\right|\right\Vert _{{\cal L}_{1}} & \le C_{S}D_{\widetilde{F}}\int\left|y_{1}-y_{2}\right|\text{d}{\cal P}_{Y}\left(y\right)\nonumber \\
 & \equiv C_{S}D_{\widetilde{F}}\left|y_{1}-y_{2}\right|,\label{eq:Pre_1}
\end{flalign}
for all $\left(y_{1},y_{2}\right)\in\left[\mathrm{cl}\left\{ \left(m_{l},m_{h}\right)\right\} \right]^{2}$,
showing that condition $\mathbf{C3}$ is satisfied with
\begin{equation}
D\triangleq\begin{cases}
p{\cal E}^{p-1}C_{S}D_{\widetilde{F}}+p{\cal E}^{p-2}\left(p-1\right), & \text{if }p>2\\
2{\cal E}C_{S}D_{\widetilde{F}}+2, & \text{if }p\equiv2\\
C_{S}D_{\widetilde{F}}, & \text{if }p\equiv1
\end{cases}.
\end{equation}
by taking the supremum of (\ref{eq:Pre_1}) relative to $\boldsymbol{x}$
over ${\cal X}$. If the Lipschitz-in-Expectation condition of case
${\bf \left(3\right)}$ is true, we obtain the desired result of Proposition
\ref{prop:C3_Valid} in exactly the same fashion. In particular, when
$p\equiv1$, the equivalence in case ${\bf \left(3\right)}$ of Proposition
\ref{prop:C3_Valid} follows directly by (\ref{eq:pre_result}), and
the fact that $C_{S}\neq0$. Enough said.\hfill{}\ensuremath{\blacksquare}

\subsection{\label{subsec:RM_1}Proof of Lemma \ref{lem:INTER_1}}

The proof is simple, though somewhat tedious; essentially, it is an
exercise on using the triangle and Cauchy-Schwarz inequalities. First,
observe that, under Assumption \ref{assu:F_AS_Main}, it is true that
\begin{equation}
\sup_{\boldsymbol{x}\in{\cal X}}\left\Vert \hspace{-2pt}\vphantom{\varint}\left\Vert \underline{\nabla}F\left(\boldsymbol{x},\hspace{-1pt}\boldsymbol{W}\right)\right\Vert _{2}\right\Vert _{{\cal L}_{2}}\le\sup_{\boldsymbol{x}\in{\cal X}}\left\Vert \hspace{-2pt}\vphantom{\varint}\left\Vert \underline{\nabla}F\left(\boldsymbol{x},\hspace{-1pt}\boldsymbol{W}\right)\right\Vert _{2}\right\Vert _{{\cal L}_{P}}\le G<\infty,
\end{equation}
for $P\in\left[2,\infty\right]$, due to condition ${\bf C1}$.

\textit{Fix} $n\in\mathbb{N}$ and let $p>1$. Under Assumption \ref{assu:F_AS_Main},
by nonexpansiveness of the projection operator onto the closed and
convex set ${\cal X}$, and by the triangle inequality, we have
\begin{flalign}
\left\Vert \boldsymbol{x}^{n+1}\hspace{-1pt}-\hspace{-1pt}\boldsymbol{x}^{n}\right\Vert _{2} & \equiv\left\Vert \Pi_{{\cal X}}\hspace{-2pt}\left\{ \boldsymbol{x}^{n}\hspace{-2pt}-\hspace{-2pt}\alpha_{n}\hspace{-2pt}\widehat{\nabla}^{n+1}\phi^{\widetilde{F}}\left(\boldsymbol{x}^{n},y^{n},z^{n}\right)\hspace{-2pt}\right\} \hspace{-1pt}\hspace{-1pt}-\hspace{-1pt}\Pi_{{\cal X}}\hspace{-2pt}\left\{ \boldsymbol{x}^{n}\right\} \right\Vert _{2}\nonumber \\
 & \le\left\Vert \alpha_{n}\hspace{-2pt}\widehat{\nabla}^{n+1}\phi^{\widetilde{F}}\left(\boldsymbol{x}^{n},y^{n},z^{n}\right)\right\Vert _{2}\nonumber \\
 & \equiv\alpha_{n}\left\Vert \underline{\nabla}F\hspace{-2pt}\left(\boldsymbol{x}^{n},\hspace{-1pt}\boldsymbol{W}_{2}^{n+1}\right)\hspace{-1pt}\hspace{-2pt}+\hspace{-2pt}c\Delta^{n+1}\left(\boldsymbol{x}^{n},y^{n},z^{n}\right)\right\Vert _{2}\nonumber \\
 & \le\alpha_{n}\left\Vert \underline{\nabla}F\hspace{-2pt}\left(\boldsymbol{x}^{n},\hspace{-1pt}\boldsymbol{W}_{2}^{n+1}\right)\hspace{-1pt}\right\Vert _{2}+\alpha_{n}c\left\Vert \Delta^{n+1}\left(\boldsymbol{x}^{n},y^{n},z^{n}\right)\right\Vert _{2},
\end{flalign}
where, by Lemmata \ref{lem:Iterate_Boundedness} and \ref{lem:P_is_LIP},
\begin{flalign}
\left\Vert \Delta^{n+1}\left(\boldsymbol{x}^{n},y^{n},z^{n}\right)\right\Vert _{2} & \equiv\left\Vert \left(z^{n}\right)^{\left(1-p\right)/p}\left(\underline{\nabla}F\hspace{-2pt}\left(\boldsymbol{x}^{n},\hspace{-1pt}\boldsymbol{W}_{2}^{n+1}\right)-\underline{\nabla}F\hspace{-2pt}\left(\boldsymbol{x}^{n},\hspace{-1pt}\boldsymbol{W}_{1}^{n+1}\right)\right)\right.\nonumber \\
 & \quad\quad\times\left.\underline{\nabla}{\cal R}\hspace{-2pt}\left(F\hspace{-2pt}\left(\boldsymbol{x}^{n},\hspace{-1pt}\boldsymbol{W}_{2}^{n+1}\right)\hspace{-2pt}-y^{n}\right)\hspace{-2pt}\hspace{-2pt}\left({\cal R}\hspace{-2pt}\left(F\hspace{-2pt}\left(\boldsymbol{x}^{n},\hspace{-1pt}\boldsymbol{W}_{2}^{n+1}\right)\hspace{-2pt}-y^{n}\right)\right)^{p-1}\right\Vert _{2}\nonumber \\
 & \equiv\left(\dfrac{{\cal E}}{\varepsilon}\right)^{\hspace{-1pt}p-1}\left\Vert \left(\underline{\nabla}F\hspace{-2pt}\left(\boldsymbol{x}^{n},\hspace{-1pt}\boldsymbol{W}_{2}^{n+1}\right)-\underline{\nabla}F\hspace{-2pt}\left(\boldsymbol{x}^{n},\hspace{-1pt}\boldsymbol{W}_{1}^{n+1}\right)\right)\right\Vert _{2}\nonumber \\
 & \le\mathsf{R}_{p}\left(\left\Vert \underline{\nabla}F\hspace{-2pt}\left(\boldsymbol{x}^{n},\hspace{-1pt}\boldsymbol{W}_{2}^{n+1}\right)\right\Vert _{2}+\left\Vert \underline{\nabla}F\hspace{-2pt}\left(\boldsymbol{x}^{n},\hspace{-1pt}\boldsymbol{W}_{1}^{n+1}\right)\right\Vert _{2}\right),
\end{flalign}
almost everywhere relative to ${\cal P}$, where we have defined $\mathsf{R}_{p}\triangleq\left({\cal E}\varepsilon^{-1}\right)^{\hspace{-1pt}p-1}$.
Consequently, we may bound the $\ell_{2}$-norm of $\boldsymbol{x}^{n+1}\hspace{-1pt}-\hspace{-1pt}\boldsymbol{x}^{n}$
from above as
\begin{equation}
\left\Vert \boldsymbol{x}^{n+1}\hspace{-1pt}-\hspace{-1pt}\boldsymbol{x}^{n}\right\Vert _{2}\le\alpha_{n}\left(1+c\mathsf{R}_{p}\right)\left\Vert \underline{\nabla}F\hspace{-2pt}\left(\boldsymbol{x}^{n},\hspace{-1pt}\boldsymbol{W}_{2}^{n+1}\right)\hspace{-1pt}\right\Vert _{2}+\alpha_{n}c\left(\dfrac{{\cal E}}{\varepsilon}\right)^{\hspace{-1pt}p-1}\left\Vert \underline{\nabla}F\hspace{-2pt}\left(\boldsymbol{x}^{n},\hspace{-1pt}\boldsymbol{W}_{1}^{n+1}\right)\right\Vert _{2},
\end{equation}
almost everywhere relative to ${\cal P}$. This, of course, implies
that
\begin{flalign}
 & \hspace{-2pt}\hspace{-2pt}\hspace{-2pt}\mathbb{E}_{\mathscr{D}^{n}}\left\{ \left\Vert \boldsymbol{x}^{n+1}\hspace{-1pt}-\hspace{-1pt}\boldsymbol{x}^{n}\right\Vert _{2}^{2}\hspace{-1pt}\right\} \nonumber \\
 & \le\alpha_{n}^{2}\mathbb{E}_{\mathscr{D}^{n}}\left\{ \hspace{-1pt}\hspace{-1pt}\left(\hspace{-2pt}\left(1\hspace{-2pt}+c\mathsf{R}_{p}\right)\hspace{-1pt}\hspace{-2pt}\left\Vert \underline{\nabla}F\hspace{-2pt}\left(\boldsymbol{x}^{n},\hspace{-1pt}\boldsymbol{W}_{2}^{n+1}\right)\hspace{-1pt}\right\Vert _{2}+c\mathsf{R}_{p}\left\Vert \underline{\nabla}F\hspace{-2pt}\left(\boldsymbol{x}^{n},\hspace{-1pt}\boldsymbol{W}_{1}^{n+1}\right)\right\Vert _{2}\right)^{2}\hspace{-1pt}\right\} ,\label{eq:INTER_1_1}
\end{flalign}
almost everywhere relative to ${\cal P}$, as well. Let us focus more
closely on the conditional expectation on the RHS of (\ref{eq:INTER_1_1}).
First, by the substitution rule for conditional expectations, which
is guaranteed to be valid in all our discussions in this paper, due
to the existence of regular conditional distributions on Borel spaces
(see, for instance, \citep{Durrett2010probability}), it is true that
\begin{flalign}
 & \hspace{-2pt}\hspace{-2pt}\hspace{-2pt}\mathbb{E}_{\mathscr{D}^{n}}\left\{ \hspace{-2pt}\hspace{-1pt}\left(\hspace{-2pt}\left(1+c\mathsf{R}_{p}\right)\hspace{-1pt}\hspace{-2pt}\left\Vert \underline{\nabla}F\hspace{-2pt}\left(\boldsymbol{x}^{n},\hspace{-1pt}\boldsymbol{W}_{2}^{n+1}\right)\hspace{-1pt}\right\Vert _{2}+c\mathsf{R}_{p}\left\Vert \underline{\nabla}F\hspace{-2pt}\left(\boldsymbol{x}^{n},\hspace{-1pt}\boldsymbol{W}_{1}^{n+1}\right)\right\Vert _{2}\right)^{2}\hspace{-1pt}\right\} \nonumber \\
 & \equiv\hspace{-2pt}\left.\mathbb{E}\left\{ \hspace{-2pt}\left(\hspace{-2pt}\left(1+c\mathsf{R}_{p}\right)\hspace{-1pt}\hspace{-2pt}\left\Vert \underline{\nabla}F\hspace{-2pt}\left(\boldsymbol{x},\hspace{-1pt}\boldsymbol{W}_{2}^{n+1}\right)\hspace{-1pt}\right\Vert _{2}+c\mathsf{R}_{p}\left\Vert \underline{\nabla}F\hspace{-2pt}\left(\boldsymbol{x},\hspace{-1pt}\boldsymbol{W}_{1}^{n+1}\right)\right\Vert _{2}\right)^{2}\right\} \right|_{\boldsymbol{x}\equiv\boldsymbol{x}^{n}},\label{eq:INTER_1_2}
\end{flalign}
almost everywhere relative to ${\cal P}$, where, in the RHS of (\ref{eq:INTER_1_2}),
expectation is with respect to the product measure ${\cal P}_{\boldsymbol{W}}\times{\cal P}_{\boldsymbol{W}}$
on the Borel measurable space $\left(\mathbb{R}^{M}\times\mathbb{R}^{M},\mathscr{B}\left(\mathbb{R}^{M}\right)\otimes\mathscr{B}\left(\mathbb{R}^{M}\right)\right)$.
This due to mutual independence of $\boldsymbol{W}_{1}^{n+1}$ and
$\boldsymbol{W}_{2}^{n+1}$, and also their independence relative
to $\mathscr{D}^{n}$. Then, by the triangle inequality of the ${\cal L}_{2}$-norm
on the aforementioned product probability space, we may write, for
$\boldsymbol{x}\in{\cal X}$,
\begin{flalign}
 & \hspace{-2pt}\hspace{-2pt}\hspace{-2pt}\hspace{-2pt}\hspace{-2pt}\hspace{-2pt}\hspace{-2pt}\hspace{-2pt}\hspace{-2pt}\sqrt{\mathbb{E}\left\{ \hspace{-2pt}\left(\hspace{-2pt}\left(1+c\mathsf{R}_{p}\right)\hspace{-1pt}\hspace{-2pt}\left\Vert \underline{\nabla}F\hspace{-2pt}\left(\boldsymbol{x},\hspace{-1pt}\boldsymbol{W}_{2}^{n+1}\right)\hspace{-1pt}\right\Vert _{2}+c\mathsf{R}_{p}\left\Vert \underline{\nabla}F\hspace{-2pt}\left(\boldsymbol{x},\hspace{-1pt}\boldsymbol{W}_{1}^{n+1}\right)\right\Vert _{2}\right)^{2}\right\} }\nonumber \\
 & \quad\quad\equiv\hspace{-2pt}\left\Vert \hspace{-2pt}\vphantom{\int}\left(1+c\mathsf{R}_{p}\right)\hspace{-1pt}\hspace{-2pt}\left\Vert \underline{\nabla}F\hspace{-2pt}\left(\boldsymbol{x},\hspace{-1pt}\boldsymbol{W}_{2}^{n+1}\right)\hspace{-1pt}\right\Vert _{2}+c\mathsf{R}_{p}\left\Vert \underline{\nabla}F\hspace{-2pt}\left(\boldsymbol{x},\hspace{-1pt}\boldsymbol{W}_{1}^{n+1}\right)\right\Vert _{2}\right\Vert _{{\cal L}_{2}}\nonumber \\
 & \quad\quad\le\hspace{-2pt}\left(1+c\mathsf{R}_{p}\right)\left\Vert \hspace{-2pt}\vphantom{\int}\left\Vert \underline{\nabla}F\hspace{-2pt}\left(\boldsymbol{x},\hspace{-1pt}\boldsymbol{W}_{2}^{n+1}\right)\right\Vert _{2}\right\Vert _{{\cal L}_{2}}\hspace{-2pt}+c\mathsf{R}_{p}\left\Vert \hspace{-2pt}\vphantom{\int}\left\Vert \underline{\nabla}F\hspace{-2pt}\left(\boldsymbol{x},\hspace{-1pt}\boldsymbol{W}_{1}^{n+1}\right)\right\Vert _{2}\right\Vert _{{\cal L}_{2}}\nonumber \\
 & \quad\quad\le\hspace{-2pt}\left(2c\mathsf{R}_{p}+1\right)G,
\end{flalign}
or, by taking squares on both sides,
\begin{equation}
\mathbb{E}\left\{ \hspace{-2pt}\left(\hspace{-2pt}\left(1+c\mathsf{R}_{p}\right)\hspace{-1pt}\hspace{-2pt}\left\Vert \underline{\nabla}F\hspace{-2pt}\left(\boldsymbol{x},\hspace{-1pt}\boldsymbol{W}_{2}^{n+1}\right)\hspace{-1pt}\right\Vert _{2}+c\mathsf{R}_{p}\left\Vert \underline{\nabla}F\hspace{-2pt}\left(\boldsymbol{x},\hspace{-1pt}\boldsymbol{W}_{1}^{n+1}\right)\hspace{-1pt}\right\Vert _{2}\right)^{2}\right\} \hspace{-1pt}\le\hspace{-1pt}\left(2c\hspace{-1pt}+\hspace{-1pt}1\right)^{2}G^{2},
\end{equation}
almost everywhere relative to ${\cal P}$. Thus, it follows that
\begin{equation}
\mathbb{E}_{\mathscr{D}^{n}}\left\{ \left\Vert \boldsymbol{x}^{n+1}\hspace{-1pt}-\hspace{-1pt}\boldsymbol{x}^{n}\right\Vert _{2}^{2}\hspace{-1pt}\right\} \le\alpha_{n}^{2}\left(2c\mathsf{R}_{p}+1\right)^{2}G^{2},\label{eq:INTER_1_3}
\end{equation}
almost everywhere relative to ${\cal P}$. In case $p\equiv1$, it
may be easily shown that the respective bound may be recovered by
setting $p\equiv1$ in (\ref{eq:INTER_1_3}) (pretending that ${\cal E}$
and $\varepsilon$ are finite). Finally, note that, for every value
of $p$, (\ref{eq:INTER_1_3}) holds for each $n\in\mathbb{N}$, and
$\mathbb{N}$ is, of course, countable. Enough said.\hfill{}\ensuremath{\blacksquare}

\subsection{\label{subsec:RM_0}Proof of Lemma \ref{lem:INTER_1-1}}

\textit{Fix} $n\in\mathbb{N}$, and let $y^{n}\hspace{-1pt}\hspace{-1pt}-\hspace{-1pt}{\cal S}^{\widetilde{F}}\hspace{-1pt}\hspace{-1pt}\left(\boldsymbol{x}^{n}\right)\triangleq E_{{\cal S}}^{n}$,
for brevity. Then, we may write
\begin{align}
\left|E_{{\cal S}}^{n+1}\right|^{2} & \hspace{-2pt}\equiv\hspace{-2pt}\left|\left(1-\beta_{n}\right)y^{n}+\beta_{n}F\hspace{-2pt}\left(\boldsymbol{x}^{n},\boldsymbol{W}_{1}^{n+1}\right)\hspace{-1pt}\hspace{-1pt}-\hspace{-1pt}{\cal S}^{\widetilde{F}}\hspace{-1pt}\hspace{-1pt}\left(\boldsymbol{x}^{n+1}\right)\right|^{2}\nonumber \\
 & \hspace{-2pt}\equiv\hspace{-2pt}\left|\left(1-\beta_{n}\right)\left(y^{n}-{\cal S}^{\widetilde{F}}\hspace{-1pt}\hspace{-1pt}\left(\boldsymbol{x}^{n}\right)\right)+\beta_{n}\left(F\hspace{-2pt}\left(\boldsymbol{x}^{n},\boldsymbol{W}_{1}^{n+1}\right)\hspace{-1pt}\hspace{-1pt}-{\cal S}^{\widetilde{F}}\hspace{-1pt}\hspace{-1pt}\left(\boldsymbol{x}^{n}\right)\right)+{\cal S}^{\widetilde{F}}\hspace{-1pt}\hspace{-1pt}\left(\boldsymbol{x}^{n}\right)-\hspace{-1pt}{\cal S}^{\widetilde{F}}\hspace{-1pt}\hspace{-1pt}\left(\boldsymbol{x}^{n+1}\right)\right|^{2}\nonumber \\
 & \hspace{-2pt}\equiv\hspace{-2pt}\left|\left(1-\beta_{n}\right)E_{{\cal S}}^{n}+\beta_{n}\left(F\hspace{-2pt}\left(\boldsymbol{x}^{n},\boldsymbol{W}_{1}^{n+1}\right)\hspace{-1pt}\hspace{-1pt}-{\cal S}^{\widetilde{F}}\hspace{-1pt}\hspace{-1pt}\left(\boldsymbol{x}^{n}\right)\right)+{\cal S}^{\widetilde{F}}\hspace{-1pt}\hspace{-1pt}\left(\boldsymbol{x}^{n}\right)-\hspace{-1pt}{\cal S}^{\widetilde{F}}\hspace{-1pt}\hspace{-1pt}\left(\boldsymbol{x}^{n+1}\right)\right|^{2}\nonumber \\
 & \hspace{-2pt}\le\hspace{-2pt}\left(1+\beta_{n}\right)\hspace{-2pt}\left|\left(1-\beta_{n}\right)E_{{\cal S}}^{n}+\beta_{n}\hspace{-2pt}\left(F\hspace{-2pt}\left(\boldsymbol{x}^{n},\boldsymbol{W}_{1}^{n+1}\right)\hspace{-1pt}\hspace{-1pt}-{\cal S}^{\widetilde{F}}\hspace{-1pt}\hspace{-1pt}\left(\boldsymbol{x}^{n}\right)\right)\right|^{2}\nonumber \\
 & \quad+\hspace{-2pt}\left(1+\beta_{n}^{-1}\right)\hspace{-2pt}\hspace{-1pt}\left|{\cal S}^{\widetilde{F}}\hspace{-1pt}\hspace{-1pt}\left(\boldsymbol{x}^{n}\right)\hspace{-1pt}\hspace{-1pt}-\hspace{-1pt}{\cal S}^{\widetilde{F}}\hspace{-1pt}\hspace{-1pt}\left(\boldsymbol{x}^{n+1}\right)\right|^{2}\nonumber \\
 & \hspace{-2pt}\equiv\hspace{-2pt}\left(1+\beta_{n}\right)\left(1-\beta_{n}\right)^{2}\left|E_{{\cal S}}^{n}\right|^{2}+\left(1+\beta_{n}\right)\beta_{n}^{2}\left|F\hspace{-2pt}\left(\boldsymbol{x}^{n},\boldsymbol{W}_{1}^{n+1}\right)\hspace{-1pt}\hspace{-1pt}-{\cal S}^{\widetilde{F}}\hspace{-1pt}\hspace{-1pt}\left(\boldsymbol{x}^{n}\right)\right|^{2}\nonumber \\
 & \quad+2\left(1-\beta_{n}^{2}\right)\beta_{n}E_{{\cal S}}^{n}\left(F\hspace{-2pt}\left(\boldsymbol{x}^{n},\boldsymbol{W}_{1}^{n+1}\right)\hspace{-1pt}\hspace{-1pt}-{\cal S}^{\widetilde{F}}\hspace{-1pt}\hspace{-1pt}\left(\boldsymbol{x}^{n}\right)\right)+\left(1+\beta_{n}^{-1}\right)\left|{\cal S}^{\widetilde{F}}\hspace{-1pt}\hspace{-1pt}\left(\boldsymbol{x}^{n}\right)-\hspace{-1pt}{\cal S}^{\widetilde{F}}\hspace{-1pt}\hspace{-1pt}\left(\boldsymbol{x}^{n+1}\right)\right|^{2}\nonumber \\
 & \hspace{-2pt}\le\hspace{-2pt}\left(1-\beta_{n}\right)\left|E_{{\cal S}}^{n}\right|^{2}+2\beta_{n}^{2}\left|F\hspace{-2pt}\left(\boldsymbol{x}^{n},\boldsymbol{W}_{1}^{n+1}\right)\hspace{-1pt}\hspace{-1pt}-{\cal S}^{\widetilde{F}}\hspace{-1pt}\hspace{-1pt}\left(\boldsymbol{x}^{n}\right)\right|^{2}\nonumber \\
 & \quad+2\left(1+\beta_{n}\right)\left(1-\beta_{n}\right)\beta_{n}E_{{\cal S}}^{n}\left(F\hspace{-2pt}\left(\boldsymbol{x}^{n},\boldsymbol{W}_{1}^{n+1}\right)\hspace{-1pt}\hspace{-1pt}-{\cal S}^{\widetilde{F}}\hspace{-1pt}\hspace{-1pt}\left(\boldsymbol{x}^{n}\right)\right)+2\beta_{n}^{-1}G^{2}\left\Vert \boldsymbol{x}^{n+1}\hspace{-1pt}\hspace{-1pt}-\hspace{-1pt}\boldsymbol{x}^{n}\right\Vert _{2}^{2},
\end{align}
where we have used our assumption that $\beta_{n}\le1$. Taking expectations
relative to $\mathscr{D}^{n}$ on both sides, we have
\begin{equation}
\mathbb{E}_{\mathscr{D}^{n}}\hspace{-2pt}\left\{ \left|E_{{\cal S}}^{n+1}\right|^{2}\right\} \le\left(1-\beta_{n}\right)\left|E_{{\cal S}}^{n}\right|^{2}+\beta_{n}^{2}2V+0+\beta_{n}^{-1}2G^{2}\mathbb{E}_{\mathscr{D}^{n}}\hspace{-2pt}\left\{ \left\Vert \boldsymbol{x}^{n+1}\hspace{-1pt}\hspace{-1pt}-\hspace{-1pt}\boldsymbol{x}^{n}\right\Vert _{2}^{2}\right\} ,
\end{equation}
almost everywhere relative to ${\cal P}$. The fact that $\mathbb{N}$
is countable completes the proof.\hfill{}\ensuremath{\blacksquare}

\subsection{\label{subsec:RM_2}Proof of Lemma \ref{lem:INTER_Middle}}

\textit{Fix} $n\in\mathbb{N}$. By adding and subtracting appropriate
terms as in the proof of Lemma \ref{lem:INTER_1-1} above, it is then
easy to show that the difference $z^{n}-{\cal D}^{\widetilde{F}}\left(\boldsymbol{x}^{n},y^{n}\right)\triangleq E_{{\cal D}}^{n}$
may be expressed as
\begin{flalign}
E_{{\cal D}}^{n+1} & \equiv\left(1-\gamma_{n}\right)E_{{\cal D}}^{n}+\left({\cal D}^{\widetilde{F}}\left(\boldsymbol{x}^{n},y^{n}\right)-{\cal D}^{\widetilde{F}}\left(\boldsymbol{x}^{n+1},y^{n+1}\right)\right)\nonumber \\
 & \quad\quad\quad\quad+\gamma_{n}\left(\left({\cal R}\left(F\hspace{-2pt}\left(\boldsymbol{x}^{n},\hspace{-1pt}\boldsymbol{W}_{2}^{n+1}\right)-y^{n}\right)\right)^{p}-{\cal D}^{\widetilde{F}}\left(\boldsymbol{x}^{n},y^{n}\right)\right).
\end{flalign}
Let us consider the quantity $\left|E_{{\cal D}}^{n}\right|^{2}$.
We may expand the square one time, yielding
\begin{flalign}
\left|E_{{\cal D}}^{n+1}\right|^{2} & \le\left(1+\gamma_{n}\right)\left|\left(1-\gamma_{n}\right)E_{{\cal D}}^{n}+\gamma_{n}\left(\left({\cal R}\left(F\hspace{-2pt}\left(\boldsymbol{x}^{n},\hspace{-1pt}\boldsymbol{W}_{2}^{n+1}\right)-y^{n}\right)\right)^{p}-{\cal D}^{\widetilde{F}}\left(\boldsymbol{x}^{n},y^{n}\right)\right)\right|^{2}\nonumber \\
 & \quad\quad+\left(1+\gamma_{n}^{-1}\right)\left|{\cal D}^{\widetilde{F}}\left(\boldsymbol{x}^{n},y^{n}\right)-{\cal D}^{\widetilde{F}}\left(\boldsymbol{x}^{n+1},y^{n+1}\right)\right|^{2}\nonumber \\
 & \equiv\left(1+\gamma_{n}\right)\left(1-\gamma_{n}\right)^{2}\left|E_{{\cal D}}^{n}\right|^{2}+\left(1+\gamma_{n}\right)\gamma_{n}^{2}\left|\left({\cal R}\left(F\hspace{-2pt}\left(\boldsymbol{x}^{n},\hspace{-1pt}\boldsymbol{W}_{2}^{n+1}\right)-y^{n}\right)\right)^{p}-{\cal D}^{\widetilde{F}}\left(\boldsymbol{x}^{n},y^{n}\right)\right|^{2}\nonumber \\
 & \quad\quad+2\left(1-\gamma_{n}^{2}\right)\gamma_{n}E_{{\cal D}}^{n}\left(\left({\cal R}\left(F\hspace{-2pt}\left(\boldsymbol{x}^{n},\hspace{-1pt}\boldsymbol{W}_{2}^{n+1}\right)-y^{n}\right)\right)^{p}-{\cal D}^{\widetilde{F}}\left(\boldsymbol{x}^{n},y^{n}\right)\right)\nonumber \\
 & \quad\quad\quad\quad+\left(1+\gamma_{n}^{-1}\right)\left|{\cal D}^{\widetilde{F}}\left(\boldsymbol{x}^{n},y^{n}\right)-{\cal D}^{\widetilde{F}}\left(\boldsymbol{x}^{n+1},y^{n+1}\right)\right|^{2}
\end{flalign}
As in the proof of Lemma \ref{lem:INTER_1-1}, taking conditional
expectations relative to $\mathscr{D}^{n}$ on both sides and since
$\gamma_{n}\le1$, we get
\begin{align}
\mathbb{E}_{\mathscr{D}^{n}}\left\{ \left|E_{{\cal D}}^{n+1}\right|^{2}\right\}  & \le\left(1-\gamma_{n}\right)\left|E_{{\cal D}}^{n}\right|^{2}+2\gamma_{n}^{2}\mathbb{E}_{\mathscr{D}^{n}}\left\{ \left|\left({\cal R}\left(F\hspace{-2pt}\left(\boldsymbol{x}^{n},\hspace{-1pt}\boldsymbol{W}_{2}^{n+1}\right)-y^{n}\right)\right)^{p}-{\cal D}^{\widetilde{F}}\left(\boldsymbol{x}^{n},y^{n}\right)\right|^{2}\right\} \nonumber \\
 & \quad\quad+2\gamma_{n}^{-1}\mathbb{E}_{\mathscr{D}^{n}}\left\{ \left|{\cal D}^{\widetilde{F}}\left(\boldsymbol{x}^{n},y^{n}\right)-{\cal D}^{\widetilde{F}}\left(\boldsymbol{x}^{n+1},y^{n+1}\right)\right|^{2}\right\} +0.\label{eq:blah_blah}
\end{align}
Next, we consider the last two nonzero terms of the RHS of (\ref{eq:blah_blah})
separately. First, we may write
\begin{flalign}
 & \hspace{-2pt}\hspace{-2pt}\hspace{-2pt}\hspace{-2pt}\hspace{-2pt}\hspace{-2pt}\hspace{-2pt}\hspace{-2pt}\hspace{-2pt}\hspace{-2pt}\hspace{-2pt}\hspace{-2pt}\hspace{-2pt}\hspace{-2pt}\hspace{-2pt}\mathbb{E}_{\mathscr{D}^{n}}\left\{ \left|\left({\cal R}\left(F\hspace{-2pt}\left(\boldsymbol{x}^{n},\hspace{-1pt}\boldsymbol{W}_{2}^{n+1}\right)-y^{n}\right)\right)^{p}-{\cal D}^{\widetilde{F}}\left(\boldsymbol{x}^{n},y^{n}\right)\right|^{2}\right\} \nonumber \\
 & \le\mathbb{E}_{\mathscr{D}^{n}}\left\{ \left({\cal R}\left(F\hspace{-2pt}\left(\boldsymbol{x}^{n},\hspace{-1pt}\boldsymbol{W}_{2}^{n+1}\right)-y^{n}\right)\right)^{2p}\right\} +\mathbb{E}_{\mathscr{D}^{n}}\left\{ \left({\cal D}^{\widetilde{F}}\left(\boldsymbol{x}^{n},y^{n}\right)\right)^{2}\right\} \nonumber \\
 & \le2{\cal E}^{2p},\quad{\cal P}-a.e..\label{eq:MyGod_0-1}
\end{flalign}
Second, observe that, by Lemmata \ref{lem:P_is_LIP} and \ref{prop:EF_LIP},
we get
\begin{flalign}
 & \hspace{-2pt}\hspace{-2pt}\hspace{-2pt}\hspace{-2pt}\hspace{-2pt}\hspace{-2pt}\hspace{-2pt}\hspace{-2pt}\hspace{-2pt}\left|{\cal D}^{\widetilde{F}}\hspace{-2pt}\left(\boldsymbol{x}^{n+1},y^{n+1}\right)\hspace{-2pt}-{\cal D}^{\widetilde{F}}\hspace{-2pt}\left(\boldsymbol{x}^{n},y^{n}\right)\right|\nonumber \\
 & \equiv\left|\mathbb{E}_{\mathscr{D}^{n+1}}\hspace{-2pt}\left\{ \left({\cal R}\left(F\hspace{-2pt}\left(\boldsymbol{x}^{n+1},\hspace{-1pt}\boldsymbol{W}'\right)\hspace{-2pt}-\hspace{-2pt}y^{n+1}\right)\right)^{p}-\left({\cal R}\left(F\hspace{-2pt}\left(\boldsymbol{x}^{n},\hspace{-1pt}\boldsymbol{W}'\right)\hspace{-2pt}-\hspace{-2pt}y^{n}\right)\right)^{p}\right\} \right|\nonumber \\
 & \le\mathbb{E}_{\mathscr{D}^{n+1}}\hspace{-2pt}\left\{ \left|\left({\cal R}\left(F\hspace{-2pt}\left(\boldsymbol{x}^{n+1},\hspace{-1pt}\boldsymbol{W}'\right)\hspace{-2pt}-\hspace{-2pt}y^{n+1}\right)\right)^{p}-\left({\cal R}\left(F\hspace{-2pt}\left(\boldsymbol{x}^{n},\hspace{-1pt}\boldsymbol{W}'\right)\hspace{-2pt}-\hspace{-2pt}y^{n}\right)\right)^{p}\right|\right\} \nonumber \\
 & \le{\cal E}^{p-1}p\left(\mathbb{E}_{\mathscr{D}^{n+1}}\hspace{-2pt}\left\{ \left|F\hspace{-2pt}\left(\boldsymbol{x}^{n+1},\hspace{-1pt}\boldsymbol{W}'\right)\hspace{-2pt}-F\hspace{-2pt}\left(\boldsymbol{x}^{n},\hspace{-1pt}\boldsymbol{W}'\right)\right|\right\} +\left|y^{n+1}-y^{n}\right|\right)\nonumber \\
 & \le{\cal E}^{p-1}p\left(2G\left\Vert \boldsymbol{x}^{n+1}-\boldsymbol{x}^{n}\right\Vert _{2}+\beta_{n}\left|F\hspace{-2pt}\left(\boldsymbol{x}^{n},\hspace{-1pt}\boldsymbol{W}_{1}^{n+1}\right)-y^{n}\right|\right)\nonumber \\
 & \le{\cal E}^{p-1}p\left(2G\left\Vert \boldsymbol{x}^{n+1}-\boldsymbol{x}^{n}\right\Vert _{2}+\beta_{n}\left(m_{h}-m_{l}\right)\right)\nonumber \\
 & \equiv2G{\cal E}^{p-1}p\left\Vert \boldsymbol{x}^{n+1}-\boldsymbol{x}^{n}\right\Vert _{2}+\beta_{n}{\cal E}^{p-1}p\left(m_{h}-m_{l}\right),\quad{\cal P}-a.e.
\end{flalign}
Additionally, it is true that
\begin{equation}
\left|{\cal D}^{\widetilde{F}}\hspace{-2pt}\left(\boldsymbol{x}^{n+1},y^{n+1}\right)\hspace{-2pt}-{\cal D}^{\widetilde{F}}\hspace{-2pt}\left(\boldsymbol{x}^{n},y^{n}\right)\right|^{2}\le8G^{2}{\cal E}^{2p-2}p^{2}\left\Vert \boldsymbol{x}^{n+1}-\boldsymbol{x}^{n}\right\Vert _{2}^{2}+\beta_{n}^{2}2{\cal E}^{2p-2}p^{2}\left(m_{h}-m_{l}\right)^{2}.\label{eq:MyGod_2-1}
\end{equation}
Combining (\ref{eq:MyGod_2-1}), (\ref{eq:MyGod_0-1}) and (\ref{eq:blah_blah}),
we end up with the inequality
\begin{flalign}
\mathbb{E}_{\mathscr{D}^{n}}\left\{ \left|E_{{\cal D}}^{n+1}\right|^{2}\right\}  & =\left(1-\gamma_{n}\right)\left|E_{{\cal D}}^{n}\right|^{2}\nonumber \\
 & \hspace{-2pt}\hspace{-2pt}\hspace{-2pt}\hspace{-2pt}\hspace{-2pt}\hspace{-2pt}\hspace{-2pt}\hspace{-2pt}\hspace{-2pt}\hspace{-2pt}\hspace{-2pt}\hspace{-2pt}\hspace{-2pt}\hspace{-2pt}\hspace{-2pt}\hspace{-2pt}\hspace{-2pt}\hspace{-2pt}\hspace{-2pt}\hspace{-2pt}\hspace{-2pt}+\gamma_{n}^{-1}16G^{2}{\cal E}^{2p-2}p^{2}\mathbb{E}_{\mathscr{D}^{n}}\left\{ \left\Vert \boldsymbol{x}^{n+1}-\boldsymbol{x}^{n}\right\Vert _{2}^{2}\right\} +\beta_{n}^{2}\gamma_{n}^{-1}4{\cal E}^{2p-2}p^{2}\left(m_{h}-m_{l}\right)^{2}+\gamma_{n}^{2}4{\cal E}^{2p},
\end{flalign}
being valid almost everywhere relative to ${\cal P}$. But $\mathbb{N}$
is countable.\hfill{}\ensuremath{\blacksquare}

\subsection{\label{subsec:RM_1-1-1}Proof of Lemma \ref{lem:INTER_1-1-1}}

\textit{As usual, fix} $n\in\mathbb{N}^{+}$, and let $p>1$. Nonexpansiveness
of the projection operator onto ${\cal X}$ yields
\begin{flalign}
\left\Vert \boldsymbol{x}^{n+1}\hspace{-1pt}-\hspace{-1pt}\boldsymbol{x}^{*}\right\Vert _{2}^{2} & \equiv\left\Vert \Pi_{{\cal X}}\hspace{-2pt}\left\{ \boldsymbol{x}^{n}\hspace{-2pt}-\hspace{-2pt}\alpha_{n}\hspace{-2pt}\widehat{\nabla}^{n+1}\phi^{\widetilde{F}}\left(\boldsymbol{x}^{n},y^{n},z^{n}\right)\hspace{-2pt}\right\} \hspace{-1pt}\hspace{-1pt}-\hspace{-1pt}\Pi_{{\cal X}}\hspace{-2pt}\left\{ \boldsymbol{x}^{*}\right\} \right\Vert _{2}^{2}\nonumber \\
 & \le\left\Vert \boldsymbol{x}^{n}-\boldsymbol{x}^{*}-\alpha_{n}\hspace{-2pt}\widehat{\nabla}^{n+1}\phi^{\widetilde{F}}\left(\boldsymbol{x}^{n},y^{n},z^{n}\right)\right\Vert _{2}^{2}\nonumber \\
 & =\left\Vert \boldsymbol{x}^{n}-\boldsymbol{x}^{*}\right\Vert _{2}^{2}+\alpha_{n}^{2}\left\Vert \underline{\nabla}F\hspace{-2pt}\left(\boldsymbol{x}^{n},\hspace{-1pt}\boldsymbol{W}_{2}^{n+1}\right)\hspace{-2pt}\hspace{-1pt}+\hspace{-2pt}c\Delta^{n+1}\left(\boldsymbol{x}^{n},y^{n},z^{n}\right)\right\Vert _{2}^{2}\nonumber \\
 & \quad\quad-2\alpha_{n}\left(\boldsymbol{x}^{n}-\boldsymbol{x}^{*}\right)^{\boldsymbol{T}}\left(\underline{\nabla}F\hspace{-2pt}\left(\boldsymbol{x}^{n},\hspace{-1pt}\boldsymbol{W}_{2}^{n+1}\right)\hspace{-2pt}\hspace{-1pt}+\hspace{-2pt}c\Delta^{n+1}\hspace{-1pt}\hspace{-1pt}\left(\boldsymbol{x}^{n},{\cal S}^{\widetilde{F}}\hspace{-1pt}\hspace{-1pt}\left(\boldsymbol{x}^{n}\right),{\cal D}^{\widetilde{F}}\hspace{-1pt}\hspace{-1pt}\left(\boldsymbol{x}^{n}\right)\right)\hspace{-1pt}\right)+\boldsymbol{U}^{n+1}\nonumber \\
 & \equiv\left\Vert \boldsymbol{x}^{n}-\boldsymbol{x}^{*}\right\Vert _{2}^{2}+\alpha_{n}^{2}\left\Vert \widehat{\nabla}^{n+1}\phi^{\widetilde{F}}\left(\boldsymbol{x}^{n},y^{n},z^{n}\right)\right\Vert _{2}^{2}\nonumber \\
 & \quad\quad-2\alpha_{n}\left(\boldsymbol{x}^{n}-\boldsymbol{x}^{*}\right)^{\boldsymbol{T}}\widehat{\nabla}^{n+1}\phi^{\widetilde{F}}\left(\boldsymbol{x}^{n},{\cal S}^{\widetilde{F}}\hspace{-1pt}\hspace{-1pt}\left(\boldsymbol{x}^{n}\right),{\cal D}^{\widetilde{F}}\hspace{-1pt}\hspace{-1pt}\left(\boldsymbol{x}^{n}\right)\right)+\boldsymbol{U}^{n+1},\label{eq:BIG_P1_1}
\end{flalign}
everywhere on $\Omega$, where the function $\boldsymbol{U}^{n+1}:\Omega\rightarrow\mathbb{R}$
is defined as
\begin{flalign}
 & \hspace{-2pt}\hspace{-2pt}\hspace{-2pt}\hspace{-2pt}\hspace{-2pt}\boldsymbol{U}^{n+1}\\
 & \hspace{-2pt}\hspace{-2pt}\hspace{-2pt}\hspace{-2pt}\hspace{-2pt}\triangleq2c\alpha_{n}\left(\boldsymbol{x}^{n}-\boldsymbol{x}^{*}\right)^{\boldsymbol{T}}\left(\Delta^{n+1}\hspace{-1pt}\hspace{-1pt}\left(\boldsymbol{x}^{n},{\cal S}^{\widetilde{F}}\hspace{-1pt}\hspace{-1pt}\left(\boldsymbol{x}^{n}\right),{\cal D}^{\widetilde{F}}\hspace{-1pt}\hspace{-1pt}\left(\boldsymbol{x}^{n}\right)\right)-\Delta^{n+1}\hspace{-1pt}\hspace{-1pt}\left(\boldsymbol{x}^{n},y^{n},z^{n}\right)\hspace{-1pt}\right)\nonumber \\
 & \hspace{-2pt}\hspace{-2pt}\hspace{-2pt}\hspace{-2pt}\hspace{-2pt}\equiv2c\alpha_{n}\left(\boldsymbol{x}^{n}-\boldsymbol{x}^{*}\right)^{\boldsymbol{T}}\left(\underline{\nabla}F\hspace{-2pt}\left(\boldsymbol{x}^{n},\hspace{-1pt}\boldsymbol{W}_{2}^{n+1}\right)-\underline{\nabla}F\hspace{-2pt}\left(\boldsymbol{x}^{n},\hspace{-1pt}\boldsymbol{W}_{1}^{n+1}\right)\right)\nonumber \\
 & \hspace{-2pt}\hspace{-2pt}\hspace{-2pt}\hspace{-2pt}\times\hspace{-2pt}\hspace{-2pt}\left(\hspace{-2pt}\underset{\triangleq\mathsf{A}_{1}^{n}\left(\boldsymbol{x}^{n}\right)}{\underbrace{\left.\underline{\nabla}\left({\cal R}\hspace{-1pt}\left(r\right)\right)^{p}\right|_{r\equiv F\hspace{-2pt}\left(\boldsymbol{x}^{n},\boldsymbol{W}_{2}^{n+1}\right)-{\cal S}^{\widetilde{F}}\left(\boldsymbol{x}^{n}\right)}}}\underset{\triangleq\mathsf{C}_{1}^{n}\left(\boldsymbol{x}^{n}\right)}{\underbrace{\left({\cal D}^{\widetilde{F}}\hspace{-2pt}\left(\boldsymbol{x}^{n}\right)\hspace{-2pt}\right)^{\hspace{-2pt}\left(1-p\right)/p}}}\hspace{-2pt}-\underset{\triangleq\mathsf{A}_{2}^{n}\left(\boldsymbol{x}^{n},y^{n}\right)}{\underbrace{\left.\underline{\nabla}\left({\cal R}\hspace{-1pt}\left(r\right)\right)^{p}\right|_{r\equiv F\left(\boldsymbol{x}^{n},\boldsymbol{W}_{2}^{n+1}\right)-y^{n}}}}\underset{\triangleq\mathsf{C}_{2}^{n}\left(z^{n}\right)}{\underbrace{\left(z^{n}\right)^{\hspace{-2pt}\left(1-p\right)/p}}}\hspace{-2pt}\right)\hspace{-1pt}\hspace{-2pt}.\hspace{-2pt}\hspace{-2pt}\hspace{-2pt}\nonumber 
\end{flalign}
From the proof of Lemma \ref{lem:INTER_1} (Section \ref{subsec:RM_1}),
it readily follows that
\begin{equation}
\mathbb{E}_{\mathscr{D}^{n}}\left\{ \left\Vert \widehat{\nabla}^{n+1}\phi^{\widetilde{F}}\left(\boldsymbol{x}^{n},y^{n},z^{n}\right)\right\Vert _{2}^{2}\right\} \le\left(2c\mathsf{R}_{p}+1\right)^{2}G^{2},\quad{\cal P}-a.e.
\end{equation}
Hence, taking conditional expectations on both sides of (\ref{eq:BIG_P1_1})
relative to $\mathscr{D}^{n}$, we have
\begin{flalign}
 & \hspace{-2pt}\hspace{-2pt}\hspace{-2pt}\hspace{-2pt}\hspace{-2pt}\hspace{-2pt}\hspace{-2pt}\hspace{-2pt}\hspace{-2pt}\mathbb{E}_{\mathscr{D}^{n}}\hspace{-2pt}\left\{ \left\Vert \boldsymbol{x}^{n+1}\hspace{-1pt}-\hspace{-1pt}\boldsymbol{x}^{*}\right\Vert _{2}^{2}\right\} \nonumber \\
 & \le\hspace{-2pt}\left\Vert \boldsymbol{x}^{n}-\boldsymbol{x}^{*}\right\Vert _{2}^{2}+\alpha_{n}^{2}\left(2c\mathsf{R}_{p}+1\right)^{2}G^{2}\nonumber \\
 & \quad\quad-2\alpha_{n}\left(\boldsymbol{x}^{n}-\boldsymbol{x}^{*}\right)^{\boldsymbol{T}}\mathbb{E}_{\mathscr{D}^{n}}\left\{ \widehat{\nabla}^{n+1}\phi^{\widetilde{F}}\left(\boldsymbol{x}^{n},{\cal S}^{\widetilde{F}}\hspace{-1pt}\hspace{-1pt}\left(\boldsymbol{x}^{n}\right),{\cal D}^{\widetilde{F}}\hspace{-1pt}\hspace{-1pt}\left(\boldsymbol{x}^{n}\right)\right)\right\} \hspace{-2pt}+\mathbb{E}_{\mathscr{D}^{n}}\hspace{-2pt}\left\{ \boldsymbol{U}^{n+1}\right\} \nonumber \\
 & \equiv\hspace{-2pt}\left\Vert \boldsymbol{x}^{n}-\boldsymbol{x}^{*}\right\Vert _{2}^{2}+\alpha_{n}^{2}\left(2c\mathsf{R}_{p}+1\right)^{2}G^{2}\hspace{-2pt}-2\alpha_{n}\left(\boldsymbol{x}^{n}-\boldsymbol{x}^{*}\right)^{\boldsymbol{T}}\nabla\phi^{\widetilde{F}}\left(\boldsymbol{x}^{n}\right)\hspace{-2pt}+\mathbb{E}_{\mathscr{D}^{n}}\hspace{-2pt}\left\{ \boldsymbol{U}^{n+1}\right\} \nonumber \\
 & \le\hspace{-2pt}\left\Vert \boldsymbol{x}^{n}-\boldsymbol{x}^{*}\right\Vert _{2}^{2}+\alpha_{n}^{2}\left(2c\mathsf{R}_{p}+1\right)^{2}G^{2}\hspace{-2pt}-2\alpha_{n}\left(\phi^{\widetilde{F}}\left(\boldsymbol{x}^{n}\right)-\phi_{*}^{\widetilde{F}}\right)\hspace{-2pt}+\mathbb{E}_{\mathscr{D}^{n}}\hspace{-2pt}\left\{ \boldsymbol{U}^{n+1}\right\} ,\label{eq:BIG_P1_2}
\end{flalign}
almost everywhere relative to ${\cal P}$, where in the last inequality,
we have exploited our assumption that the objective function $\phi^{\widetilde{F}}$
is convex. Therefore, our main concern now is properly bounding $\mathbb{E}_{\mathscr{D}^{n}}\left\{ \boldsymbol{U}^{n+1}\right\} $.
By Cauchy-Schwarz, $\boldsymbol{U}^{n+1}$ may be bounded from above
as
\begin{flalign}
\boldsymbol{U}^{n+1} & \le2c\alpha_{n}\left\Vert \boldsymbol{x}^{n}-\boldsymbol{x}^{*}\right\Vert _{2}\left(\left\Vert \underline{\nabla}F\hspace{-2pt}\left(\boldsymbol{x}^{n},\hspace{-1pt}\boldsymbol{W}_{1}^{n+1}\right)\right\Vert _{2}+\left\Vert \underline{\nabla}F\hspace{-2pt}\left(\boldsymbol{x}^{n},\hspace{-1pt}\boldsymbol{W}_{2}^{n+1}\right)\right\Vert _{2}\right)\nonumber \\
 & \quad\times\left|\mathsf{A}_{1}^{n}\left(\boldsymbol{x}^{n}\right)\mathsf{C}_{1}^{n}\left(\boldsymbol{x}^{n}\right)-\mathsf{A}_{2}^{n}\left(\boldsymbol{x}^{n},y^{n}\right)\mathsf{C}_{2}^{n}\left(z^{n}\right)\right|,\label{eq:Long_0}
\end{flalign}
everywhere on $\Omega$, as well. 

Now, \textit{let Assumption \ref{assu:F_AS_Main} be in effect}. Making
use of the fact that $\underline{\nabla}{\cal R}$ is uniformly upper
bounded by unity and of Lemmata \ref{lem:Iterate_Boundedness} and
\ref{lem:P_is_LIP}, and the resulting inequality
\begin{flalign}
\mathsf{A}_{1}^{n}\left(\boldsymbol{x}^{n}\right) & \equiv\left.\underline{\nabla}\left({\cal R}\hspace{-1pt}\left(r\right)\right)^{p}\right|_{r\equiv F\hspace{-2pt}\left(\boldsymbol{x}^{n},\boldsymbol{W}_{2}^{n+1}\right)-{\cal S}^{\widetilde{F}}\left(\boldsymbol{x}^{n}\right)}\nonumber \\
 & \equiv p\left({\cal R}\hspace{-1pt}\left(F\hspace{-2pt}\left(\boldsymbol{x}^{n},\boldsymbol{W}_{2}^{n+1}\right)-{\cal S}^{\widetilde{F}}\left(\boldsymbol{x}^{n}\right)\right)\right)^{p-1}\underline{\nabla}{\cal R}\hspace{-1pt}\left(F\hspace{-2pt}\left(\boldsymbol{x}^{n},\boldsymbol{W}_{2}^{n+1}\right)-{\cal S}^{\widetilde{F}}\left(\boldsymbol{x}^{n}\right)\right)\nonumber \\
 & \le p{\cal E}^{p-1},
\end{flalign}
we may further bound the absolute difference on the RHS of (\ref{eq:Long_0})
from above as
\begin{flalign}
 & \hspace{-2pt}\hspace{-2pt}\hspace{-2pt}\hspace{-2pt}\hspace{-2pt}\hspace{-2pt}\hspace{-2pt}\hspace{-2pt}\hspace{-2pt}\hspace{-2pt}\hspace{-2pt}\hspace{-2pt}\left|\mathsf{A}_{1}^{n}\left(\boldsymbol{x}^{n}\right)\mathsf{C}_{1}^{n}\left(\boldsymbol{x}^{n}\right)-\mathsf{A}_{2}^{n}\left(\boldsymbol{x}^{n},y^{n}\right)\mathsf{C}_{2}^{n}\left(z^{n}\right)\right|\nonumber \\
 & \le\mathsf{C}_{2}^{n}\left(z^{n}\right)\hspace{-2pt}\left|\mathsf{A}_{1}^{n}\hspace{-2pt}\left(\boldsymbol{x}^{n}\right)\hspace{-2pt}-\hspace{-1pt}\mathsf{A}_{2}^{n}\hspace{-2pt}\left(\boldsymbol{x}^{n},y^{n}\right)\hspace{-1pt}\right|\hspace{-1pt}+\mathsf{A}_{1}^{n}\left(\boldsymbol{x}^{n}\right)\hspace{-1pt}\left|\mathsf{C}_{1}^{n}\hspace{-2pt}\left(\boldsymbol{x}^{n}\right)\hspace{-2pt}-\hspace{-1pt}\mathsf{C}_{2}^{n}\hspace{-2pt}\left(z^{n}\right)\hspace{-1pt}\right|\nonumber \\
 & \le\left(\dfrac{1}{\varepsilon}\right)^{\hspace{-1pt}p-1}\hspace{-2pt}\left|\mathsf{A}_{1}^{n}\hspace{-2pt}\left(\boldsymbol{x}^{n}\right)\hspace{-2pt}-\hspace{-1pt}\mathsf{A}_{2}^{n}\hspace{-2pt}\left(\boldsymbol{x}^{n},y^{n}\right)\hspace{-1pt}\right|\hspace{-1pt}+p{\cal E}^{p-1}\hspace{-1pt}\left|\mathsf{C}_{1}^{n}\hspace{-2pt}\left(\boldsymbol{x}^{n}\right)\hspace{-2pt}-\hspace{-1pt}\mathsf{C}_{2}^{n}\hspace{-2pt}\left(z^{n}\right)\hspace{-1pt}\right|\nonumber \\
 & \le\left(\dfrac{1}{\varepsilon}\right)^{\hspace{-1pt}p-1}\hspace{-2pt}\left|\mathsf{A}_{1}^{n}\hspace{-2pt}\left(\boldsymbol{x}^{n}\right)\hspace{-2pt}-\hspace{-1pt}\mathsf{A}_{2}^{n}\hspace{-2pt}\left(\boldsymbol{x}^{n},y^{n}\right)\hspace{-1pt}\right|\hspace{-1pt}+\left(p-1\right)\dfrac{{\cal E}^{p-1}}{\varepsilon^{2p-1}}\hspace{-1pt}\left|z^{n}\hspace{-2pt}-\hspace{-2pt}{\cal D}^{\widetilde{F}}\hspace{-2pt}\left(\boldsymbol{x}^{n}\right)\hspace{-1pt}\right|,\hspace{-2pt}\hspace{-2pt}\label{eq:Long_1}
\end{flalign}
almost everywhere relative to ${\cal P}$. Utilizing (\ref{eq:Long_1})
and taking conditional expectations relative to $\mathscr{D}^{n}$
on both sides of (\ref{eq:Long_0}), we have
\begin{flalign}
\hspace{-2pt}\hspace{-2pt}\hspace{-2pt}\hspace{-2pt}\hspace{-2pt}\hspace{-2pt}\hspace{-2pt}\mathbb{E}_{\mathscr{D}^{n}}\left\{ \boldsymbol{U}^{n+1}\right\}  & \le2c\alpha_{n}\left\Vert \boldsymbol{x}^{n}-\boldsymbol{x}^{*}\right\Vert _{2}\nonumber \\
 & \hspace{-2pt}\hspace{-2pt}\hspace{-2pt}\hspace{-2pt}\hspace{-2pt}\hspace{-2pt}\hspace{-2pt}\hspace{-2pt}\hspace{-2pt}\hspace{-2pt}\hspace{-2pt}\hspace{-2pt}\hspace{-2pt}\hspace{-2pt}\hspace{-2pt}\hspace{-2pt}\hspace{-2pt}\hspace{-2pt}\hspace{-2pt}\hspace{-2pt}\hspace{-2pt}\hspace{-2pt}\hspace{-2pt}\hspace{-2pt}\hspace{-2pt}\hspace{-2pt}\hspace{-2pt}\times\hspace{-2pt}\left(\hspace{-2pt}\hspace{-2pt}\left(\dfrac{1}{\varepsilon}\right)^{\hspace{-1pt}p-1}\mathbb{E}_{\mathscr{D}^{n}}\left\{ \hspace{-2pt}\left(\left\Vert \underline{\nabla}F\hspace{-2pt}\left(\boldsymbol{x}^{n},\hspace{-1pt}\boldsymbol{W}_{1}^{n+1}\right)\right\Vert _{2}+\left\Vert \underline{\nabla}F\hspace{-2pt}\left(\boldsymbol{x}^{n},\hspace{-1pt}\boldsymbol{W}_{2}^{n+1}\right)\right\Vert _{2}\right)\hspace{-2pt}\left|\mathsf{A}_{1}^{n}\left(\boldsymbol{x}^{n}\right)-\mathsf{A}_{2}^{n}\left(\boldsymbol{x}^{n},y^{n}\right)\right|\right\} \right.\nonumber \\
 & \hspace{-2pt}\hspace{-2pt}\hspace{-2pt}\hspace{-2pt}\hspace{-2pt}\hspace{-2pt}\hspace{-2pt}\hspace{-2pt}\hspace{-2pt}\hspace{-2pt}\hspace{-2pt}\hspace{-2pt}\hspace{-2pt}\hspace{-2pt}\hspace{-2pt}\hspace{-2pt}\hspace{-2pt}\hspace{-2pt}\hspace{-2pt}\hspace{-2pt}\hspace{-2pt}\hspace{-2pt}+\left(p-1\right)\dfrac{{\cal E}^{p-1}}{\varepsilon^{2p-1}}\left.\mathbb{E}_{\mathscr{D}^{n}}\left\{ \hspace{-2pt}\left(\left\Vert \underline{\nabla}F\hspace{-2pt}\left(\boldsymbol{x}^{n},\hspace{-1pt}\boldsymbol{W}_{1}^{n+1}\right)\right\Vert _{2}+\left\Vert \underline{\nabla}F\hspace{-2pt}\left(\boldsymbol{x}^{n},\hspace{-1pt}\boldsymbol{W}_{2}^{n+1}\right)\right\Vert _{2}\right)\hspace{-2pt}\left|z^{n}-{\cal D}^{\widetilde{F}}\hspace{-2pt}\left(\boldsymbol{x}^{n}\right)\right|\right\} \vphantom{\left(\dfrac{{\cal E}}{\varepsilon}\right)^{\hspace{-1pt}p-1}}\hspace{-2pt}\hspace{-2pt}\right)\hspace{-2pt},\hspace{-2pt}\label{eq:Long_2}
\end{flalign}
almost everywhere relative to ${\cal P}$. Let us consider each of
the three terms on the RHS of (\ref{eq:Long_2}) separately. Regarding
the term related to $z^{n}$, exploiting condition $\mathbf{C1}$,
we may write
\begin{flalign}
 & \hspace{-2pt}\hspace{-2pt}\hspace{-2pt}\hspace{-2pt}\mathbb{E}_{\mathscr{D}^{n}}\left\{ \left(\left\Vert \underline{\nabla}F\hspace{-2pt}\left(\boldsymbol{x}^{n},\hspace{-1pt}\boldsymbol{W}_{1}^{n+1}\right)\right\Vert _{2}+\left\Vert \underline{\nabla}F\hspace{-2pt}\left(\boldsymbol{x}^{n},\hspace{-1pt}\boldsymbol{W}_{2}^{n+1}\right)\right\Vert _{2}\right)\right\} \nonumber \\
 & \equiv\left.\left\Vert \vphantom{\int}\hspace{-2pt}\left\Vert \underline{\nabla}F\hspace{-2pt}\left(\boldsymbol{x},\hspace{-1pt}\boldsymbol{W}_{1}^{n+1}\right)\right\Vert _{2}\hspace{-2pt}+\hspace{-1pt}\left\Vert \underline{\nabla}F\hspace{-2pt}\left(\boldsymbol{x},\hspace{-1pt}\boldsymbol{W}_{2}^{n+1}\right)\right\Vert _{2}\right\Vert _{{\cal L}_{P}}\right|_{\boldsymbol{x}\equiv\boldsymbol{x}^{n}}\hspace{-2pt}\left|z^{n}-{\cal D}^{\widetilde{F}}\hspace{-2pt}\left(\boldsymbol{x}^{n}\right)\right|\nonumber \\
 & \le2G\hspace{-2pt}\hspace{-1pt}\left|z^{n}-{\cal D}^{\widetilde{F}}\hspace{-2pt}\left(\boldsymbol{x}^{n}\right)\right|\hspace{-2pt},\quad{\cal P}-a.e.\label{eq:B_0}
\end{flalign}
As far as the remaining term of (\ref{eq:Long_2}) containing $\mathsf{A}_{1}^{n}$
and $\mathsf{A}_{2}^{n}$ is concerned, the situation is somewhat
more complicated. Recall that, by assumption, we have $P\in\left[2,\infty\right]$,
$P/\left(P-1\right)\le Q\le\infty$, and $P^{-1}+Q^{-1}\le1$. Then,
we may  invoke the \textit{generalized} H\"older's Inequality for
finite measure spaces (on the \textit{appropriate} Borel space), along
with condition ${\bf C3}$, obtaining
\begin{flalign}
 & \hspace{-2pt}\hspace{-2pt}\hspace{-2pt}\mathbb{E}_{\mathscr{D}^{n}}\left\{ \hspace{-2pt}\left(\left\Vert \underline{\nabla}F\hspace{-2pt}\left(\boldsymbol{x}^{n},\hspace{-1pt}\boldsymbol{W}_{1}^{n+1}\right)\right\Vert _{2}+\left\Vert \underline{\nabla}F\hspace{-2pt}\left(\boldsymbol{x}^{n},\hspace{-1pt}\boldsymbol{W}_{2}^{n+1}\right)\right\Vert _{2}\right)\hspace{-2pt}\left|\mathsf{A}_{1}^{n}\left(\boldsymbol{x}^{n}\right)-\mathsf{A}_{2}^{n}\left(\boldsymbol{x}^{n},y^{n}\right)\right|\right\} \nonumber \\
 & \le\hspace{-1pt}\left.\left\Vert \vphantom{\int}\hspace{-2pt}\left\Vert \underline{\nabla}F\hspace{-2pt}\left(\boldsymbol{x},\hspace{-1pt}\boldsymbol{W}_{1}^{n+1}\right)\right\Vert _{2}\hspace{-2pt}+\hspace{-1pt}\left\Vert \underline{\nabla}F\hspace{-2pt}\left(\boldsymbol{x},\hspace{-1pt}\boldsymbol{W}_{2}^{n+1}\right)\right\Vert _{2}\right\Vert _{{\cal L}_{P}}\right|_{\boldsymbol{x}\equiv\boldsymbol{x}^{n}}\left.\left\Vert \vphantom{\int}\hspace{-2pt}\left|\mathsf{A}_{1}^{n}\left(\boldsymbol{x}\right)-\mathsf{A}_{2}^{n}\left(\boldsymbol{x},y\right)\right|\right\Vert _{{\cal L}_{Q}}\right|_{\boldsymbol{x}\equiv\boldsymbol{x}^{n},y\equiv y^{n}}\nonumber \\
 & \le\hspace{-1pt}2G\left.\left\Vert \vphantom{\int}\hspace{-2pt}\left|\left.\underline{\nabla}\left({\cal R}\hspace{-1pt}\left(r\right)\right)^{p}\right|_{r\equiv F\hspace{-2pt}\left(\boldsymbol{x},\boldsymbol{W}_{2}^{n+1}\right)-{\cal S}^{\widetilde{F}}\left(\boldsymbol{x}\right)}\hspace{-2pt}-\hspace{-2pt}\left.\underline{\nabla}\left({\cal R}\hspace{-1pt}\left(r\right)\right)^{p}\right|_{r\equiv F\left(\boldsymbol{x},\boldsymbol{W}_{2}^{n+1}\right)-y}\right|\right\Vert _{{\cal L}_{Q}}\right|_{\boldsymbol{x}\equiv\boldsymbol{x}^{n},y\equiv y^{n}}\nonumber \\
 & \le\hspace{-1pt}2GD\left.\vphantom{\int}\hspace{-2pt}\left|y-{\cal S}^{\widetilde{F}}\hspace{-1pt}\hspace{-1pt}\left(\boldsymbol{x}\right)\right|\right|_{\boldsymbol{x}\equiv\boldsymbol{x}^{n},y\equiv y^{n}}\equiv\hspace{-1pt}2GD\left|y^{n}-{\cal S}^{\widetilde{F}}\hspace{-1pt}\hspace{-1pt}\left(\boldsymbol{x}^{n}\right)\right|,\label{eq:finally?}
\end{flalign}
almost everywhere relative to ${\cal P}$. Combining (\ref{eq:Long_2})
with (\ref{eq:B_0}) and (\ref{eq:finally?}), we readily obtain the
upper bound
\begin{flalign}
\mathbb{E}_{\mathscr{D}^{n}}\left\{ \boldsymbol{U}^{n+1}\right\}  & \le4\widetilde{\mathsf{B}}_{p}Gc\alpha_{n}\left\Vert \boldsymbol{x}^{n}\hspace{-2pt}-\hspace{-1pt}\boldsymbol{x}^{*}\right\Vert _{2}\hspace{-2pt}\left(\left|y^{n}\hspace{-2pt}-\hspace{-1pt}{\cal S}^{\widetilde{F}}\hspace{-1pt}\hspace{-1pt}\left(\boldsymbol{x}^{n}\right)\right|\hspace{-2pt}+\hspace{-2pt}\left|z^{n}\hspace{-2pt}-\hspace{-1pt}{\cal D}^{\widetilde{F}}\hspace{-2pt}\left(\boldsymbol{x}^{n}\right)\right|\right)\hspace{-2pt},\quad{\cal P}-a.e.,
\end{flalign}
where the constant $\widetilde{\mathsf{B}}_{p}<\infty$ is defined
as
\begin{equation}
\widetilde{\mathsf{B}}_{p}\triangleq\max\left\{ \hspace{-2pt}\left(\dfrac{1}{\varepsilon}\right)^{\hspace{-1pt}p-1}\hspace{-2pt}D,\left(p-1\right)\dfrac{{\cal E}^{p-1}}{\varepsilon^{2p-1}}\right\} \hspace{-2pt}.
\end{equation}
As a next step, recalling Lemma \ref{lem:P_is_LIP}, we observe that
\begin{flalign}
 & \hspace{-2pt}\hspace{-2pt}\hspace{-2pt}\hspace{-2pt}\hspace{-2pt}\hspace{-2pt}\left|z^{n}\hspace{-2pt}-\hspace{-1pt}{\cal D}^{\widetilde{F}}\hspace{-2pt}\left(\boldsymbol{x}^{n}\right)\right|\nonumber \\
 & \equiv\left|z^{n}-{\cal D}^{\widetilde{F}}\hspace{-2pt}\left(\boldsymbol{x}^{n},y^{n}\right)+{\cal D}^{\widetilde{F}}\hspace{-2pt}\left(\boldsymbol{x}^{n},y^{n}\right)-\hspace{-1pt}{\cal D}^{\widetilde{F}}\hspace{-2pt}\left(\boldsymbol{x}^{n}\right)\right|\nonumber \\
 & \le\left|z^{n}-{\cal D}^{\widetilde{F}}\hspace{-2pt}\left(\boldsymbol{x}^{n},y^{n}\right)\right|+\left|{\cal D}^{\widetilde{F}}\hspace{-2pt}\left(\boldsymbol{x}^{n},y^{n}\right)-\hspace{-1pt}{\cal D}^{\widetilde{F}}\hspace{-2pt}\left(\boldsymbol{x}^{n}\right)\right|\nonumber \\
 & \equiv\left|z^{n}-{\cal D}^{\widetilde{F}}\hspace{-2pt}\left(\boldsymbol{x}^{n},y^{n}\right)\right|\nonumber \\
 & \quad\quad+\left|\mathbb{E}_{\mathscr{D}^{n}}\hspace{-2pt}\left\{ \left({\cal R}\left(F\hspace{-2pt}\left(\boldsymbol{x}^{n},\hspace{-1pt}\boldsymbol{W}'\right)\hspace{-2pt}-\hspace{-2pt}{\cal S}^{\widetilde{F}}\left(\boldsymbol{x}^{n}\right)\right)\right)^{p}\right\} -\mathbb{E}_{\mathscr{D}^{n}}\hspace{-2pt}\left\{ \left({\cal R}\left(F\hspace{-2pt}\left(\boldsymbol{x}^{n},\hspace{-1pt}\boldsymbol{W}'\right)\hspace{-2pt}-\hspace{-2pt}y^{n}\right)\right)^{p}\right\} \right|\nonumber \\
 & \equiv\left|z^{n}-{\cal D}^{\widetilde{F}}\hspace{-2pt}\left(\boldsymbol{x}^{n},y^{n}\right)\right|+\left|\mathbb{E}_{\mathscr{D}^{n}}\hspace{-2pt}\left\{ \left({\cal R}\left(F\hspace{-2pt}\left(\boldsymbol{x}^{n},\hspace{-1pt}\boldsymbol{W}'\right)\hspace{-2pt}-\hspace{-2pt}{\cal S}^{\widetilde{F}}\left(\boldsymbol{x}^{n}\right)\right)\right)^{p}\hspace{-2pt}-\hspace{-1pt}\left({\cal R}\left(F\hspace{-2pt}\left(\boldsymbol{x}^{n},\hspace{-1pt}\boldsymbol{W}'\right)\hspace{-2pt}-\hspace{-2pt}y^{n}\right)\right)^{p}\right\} \hspace{-1pt}\right|\nonumber \\
 & \le\left|z^{n}-{\cal D}^{\widetilde{F}}\hspace{-2pt}\left(\boldsymbol{x}^{n},y^{n}\right)\right|+\mathbb{E}_{\mathscr{D}^{n}}\hspace{-2pt}\left\{ \left|\left({\cal R}\left(F\hspace{-2pt}\left(\boldsymbol{x}^{n},\hspace{-1pt}\boldsymbol{W}'\right)\hspace{-2pt}-\hspace{-2pt}{\cal S}^{\widetilde{F}}\left(\boldsymbol{x}^{n}\right)\right)\right)^{p}\hspace{-2pt}-\hspace{-1pt}\left({\cal R}\left(F\hspace{-2pt}\left(\boldsymbol{x}^{n},\hspace{-1pt}\boldsymbol{W}'\right)\hspace{-2pt}-\hspace{-2pt}y^{n}\right)\right)^{p}\right|\right\} \nonumber \\
 & \le\left|z^{n}-{\cal D}^{\widetilde{F}}\hspace{-2pt}\left(\boldsymbol{x}^{n},y^{n}\right)\right|+{\cal E}^{p-1}p\left|y^{n}-{\cal S}^{\widetilde{F}}\hspace{-1pt}\hspace{-1pt}\left(\boldsymbol{x}^{n}\right)\right|,\quad{\cal P}-a.e..
\end{flalign}
This yields, in turn,
\begin{flalign}
\mathbb{E}_{\mathscr{D}^{n}}\left\{ \boldsymbol{U}^{n+1}\right\}  & \le4\mathsf{B}_{p}Gc\alpha_{n}\left\Vert \boldsymbol{x}^{n}\hspace{-2pt}-\hspace{-1pt}\boldsymbol{x}^{*}\right\Vert _{2}\hspace{-2pt}\left|y^{n}\hspace{-2pt}-\hspace{-1pt}{\cal S}^{\widetilde{F}}\hspace{-1pt}\hspace{-1pt}\left(\boldsymbol{x}^{n}\right)\right|\nonumber \\
 & \quad\quad+4\mathsf{B}_{p}Gc\alpha_{n}\left\Vert \boldsymbol{x}^{n}\hspace{-2pt}-\hspace{-1pt}\boldsymbol{x}^{*}\right\Vert _{2}\left|z^{n}-{\cal D}^{\widetilde{F}}\hspace{-2pt}\left(\boldsymbol{x}^{n},y^{n}\right)\right|\hspace{-2pt},
\end{flalign}
where $\mathsf{B}_{p}\triangleq\left(1+{\cal E}^{p-1}p\right)\widetilde{\mathsf{B}}{}_{p}$.
Finally, invoking Lemmata \ref{lem:INTER_1-1} and \ref{lem:INTER_Middle},
$\mathbb{E}_{\mathscr{D}^{n}}\left\{ \boldsymbol{U}^{n+1}\right\} $
may be further bounded as
\begin{flalign}
\mathbb{E}_{\mathscr{D}^{n}}\left\{ \boldsymbol{U}^{n+1}\right\}  & \le4\mathsf{B}_{p}^{2}G^{2}c^{2}\dfrac{\alpha_{n}^{2}}{\beta_{n}}\left\Vert \boldsymbol{x}^{n}\hspace{-2pt}-\hspace{-1pt}\boldsymbol{x}^{*}\right\Vert _{2}^{2}+\beta_{n}\left|y^{n}\hspace{-2pt}-\hspace{-1pt}{\cal S}^{\widetilde{F}}\hspace{-1pt}\hspace{-1pt}\left(\boldsymbol{x}^{n}\right)\right|^{2}\nonumber \\
 & \quad\quad+4\mathsf{B}_{p}^{2}G^{2}c^{2}\dfrac{\alpha_{n}^{2}}{\gamma_{n}}\left\Vert \boldsymbol{x}^{n}\hspace{-2pt}-\hspace{-1pt}\boldsymbol{x}^{*}\right\Vert _{2}^{2}+\gamma_{n}\left|z^{n}-{\cal D}^{\widetilde{F}}\hspace{-2pt}\left(\boldsymbol{x}^{n},y^{n}\right)\right|^{2},
\end{flalign}
almost everywhere relative to ${\cal P}$.

As a result, invoking Lemma \ref{lem:Iterate_Boundedness}, (\ref{eq:BIG_P1_2})
may be further bounded from above as
\begin{flalign}
 & \hspace{-2pt}\hspace{-2pt}\hspace{-2pt}\mathbb{E}_{\mathscr{D}^{n}}\left\{ \left\Vert \boldsymbol{x}^{n+1}\hspace{-1pt}-\hspace{-1pt}\boldsymbol{x}^{*}\right\Vert _{2}^{2}\right\} \nonumber \\
 & \le\left(1+4\mathsf{B}_{p}^{2}G^{2}c^{2}\left(\dfrac{\alpha_{n}^{2}}{\beta_{n}}+\dfrac{\alpha_{n}^{2}}{\gamma_{n}}\right)\hspace{-2pt}\right)\left\Vert \boldsymbol{x}^{n}-\boldsymbol{x}^{*}\right\Vert _{2}^{2}+\alpha_{n}^{2}\left(2c\mathsf{R}_{p}+1\right)^{2}G^{2}-2\alpha_{n}\left(\phi^{\widetilde{F}}\left(\boldsymbol{x}^{n}\right)-\phi_{*}^{\widetilde{F}}\right)\nonumber \\
 & \quad\quad+\beta_{n}\hspace{-1pt}\left|y^{n}\hspace{-1pt}\hspace{-1pt}-\hspace{-1pt}{\cal S}^{\widetilde{F}}\hspace{-1pt}\hspace{-1pt}\left(\boldsymbol{x}^{n}\right)\right|^{2}\hspace{-2pt}+\hspace{-1pt}\gamma_{n}\left|z^{n}-{\cal D}^{\widetilde{F}}\left(\boldsymbol{x}^{n},y^{n}\right)\right|^{2}\label{eq:Final?}
\end{flalign}
almost everywhere relative to ${\cal P}$, yielding (\ref{eq:BIG_P1_3-1})
in the statement of Lemma \ref{lem:INTER_Middle}. The fact that $\mathbb{N}^{+}$
is countable completes the proof when $p>1$. For the case where $p\equiv1$,
exactly the same procedure yields the constant $\mathsf{B}_{p}\equiv D$,
whereas the ratio $\alpha_{n}^{2}/\gamma_{n}$ and last term on the
RHS of (\ref{eq:Final?}) (left to right) \textit{disappears}.\hfill{}\ensuremath{\blacksquare}

\subsection{\label{subsec:Chung}A Generalization of Chung's Lemma}

The following result is a generalization of Chung's Lemma \citep{Chung1954},
which is an old and well-known result for analyzing convergence rates
of stochastic approximation algorithms.
\begin{lem}
\textbf{\textup{(Generalized Chung's Lemma)}}\label{lem:Chung} Consider
any nonnegative sequence $\left\{ S^{n}\right\} _{n\in\mathbb{N}}$,
such that
\begin{equation}
S^{n+1}\le\left(1-\alpha_{n}\right)S^{n}+C\beta_{n},\quad\forall n\in\mathbb{N},
\end{equation}
where $\left\{ \alpha_{n}\right\} _{n\in\mathbb{N}}$, $\left\{ \beta_{n}\right\} _{n\in\mathbb{N}}$
are also nonnegative sequences, and $C\ge0$. Suppose that
\begin{equation}
n^{+}\triangleq\min\left\{ \left.n\in\mathbb{N}\right|\alpha_{n'}\le1,\;\forall n'\in\mathbb{N}^{n}\right\} \in\left[0,\infty\right),
\end{equation}
and choose $n_{o}\in\mathbb{N}^{n^{+}}$. Then, for every $n\in\mathbb{N}^{n_{o}}$,
it is true that
\begin{equation}
S^{n+1}\le S^{n_{o}}\prod_{i\in\mathbb{N}_{n}^{n_{o}}}\left(1-\alpha_{i}\right)+C\sum_{i\in\mathbb{N}_{n}^{n_{o}}}\beta_{i}\prod_{j\in\mathbb{N}_{n}^{i+1}}\left(1-\alpha_{j}\right),
\end{equation}
where, by convention, $\prod_{j\in\mathbb{N}_{n}^{n+1}}\left(\cdot\right)\equiv\prod_{j=n+1}^{n}\left(\cdot\right)\equiv1$.
\end{lem}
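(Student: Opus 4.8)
The plan is to unroll the one-step recursion by a straightforward finite induction on $n \ge n_o$, the sole purpose of the hypothesis $n_o \in \mathbb{N}^{n^+}$ being to guarantee that every factor $1-\alpha_i$ occurring in the claimed bound is nonnegative, so that multiplying inequalities between nonnegative quantities remains legitimate. Since $n_o \ge n^+$, the definition of $n^+$ yields $\alpha_i \le 1$, hence $1-\alpha_i \ge 0$, for all $i \ge n_o$; this is the only structural fact I need beyond nonnegativity of $\{S^n\}$, $\{\alpha_n\}$, $\{\beta_n\}$ and of $C$.

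First I would verify the base case $n \equiv n_o$. Here $\mathbb{N}_{n_o}^{n_o} = \{n_o\}$, so the product collapses to the single factor $(1-\alpha_{n_o})$, while the sum contains the lone index $i \equiv n_o$, whose trailing product $\prod_{j \in \mathbb{N}_{n_o}^{n_o+1}}(1-\alpha_j)$ is empty and equals $1$ by the stated convention. The claimed bound therefore reduces to $S^{n_o+1} \le (1-\alpha_{n_o})S^{n_o} + C\beta_{n_o}$, which is precisely the hypothesized recursion evaluated at $n \equiv n_o$.

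For the inductive step, assuming the bound holds at some $n \ge n_o$, I would apply the recursion once more to obtain $S^{n+2} \le (1-\alpha_{n+1})S^{n+1} + C\beta_{n+1}$, substitute the inductive bound for $S^{n+1}$, and distribute the factor $1-\alpha_{n+1}$ --- nonnegative because $n+1 \ge n_o \ge n^+$ --- across the two pieces. The crucial bookkeeping is that $\mathbb{N}_{n+1}^{n_o} = \mathbb{N}_n^{n_o} \cup \{n+1\}$ and $\mathbb{N}_{n+1}^{i+1} = \mathbb{N}_n^{i+1} \cup \{n+1\}$, so multiplying by $(1-\alpha_{n+1})$ exactly extends each product's index range from $n$ to $n+1$. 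This turns the leading term into $S^{n_o}\prod_{i\in\mathbb{N}_{n+1}^{n_o}}(1-\alpha_i)$ and the existing summands into the $i \le n$ terms of the target sum. It then remains to observe that the freshly produced $C\beta_{n+1}$ is precisely the missing $i \equiv n+1$ summand, since $\beta_{n+1} \equiv \beta_{n+1}\prod_{j\in\mathbb{N}_{n+1}^{n+2}}(1-\alpha_j)$ by the empty-product convention; absorbing it completes the sum over $\mathbb{N}_{n+1}^{n_o}$ and yields the bound at $n+1$.

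I do not anticipate a genuine obstacle here, as the content is simply a telescoping of a linear recursion. The only point requiring real care is preserving the direction of the inequality while multiplying through by $1-\alpha_{n+1}$, which forces me to check nonnegativity both of that factor and of the entire right-hand side (itself a sum of products of nonnegative terms); this is exactly why the truncation threshold $n^+$, and the restriction $n_o \ge n^+$, appear in the statement.
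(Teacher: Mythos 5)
Your proposal is correct and follows exactly the route the paper intends: the paper's own proof of Lemma \ref{lem:Chung} is literally ``Use simple induction; enough said,'' and your argument is precisely that induction carried out in full, with the base case at $n\equiv n_{o}$, the multiplication by the nonnegative factor $1-\alpha_{n+1}$ (guaranteed by $n_{o}\ge n^{+}$), and the absorption of the new term $C\beta_{n+1}$ via the empty-product convention. No gaps; the only superfluous remark is that nonnegativity of the right-hand side is not actually needed to multiply the inequality through by a nonnegative factor.
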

\begin{proof}[Proof of Lemma \ref{lem:Chung}]
Use simple induction; enough said.
\end{proof}

\subsection{\label{subsec:Rate_Gen}Proof of Lemma \ref{lem:Rate_Generator}}

In the following, we consider the case where $p>1$. If $p\equiv1$,
the proof is similar, albeit simpler. From the proof of Lemma \ref{lem:INTER_1-1-1},
we have already shown that
\begin{equation}
\hspace{-2pt}\mathbb{E}_{\mathscr{D}^{n}}\hspace{-2pt}\left\{ \left\Vert \boldsymbol{x}^{n+1}\hspace{-1pt}-\hspace{-1pt}\boldsymbol{x}^{*}\right\Vert _{2}^{2}\right\} \hspace{-2pt}\le\hspace{-2pt}\left\Vert \boldsymbol{x}^{n}-\boldsymbol{x}^{*}\right\Vert _{2}^{2}+\alpha_{n}^{2}\left(2c\mathsf{R}_{p}+1\right)^{2}G^{2}\hspace{-2pt}-2\alpha_{n}\left(\phi^{\widetilde{F}}\left(\boldsymbol{x}^{n}\right)\hspace{-2pt}-\phi_{*}^{\widetilde{F}}\right)\hspace{-2pt}+\mathbb{E}_{\mathscr{D}^{n}}\left\{ \boldsymbol{U}^{n+1}\right\} ,\label{eq:BIG_P1_2-1}
\end{equation}
and that
\begin{equation}
\mathbb{E}_{\mathscr{D}^{n}}\left\{ \boldsymbol{U}^{n+1}\right\} \hspace{-1pt}\hspace{-1pt}\le\hspace{-1pt}4\mathsf{B}_{p}Gc\alpha_{n}\left\Vert \boldsymbol{x}^{n}\hspace{-2pt}-\hspace{-1pt}\boldsymbol{x}^{*}\right\Vert _{2}\left|y^{n}\hspace{-2pt}-\hspace{-1pt}{\cal S}^{\widetilde{F}}\hspace{-1pt}\hspace{-1pt}\left(\boldsymbol{x}^{n}\right)\right|\hspace{-1pt}+\hspace{-1pt}4\mathsf{B}_{p}Gc\alpha_{n}\left\Vert \boldsymbol{x}^{n}\hspace{-2pt}-\hspace{-1pt}\boldsymbol{x}^{*}\right\Vert _{2}\left|z^{n}\hspace{-1pt}\hspace{-1pt}-\hspace{-1pt}{\cal D}^{\widetilde{F}}\hspace{-2pt}\left(\boldsymbol{x}^{n},y^{n}\right)\right|,
\end{equation}
almost everywhere relative to ${\cal P}$, for each $n\in\mathbb{N}$.

First, we exploit our assumption in regard to strong convexity of
the objective $\phi^{\widetilde{F}}$, and taking expectations on
both sides of (\ref{eq:BIG_P1_2-1}), we get
\begin{equation}
\mathbb{E}\left\{ \left\Vert \boldsymbol{x}^{n+1}\hspace{-1pt}-\hspace{-1pt}\boldsymbol{x}^{*}\right\Vert _{2}^{2}\right\} \le\left(1-2\sigma\alpha_{n}\right)\mathbb{E}\left\{ \left\Vert \boldsymbol{x}^{n}-\boldsymbol{x}^{*}\right\Vert _{2}^{2}\right\} +\alpha_{n}^{2}\left(2c\mathsf{R}_{p}+1\right)^{2}G^{2}+\mathbb{E}\left\{ \boldsymbol{U}^{n+1}\right\} ,
\end{equation}
being true for all $n\in\mathbb{N}$. Let us focus on appropriately
bounding the term $\mathbb{E}\left\{ \boldsymbol{U}^{n+1}\right\} $,
for $n\in\mathbb{N}$. It is true that
\begin{flalign}
\mathbb{E}_{\mathscr{D}^{n}}\left\{ \boldsymbol{U}^{n+1}\right\}  & \le\dfrac{\sigma}{2}\alpha_{n}\left\Vert \boldsymbol{x}^{n}\hspace{-2pt}-\hspace{-1pt}\boldsymbol{x}^{*}\right\Vert _{2}^{2}+\alpha_{n}\dfrac{8\mathsf{B}_{p}^{2}G^{2}c^{2}}{\sigma}\left|y^{n}\hspace{-2pt}-\hspace{-1pt}{\cal S}^{\widetilde{F}}\hspace{-1pt}\hspace{-1pt}\left(\boldsymbol{x}^{n}\right)\right|^{2}\nonumber \\
 & \quad\quad+\dfrac{\sigma}{2}\alpha_{n}\left\Vert \boldsymbol{x}^{n}\hspace{-2pt}-\hspace{-1pt}\boldsymbol{x}^{*}\right\Vert _{2}^{2}+\alpha_{n}\dfrac{8\mathsf{B}_{p}^{2}G^{2}c^{2}}{\sigma}\left|z^{n}-{\cal D}^{\widetilde{F}}\hspace{-2pt}\left(\boldsymbol{x}^{n},y^{n}\right)\right|^{2}\nonumber \\
 & \equiv\sigma\alpha_{n}\left\Vert \boldsymbol{x}^{n}\hspace{-2pt}-\hspace{-1pt}\boldsymbol{x}^{*}\right\Vert _{2}^{2}+\alpha_{n}\dfrac{8\mathsf{B}_{p}^{2}G^{2}c^{2}}{\sigma}\left(\left|y^{n}\hspace{-2pt}-\hspace{-1pt}{\cal S}^{\widetilde{F}}\hspace{-1pt}\hspace{-1pt}\left(\boldsymbol{x}^{n}\right)\right|^{2}+\left|z^{n}-{\cal D}^{\widetilde{F}}\hspace{-2pt}\left(\boldsymbol{x}^{n},y^{n}\right)\right|^{2}\right),
\end{flalign}
almost everywhere relative to ${\cal P}$. Again, taking expectations
on both sides, we have
\begin{flalign}
\hspace{-1pt}\hspace{-1pt}\hspace{-1pt}\hspace{-1pt}\hspace{-1pt}\mathbb{E}\hspace{-1pt}\hspace{-1pt}\left\{ \boldsymbol{U}^{n+1}\hspace{-1pt}\right\}  & \hspace{-1pt}\hspace{-2pt}\le\hspace{-2pt}\sigma\alpha_{n}\mathbb{E}\hspace{-1pt}\hspace{-1pt}\left\{ \left\Vert \boldsymbol{x}^{n}\hspace{-1pt}\hspace{-1pt}-\hspace{-1pt}\boldsymbol{x}^{*}\right\Vert _{2}^{2}\right\} \hspace{-2pt}+\hspace{-1pt}\hspace{-1pt}\alpha_{n}\hspace{-1pt}\dfrac{8\mathsf{B}_{p}^{2}G^{2}c^{2}}{\sigma}\hspace{-1pt}\hspace{-1pt}\hspace{-1pt}\left(\hspace{-1pt}\hspace{-1pt}\mathbb{E}\hspace{-1pt}\hspace{-1pt}\left\{ \left|y^{n}\hspace{-2pt}-\hspace{-1pt}{\cal S}^{\widetilde{F}}\hspace{-1pt}\hspace{-1pt}\left(\boldsymbol{x}^{n}\right)\right|^{2}\right\} \hspace{-2pt}+\hspace{-1pt}\mathbb{E}\hspace{-1pt}\hspace{-1pt}\left\{ \left|z^{n}\hspace{-1pt}\hspace{-1pt}-\hspace{-1pt}{\cal D}^{\widetilde{F}}\hspace{-2pt}\left(\boldsymbol{x}^{n},y^{n}\right)\right|^{2}\right\} \hspace{-1pt}\hspace{-1pt}\right)\hspace{-1pt}\hspace{-1pt},\hspace{-1pt}\hspace{-1pt}\hspace{-1pt}
\end{flalign}
 for all $n\in\mathbb{N}$. For brevity, we hereafter make the identifications
\begin{align}
A^{n} & \triangleq\mathbb{E}\left\{ \left\Vert \boldsymbol{x}^{n}-\boldsymbol{x}^{*}\right\Vert _{2}^{2}\right\} ,\\
B^{n} & \triangleq\mathbb{E}\left\{ \left|y^{n}\hspace{-2pt}-\hspace{-1pt}{\cal S}^{\widetilde{F}}\hspace{-1pt}\hspace{-1pt}\left(\boldsymbol{x}^{n}\right)\right|^{2}\right\} \quad\text{and}\\
C^{n} & \triangleq\mathbb{E}\left\{ \left|z^{n}-{\cal D}^{\widetilde{F}}\hspace{-2pt}\left(\boldsymbol{x}^{n},y^{n}\right)\right|^{2}\right\} ,\quad\forall n\in\mathbb{N}.
\end{align}
As a result, we arrive at the inequalities
\begin{flalign}
A^{n+1} & \le\left(1-2\sigma\alpha_{n}\right)A^{n}+\alpha_{n}^{2}\left(2c\mathsf{R}_{p}+1\right)^{2}G^{2}+\mathbb{E}\left\{ \boldsymbol{U}^{n+1}\right\} \quad\text{and}\\
\mathbb{E}\left\{ \boldsymbol{U}^{n+1}\right\}  & \le\sigma\alpha_{n}A^{n}+\alpha_{n}\dfrac{8\mathsf{B}_{p}^{2}G^{2}c^{2}}{\sigma}\left(B^{n}+C^{n}\right),\quad\forall n\in\mathbb{N},
\end{flalign}
which imply that
\begin{equation}
A^{n+1}\le\left(1-\sigma\alpha_{n}\right)A^{n}+\alpha_{n}^{2}\left(2c\mathsf{R}_{p}+1\right)^{2}G^{2}+\alpha_{n}\dfrac{8\mathsf{B}_{p}^{2}G^{2}c^{2}}{\sigma}\left(B^{n}+C^{n}\right),\quad\forall n\in\mathbb{N}.
\end{equation}
By Lemmata \ref{lem:INTER_1}, \ref{lem:INTER_1-1} and \ref{lem:INTER_Middle},
we know that (by taking expectations on both sides), for every $n\in\mathbb{N}^{+}$,
$B^{n}$ and $C^{n}$ satisfy the recursive inequalities
\begin{align}
\hspace{-2pt}\hspace{-2pt}\hspace{-2pt}\hspace{-2pt}\hspace{-2pt}\hspace{-2pt}B^{n} & \hspace{-1pt}\hspace{-1pt}\le\hspace{-1pt}\hspace{-1pt}\left(1\hspace{-1pt}-\hspace{-1pt}\beta_{n-1}\right)\hspace{-1pt}B^{n-1}\hspace{-1pt}+\hspace{-1pt}\dfrac{\alpha_{n-1}^{2}}{\beta_{n-1}}2\left(2c\mathsf{R}_{p}\hspace{-1pt}+\hspace{-1pt}1\right)^{2}G^{4}\hspace{-1pt}+\hspace{-1pt}\beta_{n-1}^{2}2V\quad\text{and}\\
\hspace{-2pt}\hspace{-2pt}\hspace{-2pt}\hspace{-2pt}\hspace{-2pt}\hspace{-2pt}C^{n} & \hspace{-1pt}\hspace{-1pt}\le\hspace{-1pt}\hspace{-1pt}\left(1\hspace{-1pt}-\hspace{-1pt}\gamma_{n-1}\right)\hspace{-1pt}C^{n-1}\hspace{-1pt}+\hspace{-1pt}\dfrac{\alpha_{n-1}^{2}}{\gamma_{n-1}}16G^{4}{\cal E}^{2p-2}p^{2}\hspace{-2pt}\left(2c\mathsf{R}_{p}\hspace{-1pt}+\hspace{-1pt}1\right)^{2}\hspace{-1pt}+\hspace{-1pt}\dfrac{\beta_{n-1}^{2}}{\gamma_{n-1}}4{\cal E}^{2p-2}p^{2}\hspace{-1pt}\left(m_{h}\hspace{-1pt}-\hspace{-1pt}m_{l}\right)\hspace{-1pt}+\hspace{-1pt}\gamma_{n-1}^{2}2{\cal E}^{2p}.\hspace{-2pt}\hspace{-2pt}\hspace{-2pt}
\end{align}
Now, for simplicity and clarity, let us define a strictly \textit{problem
dependent} constant
\begin{multline}
\quad\quad\quad\Sigma\hspace{-1pt}\triangleq\max\left\{ \vphantom{\left(2c\mathsf{R}_{p}+1\right)^{2}}\left(2c\mathsf{R}_{p}+1\right)^{2}G^{2},8\mathsf{B}_{p}^{2}G^{2}c^{2},2\left(2c\mathsf{R}_{p}+1\right)^{2}G^{4},2V,\right.\\
\left.\vphantom{\left(2c\mathsf{R}_{p}+1\right)^{2}}16G^{4}{\cal E}^{2p-2}p^{2}\left(2c\mathsf{R}_{p}+1\right)^{2},4{\cal E}^{2p-2}p^{2}\left(m_{h}-m_{l}\right),2{\cal E}^{2p}\right\} .\quad\quad\quad
\end{multline}
Then, it is true that
\begin{flalign}
A^{n+1} & \le\left(1-\sigma\alpha_{n}\right)A^{n}+\alpha_{n}^{2}\Sigma+\alpha_{n}\dfrac{\Sigma}{\sigma}B^{n}+\alpha_{n}\dfrac{\Sigma}{\sigma}C^{n},\quad\text{with}\label{eq:A}\\
B^{n} & \le\left(1-\beta_{n-1}\right)B^{n-1}+\dfrac{\alpha_{n-1}^{2}}{\beta_{n-1}}\Sigma+\beta_{n-1}^{2}\Sigma\quad\text{and}\label{eq:B}\\
C^{n} & \le\left(1-\gamma_{n-1}\right)C^{n-1}+\dfrac{\alpha_{n-1}^{2}}{\gamma_{n-1}}\Sigma+\dfrac{\beta_{n-1}^{2}}{\gamma_{n-1}}\Sigma+\gamma_{n-1}^{2}\Sigma,\quad\forall n\in\mathbb{N}^{+}.\label{eq:C}
\end{flalign}
Next, let $\left\{ \boldsymbol{\Delta}_{B}^{n}\ge0\right\} _{n\in\mathbb{N}^{+}}$
and $\left\{ \boldsymbol{\Delta}_{C}^{n}\ge0\right\} _{n\in\mathbb{N}^{+}}$
be two \textit{auxiliary correction sequences}, to be determined shortly.
It then follows that
\begin{equation}
\boldsymbol{\Delta}_{B}^{n}B^{n}\le\left(1-\beta_{n-1}\right)\boldsymbol{\Delta}_{B}^{n}B^{n}+\dfrac{\alpha_{n-1}^{2}}{\beta_{n-1}}\boldsymbol{\Delta}_{B}^{n}\Sigma+\beta_{n-1}^{2}\boldsymbol{\Delta}_{B}^{n}\Sigma,
\end{equation}
implying that, for every $n\in\mathbb{N}^{+}$,
\begin{align}
\left(\boldsymbol{\Delta}_{B}^{n}+\alpha_{n}\dfrac{\Sigma}{\sigma}\right)B^{n} & \le\left(1-\beta_{n-1}\right)\left(\boldsymbol{\Delta}_{B}^{n}+\alpha_{n}\dfrac{\Sigma}{\sigma}\right)B^{n-1}\nonumber \\
 & +\dfrac{\alpha_{n-1}^{2}}{\beta_{n-1}}\boldsymbol{\Delta}_{B}^{n}\Sigma+\beta_{n-1}^{2}\boldsymbol{\Delta}_{B}^{n}\Sigma+\dfrac{\alpha_{n}\alpha_{n-1}^{2}}{\beta_{n-1}}\dfrac{\Sigma^{2}}{\sigma}+\alpha_{n}\beta_{n-1}^{2}\dfrac{\Sigma^{2}}{\sigma}.\label{eq:Delta_B}
\end{align}
Exactly the same procedure for (\ref{eq:C}) yields
\begin{align}
\left(\boldsymbol{\Delta}_{C}^{n}+\alpha_{n}\dfrac{\Sigma}{\sigma}\right)C^{n} & \le\left(1-\gamma_{n-1}\right)\left(\boldsymbol{\Delta}_{C}^{n}+\alpha_{n}\dfrac{\Sigma}{\sigma}\right)C^{n-1}\nonumber \\
 & \hspace{-2pt}\hspace{-2pt}\hspace{-2pt}\hspace{-2pt}\hspace{-2pt}\hspace{-2pt}\hspace{-2pt}\hspace{-2pt}\hspace{-2pt}\hspace{-2pt}\hspace{-2pt}\hspace{-2pt}\hspace{-2pt}\hspace{-2pt}\hspace{-2pt}\hspace{-2pt}\hspace{-2pt}\hspace{-2pt}\hspace{-2pt}\hspace{-2pt}\hspace{-2pt}\hspace{-2pt}\hspace{-2pt}\hspace{-2pt}\hspace{-2pt}\hspace{-2pt}+\dfrac{\alpha_{n-1}^{2}}{\gamma_{n-1}}\boldsymbol{\Delta}_{C}^{n}\Sigma+\dfrac{\beta_{n-1}^{2}}{\gamma_{n-1}}\boldsymbol{\Delta}_{C}^{n}\Sigma+\gamma_{n-1}^{2}\boldsymbol{\Delta}_{C}^{n}\Sigma+\dfrac{\alpha_{n}\alpha_{n-1}^{2}}{\gamma_{n-1}}\dfrac{\Sigma^{2}}{\sigma}+\dfrac{\alpha_{n}\beta_{n-1}^{2}}{\gamma_{n-1}}\dfrac{\Sigma^{2}}{\sigma}+\alpha_{n}\gamma_{n-1}^{2}\dfrac{\Sigma^{2}}{\sigma},\label{eq:Delta_C}
\end{align}
for all $n\in\mathbb{N}^{+}$. Simply combining (\ref{eq:A}), (\ref{eq:Delta_B})
and (\ref{eq:Delta_C}), we obtain
\begin{flalign}
 & \hspace{-2pt}\hspace{-2pt}\hspace{-2pt}\hspace{-2pt}A^{n+1}+\left(\boldsymbol{\Delta}_{B}^{n}+\alpha_{n}\dfrac{\Sigma}{\sigma}\right)B^{n}+\left(\boldsymbol{\Delta}_{C}^{n}+\alpha_{n}\dfrac{\Sigma}{\sigma}\right)C^{n}\nonumber \\
 & \hspace{-2pt}\hspace{-2pt}\le\left(1-\sigma\alpha_{n}\right)A^{n}+\left(1-\beta_{n-1}\right)\left(\boldsymbol{\Delta}_{B}^{n}+\alpha_{n}\dfrac{\Sigma}{\sigma}\right)B^{n-1}+\left(1-\gamma_{n-1}\right)\left(\boldsymbol{\Delta}_{C}^{n}+\alpha_{n}\dfrac{\Sigma}{\sigma}\right)C^{n-1}\nonumber \\
 & \hspace{-2pt}\hspace{-2pt}+\alpha_{n}^{2}\Sigma+\alpha_{n}\dfrac{\Sigma}{\sigma}B^{n}+\alpha_{n}\dfrac{\Sigma}{\sigma}C^{n}\nonumber \\
 & \hspace{-2pt}\hspace{-2pt}+\dfrac{\alpha_{n-1}^{2}}{\beta_{n-1}}\boldsymbol{\Delta}_{B}^{n}\Sigma\hspace{-1pt}+\hspace{-1pt}\beta_{n-1}^{2}\boldsymbol{\Delta}_{B}^{n}\Sigma\hspace{-1pt}+\hspace{-1pt}\dfrac{\alpha_{n}\alpha_{n-1}^{2}}{\beta_{n-1}}\dfrac{\Sigma^{2}}{\sigma}\hspace{-1pt}+\hspace{-1pt}\alpha_{n}\beta_{n-1}^{2}\dfrac{\Sigma^{2}}{\sigma}\nonumber \\
 & \hspace{-2pt}\hspace{-2pt}+\dfrac{\alpha_{n-1}^{2}}{\gamma_{n-1}}\boldsymbol{\Delta}_{C}^{n}\Sigma\hspace{-1pt}+\hspace{-1pt}\dfrac{\beta_{n-1}^{2}}{\gamma_{n-1}}\boldsymbol{\Delta}_{C}^{n}\Sigma\hspace{-1pt}+\hspace{-1pt}\gamma_{n-1}^{2}\boldsymbol{\Delta}_{C}^{n}\Sigma\hspace{-1pt}+\hspace{-1pt}\dfrac{\alpha_{n}\alpha_{n-1}^{2}}{\gamma_{n-1}}\dfrac{\Sigma^{2}}{\sigma}\hspace{-1pt}+\hspace{-1pt}\dfrac{\alpha_{n}\beta_{n-1}^{2}}{\gamma_{n-1}}\dfrac{\Sigma^{2}}{\sigma}\hspace{-1pt}+\hspace{-1pt}\alpha_{n}\gamma_{n-1}^{2}\dfrac{\Sigma^{2}}{\sigma},\;\forall n\in\mathbb{N}^{+},
\end{flalign}
or, equivalently,
\begin{flalign}
 & \hspace{-2pt}\hspace{-2pt}\hspace{-2pt}\hspace{-2pt}A^{n+1}+\boldsymbol{\Delta}_{B}^{n}B^{n}+\boldsymbol{\Delta}_{C}^{n}C^{n}\nonumber \\
 & \hspace{-2pt}\hspace{-2pt}\le\left(1-\sigma\alpha_{n}\right)A^{n}+\left(1-\beta_{n-1}\right)\left(\boldsymbol{\Delta}_{B}^{n}+\alpha_{n}\dfrac{\Sigma}{\sigma}\right)B^{n-1}+\left(1-\gamma_{n-1}\right)\left(\boldsymbol{\Delta}_{C}^{n}+\alpha_{n}\dfrac{\Sigma}{\sigma}\right)C^{n-1}\nonumber \\
 & \hspace{-2pt}\hspace{-2pt}+\alpha_{n}^{2}\Sigma\nonumber \\
 & \hspace{-2pt}\hspace{-2pt}+\dfrac{\alpha_{n-1}^{2}}{\beta_{n-1}}\boldsymbol{\Delta}_{B}^{n}\Sigma\hspace{-1pt}+\hspace{-1pt}\beta_{n-1}^{2}\boldsymbol{\Delta}_{B}^{n}\Sigma\hspace{-1pt}+\hspace{-1pt}\dfrac{\alpha_{n}\alpha_{n-1}^{2}}{\beta_{n-1}}\dfrac{\Sigma^{2}}{\sigma}\hspace{-1pt}+\hspace{-1pt}\alpha_{n}\beta_{n-1}^{2}\dfrac{\Sigma^{2}}{\sigma}\nonumber \\
 & \hspace{-2pt}\hspace{-2pt}+\dfrac{\alpha_{n-1}^{2}}{\gamma_{n-1}}\boldsymbol{\Delta}_{C}^{n}\Sigma\hspace{-1pt}+\hspace{-1pt}\dfrac{\beta_{n-1}^{2}}{\gamma_{n-1}}\boldsymbol{\Delta}_{C}^{n}\Sigma\hspace{-1pt}+\hspace{-1pt}\gamma_{n-1}^{2}\boldsymbol{\Delta}_{C}^{n}\Sigma\hspace{-1pt}+\hspace{-1pt}\dfrac{\alpha_{n}\alpha_{n-1}^{2}}{\gamma_{n-1}}\dfrac{\Sigma^{2}}{\sigma}\hspace{-1pt}+\hspace{-1pt}\dfrac{\alpha_{n}\beta_{n-1}^{2}}{\gamma_{n-1}}\dfrac{\Sigma^{2}}{\sigma}\hspace{-1pt}+\hspace{-1pt}\alpha_{n}\gamma_{n-1}^{2}\dfrac{\Sigma^{2}}{\sigma},\;\forall n\in\mathbb{N}^{+}.\label{eq:REC_1}
\end{flalign}
Driven by the form of (\ref{eq:REC_1}), we would first like to choose
$\boldsymbol{\Delta}_{B}^{n}$ and $\boldsymbol{\Delta}_{C}^{n}$
such that, \textit{at least for $n$ sufficiently large},
\begin{flalign}
\left(1-\beta_{n-1}\right)\left(\boldsymbol{\Delta}_{B}^{n}+\alpha_{n}\dfrac{\Sigma}{\sigma}\right) & \le\left(1-\sigma\alpha_{n}\right)\boldsymbol{\Delta}_{B}^{n},\quad\text{and}\label{eq:Conditions_1}\\
\left(1-\gamma_{n-1}\right)\left(\boldsymbol{\Delta}_{C}^{n}+\alpha_{n}\dfrac{\Sigma}{\sigma}\right) & \le\left(1-\sigma\alpha_{n}\right)\boldsymbol{\Delta}_{C}^{n}.
\end{flalign}
The following procedure is exactly the same for both terms, so let
us take, say, $\boldsymbol{\Delta}_{B}^{n}$. We may write (\ref{eq:Conditions_1})
equivalently as
\begin{flalign}
\left(1-\beta_{n-1}\right)\alpha_{n}\dfrac{\Sigma}{\sigma} & \le\left(\beta_{n-1}-\sigma\alpha_{n}\right)\boldsymbol{\Delta}_{B}^{n}.
\end{flalign}
Since, by condition ${\bf G1}$,
\begin{equation}
\sigma\alpha_{n}\le\dfrac{K-1}{K}\min\left\{ \beta_{n-1},\gamma_{n-1}\right\} <\beta_{n-1}\le1,
\end{equation}
for every $n\in\mathbb{N}^{n_{o}}$ and some globally fixed $n_{o}\in\mathbb{N}^{+}$,
(\ref{eq:Conditions_1}) is also equivalent to
\begin{equation}
\dfrac{\left(1-\beta_{n-1}\right)\alpha_{n}}{\beta_{n-1}-\sigma\alpha_{n}}\dfrac{\Sigma}{\sigma}\le\boldsymbol{\Delta}_{B}^{n},\quad\forall n\in\mathbb{N}^{n_{o}}.
\end{equation}
Therefore, it suffices to choose
\begin{equation}
\boldsymbol{\Delta}_{B}^{n}\triangleq\dfrac{\alpha_{n}}{\beta_{n-1}-\sigma\alpha_{n}}\dfrac{\Sigma}{\sigma},\quad\forall n\in\mathbb{N}^{n_{o}}.
\end{equation}
Additionally, by defining $\boldsymbol{\Delta}_{B}^{n_{o}-1}\triangleq\boldsymbol{\Delta}_{B}^{n_{o}}$,
it is easy to see that conditions ${\bf G1}$ and ${\bf G2}$ imply
that
\begin{flalign}
\dfrac{\alpha_{n}}{\beta_{n-1}}\dfrac{\Sigma}{\sigma}\le\boldsymbol{\Delta}_{B}^{n} & \le K\dfrac{\alpha_{n}}{\beta_{n-1}}\dfrac{\Sigma}{\sigma}\quad\text{and}\\
\boldsymbol{\Delta}_{B}^{n} & \le\boldsymbol{\Delta}_{B}^{n-1},\quad\forall n\in\mathbb{N}^{n_{o}},
\end{flalign}
respectively. Of course, similar results hold in a completely analogous
fashion for $\boldsymbol{\Delta}_{C}^{n}$, for every $n\in\mathbb{N}^{n_{o}}$.
Consequently, letting
\begin{equation}
J^{n}\triangleq A^{n}+\boldsymbol{\Delta}_{B}^{n-1}B^{n-1}+\boldsymbol{\Delta}_{C}^{n-1}C^{n-1},\quad\forall n\in\mathbb{N}^{+},
\end{equation}
and defining the constant
\begin{equation}
\widetilde{\Sigma}\triangleq\dfrac{\Sigma}{\sigma^{2}}\max\left\{ \left(K+1\right)\dfrac{\Sigma}{\sigma^{2}},\left(K+1\right)\Sigma,1\right\} ,
\end{equation}
standard manipulations show that the RHS of expression (\ref{eq:REC_1})
may be further bounded as
\begin{equation}
\hspace{-1pt}\hspace{-1pt}\hspace{-1pt}\hspace{-1pt}J^{n+1}\hspace{-1pt}\hspace{-1pt}\le\hspace{-1pt}\left(1\hspace{-1pt}-\hspace{-1pt}\sigma\alpha_{n}\right)J^{n}\hspace{-1pt}+\hspace{-1pt}\widetilde{\Sigma}\left(\hspace{-1pt}\sigma^{2}\alpha_{n}^{2}\hspace{-1pt}+\hspace{-1pt}\dfrac{\sigma^{3}\alpha_{n}\alpha_{n-1}^{2}}{\beta_{n-1}^{2}}\hspace{-1pt}+\hspace{-1pt}\sigma\alpha_{n}\beta_{n-1}\hspace{-1pt}+\hspace{-1pt}\dfrac{\sigma^{3}\alpha_{n}\alpha_{n-1}^{2}}{\gamma_{n-1}^{2}}\hspace{-1pt}+\hspace{-1pt}\dfrac{\sigma\alpha_{n}\beta_{n-1}^{2}}{\gamma_{n-1}^{2}}\hspace{-1pt}+\hspace{-1pt}\sigma\alpha_{n}\gamma_{n-1}\hspace{-1pt}\right)\hspace{-1pt}\hspace{-1pt},
\end{equation}
for all $n\in\mathbb{N}^{n_{o}}$.

Lastly, let us now show the last part of Lemma \ref{lem:Rate_Generator}.
If, additionally, the assumptions of Lemma \ref{lem:Iter_Error_Bound}
are in effect, it follows that
\begin{equation}
\sup_{n\in\mathbb{N}}B^{n}<\infty\quad\text{and}\quad\sup_{n\in\mathbb{N}}C^{n}<\infty.
\end{equation}
Thus, we may write
\begin{equation}
A^{n+1}\le\left(1-\sigma\alpha_{n}\right)A^{n}+\alpha_{n}^{2}\sigma^{2}\Lambda+\alpha_{n}\sigma\Lambda,\quad\forall n\in\mathbb{N}.
\end{equation}
where
\begin{equation}
\Lambda\triangleq\dfrac{G^{2}}{\sigma^{2}}\max\left\{ \hspace{-2pt}\left(2c\mathsf{R}_{p}+1\right)^{2},8\mathsf{B}_{p}^{2}c^{2}\left(\sup_{n\in\mathbb{N}}B^{n}+\sup_{n\in\mathbb{N}}C^{n}\right)\hspace{-2pt}\right\} 
\end{equation}
We now make use of conditions ${\bf G1}$ and ${\bf G2}$. It is true
that
\begin{flalign}
A^{1} & \le\left(1-\sigma\alpha_{0}\right)A^{0}+\alpha_{0}^{2}\sigma^{2}\Lambda+\alpha_{0}\sigma\Lambda\nonumber \\
 & \le A^{0}+\max\left\{ \sup_{n\in\mathbb{N}_{n_{o}-1}}\alpha_{n}^{2}\sigma^{2},1\right\} \Lambda+\sup_{n\in\mathbb{N}}\alpha_{n}\sigma\Lambda\triangleq D<\infty,
\end{flalign}
and, of course, $A^{0}\le D$. We use simple induction. Suppose that,
for some $n\in\mathbb{N}$, $A^{n}\le D$. Then, there are two possibilities
for $\sigma\alpha_{n}\ge0$. Either $\sigma\alpha_{n}>1$, which is
of course only possible if $n\in\mathbb{N}_{n_{o}-1}$, in which case
we have
\begin{flalign}
A^{n+1} & \le\left(1-\sigma\alpha_{n}\right)A^{n}+\alpha_{n}^{2}\sigma^{2}\Lambda+\alpha_{n}\sigma\Lambda\nonumber \\
 & \le\alpha_{n}^{2}\sigma^{2}\Lambda+\alpha_{n}\sigma\Lambda\nonumber \\
 & \le\max\left\{ \sup_{n\in\mathbb{N}_{n_{o}-1}}\alpha_{n}^{2}\sigma^{2},1\right\} \Lambda+\sup_{n\in\mathbb{N}}\alpha_{n}\sigma\Lambda\nonumber \\
 & \equiv D,
\end{flalign}
or $\sigma\alpha_{n}\le1$, which might happen for any $n\in\mathbb{N}$,
yielding
\begin{flalign}
A^{n+1} & \le\left(1-\sigma\alpha_{n}\right)A^{n}+\alpha_{n}^{2}\sigma^{2}\Lambda+\alpha_{n}\sigma\Lambda\nonumber \\
 & \le\left(1-\sigma\alpha_{n}\right)D+\alpha_{n}\sigma\left(\alpha_{n}\sigma\Lambda+\Lambda\right)\nonumber \\
 & \le\left(1-\sigma\alpha_{n}\right)D+\alpha_{n}\sigma\left(\sup_{n\in\mathbb{N}}\alpha_{n}\sigma\Lambda+\max\left\{ \sup_{n\in\mathbb{N}_{n_{o}-1}}\alpha_{n}^{2}\sigma^{2},1\right\} \Lambda\right)\nonumber \\
 & \le D-\sigma\alpha_{n}D+\alpha_{n}\sigma D\nonumber \\
 & \equiv D.
\end{flalign}
As a result, we have shown that $A^{n+1}\le D$, as well, implying
that
\begin{equation}
\sup_{n\in\mathbb{N}}A^{n}\le D<\infty.
\end{equation}
By definition of $J^{n}$, for $n\in\mathbb{N}^{+}$, we may write
(utilizing condition ${\bf G2}$)
\begin{flalign}
\sup_{n\in\mathbb{N}^{+}}J^{n} & \equiv\sup_{n\in\mathbb{N}^{+}}A^{n}+\boldsymbol{\Delta}_{B}^{n-1}B^{n-1}+\boldsymbol{\Delta}_{C}^{n-1}C^{n-1}\nonumber \\
 & \le\sup_{n\in\mathbb{N}^{+}}A^{n}+\sup_{n\in\mathbb{N}^{+}}\boldsymbol{\Delta}_{B}^{n-1}\sup_{n\in\mathbb{N}^{+}}B^{n-1}+\sup_{n\in\mathbb{N}^{+}}\boldsymbol{\Delta}_{C}^{n-1}\sup_{n\in\mathbb{N}^{+}}C^{n-1}\nonumber \\
 & \equiv\sup_{n\in\mathbb{N}^{+}}A^{n}+\max\hspace{-1pt}\hspace{-1pt}\left\{ \sup_{n\in\mathbb{N}^{n_{o}-2}}\hspace{-1pt}\boldsymbol{\Delta}_{B}^{n},\boldsymbol{\Delta}_{B}^{n_{o}-1}\hspace{-1pt}\right\} \hspace{-1pt}\sup_{n\in\mathbb{N}^{+}}B^{n-1}+\max\hspace{-1pt}\hspace{-1pt}\left\{ \sup_{n\in\mathbb{N}^{n_{o}-2}}\hspace{-1pt}\boldsymbol{\Delta}_{C}^{n},\boldsymbol{\Delta}_{C}^{n_{o}-1}\hspace{-1pt}\right\} \hspace{-1pt}\sup_{n\in\mathbb{N}^{+}}C^{n-1}\nonumber \\
 & <\infty,
\end{flalign}
and the proof is now complete.\hfill{}\ensuremath{\blacksquare}

\subsection{\label{subsec:Rate_Sconvex}Proof of Lemma \ref{lem:Rate_SConvex}}

First, let us verify conditions $\mathbf{G1}$ and $\mathbf{G2}$
of Lemma \ref{lem:Rate_Generator}. For $\mathbf{G1}$, we perform,
for every $n\in\mathbb{N}^{2}$, the equivalence test
\begin{equation}
\sigma\alpha_{n}\equiv\dfrac{1}{n}\le\dfrac{K-1}{K}\dfrac{1}{\left(n-1\right)^{\tau_{2}}}\equiv\dfrac{K-1}{K}\min\left\{ \beta_{n-1},\gamma_{n-1}\right\} \iff K\ge\dfrac{1}{1-\dfrac{\left(n-1\right)^{\tau_{2}}}{n}},
\end{equation}
which implies that any $K\ge2$ works. Therefore, $\mathbf{G1}$ is
satisfied for all $n\in\mathbb{N}^{2}$ by choosing, say, $K\equiv2$.
To verify $\mathbf{G2}$ for the sequence $\left\{ \beta_{n}\right\} _{n\in\mathbb{N}}$,
we would also like to show that
\begin{equation}
\alpha_{n+1}\beta_{n-1}\equiv\dfrac{1}{\sigma\left(n+1\right)}\dfrac{1}{\left(n-1\right)^{\tau_{2}}}\le\dfrac{1}{\sigma n}\dfrac{1}{n^{\tau_{2}}}\equiv\alpha_{n}\beta_{n},
\end{equation}
for all \textit{sufficiently large} $n\in\mathbb{N}^{2}$. Indeed,
it is a standard calculus exercise to show that
\begin{equation}
\dfrac{n^{\tau_{2}}}{\left(n-1\right)^{\tau_{2}}}\le\dfrac{n+1}{n},\quad\forall n\in\left[\dfrac{1}{1-\tau_{2}^{1/\left(\tau_{2}+1\right)}},\infty\right)\bigcap\mathbb{N}\subseteq\mathbb{N}^{3}.
\end{equation}
To verify $\mathbf{G2}$ for the sequence $\left\{ \gamma_{n}\right\} _{n\in\mathbb{N}}$,
it suffices to observe that
\begin{equation}
\dfrac{n^{\tau_{2}}}{\left(n-1\right)^{\tau_{2}}}>\dfrac{n^{\tau_{3}}}{\left(n-1\right)^{\tau_{3}}},\quad\forall n\in\mathbb{N}^{2},
\end{equation}
implying that
\begin{equation}
\dfrac{n^{\tau_{3}}}{\left(n-1\right)^{\tau_{3}}}\le\dfrac{n+1}{n}\:\iff\:\alpha_{n+1}\gamma_{n-1}\le\alpha_{n}\gamma_{n},\ \forall n\in\left[\dfrac{1}{1-\tau_{2}^{1/\left(\tau_{2}+1\right)}},\infty\right)\bigcap\mathbb{N}.
\end{equation}
Therefore, we may apply Lemma \ref{lem:Rate_Generator} by choosing
\begin{equation}
n_{o}\equiv\text{\ensuremath{n_{o}}}\left(\tau_{2}\right)\equiv\left\lceil \dfrac{1}{1-\tau_{2}^{1/\left(\tau_{2}+1\right)}}\right\rceil ,\label{eq:n_depend}
\end{equation}
in which case it must be true that, for every $n\in\mathbb{N}^{n_{o}}$,
\begin{flalign}
J^{n+1} & \le\left(1-\sigma\alpha_{n}\right)J^{n}+\widetilde{\Sigma}\left(\sigma^{2}\alpha_{n}^{2}+\dfrac{\sigma^{3}\alpha_{n}\alpha_{n-1}^{2}}{\beta_{n-1}^{2}}+\sigma\alpha_{n}\beta_{n-1}+\dfrac{\sigma^{3}\alpha_{n}\alpha_{n-1}^{2}}{\gamma_{n-1}^{2}}+\dfrac{\sigma\alpha_{n}\beta_{n-1}^{2}}{\gamma_{n-1}^{2}}+\sigma\alpha_{n}\gamma_{n-1}\right)\nonumber \\
 & \le\left(1-\sigma\alpha_{n}\right)J^{n}+\widetilde{\Sigma}\left(\sigma^{2}\alpha_{n}^{2}+\dfrac{\sigma^{3}\alpha_{n-1}^{3}}{\beta_{n-1}^{2}}+\sigma\alpha_{n-1}\beta_{n-1}+\dfrac{\sigma^{3}\alpha_{n-1}^{3}}{\gamma_{n-1}^{2}}+\dfrac{\sigma\alpha_{n-1}\beta_{n-1}^{2}}{\gamma_{n-1}^{2}}+\sigma\alpha_{n-1}\gamma_{n-1}\right)\nonumber \\
 & \le\left(1-\sigma\alpha_{n}\right)J^{n}+\widetilde{\Sigma}\left(\sigma^{2}\alpha_{n}^{2}+2\dfrac{\sigma^{3}\alpha_{n-1}^{3}}{\beta_{n-1}^{2}}+\dfrac{\sigma\alpha_{n-1}\beta_{n-1}^{2}}{\gamma_{n-1}^{2}}+2\sigma\alpha_{n-1}\gamma_{n-1}\right)\nonumber \\
 & \equiv\left(1-\dfrac{1}{n}\right)J^{n}+\widetilde{\Sigma}\left(\dfrac{1}{n^{2}}+2\dfrac{1}{\left(n-1\right)^{3-2\tau_{2}}}+\dfrac{1}{\left(n-1\right)^{1+2\tau_{2}-2\tau_{3}}}+2\dfrac{1}{\left(n-1\right)^{1+\tau_{3}}}\right)\nonumber \\
 & \le\left(1-\dfrac{1}{n}\right)J^{n}+\widetilde{\Sigma}\left(\dfrac{1}{n^{2}}+2\Lambda_{2}\left(\tau_{2}\right)\dfrac{1}{n^{3-2\tau_{2}}}+\Lambda_{3}\left(\tau_{2}\right)\dfrac{1}{n^{1+2\tau_{2}-2\tau_{3}}}+2\Lambda_{23}\left(\tau_{2}\right)\dfrac{1}{n^{1+\tau_{3}}}\right),
\end{flalign}
where we have used the fact that, for our choice of the sequence $\left\{ \alpha_{n}\right\} _{n\in\mathbb{N}}$,
it is true that $\alpha_{n}\le\alpha_{n-1}$, for all $n\in\mathbb{N}^{+}$,
and the constants $\Lambda_{2}$,$\Lambda_{3}$ and $\Lambda_{23}$
are defined as
\begin{flalign}
\Lambda_{2}\left(\tau_{2}\right) & \triangleq\left(\dfrac{n_{o}\left(\tau_{2}\right)}{n_{o}\left(\tau_{2}\right)-1}\right)^{3-2\tau_{2}}<8,\\
\Lambda_{3}\left(\tau_{2}\right) & \triangleq\left(\dfrac{n_{o}\left(\tau_{2}\right)}{n_{o}\left(\tau_{2}\right)-1}\right)^{1+2\tau_{2}-2\tau_{3}}<8\quad\text{and}\\
\Lambda_{23}\left(\tau_{2}\right) & \triangleq\left(\dfrac{n_{o}\left(\tau_{2}\right)}{n_{o}\left(\tau_{2}\right)-1}\right)^{1+\tau_{3}}<4<8.
\end{flalign}
Consequently, we obtain the bound
\begin{equation}
J^{n+1}\le\left(1-\dfrac{1}{n}\right)J^{n}+16\widetilde{\Sigma}\left(\dfrac{1}{n^{2}}+\dfrac{1}{n^{3-2\tau_{2}}}+\dfrac{1}{n^{1+2\tau_{2}-2\tau_{3}}}+\dfrac{1}{n^{1+\tau_{3}}}\right),\label{eq:REC_SUB}
\end{equation}
being true for all $n\in\mathbb{N}^{n_{o}}$, where $\text{\ensuremath{n_{o}}}$
depends on $\tau_{2}$ according to (\ref{eq:n_depend}).

Let us now apply the Generalized Chung's Lemma (Lemma \ref{lem:Chung})
to the recursion (\ref{eq:REC_SUB}). For every $n\in\mathbb{N}^{n_{o}}$,
we have
\begin{flalign}
J^{n+1} & \le J^{n_{o}}\prod_{i\in\mathbb{N}_{n}^{n_{o}}}\left(1-\dfrac{1}{i}\right)+16\widetilde{\Sigma}\sum_{i\in\mathbb{N}_{n}^{n_{o}}}\left(\dfrac{1}{i^{2}}+\dfrac{1}{i^{3-2\tau_{2}}}+\dfrac{1}{i^{1+2\tau_{2}-2\tau_{3}}}+\dfrac{1}{i^{1+\tau_{3}}}\right)\prod_{j\in\mathbb{N}_{n}^{i+1}}\left(1-\dfrac{1}{j}\right)\nonumber \\
 & \equiv J^{n_{o}}\prod_{i\in\mathbb{N}_{n}^{n_{o}}}\left(\dfrac{i-1}{i}\right)+16\widetilde{\Sigma}\sum_{i\in\mathbb{N}_{n-1}^{n_{o}}}\left(\dfrac{1}{i^{2}}+\dfrac{1}{i^{3-2\tau_{2}}}+\dfrac{1}{i^{1+2\tau_{2}-2\tau_{3}}}+\dfrac{1}{i^{1+\tau_{3}}}\right)\prod_{j\in\mathbb{N}_{n}^{i+1}}\left(\dfrac{j-1}{j}\right)\nonumber \\
 & \quad\quad\quad\quad+16\widetilde{\Sigma}\left(\dfrac{1}{n^{2}}+\dfrac{1}{n^{3-2\tau_{2}}}+\dfrac{1}{n^{1+2\tau_{2}-2\tau_{3}}}+\dfrac{1}{n^{1+\tau_{3}}}\right)\nonumber \\
 & \equiv J^{n_{o}}\dfrac{n_{o}-1}{n}+16\widetilde{\Sigma}\sum_{i\in\mathbb{N}_{n-1}^{n_{o}}}\left(\dfrac{1}{i^{2}}+\dfrac{1}{i^{3-2\tau_{2}}}+\dfrac{1}{i^{1+2\tau_{2}-2\tau_{3}}}+\dfrac{1}{i^{1+\tau_{3}}}\right)\dfrac{i}{n}\nonumber \\
 & \quad\quad\quad\quad+16\widetilde{\Sigma}\left(\dfrac{1}{n^{2}}+\dfrac{1}{n^{3-2\tau_{2}}}+\dfrac{1}{n^{1+2\tau_{2}-2\tau_{3}}}+\dfrac{1}{n^{1+\tau_{3}}}\right)\nonumber \\
 & \equiv J^{n_{o}}\dfrac{n_{o}-1}{n}+\dfrac{16\widetilde{\Sigma}}{n}\sum_{i\in\mathbb{N}_{n-1}^{n_{o}}}\left(\dfrac{1}{i}+\dfrac{1}{i^{2-2\tau_{2}}}+\dfrac{1}{i^{2\tau_{2}-2\tau_{3}}}+\dfrac{1}{i^{\tau_{3}}}\right)\nonumber \\
 & \quad\quad\quad\quad+16\widetilde{\Sigma}\left(\dfrac{1}{n^{2}}+\dfrac{1}{n^{3-2\tau_{2}}}+\dfrac{1}{n^{1+2\tau_{2}-2\tau_{3}}}+\dfrac{1}{n^{1+\tau_{3}}}\right).\label{eq:mid_point}
\end{flalign}
Due to our assumption that $1/2\le\tau_{3}<\tau_{2}<1$, it holds
that $2-2\tau_{2}\neq1$ and $2\tau_{2}-2\tau_{3}\neq1$. Consequently,
it is true that
\begin{flalign}
\sum_{i\in\mathbb{N}_{n-1}^{n_{o}}}\dfrac{1}{i^{2-2\tau_{2}}} & <\dfrac{1}{n_{o}^{2-2\tau_{2}}}+\dfrac{n^{1-\left(2-2\tau_{2}\right)}}{1-\left(2-2\tau_{2}\right)},\\
\sum_{i\in\mathbb{N}_{n-1}^{n_{o}}}\dfrac{1}{i^{2\tau_{2}-2\tau_{3}}} & <\dfrac{1}{n_{o}^{2\tau_{2}-2\tau_{3}}}+\dfrac{n^{1-\left(2\tau_{2}-2\tau_{3}\right)}}{1-\left(2\tau_{2}-2\tau_{3}\right)}\quad\text{and}\\
\sum_{i\in\mathbb{N}_{n-1}^{n_{o}}}\dfrac{1}{i^{\tau_{3}}} & <\dfrac{1}{n_{o}^{\tau_{3}}}+\dfrac{n^{1-\tau_{3}}}{1-\tau_{3}},
\end{flalign}
whereas
\begin{equation}
\sum_{i\in\mathbb{N}_{n-1}^{n_{o}}}\dfrac{1}{i}<\dfrac{1}{n_{o}}+\log\left(n\right).
\end{equation}
By defining the quantity
\begin{equation}
\mathsf{R}\equiv\mathsf{R}\left(\tau_{2},\tau_{3}\right)\triangleq\dfrac{1}{1-\max\left\{ 2-2\tau_{2},2\tau_{2}-2\tau_{3},\tau_{3}\right\} }>1,
\end{equation}
we may further bound (\ref{eq:mid_point}) from above as
\begin{flalign}
J^{n+1} & \le J^{n_{o}}\dfrac{n_{o}-1}{n}+16\widetilde{\Sigma}\mathsf{R}\left(\dfrac{\log\left(n\right)}{n}+\dfrac{1}{n^{2-2\tau_{2}}}+\dfrac{1}{n^{2\tau_{2}-2\tau_{3}}}+\dfrac{1}{n^{\tau_{3}}}\right)\nonumber \\
 & \quad\quad\quad\quad+\dfrac{64\widetilde{\Sigma}}{n}+16\widetilde{\Sigma}\left(\dfrac{1}{n^{2}}+\dfrac{1}{n^{3-2\tau_{2}}}+\dfrac{1}{n^{1+2\tau_{2}-2\tau_{3}}}+\dfrac{1}{n^{1+\tau_{3}}}\right)\nonumber \\
 & \le\dfrac{J^{n_{o}}n_{o}+64\widetilde{\Sigma}}{n}+32\widetilde{\Sigma}\mathsf{R}\left(\dfrac{\log\left(n\right)}{n}+\dfrac{1}{n^{2-2\tau_{2}}}+\dfrac{1}{n^{2\tau_{2}-2\tau_{3}}}+\dfrac{1}{n^{\tau_{3}}}\right)\nonumber \\
 & \le\dfrac{n_{o}\left(J^{n_{o}}+64\widetilde{\Sigma}\right)}{n}+32\widetilde{\Sigma}\mathsf{R}\left(\dfrac{1}{n^{1/2}}+\dfrac{1}{n^{2-2\tau_{2}}}+\dfrac{1}{n^{2\tau_{2}-2\tau_{3}}}+\dfrac{1}{n^{\tau_{3}}}\right)\nonumber \\
 & \le\dfrac{n_{o}\left(J^{n_{o}}+64\widetilde{\Sigma}\right)}{n}+128\widetilde{\Sigma}\mathsf{R}\dfrac{1}{n^{\min\left\{ 1/2,2-2\tau_{2},2\tau_{2}-2\tau_{3},\tau_{3}\right\} }}\nonumber \\
 & \equiv\dfrac{n_{o}\left(J^{n_{o}}+64\widetilde{\Sigma}\right)}{n}+128\widetilde{\Sigma}\mathsf{R}\dfrac{1}{n^{\min\left\{ 2-2\tau_{2},2\tau_{2}-2\tau_{3}\right\} }},
\end{flalign}
For our stepsize choices, it is trivial to see that the remaining
condition $\mathbf{G3}$ of Lemma \ref{lem:Rate_Generator} is also
satisfied. Therefore, it must be true that
\begin{equation}
J^{n+1}\le\dfrac{n_{o}\left(\sup_{n\in\mathbb{N}^{+}}J^{n}+64\widetilde{\Sigma}\right)}{n}+\dfrac{128\widetilde{\Sigma}\mathsf{R}}{n^{\min\left\{ 2-2\tau_{2},2\tau_{2}-2\tau_{3}\right\} }},\quad\forall n\in\mathbb{N}^{n_{o}},
\end{equation}
where $\sup_{n\in\mathbb{N}^{+}}J^{n}<\infty$, and defining another
constant $\widehat{\Sigma}\triangleq\max\left\{ \left(\sup_{n\in\mathbb{N}^{+}}J^{n}+64\widetilde{\Sigma}\right),128\widetilde{\Sigma}\right\} $,
we end up with the inequality
\begin{equation}
\mathbb{E}\left\{ \left\Vert \boldsymbol{x}^{n+1}-\boldsymbol{x}^{*}\right\Vert _{2}^{2}\right\} \le J^{n+1}\le\dfrac{\widehat{\Sigma}n_{o}}{n}+\dfrac{\widehat{\Sigma}\mathsf{R}}{n^{\min\left\{ 2-2\tau_{2},2\tau_{2}-2\tau_{3}\right\} }},\quad\forall n\in\mathbb{N}^{n_{o}},\label{eq:RATE_1}
\end{equation}
completing the proof for first part Theorem \ref{lem:Rate_SConvex}.

To prove the second part, let
\begin{equation}
\tau_{2}\equiv\dfrac{3+\epsilon}{4}\quad\text{and}\quad\tau_{3}\equiv\dfrac{1+\delta\epsilon}{2},
\end{equation}
for some $\epsilon\in\left[0,1\right)$ and $\delta\in\left(0,1\right)$.
Then, for the exponents of the corresponding terms in (\ref{eq:RATE_1}),
we have the identities
\begin{flalign}
2-2\tau_{2} & \equiv\dfrac{1-\epsilon}{2}\quad\text{and}\\
2\tau_{2}-2\tau_{3} & \equiv\dfrac{1-\epsilon\left(2\delta-1\right)}{2},
\end{flalign}
out of which the first is the smallest. Additionally, it also true
that
\begin{flalign}
\mathsf{R}\left(\tau_{2},\tau_{3}\right)\equiv\mathsf{R}\left(\delta,\epsilon\right) & \equiv\dfrac{1}{1-\max\left\{ \dfrac{1-\epsilon}{2},\dfrac{1-\epsilon\left(2\delta-1\right)}{2},\dfrac{1+\delta\epsilon}{2}\right\} }\nonumber \\
 & \equiv\dfrac{1}{1-\dfrac{1}{2}\max\left\{ 1-\epsilon,1-\epsilon\left(2\delta-1\right),1+\delta\epsilon\right\} }\nonumber \\
 & \equiv\dfrac{2}{1-\epsilon\max\left\{ 1-2\delta,\delta\right\} }\nonumber \\
 & <\dfrac{2}{1-\epsilon}.
\end{flalign}
As a result, we may further bound (\ref{eq:RATE_1}) as
\begin{flalign}
\mathbb{E}\left\{ \left\Vert \boldsymbol{x}^{n+1}-\boldsymbol{x}^{*}\right\Vert _{2}^{2}\right\} \le J^{n+1} & \le\dfrac{\widehat{\Sigma}n_{o}\left(\epsilon\right)}{n}+\dfrac{\widehat{\Sigma}\mathsf{R}\left(\delta,\epsilon\right)}{n^{\min\left\{ \left(1-\epsilon\right)/2,\left(1-\epsilon\left(2\delta-1\right)\right)/2\right\} }}\nonumber \\
 & \equiv\dfrac{\widehat{\Sigma}n_{o}\left(\epsilon\right)}{n}+\dfrac{\widehat{\Sigma}\mathsf{R}\left(\delta,\epsilon\right)}{n^{\left(1-\epsilon\right)/2}}\nonumber \\
 & \le\dfrac{\widehat{\Sigma}n_{o}\left(\epsilon\right)}{n^{\left(1-4\epsilon\right)/2}}+\dfrac{\widehat{\Sigma}\dfrac{2}{1-\epsilon}}{n^{\left(1-\epsilon\right)/2}}\nonumber \\
 & \equiv\dfrac{\widehat{\Sigma}\left(n_{o}\left(\epsilon\right)+\dfrac{2}{1-\epsilon}\right)}{n^{\left(1-\epsilon\right)/2}},
\end{flalign}
for every $n\in\mathbb{N}^{n_{o}\text{\ensuremath{\left(\epsilon\right)}}}$.\hfill{}\ensuremath{\blacksquare}\vspace{25bp}

\[
{\fontsize{80}{80}\selectfont\ensuremath{\smiley}}
\]

\clearpage{}

\bibliographystyle{plainnat}
\phantomsection\addcontentsline{toc}{section}{\refname}\bibliography{library_fixed}

\begin{center}
\vspace{70bp}
{\fontsize{80}{80}\selectfont \S}
\par\end{center}
\end{document}